\renewcommand{\sl}{\text{sl}}
\declaretheorem[name=Theorem,numberwithin=section]{theo}
\newtheorem*{theo*}{Theorem}
\newtheorem{coro}[theo]{Corollary}
\newtheorem*{coro*}{Corollary}
\newtheorem{conj}[theo]{Conjecture}
\newtheorem*{conj*}{Conjecture}
\newtheorem{prop}[theo]{Proposition}
\newtheorem{lemm}[theo]{Lemma}
\newtheorem*{ques*}{Question}
\newtheorem{exem}[theo]{Example}
\theoremstyle{definition}
\newtheorem{defi}[theo]{Definition}
\theoremstyle{remark}
\newtheorem{rema}[theo]{Remark}
\def\HF{\text{HF}}
\def\CFK{\text{CFK}}
\def\HFKh{\widehat{\text{HFK}}}
\def\MT{\text{MT}}
\def\mindeg{\text{mindeg}}
\def\maxdeg{\text{maxdeg}}
\def\Span{\text{span}}
\def\rank{\text{rank}}
\begin{document}

\title{On Fox's trapezoidal conjecture}
\date{\today}

\author{Soheil Azarpendar}
\address{Mathematical Institute, University of Oxford, Andrew Wiles Building,
		Radcliffe Observatory Quarter, Woodstock Road, Oxford, OX2 6GG, UK}
\email{azarpendar@maths.ox.ac.uk}

\author{András Juhász}
\address{Mathematical Institute, University of Oxford, Andrew Wiles Building,
		Radcliffe Observatory Quarter, Woodstock Road, Oxford, OX2 6GG, UK}
\email{juhasza@maths.ox.ac.uk}

\author{Tam\'as K\'alm\'an}

\address{Department of Mathematics, 
	Tokyo Institute of Technology,
	H-214, 2-12-1 Ookayama, Meguro-ku, Tokyo 152-8551, Japan\\
	and International Institute for Sustainability with Knotted Chiral Meta Matter (WPI-SKCM$^2$),
Hiroshima University
1-3-1 Kagamiyama, Higashi-Hiroshima, Hiroshima 739-8526, Japan}

\email{kalman@math.titech.ac.jp}

\thanks{TK was supported by a Japan Society for the Promotion of Science (JSPS) Grant-in-Aid for Scientific Research C (no.\ 23K03108).}

\keywords{Alexander polynomial, knot, link, log-concave, trapezoidal conjecture, unimodal}

\begin{abstract}
We investigate Fox's trapezoidal conjecture for alternating links. We show that it holds for diagrammatic Murasugi sums of special alternating links, where all sums involved have length less than three (which includes diagrammatic plumbing). It also holds for links containing a large twist region, which we call twist-concentrated. Furthermore, we show some weaker inequalities between consecutive coefficients of the Alexander polynomial of an alternating 3-braid closure, and extend this to arbitrary alternating links. We then study an extension of the trapezoidal conjecture due to Hirasawa and Murasugi, which states that the stable length of the Alexander polynomial of an alternating link can be bounded from above using the signature. We estabilish this and determine when equality holds for diagrammatic Murasugi sums of special alternating knots where each sum has length less than three, and also for twist-concentrated 3-braids. Finally, we study the behavior of the Hirasawa--Murasugi inequality under concordance.
\end{abstract}

\maketitle
\tableofcontents
\pagebreak

\section{Overview}

 In Section~\ref{Section1}, we focus on the following conjecture of Fox \cite{fox1962some} about the Alexander polynomial of alternating knots. This is known as \textit{Fox's trapezoidal conjecture}. 

\begin{restatable}{conj}{Fox} \cite{fox1962some}\label{foxconjecture}
    Let $(a_1,\ldots,a_n)$ be the sequence of absolute values of the coefficients of the Alexander polynomial $\Delta_K$ of an alternating link $K$. With $m=\lfloor \frac{n}{2} \rfloor$, the following properties hold:
    \begin{enumerate} 
    \item $a_1 \leq a_2 \leq \dots \leq a_m \geq \dots \geq a_{n-1} \geq a_n$;
    \item if $a_i=a_{i+1}$ for $i < m$, then $a_i =a_{i+1}= \dots = a_m$.
    \end{enumerate}
\end{restatable}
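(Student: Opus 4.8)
The full conjecture is open, so the plan is to establish it on the families where the Alexander polynomial admits a combinatorial model, and to identify precisely where the general case resists. Throughout I would use that alternating links have thin knot Floer homology (Ozsv\'ath--Szab\'o): $\HFKh(K)$ is supported on a single diagonal, so $a_i = \rank \HFKh(K,s_i)$ in consecutive Alexander gradings. This turns the statement about signed coefficients of $\Delta_K$ into a statement about the ranks of a finitely generated bigraded group, and it explains why the $a_i$ form a genuine nonnegative sequence. I would also record that $\Delta_K$ is palindromic, so any unimodal coefficient sequence automatically peaks at the center index $m$.

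The heart of clause (1), and in fact all of clause (2) at once, I would reduce to \emph{log-concavity}: a positive sequence with $a_i^2 \ge a_{i-1} a_{i+1}$ and no internal zeros is unimodal, and together with the palindromic symmetry it forces the peak to sit at $m$; moreover if $a_i = a_{i+1}$ with $i<m$ then the log-concavity inequality at the next index, combined with monotonicity up to $m$, pins $a_{i+1} = \cdots = a_m$, which is exactly clause (2). So it suffices to prove that the coefficient sequence is log-concave. For special alternating links this is within reach because $\Delta_K$ is, up to sign and normalization, a one-variable specialization of the Tutte polynomial of the Seifert graph $G$ --- equivalently the interior polynomial / $h$-vector attached to $G$. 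I would then invoke matroidal log-concavity: the basis-generating polynomial of a matroid is Lorentzian (Br\"and\'en--Huh), and the relevant specialization inherits log-concavity, giving the conjecture for special alternating links.

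To pass from special alternating links to their diagrammatic Murasugi sums I would use multiplicativity of the Alexander polynomial under Murasugi sum, $\Delta_{K_1 \ast K_2} \doteq \Delta_{K_1} \cdot \Delta_{K_2}$, together with the fact that a product of polynomials with nonnegative coefficients that are each log-concave with no internal zeros is again of the same type; the hypothesis that every sum has length less than three is what keeps the pieces special alternating (including the plumbing case) and preserves thinness. For the twist-concentrated family I would instead isolate the large twist region, compute its explicit geometric-series contribution to $\Delta_K$, and check that multiplying this factor by the Alexander polynomial of the remaining diagram only sharpens log-concavity.

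The genuine obstacle is the arbitrary alternating link: $\Delta_K$ then need not be a single matroidal specialization, nor need $K$ decompose as a Murasugi sum of special pieces, so neither Lorentzian machinery nor multiplicativity applies directly. I expect the decisive missing input to be a direct proof that the full coefficient sequence of \emph{any} alternating link is log-concave --- presumably via a Floer-theoretic or combinatorial model that is log-concave before any matroidal factorization is available. Short of that, the realistic outcome (and what the rest of the paper pursues) is the weaker chains of inequalities between consecutive coefficients for $3$-braid closures and their extension to general alternating links, which capture part of unimodality without the full strict conclusion of clause (2).
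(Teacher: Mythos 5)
The statement you are addressing is Fox's conjecture itself, which remains open; the paper does not prove it either, only special cases of it, so your decision to treat this as a programme covering particular families is the right framing. However, two of your key steps are wrong, and they fail exactly where the paper has to do real work. First, the Alexander polynomial is \emph{not} multiplicative under Murasugi sum: $\Delta_{K_1 * K_2} \doteq \Delta_{K_1}\Delta_{K_2}$ holds for connected sums (2-gon sums) but fails already for plumbings, and the paper says so explicitly. The paper's central result for this family (Proposition~\ref{Alexanderformulaplumbing}) is the corrected formula
\[
\tilde{\Delta}_{L_1 * L_2} = \tilde{\Delta}_{L_1}\cdot\tilde{\Delta}_{L_2} + \tilde{\Delta}_{\widetilde{L}_1}\cdot\tilde{\Delta}_{\widetilde{L}_2},
\]
proved by a Kauffman-state/spanning-tree decomposition, where $\widetilde{L}_i$ are auxiliary links obtained by deleting the plumbing regions. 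The trapezoidal conclusion then requires showing that \emph{both} summands are trapezoidal and that their degrees differ by exactly one (equation~\eqref{degreedif}), since a sum of trapezoidal polynomials is not trapezoidal without such a degree comparison. Under your multiplicativity assumption the whole theorem would collapse to Murasugi's classical product lemma, which should have been a warning sign.

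Second, your treatment of the twist-concentrated family has the same structural error: the large twist region does not contribute a geometric-series \emph{factor}. The paper's recursion (Lemma~\ref{CrowellSkein}, iterated) gives
\[
P_n = t^{n-1}P_1 + (1+t+\cdots+t^{n-2})P'_2,
\]
again a sum of two terms, and the trapezoidal property is extracted by a degree-separation argument showing that the low-order coefficients of $P_n$ are exactly the cumulative sums of the (positive) coefficients of $P'_2$, hence half-trapezoidal, after which palindromic symmetry finishes the proof. Note that cumulative sums of a positive sequence are increasing but generally \emph{not} log-concave (for instance $(1,1,100)$ has cumulative sums $(1,2,102)$ and $2^2 < 1\cdot 102$), so this family cannot be reached by your proposed reduction of everything to log-concavity; what the paper proves here is genuinely weaker than log-concavity. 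Your remaining ingredients are sound but not load-bearing: thinness of $\HFKh$ is a framing device the paper never uses (it works with Crowell's and Kauffman's state sums instead), the Lorentzian argument for special alternating links is cited from Hafner--M\'esz\'aros--Vidinas rather than reproved, and the observation that positivity, log-concavity, and symmetry together imply both clauses of the conjecture is correct but only applies to families where log-concavity is actually available.
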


A sequence with such properties is called \emph{trapezoidal}; see Figure \ref{trapezoidal}. 

In Subsection~\ref{survey}, we go through the main known results about the trapezoidal conjecture in chronological order and briefly mention the methods used to derive each result. The following table summarizes Subsection~\ref{survey}.\\

\makebox[\textwidth][c]
{
\renewcommand{\arraystretch}{1.4}
\begin{tabular}{ |p{5cm}||p{5cm}|p{5cm}|}
 \hline
 \multicolumn{3}{|c|}{Results about trapezoidal conjecture } \\
 \hline
Reference & Class of alternating links& Method\\
 \hline
\textcolor{red}{\textbf{Parris}}\cite{Parris}   & Alternating pretzel knots    &Formula for $\Delta_K$\\
\textcolor{red}{\textbf{Hartley}}\cite{hartley_1979}&   Two-bridge links  & Relating a formula for $\Delta_K$ to extended Schubert diagrams \\
\textcolor{red}{\textbf{Murasugi}}\cite{murasugi_1985} &Alternating arborescent links & Skein formula for plumbing trees and induction\\
\textcolor{red}{\textbf{Ozsváth, Szabó}}\cite{OSineq}   & Alternating knots with $g(K)\leq 2$ & Computation of $\HF^{+}(S^3_{0}(K))$ using large surgery formula\\
 \textcolor{red}{\textbf{Jong}}\cite{Jong},  \textcolor{red}{\textbf{Stoimenow}}\cite{Stoimenow2011DiagramGG} & Alternating knots with $g(K)\leq 4$ & Reduction to a finite list of genus generators using $\Bar{t'_2}$ move\\
 \textcolor{red}{\textbf{Alrefai, Chbili}}\cite{small3braids}& Alternating 3-braids of length less than 4  & Formula for $\Delta_K$ using Burau representation\\
  \textcolor{red}{\textbf{Hafner, Mészáros, and Vidinas}}\cite{KarolaLogconcavityOT}& Special alternating links & Introduction of multivariable refinement of $\Delta_K$ \\
 \hline
\end{tabular}\par
}
\hfill \break

In Subsection~\ref{stablisesection}, we use Crowell's formula for the Alexander polynomial and arguments similar to Jong\cite{Jong} to derive the trapezoidal conjecture for alternating links that contain a (linearly) large twist region. To be more precise, let $\MT(L)$ be the size of the biggest coherent twist region in an alternating link; see Definition~\ref{MTdef} and the paragraph preceding it. We call an alternating link \emph{twist-concentrated} if it satisfies the inequality
\[
\MT(L)-3 \geq g(L) + \frac{|L|}{2},
\]
where $|L|$ is the number of connected components of L. 
Such links satisfy Fox's trapezoidal conjecture.

More generally, we show that the first $n$ inequalities of the trapezoidal conjecture follow from the existence of a twist region with $n+3$ crossings. The main results of this subsection are Theorems \ref{twistconcentrated} and \ref{twistconcentratedgeneral}.

In Subsection~\ref{plumbingofspecial}, we use the decomposition of alternating links into diagrammatic Murasugi sums of special alternating links. Murasugi \cite{Murasugisums} proposed an algorithm for such a decomposition. We use this to extend the result of Hafner, Mészáros, and Vidinas~\cite{KarolaLogconcavityOT} to a certain class of Murasugi sums. In general, it is not possible to derive a formula for the Alexander polynomial of the Murasugi sum of two knots using the Alexander polynomials of the summands. If we restrict the sums to diagrammatic plumbings, then such a formula can be derived as it is done in Proposition~\ref{Alexanderformulaplumbing}. More generally, this can be done for diagrammatic Murasugi sums of length less than 3; see Subsection \ref{plumbingofspecial} 
Using this formula, we prove the trapezoidal conjecture for diagrammatic Murasugi sums of special alternating links such that the lengths of all Murasugi sums are less than 3. The main results of this subsection are Theorems~\ref{plumbingofspecialthm} and~\ref{Trapezoidalsumovertree}.

In Subsection~\ref{3braids}, we focus on alternating 3-braids which form the simplest class of non-special alternating links, and we try to extend the main idea of Hafner, Mészáros, and Vidinas \cite{KarolaLogconcavityOT}. Their main idea is to construct a multivariable refinement of the Alexander polynomial such that all of its coefficients are equal to $1$ and its support is M-convex. This means that this multivariable polynomial will be Lorentzian, and this opens the door to techniques developed by Brändén and Huh~\cite{Huh}; see Subsection~\ref{3braids} for a detailed explanation. We consider a natural generalization of the multivariable refinement to alternating 3-braids. We show that, unfortunately, this multivariable polynomial cannot be Lorentzian except when the mentioned alternating 3-braid is a connected sum of two torus links. Despite this, one can use combinatorial arguments to derive certain inequalities between its coefficients, which in turn result in inequalities between the coefficients of the Alexander polynomial. These inequalities are generally either weaker than the statement of the trapezoidal conjecture (by a multiplicative factor) or they are independent from it. Finally, since this argument only uses the local combinatorial structure of the alternating 3-braids, one can generalize this to all alternating links. These results are stated in Theorems~\ref{badinequality} and~\ref{badinequalitygeneral}.

In Section~\ref{Section2}, we discuss a stronger version of the trapezoidal conjecture proposed by Hirasawa and Murasugi~\cite{hirasawa2013various}. The main addition is a conjectured inequality between the stable length of the sequence of coefficients of the Alexander polynomial and the signature of the link, explained in the following.

\begin{restatable}{conj}{HMrestatable}\label{HMconjecture}
Let $L$ be an alternating link and $\Delta_L(t)=\sum\limits_{i=0}^{l-1}(-1)^i a_i t^{i}$ be its Alexander polynomial. Then there is $i_0 \in \{1,\dots,l/2+1\}$ such that
\[
a_0 < \cdots <a_{i_0-1} = a_{i_0}=\cdots=a_{l-i_0}>\cdots>a_{l-1}.
\]
Moreover,
\[
\left\lfloor \frac{|\sigma(L)|+1}{2} \right\rfloor \geq \left\lfloor \frac{l-2
(i_0-1)}{2} \right\rfloor.
\]
\end{restatable}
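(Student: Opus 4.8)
Since the monotonicity part of \ref{HMconjecture} is exactly Fox's trapezoidal conjecture \ref{foxconjecture} (the strict increase to a plateau followed by a strict decrease is equivalent to conditions (1)--(2) there), for each class of alternating links treated in this paper that half is already supplied: by Hafner--M\'esz\'aros--Vidinas \cite{KarolaLogconcavityOT} for special alternating links, by Theorems~\ref{plumbingofspecialthm} and~\ref{Trapezoidalsumovertree} for the diagrammatic Murasugi sums, and by Theorem~\ref{twistconcentrated} in the twist-concentrated case. The genuine content of \ref{HMconjecture} beyond Fox is therefore the \emph{signature inequality}
\[
\left\lfloor \frac{|\sigma(L)|+1}{2}\right\rfloor \;\ge\; \left\lfloor \frac{l-2(i_0-1)}{2}\right\rfloor ,
\]
so the plan is to isolate this and prove it together with the equality analysis. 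Writing $\lambda := l-2(i_0-1)$ for the stable length, note that $\lambda$ and $l$ have the same parity, so $\lfloor \lambda/2\rfloor = \lfloor l/2\rfloor-(i_0-1)$; this bookkeeping identity is used throughout.

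For the special alternating \emph{atoms} I would exploit that such a link has a \emph{definite} special Seifert surface $F$, so its symmetrized Seifert form $V+V^{T}$ is definite of rank $b_1(F)$. Since alternating links satisfy $\deg\Delta_L=b_1(F)=l-1$, this gives the sharp identity $|\sigma(L)|=l-1$. Substituting, $\lfloor(|\sigma|+1)/2\rfloor=\lfloor l/2\rfloor\ge \lfloor \lambda/2\rfloor$ holds automatically because $\lambda\le l$, and by the parity identity equality occurs precisely when $i_0=1$, i.e.\ exactly when the coefficient sequence is a single plateau. Through \cite{KarolaLogconcavityOT} this can be read off the associated generalized permutohedron: $i_0=1$ says that every slice $\{\sum d_j=\text{const}\}$ carries the maximal number of lattice points, which pins down the atoms attaining equality.

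For the induction over the decomposition, by Murasugi's algorithm \cite{Murasugisums} the link is assembled from special alternating atoms by diagrammatic connected sums (length $1$) and plumbings (length $2$); in either case $b_1$ and hence the degrees add, $l-1=\sum_j(l_j-1)$. Under connected sum $\Delta$ multiplies and $\sigma$ adds \emph{exactly}, so $\lambda$ is governed by the convolution of the two coefficient sequences and the inequality follows by a direct estimate comparing $\lfloor(|\sigma_1+\sigma_2|+1)/2\rfloor$ with the stable length of the product. The essential case is plumbing, where oppositely-signed atoms can cancel and the identity $|\sigma|=l-1$ fails: here I would compute $\sigma(L)$ from the alternating diagram of the whole sum by the Gordon--Litherland formula, using that the Goeritz form of a checkerboard surface and its correction term split along the summing square, and read off $\lambda$ from the formula of Proposition~\ref{Alexanderformulaplumbing} for $\Delta_{L_1*L_2}$. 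Combining the two and tracking the slack yields both the inequality and its equality cases---for instance the opposite-sign plumbing of two Hopf bands, which realizes equality with $\sigma=0$ as the figure-eight knot does. The twist-concentrated $3$-braids, being non-special, I would instead treat by direct computation: the reduced Burau description of $\Delta$ for alternating $3$-braids \cite{small3braids}, combined with Theorem~\ref{twistconcentrated}, shows that a twist region of size $\MT(L)$ forces a plateau of controllable length, while the signature of a $3$-braid closure is given by an explicit formula in the braid word, and matching the two explicit expressions gives the inequality and equality characterization.

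The main obstacle is the plumbing step. The clean identity $|\sigma|=l-1$ rests on definiteness of the special surface and is destroyed once oppositely-oriented pieces are plumbed, so one must control two a priori independent effects at once: the amount of signature lost to cancellation between summands of opposite sign, and the growth of the stable length under the convolution-type formula of Proposition~\ref{Alexanderformulaplumbing}. Showing that these two effects are compatible---and in particular pinning down the exact Gordon--Litherland behavior of $\sigma$ under diagrammatic plumbing that the equality statement requires---is the crux; by contrast the monotonicity half is inherited from \ref{foxconjecture}, and the special alternating and connected-sum cases reduce to the parity bookkeeping of the atoms together with exact additivity.
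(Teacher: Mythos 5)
You are attempting to prove a statement that the paper itself does not prove: Conjecture~\ref{HMconjecture} is open, and the paper only establishes it for restricted families (Corollary~\ref{H-Mplummbing}, Theorem~\ref{HM3-braids}, Corollaries~\ref{ReductionoftwistsinHM}, \ref{HMdual} and~\ref{amphicheiral}). Your proposal implicitly restricts to the same families, so I compare at that level, and there are two genuine gaps. First, your reduction rests on the claim that Murasugi's algorithm assembles every alternating link from special alternating atoms by diagrammatic connected sums (length $1$) and plumbings (length $2$). This is false: the Murasugi decomposition of an alternating diagram can involve sums of arbitrary length (see Remark~\ref{diagramgonality}, where $l$ and the $x_j$, $y_j$ are unconstrained), and the hypothesis that all sums have length less than $3$ is a genuine restriction in Theorems~\ref{plumbingofspecialthm} and~\ref{Trapezoidalsumovertree} and in Corollary~\ref{H-Mplummbing} --- precisely because the formula of Proposition~\ref{Alexanderformulaplumbing} breaks down for longer sums, where extra terms appear that cannot be grouped into trapezoidal products. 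Were your claim true, the paper's results would already prove Fox's conjecture, and hence the monotonicity half of \ref{HMconjecture}, for \emph{all} alternating links, which they do not.

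Second, what you single out as the crux --- controlling $\sigma$ under plumbing when summands have opposite signs, to be handled by a Gordon--Litherland/Goeritz-form splitting --- is not an obstacle at all in the paper's framework: Murasugi proved that signature and genus are \emph{exactly additive} under the diagrammatic Murasugi decomposition of an alternating link (Proposition~\ref{Murasugidecomp}), regardless of the signs of the summands. The paper's proof of Corollary~\ref{H-Mplummbing} needs only this additivity, the formula of Proposition~\ref{Alexanderformulaplumbing}, and Lemma~\ref{stablelenproductoftrapezoidals} on the stable length of a product of trapezoidal polynomials; the absolute-value estimates are then the same as in the connected-sum case (Corollary~\ref{HMnonprime}). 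Your proposed alternative is both unnecessary and unproven: the Gordon--Litherland correction term does not obviously split along the gluing disk, so this step would need a real argument. The remaining pieces of your plan do track the paper: the definiteness argument for special alternating atoms is a legitimate alternative to Lemma~\ref{specialaltsignature} (the paper instead uses the combinatorial signature formula of Lemma~\ref{signatureformula}); your connected-sum convolution estimate is Lemma~\ref{stablelenproductoftrapezoidals}; and for twist-concentrated $3$-braids the paper computes the stable length via $\deg(P'_2)$ (Theorem~\ref{stablelengthofhightwist} and Proposition~\ref{degP'2_3braids}) and quotes Erle's signature formula (Lemma~\ref{sign_3braids}) rather than using the Burau representation, but the spirit of matching two explicit expressions is the same.
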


We call this inequality the \textit{H-M inequality}. We call those alternating links for which the H-M inequality becomes an equality \textit{H-M sharp}. We define the \textit{stable length of $L$} to be 
\[
sl(L) := l-2(i_0-1).
\]

In Subsection~\ref{HMintro}, we use the results developed in Section~\ref{Section1} to derive corollaries about the H-M inequality. Using the results of Subsection~\ref{stablisesection}, one can show that the H-M inequality for twist-concentrated alternating links $L$ reduces to the problem for the rest of the alternating links. In the case of certain twist-concentrated links for which we can compute the signature, such as twist-concentrated 3-braids, this argument proves the H-M inequality. These results are stated in Corollary~\ref{ReductionoftwistsinHM} and Theorem~\ref{HM3-braids}.

We then use the results of Subsection \ref{plumbingofspecial}, and prove the H-M inequality for the diagrammatic Murasugi sum of special alternating links when the lengths of all Murasugi sums are less than 3. This is the content of Corollary~\ref{H-Mplummbing}.

In Subsection~\ref{HMsharp}, we investigate the properties of H-M sharp links. Refining the arguments of the previous subsection, one can derive a diagrammatic characterization of H-M sharp knots, which are diagrammatic Murasugi sums of special alternating links such that the lengths of all Murasugi sums are less than $3$. This result is stated in Corollary~\ref{H-Msharpplum}.

In Subsection~\ref{dual}, we use the Fox--Milnor equation to form a relation between the H-M inequality and concordance. Consider the minimum degree of the Alexander polynomial in the algebraic concordance class of an alternating knot $K$, denoted by $d_c(K)$. It turns out that $d_c(K)+1$ gives an upper bound for the stable length $sl(K)$. The H-M inequality can be viewed as an optimization problem; i.e., finding the maximal stable length in a given algebraic concordance class. From this perspective, computing $d_c(K)$ acts as a dual problem for the H-M inequality. This leads to proof of the H-M inequality for certain concordance classes, as stated in Corollaries~\ref{HMdual} and~\ref{amphicheiral}. This perspective relaxes the alternating condition and thus might be useful.

\section{Fox's trapezoidal conjecture}\label{Section1}
\subsection{Introduction}\label{survey}

 A link $K$ in $S^3$ is called \emph{alternating} if it has a projection $D$ in which the undercrossings and overcrossings alternate as one moves along any component. Let $\Delta_K(t) \in \mathbb{Z}[t]$ be the (single-variable) Alexander polynomial of $K$. We also use the notation $\tilde{\Delta}_{K} \in \mathbb{Z}[t,t^{-1}]$ to denote the \emph{symmetrized Alexander polynomial} of the link $K$. Although this is not standard notation, we need it for clarity.

 In 1962, Fox conjectured that the sequence of absolute values of the coefficients of $\Delta_K$ is trapezoidal for alternating K.

\Fox*

See Figure~\ref{trapezoidal} for an example of a trapezoidal sequence.

\begin{figure}[ht]
\centering
\includegraphics[scale=0.3]{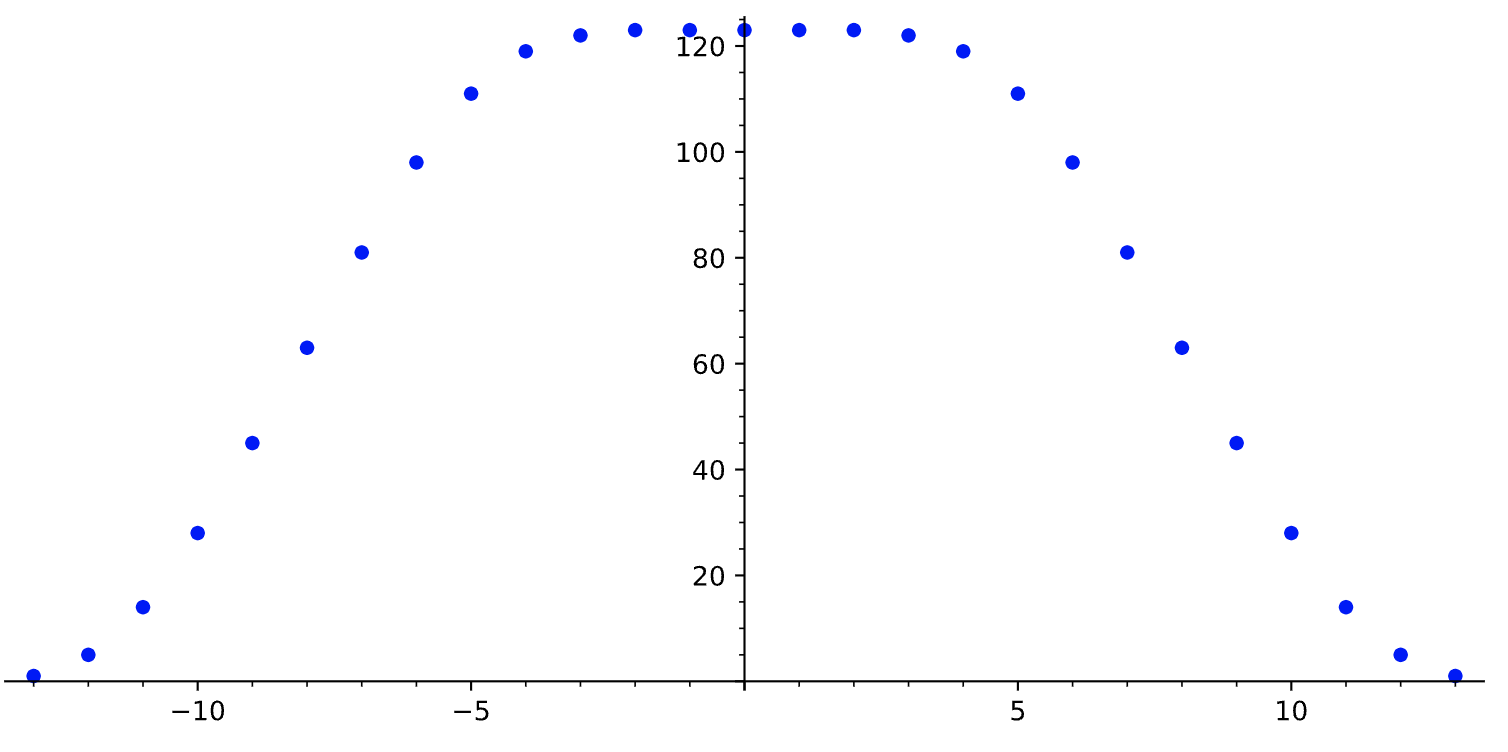}
\caption{Graph of a trapezoidal sequence.}
\label{trapezoidal}
\end{figure}

Note that the sequence of coefficients of $\Delta_K$ is symmetric; i.e., ${a_i = a_{n-i+1}}$ for $i \in \{1,\dots,n\}$.  Murasugi~\cite{murasugi_1985} proved that, for an alternating $K$, signs of this sequence alternate, which means, using the notation of Conjecture~\ref{foxconjecture}, that 
\[
\Delta_K(t) = \pm \sum\limits_{1\leq i \leq n}  (-1)^{i-1}a_i t^{i-1}.
\]
Stoimenow proposed the following strengthening of Conjecture~\ref{foxconjecture}.
 
\begin{conj}\cite{Stoimenow2005NewtonlikePO}\label{Stoimenowconj}
Let $(a_1,\ldots,a_n)$ be the sequence of absolute values of the coefficients of the Alexander polynomial $\Delta_K$ of an alternating link $K$ in $S^3$. Then $(a_1,\ldots,a_n)$ is log-concave; i.e., for every $i \in \{2,\dots,n-1\}$, we have
\[
\ a_{i-1}a_{i+1}\leq a_i^2.
\]
\end{conj}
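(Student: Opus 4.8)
The plan is to route the log-concavity statement of Conjecture~\ref{Stoimenowconj} through the Lorentzian-polynomial framework of Br\"and\'en and Huh~\cite{Huh}, taking as a black box the work of Hafner, M\'esz\'aros, and Vidinas~\cite{KarolaLogconcavityOT}, who already establish it for special alternating links, and then propagating the conclusion along the Murasugi decomposition of a general alternating link into special alternating summands. In other words, I would treat the special alternating case as the base case and the behaviour under (diagrammatic) Murasugi sums as the inductive step. Since the signs of the coefficients already alternate by Murasugi's theorem, passing to absolute values is harmless, so throughout one may work with honest nonnegative coefficient sequences.

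For the base case, recall that~\cite{KarolaLogconcavityOT} attach to a special alternating link a multivariable refinement of $\Delta_K$ whose nonzero coefficients are all equal to $1$ and whose support is M-convex; such a polynomial is Lorentzian. Specializing all variables to a common value recovers a unit multiple of $\Delta_K$, and because the class of Lorentzian polynomials is closed under this specialization, the resulting univariate polynomial is again Lorentzian, whence its coefficient sequence is ultra-log-concave and in particular log-concave. Thus Conjecture~\ref{Stoimenowconj} holds for special alternating links, which anchors the induction.

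For the inductive step I would decompose an arbitrary alternating link as a diagrammatic Murasugi sum of special alternating pieces via Murasugi's algorithm. When every sum in the decomposition has length less than $3$, there is an explicit formula for $\Delta_K$ in terms of the summands (cf.\ Proposition~\ref{Alexanderformulaplumbing}), and the plan is to verify that log-concavity is preserved under the operation this formula encodes. The clean case is when the operation is ordinary multiplication of the summand polynomials: the coefficient sequence of a product of two polynomials with nonnegative, log-concave, internal-zero-free coefficients is again log-concave, since such sequences are P\'olya frequency sequences of order two and these are closed under convolution. Combining this with the base case would close the argument for all alternating links whose Murasugi decomposition uses only sums of length less than $3$.

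The main obstacle is the passage to Murasugi sums of length at least $3$, which is exactly where the method of~\cite{KarolaLogconcavityOT} stops working. On the one hand, no formula for $\Delta_K$ is available for general Murasugi sums, so the convolution argument is unavailable. On the other hand, the natural multivariable refinement fails to be Lorentzian already for non-special links such as alternating $3$-braids---being Lorentzian only for connected sums of torus links---so the Br\"and\'en--Huh machinery cannot be applied directly. Bridging this gap, either by producing a genuinely M-convex refinement adapted to non-special alternating diagrams or by finding a gluing inequality that controls the full coefficient sequence across a higher-length Murasugi sum, is the crux; the purely local combinatorial estimates available in this setting yield only inequalities strictly weaker than log-concavity. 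For this reason I expect Conjecture~\ref{Stoimenowconj} to remain genuinely hard in full generality, with the strategy above succeeding only on the subclasses where the Lorentzian structure survives.
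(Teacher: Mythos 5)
The statement you were asked to prove is stated in the paper as a \emph{conjecture} (Stoimenow's log-concavity strengthening of Fox's trapezoidal conjecture), and the paper contains no proof of it: the only log-concavity results it records are quoted from other work (special alternating links in \cite{KarolaLogconcavityOT}, two-bridge links in \cite{Banfield2022ChristoffelWA}), and every new result the paper itself proves concerns the strictly weaker trapezoidal property or inequalities weaker than, or independent of, log-concavity. So there is no paper proof to compare against, and your proposal must stand on its own. It does not: as you yourself concede, it addresses only subclasses and declares the general case out of reach, so it is not a proof of the conjecture.

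More importantly, your inductive step fails even on the subclass where you claim it succeeds. For a diagrammatic Murasugi sum of length less than $3$, the relevant identity is not ordinary multiplication of the summands' polynomials; by Proposition~\ref{Alexanderformulaplumbing} it is
\[
\tilde{\Delta}_{L} = \tilde{\Delta}_{L_1}\cdot\tilde{\Delta}_{L_2} + \tilde{\Delta}_{\widetilde{L}_1}\cdot\tilde{\Delta}_{\widetilde{L}_2},
\]
a \emph{sum} of two products. Your convolution (P\'olya frequency) argument controls each product separately, but log-concavity is not closed under addition, even for symmetric, internal-zero-free, log-concave sequences centred at the same point whose symmetrized degrees differ by one --- which is exactly the situation produced by equation~\eqref{degreedif}. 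For instance, $(2,3,4,3,2)$ and $(1,9,1)$ are both symmetric and log-concave, yet their centred sum $(2,4,13,4,2)$ violates log-concavity since $2\cdot 13 > 4^2$. This is precisely why the paper only deduces \emph{trapezoidality} for these classes (Theorems~\ref{plumbingofspecialthm} and~\ref{Trapezoidalsumovertree}): symmetric trapezoidal sequences with a common centre \emph{are} closed under addition, and the products are handled by \cite[Proposition~2.1]{murasugi_1985} together with the degree comparison~\eqref{degreedif}. So your plan yields, at best, what the paper already proves, namely Fox's conjecture for these subclasses, and it does not give Stoimenow's log-concavity statement even there; the only part of the proposal that is both correct and relevant to the actual statement is the base case for special alternating links, which is taken verbatim from \cite{KarolaLogconcavityOT}.
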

 
 \textcolor{red}{\textbf{Parris}}~\cite{Parris} in his PhD thesis used Fox calculus to derive a formula for the Alexander polynomial of pretzel knots which in turn proves the trapezoidal conjecture for this class.
 
\textcolor{red}{\textbf{Hartley}}~\cite{hartley_1979} proved the trapezoidal conjecture for two-bridge knots $D(p,q)$. The Alexander polynomial $\Delta_{D(p,q)}$ can be computed by the following formula. Let $p$, $q$ be coprime integers satisfying $0<q<p$ and $q$ odd. Then 
 \begin{equation}\label{Minkus}
     \Delta_{D(p,q)}(t)=\sum\limits_{0\leq k \leq p-1} (-1)^{k} \ t^{s_k} \ , \ s_k=\sum\limits^{k}_{i=0} (-1)^{\left\lfloor \frac{iq}{p} \right\rfloor}.
 \end{equation}
Minkus~\cite{Minkus1982TheBC} proved this formula using Fox calculus on a Schubert normal diagram of $D(p,q)$. Using a similar argument, Hartley provided another formula for $\Delta_{D(p,q)}$ which can be read from an extended diagram obtained from unwinding Schubert's normal diagram; see Figure \ref{Hartley}. Note that, in the extended diagram, all of the diagonal segments have slope $\frac{q}{p}$. 
\begin{figure}[h]
\centering
\includegraphics[scale=0.4]{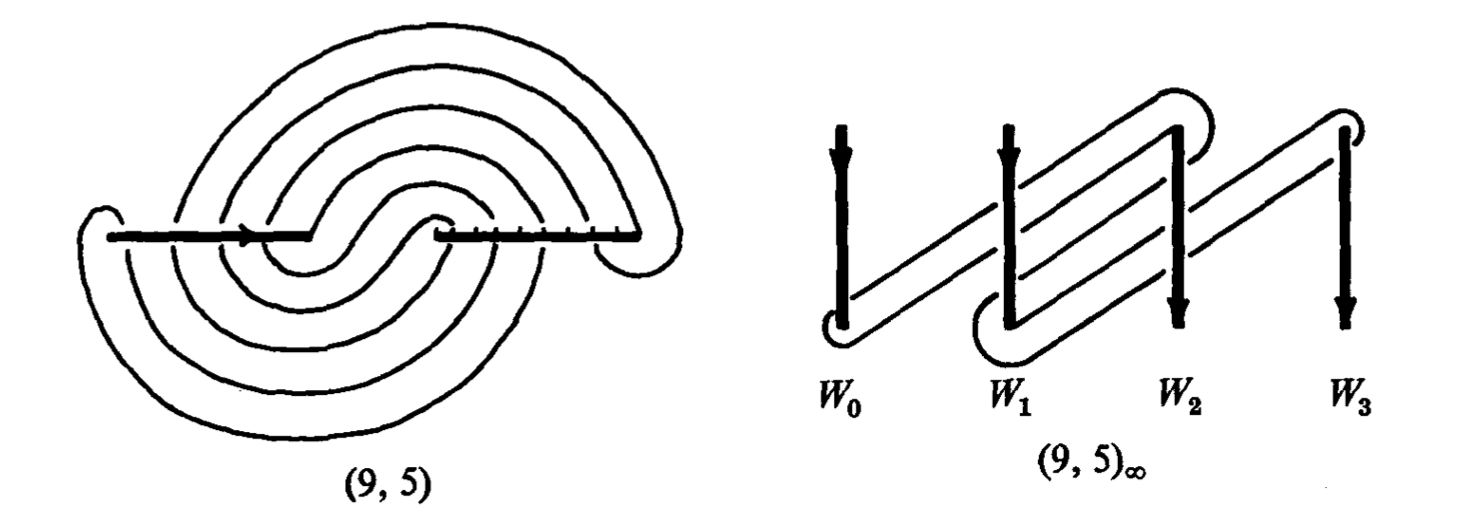}
\caption{Hartley's extended diagram~\cite{hartley_1979}.}\label{Hartley}
\end{figure}
\begin{prop}\cite{hartley_1979}
Let $\alpha_i$ be the number of segments joining the $i$-th and $(i+1)$-st vertical lines in the extended diagram of $D(p,q)$. Then 
\[
\Delta_{D(p,q)}=\sum (-1)^{i} \alpha_i t^i.
\]
\end{prop}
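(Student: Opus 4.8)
The plan is to deduce Hartley's formula from Minkus's formula~\eqref{Minkus}, which we may take as given; directly redoing the Fox calculus on the extended diagram would be an alternative but less economical route. Write $\varepsilon_i=(-1)^{\lfloor iq/p\rfloor}\in\{+1,-1\}$, so that the exponents in~\eqref{Minkus} satisfy $s_k=\sum_{i=0}^k\varepsilon_i$. Thus the sequence $(s_0,s_1,\ldots,s_{p-1})$ is a nearest-neighbour walk on $\mathbb{Z}$: it starts at $s_0=1$ and moves by $\pm 1$ at each step. Collecting the $p$ monomials of~\eqref{Minkus} by degree, the coefficient of $t^j$ in $\Delta_{D(p,q)}$ is the signed count $c_j=\sum_{k:\,s_k=j}(-1)^k$, and everything reduces to evaluating these sums.

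The crucial point is that each such sum involves \emph{no} cancellation. Since every increment $\varepsilon_i$ is odd, we have $s_k\equiv k+1\pmod 2$, so every index $k$ with $s_k=j$ satisfies $k\equiv j+1\pmod 2$; hence all the signs $(-1)^k$ occurring in $c_j$ coincide and equal $(-1)^{j+1}$. Therefore $c_j=(-1)^{j+1}N_j$, where $N_j=\#\{\,k\in\{0,\ldots,p-1\}:s_k=j\,\}$ is simply the number of times the walk visits height $j$. In particular the coefficients strictly alternate in sign (recovering Murasugi's result in this special case), and as a consistency check note that $\sum_j N_j=p$ matches $|\Delta_{D(p,q)}(-1)|=p$, the determinant of the two-bridge link. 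It now remains only to identify the visit counts $N_j$ with the segment counts $\alpha_j$ of the extended diagram in Figure~\ref{Hartley}.

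For this identification I would set up coordinates on the unwound Schubert diagram so that its $j$-th vertical line corresponds to height $j$ of the walk, and read the diagonal segments of slope $q/p$ as the successive steps of the developed trajectory: the $i$-th lattice crossing of the line through the $p\times q$ box is recorded by the index $i$, and the parity $\varepsilon_i$ registers whether that step ascends or descends, so that a diagonal segment joins the $j$-th and $(j+1)$-st vertical lines exactly when the walk occupies height $j$ at the corresponding step. Matching the two indexing conventions (and absorbing the harmless global sign and possible degree shift coming from the symmetrisation of $\Delta$) then yields $\alpha_j=N_j$, whence substitution into $c_j=(-1)^{j+1}N_j$ gives the claimed $\Delta_{D(p,q)}=\sum(-1)^j\alpha_j t^j$. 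The parity argument of the second paragraph is clean and immediate; the genuine work, and the step I expect to be the main obstacle, is precisely this geometric dictionary—defining the coordinates on the extended diagram and verifying that ``the walk sits at height $j$'' translates exactly into ``a segment joins the $j$-th and $(j+1)$-st vertical lines,'' while correctly handling the endpoint indices $k=0$ and $k=p-1$ and the overall normalisation. I anticipate this bookkeeping, rather than any deeper idea, to be the crux.
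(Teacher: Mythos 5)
The paper itself contains no proof of this proposition: it is quoted from Hartley~\cite{hartley_1979} in a survey subsection, and the surrounding text indicates that Hartley's own derivation runs parallel to Minkus's, i.e., by Fox calculus carried out on the (unwound) Schubert diagram. Your route is genuinely different: you take Minkus's formula~\eqref{Minkus} as given and try to convert it combinatorially into Hartley's. The first half of your argument is correct and complete, and it is the nicest part of the proposal: since every increment $\varepsilon_i=(-1)^{\lfloor iq/p\rfloor}$ is odd, one has $s_k\equiv k+1 \pmod 2$, so all indices $k$ contributing to a fixed power $t^j$ carry the same sign $(-1)^{j+1}$; hence the coefficient of $t^j$ equals $(-1)^{j+1}N_j$, where $N_j$ is the number of visits of the walk $(s_k)$ to height $j$, the signs strictly alternate, and the global sign and degree shift are absorbed into the unit ambiguity of $\Delta$. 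This no-cancellation observation is a clean reduction that the paper does not supply.

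However, the second half --- the identification $\alpha_j=N_j$ --- is precisely the content of Hartley's proposition once Minkus's formula is granted, and you do not prove it: you describe coordinates you ``would'' set up and defer the verification as bookkeeping. This is more than bookkeeping. The extended diagram is specified in the paper only by Figure~\ref{Hartley} (``unwinding Schubert's normal diagram,'' diagonal segments of slope $q/p$), so any proof of $\alpha_j=N_j$ must first pin down a precise combinatorial definition of that diagram and then verify that the $k$-th diagonal segment lies between the vertical lines indexed by $s_k$, with consistent conventions at the endpoints $k=0$ and $k=p-1$ and under the folding. Until that dictionary is established, what you have actually proven is a statement about Minkus's walk (alternating signs, coefficients equal to visit counts), not the stated proposition about the extended diagram. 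The plan is plausible --- the identification is true, and it is essentially the same correspondence that Banfield~\cite{Banfield2022ChristoffelWA} exploits between the coefficient sequence and vertical slices of folded Christoffel paths --- but as written the proposal has a genuine gap exactly at its crux.
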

Hartley uses other combinatorial properties of the extended diagrams to devise an inductive argument. \textcolor{red}{\textbf{Chen}} \cite{Chen2017OnTA} uses the same argument to derive stronger results; see Section~\ref{Section2} for more details.

\textcolor{red}{\textbf{Murasugi}}~\cite{murasugi_1985} proved the trapezoidal conjecture for arborescent alternating links. These are alternating links that can be constructed from the plumbing of oriented twisted bands over a tree (for details, see Subsection~\ref{plumbingofspecial}). Murasugi uses skein relations on plumbing trees to derive an inductive argument. Figure~\ref{skein} shows an example of such a skein triple.

As depicted in Figure~\ref{skein}, changing the orientation of a crossing reduces the number of twists in a band (weight of a vertex) by 2 and smoothing a crossing, results in a connected sum of the adjacent bands.

Furthermore, Murasugi derived a sufficient criterion for an algebraic link to be alternating. This relies on a trick which relates the plumbing tree $T$ to the Tait graph of a diagram of the associated algebraic link $l(T)$. More precisely, Murasugi showed that if one augments $T$ by an exterior vertex $\hat{v}$, and connects $\hat{v}$ to $V(T)$ such that in the augmented graph, the weight of each vertex is equal to the signed count of its adjacent edges, then $l(T)$ is isotopic to the result of applying the median construction to the planar dual of $T \cup \{\hat{v}\}$; see Figure~\ref{plumbing-tait}.

\begin{figure}[h]
\centering
\includegraphics[scale=0.3]{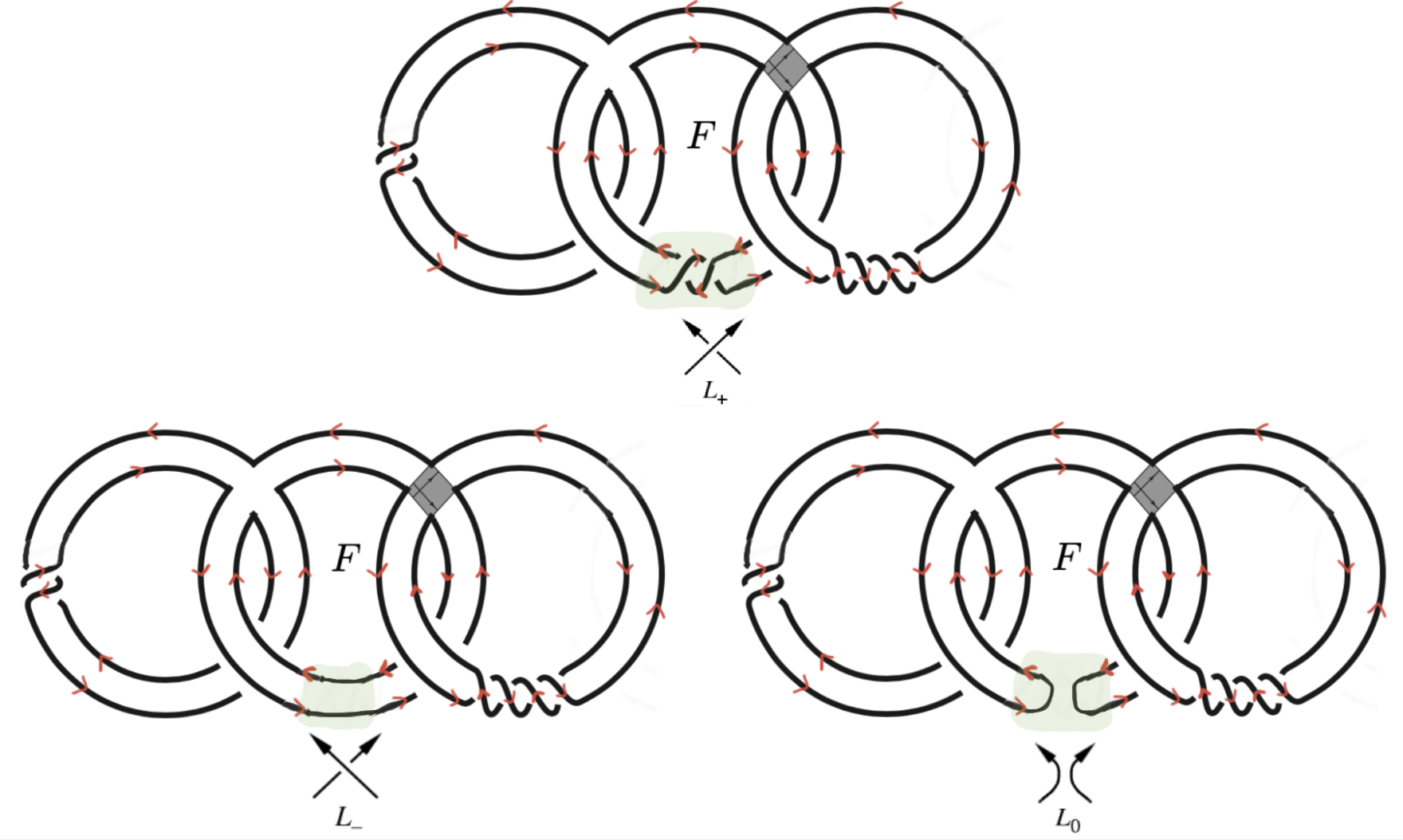}
\caption{Skein relation on plumbing trees.}\label{skein}
\end{figure}

\begin{figure}[h]
\centering
\includegraphics[scale=0.3]{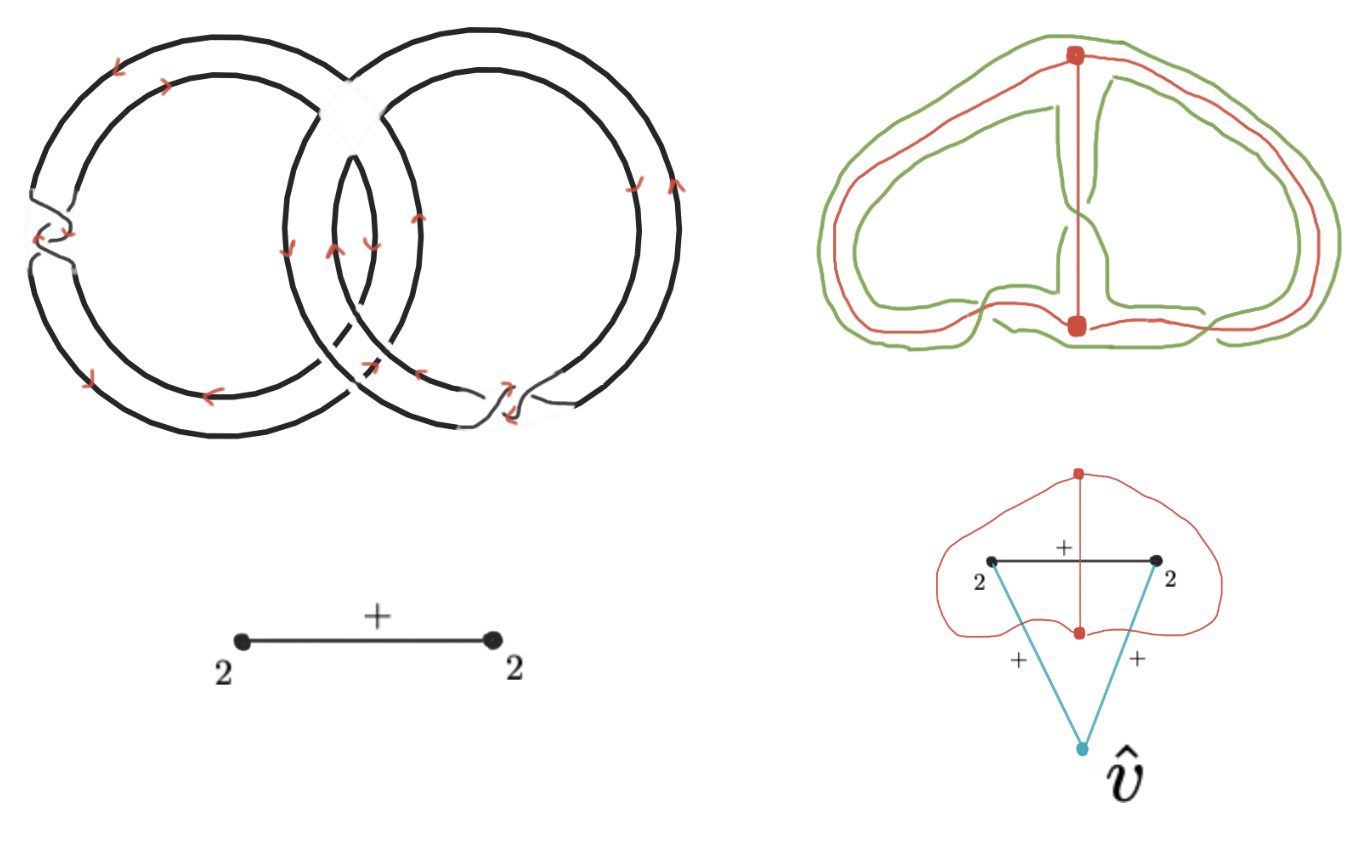}
\caption{Alternative construction of an algebraic link (both black and green diagrams represent the trefoil).}\label{plumbing-tait}
\end{figure}

\textcolor{red}{\textbf{Ozsváth and Szabó}} \cite{Szabó2003} used Heegaard Floer homology to prove the trapezoidal conjecture for alternating knots of genus two. They proved that alternating knots are HF-thin; i.e., 
\[
\HFKh(S^3,K,s)\cong \mathbb{Z}^{|a_s|}\  \text{supported in Maslov grading}\  s+\frac{\sigma}{2},
\]
where $\tilde{\Delta}_K(T)=a_0+\sum\limits_{s>0}a_s(T^s+T^{-s})$ and $\sigma$ is the signature of $K$. The proof comes from construction of a Heegaard diagram for a knot where the Heegaard states are in one-to-one correspondence with Kauffmann states. For alternating knots, this Heegaard diagram has the property that $M(\mathbf{\underbar{x}})-A(\mathbf{\underbar{x}})=\frac{\sigma(K)}{2}$ for any intersection point $\mathbf{\underbar{x}}$, where $A(\mathbf{\underbar{x}})$ is the Alexander grading and $M(\mathbf{\underbar{x}})$ is the Maslov grading of $\mathbf{\underbar{x}}$. As a result, all the differentials in the hat version of the knot Floer chain complex vanish. Ozsváth and Szabó go one step further and prove the following.  

\begin{prop}\cite{Szabó2003}
Let $K$ be an alternating knot, oriented so that $\sigma = \sigma(K)\leq 0$. For all $s>0$, let 
\[
\delta(\sigma,s):=max\left(0,\left\lceil \frac{|\sigma|-2|s|}{4} \right\rceil\right) 
\]
and ${t_s(K):= \sum\limits^{\infty}_{j=1} j a_{|s|+j}}$. Then we have a $\mathbb{Z}[U]$-module isomorphism 
\[
\HF^{+}(S_{0}^3(K),s)\cong \mathbb{Z}^{b_s} \oplus \left(\mathbb{Z}[U]/U^{\delta(\sigma,s)}\right),
\]
where $(-1)^{s+\frac{\sigma}{2}}b_s=\delta(\sigma,s)-t_s(K)$. Furthermore, for $s=0$, we have
\[
\HF^{+}(S_{0}^3(K),0)\cong \mathbb{Z}^{b_0} \oplus \mathcal{T}^{+}_{-1/2} \oplus \mathcal{T}^{+}_{-2\delta(\sigma,0)+\frac{1}{2}},
\]
where $\HF^+(S^3)=\mathcal{T}^{+}_{0}$ and the subscript denotes a shift in the $\mathbb{Q}$-grading.
\end{prop}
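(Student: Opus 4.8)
The plan is to compute $\HF^{+}(S_0^3(K))$ directly from the knot Floer complex $\CFK^\infty(K)$ via the large-surgery and mapping-cone formalism, specialised to the case at hand. The essential input is the preceding proposition: since $K$ is alternating it is thin, i.e. $\HFKh(S^3,K,s)\cong\mathbb{Z}^{|a_s|}$ sits in the single Maslov grading $s+\frac{\sigma}{2}$. I would first record the consequence that the filtered chain homotopy type of $C:=\CFK^\infty(K)$ is determined by $\Delta_K$ and $\sigma$ alone: a thin complex splits (up to filtered homotopy) as a direct sum of one \emph{staircase} summand $S(\sigma)$, whose length records $\tau(K)=-\sigma/2$, together with a collection of \emph{box} (square) summands. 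This reduces every subsequent homology computation to an explicit calculation in a standard model.

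Next I would set up the surgery machinery. Writing $A_s^+=C\{\max(i,j-s)\ge 0\}$, $B^+=C\{i\ge 0\}\simeq\mathcal{T}^+$, and letting $v_s,h_s\colon A_s^+\to B^+$ be the two canonical projections (the second composed with the flip map identifying $C\{j\ge s\}$ with $\mathcal{T}^+$), the large-surgery theorem computes $H_*(A_s^+)$ and the integer-surgery formula identifies
\[
\HF^{+}(S_0^3(K),s)\cong H_*\bigl(\mathrm{Cone}(A_s^+\xrightarrow{\,v_s-h_s\,}B^+)\bigr)
\]
up to an overall grading shift. For the thin model $H_*(A_s^+)\cong\mathcal{T}^+\oplus(\text{$U$-torsion})$, and the key computation is that on the towers $v_s$ acts as $U^{V_s}$ and $h_s$ as $U^{V_{-s}}$, where the local invariant of the staircase $S(\sigma)$ is exactly $V_s=\delta(\sigma,s)=\max\bigl(0,\lceil(|\sigma|-2s)/4\rceil\bigr)$ for $s\ge 0$, with $V_{-s}=V_s+s$ by the symmetry $A_s\cong A_{-s}$. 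I would verify this value on the model, where it agrees with $T(2,2n+1)$ ($\sigma=-2n$, $V_s=\lceil(n-s)/2\rceil$).

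The structure of the answer then falls out. For $s\neq 0$, say $s>0$, the tower map is $U^{V_s}(1-U^{s})$; since $1-U^{s}$ is an isomorphism of $\mathcal{T}^+$, this map is surjective with kernel $\mathbb{Z}[U]/U^{\delta(\sigma,s)}$, contributing that summand and no extra tower, while the box summands contribute the free part $\mathbb{Z}^{b_s}$. For $s=0$ one has $V_0=V_{-0}$, so $v_0-h_0$ vanishes on the towers; the cone therefore retains \emph{both} towers, and tracking the staircase depth $V_0=\delta(\sigma,0)$ through the grading conventions places their bottoms at $-1/2$ and $-2\delta(\sigma,0)+1/2$, yielding the stated $\mathcal{T}^{+}_{-1/2}\oplus\mathcal{T}^{+}_{-2\delta(\sigma,0)+1/2}$. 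Finally, to pin down $b_s$ I would invoke Ozsváth--Szabó's identification of $\chi\bigl(\HF^{+}(S_0^3(K),s)\bigr)$ with the Turaev torsion, i.e. with the torsion coefficient $t_s(K)=\sum_{j\ge1}j\,a_{|s|+j}$; thinness forces $\HF^{+}_{\mathrm{red}}$ into the single grading parity $(-1)^{s+\sigma/2}$, so comparing ranks gives $(-1)^{s+\sigma/2}b_s=\delta(\sigma,s)-t_s(K)$.

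The main obstacle is the middle step: extracting the precise local invariants $V_s=\delta(\sigma,s)$ from the signature and controlling them with $\mathbb{Z}$ coefficients. This requires the full thin structure theorem (staircase plus boxes) with correct signs, together with a careful bookkeeping of the absolute $\mathbb{Q}$-gradings — in particular the two competing shifts in the $s=0$ torsion $\mathrm{spin}^c$ structure. A secondary subtlety is confirming that the box summands do not interfere with the tower computation and contribute only to $\mathbb{Z}^{b_s}$ in the expected parity; this is where thinness is used decisively, since it rules out the diagonal differentials that could otherwise mix the two contributions.
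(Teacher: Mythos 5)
Your proposal is a coherent strategy, but it is not the paper's route. The paper follows Ozsv\'ath--Szab\'o's original argument: compute $\HF^{+}(S^3_{n}(K),[s])$ for large $n$ via the large surgery formula together with the short exact sequence
\[
0 \rightarrow C\{i\geq 0 \ \text{or}\  j\geq s\} \rightarrow C\{i\geq 0\} \oplus C\{j \geq s\} \rightarrow C\{i\geq 0 \ \text{and}\  j\geq s\}\rightarrow 0,
\]
and then feed the answer into the surgery exact triangle relating $\HF^{+}(S^3)$, $\HF^{+}(S^3_0(K),s)$ and $\HF^{+}(S^3_n(K),[s])$. In that argument thinness enters \emph{only} through the grading identity $M([\mathbf{x},i,j])=i+j+\frac{\sigma}{2}$, which confines $H_{*}(C\{i\geq 0 \ \text{and}\ j\geq s\})$ to degrees at least $s+\frac{\sigma}{2}$; no knowledge of the filtered chain homotopy type of $\CFK^{\infty}$ is required. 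Your route instead runs through the integer surgery (mapping cone) formula plus the structure theorem for thin complexes (staircase plus boxes). This is genuinely different and, when it works, yields more (the local invariants $V_s$ and the $d$-invariants fall out), but it consumes a far stronger input than the statement needs.

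That stronger input is where the genuine gap lies. The splitting of the complex of a thin knot into one staircase and boxes is established in the literature only over $\mathbb{F}_2$ (Petkova), whereas the proposition is a statement about $\mathbb{Z}[U]$-modules: $\mathbb{Z}^{b_s}$, $\mathbb{Z}[U]/U^{\delta(\sigma,s)}$, and integral towers. You name ``the full thin structure theorem with correct signs'' as the main obstacle, but the proposal simply assumes it; a $\mathbb{Z}$-coefficient version does not follow from the $\mathbb{F}_2$ one by universal coefficients, so as written you only recover the result over $\mathbb{F}_2$. A second, related point: your claims that the box summands ``contribute only to $\mathbb{Z}^{b_s}$'' and that at $s=0$ the cone ``retains both towers'' at $-1/2$ and $-2\delta(\sigma,0)+1/2$ are correct \emph{only after} the normalization $\sigma\leq 0$ fixes the orientation of the staircase. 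For the mirror-type staircase ($\tau<0$), $H_{*}(A_0^{+})$ contains an extra $\mathbb{Z}$-class that \emph{does} surject onto the bottom of the $B$-tower, and the two towers of the cone then sit at $+1/2$ and $+3/2$ (this is exactly what happens for the left-handed trefoil), not at the values your computation would predict from $V_0=H_0=0$ alone. So the sign and orientation bookkeeping you defer is not routine: it is precisely the step that distinguishes the two mirror cases, and it must be carried out in detail, whereas the paper's exact-triangle argument sidesteps all of it by working only with gradings.
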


Their proof uses the large surgery formula to compute $\HF^{+}(S^3_{n}(K),s)$ for sufficiently large $n$, and then applies the surgery exact sequence: 
\[
\cdots \rightarrow \HF^{+}(S^3) \rightarrow \HF^{+}(S^3_{0}(K),s)\rightarrow \HF^{+}(S^3_{n}(K),[s]) \rightarrow \cdots
\]
Note that a corollary of this computation is that $b_s\geq 0$. Using the inequality $b_0 \geq 0$ for alternating knots of genus two implies the trapezoidal conjecture.

\begin{rema}
One needs to combine the large surgery formula with some homological algebra to calculate $\HF^{+}(S^3_{n}(K),s)$. The following (Mayer--Vietoris) exact sequence of subcomplexes of $\CFK^{\infty}(S^3,K)$ is useful:
\[
0 \rightarrow C\{i\geq 0 \ \text{or}\  j\geq s\} \rightarrow C\{i\geq 0\} \oplus C\{j \geq s\} \rightarrow C\{i\geq 0 \ \text{and}\  j\geq s\}\rightarrow 0
\]
\begin{figure}[h]
\centering
\includegraphics[scale=0.35]{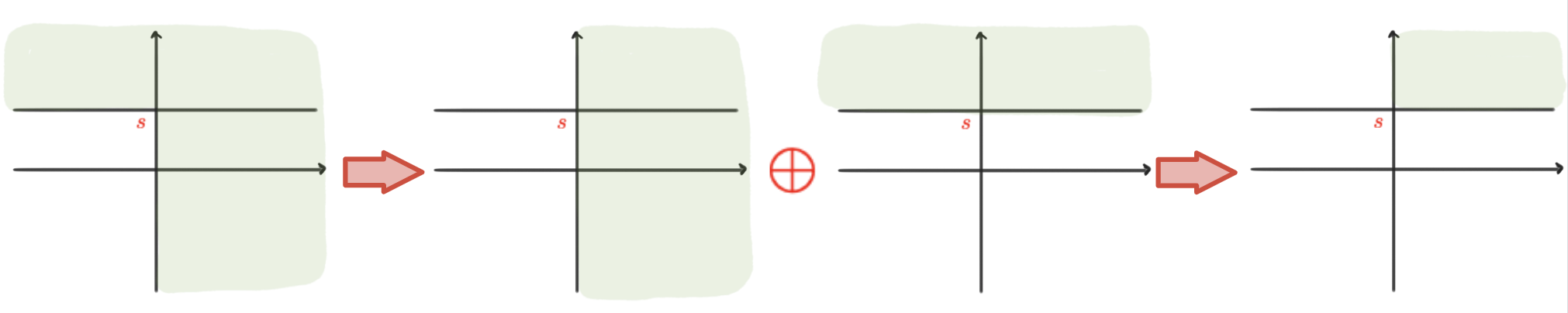}
\caption{Exact sequence of subcomplexes of $\CFK^{\infty}$.}\label{M-V large surgery}
\end{figure}\\
For an alternating knot, one can use $M(\mathbf{\underbar{x}})-A(\mathbf{\underbar{x}})=\frac{\sigma(K)}{2}$ to conclude that 
\[
M([\mathbf{\underbar{x}},i,j]) = M(\mathbf{\underbar{x}}) + 2i = A(\mathbf{\underbar{x}})+ \frac{\sigma}{2} +2i = (j-i)+\frac{\sigma}{2}+2i=i+j+ \frac{\sigma}{2}.
\]
In particular, this means that $H_{*}(C\{i\geq 0 \ \text{and}\  j\geq s\})$ is supported in degrees higher than $s + \frac{\sigma}{2}$. Combining this fact with the long exact sequence of homologies associated to the above short exact sequence results in: 
\[
\begin{split}
H_{\leq s +\frac{\sigma}{2}-2}(C\{i\geq 0 \ \text{or}\  j\geq s\}) &\cong H_{\leq s +\frac{\sigma}{2}-2}(C\{i\geq 0\}) \oplus H_{\leq s +\frac{\sigma}{2}-2}(C\{j \geq s\})\\
&\cong \HF^{+}_{\leq s +\frac{\sigma}{2}-2}(S^3) \oplus \HF^{+}_{\leq -s +\frac{\sigma}{2}-2}(S^3).
\end{split}
\]
Recall that, based on the large surgery formula, we have 
\[
H_{*}(C\{i\geq 0 \ \text{or}\  j\geq s\}) \cong \HF^{+}(S^3_{n}(K),[s])
\]
for $n$ sufficiently large.
\end{rema}

\textcolor{red}{\textbf{Ni}}~\cite{Ni} used these results to prove an extremal case of inequalities in the  trapezoidal conjecture. Let $K$ be an alternating knot and $\Delta_K(T)=\sum\limits_{i=-g}^{g} a_i T^i$. Then Ni showed that, if $|a_g|=|a_{g-1}|$, then $K$ or its mirror is the torus knot $T_{2g+1,2}$. This follows from the fact that knot Floer homology detects of fibredness.

\textcolor{red}{\textbf{Jong}}\cite{Jong} gave a combinatorial proof of the trapezoidal conjecture for alternating knots of genus at most two. This proof relies on Stoimenow's characterization of knot diagrams of genus two \cite{Stoimenowgenus2}. Analysis of Seifert graphs proves that all knot diagrams with Seifert genus two come from applying flypes and $\overline{t'_2}$ moves to a finite list of diagrams called generators.

\begin{figure}[ht]
\centering
\includegraphics[scale=0.4]{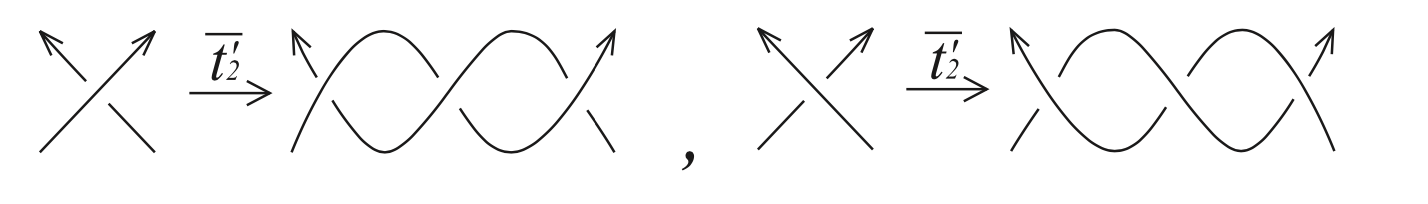}
\caption{The move $\overline{t'_2}$; see Jong~\cite{Jong}.}\label{t'2}
\end{figure}

Note that, by a theorem of Gabai~\cite{Gabai1986GeneraOT}, the Seifert genus of a reduced alternating diagram is equal to the genus of the knot. As a result, one can conclude that all alternating knots of genus two come from applying flypes and $\overline{t'_2}$ moves to alternating diagrams of Stoimenow's list of generators.

Jong combined this characterization with Crowell's spanning tree formula for the Alexander polynomial of alternating links. We explain this formula in detail in Subsection~\ref{stablisesection}. Jong calculated the effect of $\overline{t'_{2}}$ on Crowell's weighted graph and proved the trapezoidal conjecture by an inductive argument.

Note that Stoimenow's theorem is true for any fixed genus, so one can prove the trapezoidal conjecture by checking it for a finite list of generators. Performing such a task, even with the help of a computer, is not trivial. \textcolor{red}{\textbf{Stoimenow}}~\cite{Stoimenow2011DiagramGG} used several computational tricks to prove the trapezoidal conjecture for all alternating knots with genus less than or equal to $4$.

\textcolor{red}{\textbf{Alrefai and Chbili}}~\cite{small3braids} used Burau representation to derive closed-form formulas for the Alexander polynomial of alternating 3-braids with length less than or equal to $3$; i.e., for braid words of the form 
\[
\sigma_1^{p_1}\sigma_2^{-q_1}\sigma_1^{p_2}\sigma_2^{-q_2}\sigma_1^{p_3}\sigma_2^{-q_3}.
\]
Similar ideas were used by \textcolor{red}{\textbf{Alsukaiti and Chbili}}~\cite{alsukaiti2023alexander} to prove the conjecture for 3-braids of the form $(\sigma_1\sigma_2^{-1})^n$. 

\textcolor{red}{\textbf{Banfield}}~\cite{Banfield2022ChristoffelWA} used Minkus's formula for the Alexander polynomial of two-bridge link (equation~\ref{Minkus}) to prove that the sequence of coefficients of this polynomial is not only trapezoidal but log-concave. They noticed that Minkus's formula identifies this sequence with the vertical slices of folded Christoffel paths. The Christoffel path with slope $\frac{q}{p}$ is the path in a $p \times q$ grid which is closest to the diagonal and remains under it. The closeness here means that there are no grid points in the region bounded by the path and the diagonal. This path is the best "integer approximation" of the line with slope $\frac{q}{p}$; see Figure~\ref{Christoffel}. Banfield then used facts about the characterization of Christoffel words; i.e., words in $\{x,y\}^{*}$ generated by following steps of a Christoffel path. Using these, they constructed an inductive argument that completed the proof.

\textcolor{red}{\textbf{Hafner, Mészáros, and Vidinas}}~\cite{KarolaLogconcavityOT} proved the trapezoidal conjecture for special alternating knots. Special alternating knots are knots that can be constructed using the median construction on a bipartite plane graph $G$. Equivalently, these are alternating knots in which all crossings are positive. We talk about their method in detail in Subsection~\ref{3braids}.

\begin{figure}[ht]
\includegraphics[scale=0.4]{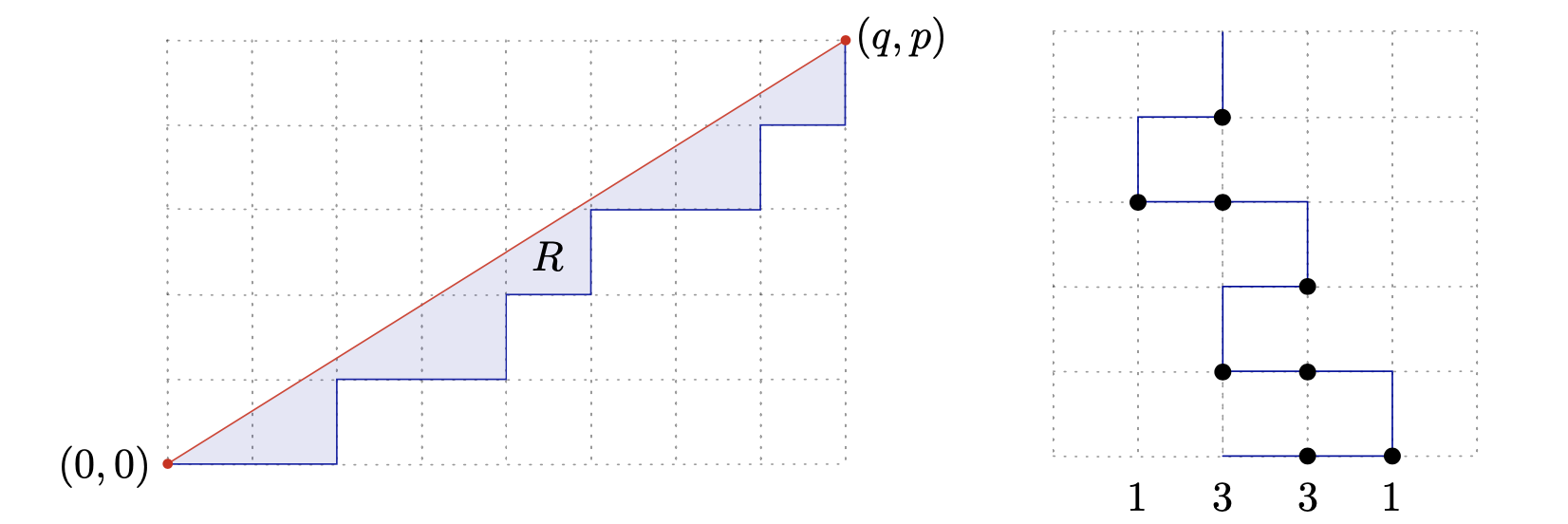}
\caption{Christoffel paths (ordinary and folded); see Banfield~\cite{Banfield2022ChristoffelWA}.}\label{Christoffel}
\end{figure}

\subsection{Stabilization and highly-twisted links}\label{stablisesection}

The aim of this subsection is to prove the trapezoidal conjecture for alternating links with (linearly) large twist regions; see Theorem~\ref{twistconcentrated}. A neighborhood in a link diagram is called an \textit{coherent twist region} if it is locally isomorphic to Figure~\ref{twistregion}. These twist regions can be constructed by twisting two parallel strands of a link. 

\begin{figure}[h]
\centering
\includegraphics[scale=0.35]{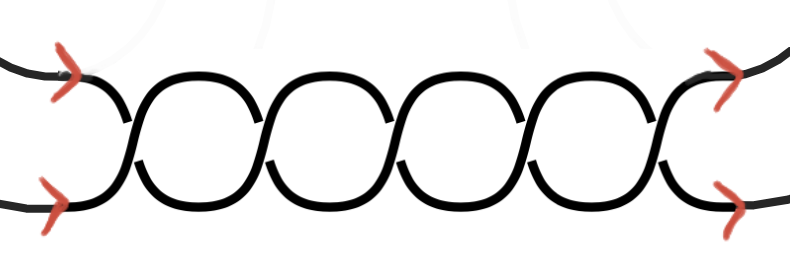}
\caption{A coherent twist region.}\label{twistregion}
\end{figure}

\begin{defi}\label{MTdef}
We define the \textit{maximal twist number} $\MT(L)$ of an alternating link $L$ to be the maximum of the number of crossings in an coherent twist region in any reduced alternating diagram of $L$.
\end{defi}


\begin{theo}\label{twistconcentrated}
Let $L$ be an alternating link and 
\[
\Delta_L(t)\dot{=}\sum\limits_{i=0}^{2g(L)+|L|-1} (-1)^{i}a_i t^i,
\]
where $a_i \ge 0$ for $i \in \{0,\dots,2g(L) + |L| - 1\}$.
If
\begin{enumerate}
    \item $MT(L)$ is odd and $\MT(L)-2 \geq g(L) + |L|/2$, or
    \item $MT(L)$ is even and $\MT(L)-3 \geq g(L) + |L|/2$,
\end{enumerate}
then the sequence $(a_i)$ is trapezoidal.
\end{theo}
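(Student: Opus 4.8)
The plan is to combine Crowell's spanning-tree model for $\tilde\Delta_L$ with the symmetry of the Alexander polynomial, following the strategy the authors attribute to Jong. Fix a reduced alternating diagram $D$ of $L$ in which a coherent twist region $R$ realizes $\MT(L)$, and let $\Gamma$ be the associated Seifert (Tait) graph. Because $R$ is coherent, Seifert's algorithm turns its $\MT(L)$ crossings into a bundle of parallel edges $e_1,\dots,e_{\MT(L)}$ joining two fixed vertices $u,v$ of $\Gamma$; in Crowell's weighting these nested edges carry the consecutive exponents $c,c+1,\dots,c+\MT(L)-1$ for some constant $c$. In the alternating case every spanning tree contributes a single monomial of one fixed sign, so, splitting trees according to whether they use one of the $e_i$ (a tree uses at most one, since two would create a cycle) and performing deletion--contraction on the bundle, I get, after normalization,
\[
\tilde\Delta_L(t)\;\doteq\;\Psi(t)\;+\;t^{c}\,[\MT(L)]_t\,\Phi(t),\qquad [k]_t:=1+t+\cdots+t^{k-1},
\]
where $\Phi$ counts the spanning trees of $\Gamma$ with the bundle contracted and $\Psi$ those avoiding $R$ entirely; crucially both have non-negative coefficients.

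The first step is a degree count. Writing $n=2g(L)+|L|$, the sequence $(a_i)_{i=0}^{n-1}$ has length $n$ and is symmetric, $a_i=a_{n-1-i}$, with middle near $n/2=g(L)+|L|/2$. Removing the $\MT(L)$ crossings of $R$ lowers the degree of the Alexander polynomial by exactly $\MT(L)$ (each added crossing in a coherent region raises $2g+|L|-1$ by one), so $\deg\Psi\le \deg\tilde\Delta_L-\MT(L)$. The hypothesis $\MT(L)-3\ge g(L)+|L|/2$ (resp.\ $\MT(L)-2\ge g(L)+|L|/2$ in the odd case) is exactly what forces $\deg\Psi$ to lie strictly below the centre of $\tilde\Delta_L$. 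Consequently, throughout the upper half of the degree range the base term $\Psi$ contributes nothing, and there $\tilde\Delta_L$ agrees with the pure ramp $R:=t^{c}[\MT(L)]_t\Phi$.

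The heart of the argument is then purely formal. For $j$ in the upper half the coefficient $R_j=\sum_{i}\phi_{\,j-c-i}$ is a tail sum of the non-negative sequence $(\phi_l)$, and a telescoping computation gives $R_j-R_{j+1}=\phi_{\,j-c-\MT(L)+1}\ge 0$, so the ramp coefficients are non-increasing as one moves up from the centre. Transporting this through the symmetry $a_d=a_{\deg\tilde\Delta_L-d}$ --- legitimate because $\deg\tilde\Delta_L-d$ always lands in the pure-ramp region for $d$ up to the middle --- yields $a_d-a_{d-1}=\phi_{\,\deg\Phi-d}\ge 0$ for every $d$ from $1$ to the middle index. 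This is precisely statement (1). Statement (2) follows from the same identity: the increments $a_d-a_{d-1}$ are the coefficients of $\Phi$ read off in reverse, so an interior equality $a_{d-1}=a_d$ can occur only once the index $\deg\Phi-d$ leaves the support of $\Phi$, after which all further increments vanish up to the plateau; thus any equality below the middle propagates to the middle, provided $\Phi$ has no interior zero coefficients.

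I expect two places to require real care. First, the clean decomposition above --- in particular that the twist bundle produces exactly the geometric factor $[\MT(L)]_t$ with a single coherent sign and non-negative $\Phi,\Psi$ --- is where the input ``similar to Jong'' is needed; getting Crowell's weights and signs right on the nested twist edges is the main technical obstacle. Second, the exact thresholds $\MT(L)-2$ and $\MT(L)-3$, together with the no-interior-zeros property of $\Phi$ needed for (2), demand a careful parity analysis of where the centre index sits (depending on the parity of $\MT(L)$) and a positivity statement for the contracted spanning-tree polynomial $\Phi$, which I would derive from the structure of $\Gamma$ (e.g.\ via the log-concavity of special alternating pieces established in~\cite{KarolaLogconcavityOT}). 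The conceptual core --- symmetry plus monotone tail sums --- is short once these inputs are in place.
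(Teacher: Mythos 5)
Your overall strategy coincides with the paper's: a spanning-tree state sum in which the twist region contributes a geometric factor $1+t+\cdots$, so that half of the coefficients are cumulative (or tail) sums of a nonnegative sequence, hence half-trapezoidal, with symmetry finishing the job. However, the three inputs you defer are exactly where the paper's proof does its work, and two of your assertions are not merely unproven but incorrect as stated. First, the decomposition $\tilde\Delta_L \doteq \Psi + t^c[\MT(L)]_t\,\Phi$ is not available off the shelf: in Crowell's model (the one the paper uses) a coherent twist region is a \emph{chain of crossing-vertices}, not a bundle of parallel edges, and no per-edge weighting assigns ``consecutive exponents.'' The geometric factor has to be derived; the paper does this via the unoriented skein recursion $P_k = tP_{k-1}+P'_k$ (Lemma~\ref{CrowellSkein}) together with the stabilization $P'_k=P'_2$, arriving at $P_n = t^{n-1}P_1 + (1+\cdots+t^{n-2})P'_2$ (equation~\eqref{iterated2}), where the complementary term is the polynomial of the link with \emph{one crossing left}, shifted by $t^{n-1}$ --- not your deletion term $\Psi$. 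Relatedly, part (2) of trapezoidality (equality propagates to the plateau) requires $\Phi$ to have no internal zero coefficients, and your proposed source --- log-concavity of special alternating pieces from \cite{KarolaLogconcavityOT} --- does not apply: the contraction polynomial (the paper's $P'_2$) is \emph{not} the Alexander polynomial of any link, since its weights and orientations differ from those of a Crowell graph of a link diagram; the paper instead proves positivity of all coefficients for arbitrary weighted, oriented, rooted graphs by a greedoid exchange argument (Remark~\ref{positivecoeff}).

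The step that would actually fail as written is the degree separation. You infer from a genus count that ``$\deg\Psi$ lies strictly below the centre,'' but a span count only bounds the \emph{length} of $\Psi$'s support, not its \emph{position}: in any such decomposition the two summands carry relative $t$-shifts that must be pinned down, and this anchoring is the crux. Indeed, with the paper's conventions the complementary term sits at the \emph{top} of the degree range, not the bottom: the paper proves $\maxdeg(P_n)=\maxdeg(t^{n-1}P_1)$ (equation~\eqref{highdegeq}) by a tree-inclusion argument ($\mathcal{T}(G'_2,v_2)\subseteq\mathcal{T}(G_1,v_1)$), and then locates its mindeg via $\Span(P_n)-\Span(t^{n-1}P_1)=n-1$ (equation~\eqref{Spaneq}), computed from the Seifert graphs using $g(L_n)-g(L_1)=\tfrac{n-1}{2}$. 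Consequently the ramp covers the \emph{bottom} of the range, where the coefficients are cumulative sums (increasing), and symmetry handles the top --- the mirror of your picture. The mirror convention itself is harmless, but the two anchoring statements (the maxdeg equality and the span difference) are the genuine content of the separation, and neither follows from the one-line degree count in your proposal.
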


The proof of Theorem~\ref{twistconcentrated} follows from a stabilization phenomenon for the Alexander polynomial's coefficients after applying a large number of twists. This was shown by Lambert-Cole \cite{LambertCole2016TwistingMA} using skein relations. Furthermore, Lambert-Cole generalized this to knot Floer homology. In the rest of this section, we reprove this fact for alternating links using Crowell's formula. This proof gives us some additional information about the stabilized values of the coefficients, which will in turn be used to prove Theorem~\ref{twistconcentrated}. First, we need to describe Crowell's formula for the Alexander polynomial of alternating links.

Let $D$ be an alternating diagram of a link $L$ with $m$ crossings $c_1,...,c_m$. We can assume that $m \geq 2$. Consider the underlying planar graph of the diagram $D$, which we denote by $G_D$ and call the \textit{Crowell graph} associated to $D$. The vertices of $G_D$ are crossings $c_1,...,c_m$ and the edges are the arcs of the diagram $D$. We orient the edges of this graph such that the two under-crossing (resp.\ over-crossing) arcs at each crossing correspond to incoming (resp.\ outgoing) edges. Furthermore, we label the edges with weights $t$ and $1$ such that, as one traverses the over-strand at crossing $c_i$ (following the link's orientation), the weight $t$ (resp.\ $1$) appears on the right (resp.\ left). We choose an arbitrary vertex, say $c_k$, of this graph as the root. Let $\mathcal{T}(G_D,c_k)$ be the set of all maximal oriented trees rooted at $c_k$. For a tree $T \in \mathcal{T}(G_D,c_k)$, we define its weight $W(T)$ to be the product of weights of all of its edges. The \textit{Crowell polynomial} associated to $(G_D,c_k)$ is 
\[
P_{G_D,c_k}(t) := \sum_{T \in \mathcal{T}(G_D,c_k)} W(T).
\]
Note that the Crowell polynomial $P_{G,v}(t)$ can be defined for any weighted, oriented graph $G$ rooted at a vertex $v$.

\begin{figure}[h]
\centering
\includegraphics[scale=0.4]{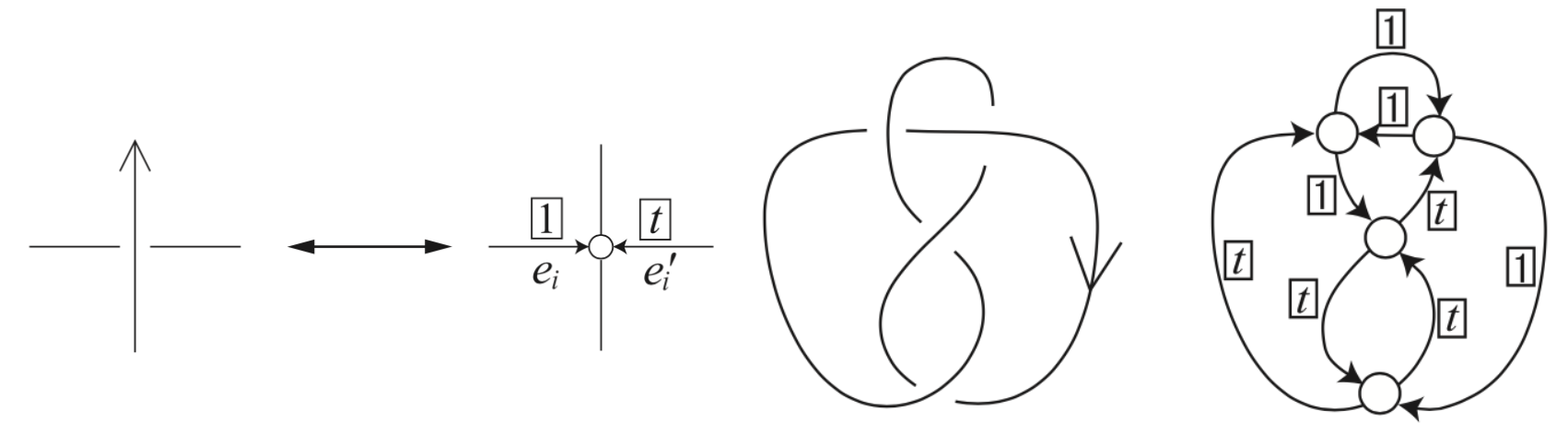}
\caption{Crowell's method~\cite{Jong}.}\label{}
\end{figure}

\begin{theo}\label{Crowell}\cite{Crowell}
    Let $\Delta_L(t)$ be the normalized Alexander polynomial of an alternating link $L$ with alternating diagram $D$. If $c_k$ is a crossing of $D$, then
    \[
    \Delta_{L}(-t) \dot{=} P_{G_D,c_k}(t),
    \]
    where $\dot{=}$ means equality up to multiplication by units of the Laurent polynomial ring $\mathbb{Z}[t,t^{-1}]$; i.e., by $\pm t^k$. 
\end{theo}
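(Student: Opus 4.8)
The plan is to derive the identity from Fox's free differential calculus applied to a Wirtinger presentation of $\pi_1(S^3 \setminus L)$, and then to invoke the directed, weighted Matrix--Tree theorem to convert the resulting determinant into a sum over rooted spanning trees of $G_D$. First I would set up the Wirtinger presentation with one generator $x_i$ per arc of $D$ and one relator per crossing, each of the form $r = x_b x_a x_b^{-1} x_c^{-1}$, where $x_a$ is the over-arc and $x_b, x_c$ are the two under-arcs at the crossing. Fox-differentiating the relators and abelianizing (sending every generator to $t$) produces the Alexander matrix $A(t)$, whose nontrivial entries are drawn from $\{1-t,\,t,\,-1\}$: indeed $\partial r/\partial x_a \mapsto t$, $\partial r/\partial x_b \mapsto 1-t$, and $\partial r/\partial x_c \mapsto -1$. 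By the definition of the Alexander polynomial through the first elementary ideal, $\Delta_L(t) \dot{=} \det A_{\hat{k}}(t)$, where $A_{\hat{k}}$ is $A$ with the (redundant) row and the column indexed by $c_k$ deleted.

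The crux is the second step: I would show that, after the substitution $t \mapsto -t$ and after clearing units, the matrix $A_{\hat{k}}(-t)$ coincides with the reduced weighted Laplacian (Kirchhoff matrix) of the oriented, weighted Crowell graph $G_D$ with row and column $k$ removed. Concretely, at each crossing the local Fox-derivative contributions must be matched against the weighted in/out incidences of $G_D$: the over-arc orientation dictates which incident edge carries weight $t$ and which carries $1$, the under/over convention fixes the edge orientations (under $=$ incoming, over $=$ outgoing), and the alternating hypothesis is exactly what forces the signs to line up uniformly, so that each off-diagonal entry becomes the negative of the corresponding edge weight (as a Laplacian requires) while the diagonal entries become the matching weighted out-degrees. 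I expect this local sign-and-weight bookkeeping to be the main obstacle, since one must track orientation, over/under status, and the left--right weight rule simultaneously at the arcs meeting under each crossing, and then verify that it is precisely the alternating condition that makes the global pattern consistent. As a byproduct this also explains why $P_{G_D,c_k}(t)$ has nonnegative coefficients, matching the alternation of signs of $\Delta_L$ recorded earlier in the excerpt.

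With this identification in hand, the third step is to apply the directed Matrix--Tree theorem (the arborescence version): for a weighted digraph whose Laplacian has the appropriate vanishing row/column sums, the principal cofactor obtained by deleting row and column $k$ equals $\sum_{T} W(T)$, the sum over maximal oriented trees rooted at $c_k$, which is exactly $P_{G_D,c_k}(t)$. Combining the three steps gives $\det A_{\hat{k}}(-t) \dot{=} P_{G_D,c_k}(t)$, hence $\Delta_L(-t) \dot{=} P_{G_D,c_k}(t)$. The independence of the root $c_k$ is then automatic, since all principal cofactors of the Laplacian agree up to units (equivalently, the Alexander polynomial does not depend on which Wirtinger relator is discarded), which also accounts for the $\dot{=}$ in the statement. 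A final routine point is to confirm that the same cofactor computation goes through for links of several components once all components are coherently oriented, so that the knot argument extends verbatim to the link case.
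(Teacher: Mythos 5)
The paper itself contains no proof of this statement --- Theorem~\ref{Crowell} is imported verbatim from Crowell's paper and used as a black box --- so the only meaningful comparison is with the classical argument, which is exactly the route you take: Fox calculus on a Wirtinger presentation, followed by the directed matrix--tree theorem (Bott--Mayberry/Tutte) applied to the reduced Alexander matrix after the substitution $t \mapsto -t$; your outline is correct. The one point you should make explicit when you carry out the ``bookkeeping'' step is the bijection between Wirtinger arcs and crossings (in an alternating diagram every arc passes over exactly one crossing): this is what turns the (crossings)$\times$(arcs) Alexander matrix into a square matrix indexed by the vertices of $G_D$, i.e.\ into a reduced Laplacian, and it is also where alternation is genuinely used, together with the fact that alternation makes the edge orientations of $G_D$ (under $=$ incoming, over $=$ outgoing) well defined; note too that crossings of both signs give Wirtinger rows that agree with the Laplacian rows only after scaling by a unit, and that any discrepancy between your sign computation and the paper's ``$t$ on the right'' weight rule amounts to the global choice $t \leftrightarrow t^{-1}$, which is harmless up to $\dot{=}$.
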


The absolute values of the coefficients of $\Delta_L(t)$ and $\Delta_L(-t)$ agree. In the rest of this subsection, we are going to focus on the coefficients of the Crowell polynomial.

Assume $\MT(L)=n$, and let $R$ be an coherent twist region with $n$ crossings $c_1,\dots,c_{n}$, labelled from left to right, in some reduced alternating diagram $D$ of $L$. We define $D_{k}$ as the link diagram constructed by replacing $R$ with an coherent twist region with $k$ crossings $c_1,\dots,c_{k}$, and let $L_k$ be the corresponding link. Let $G_{k}$ be the Crowell graph associated with $D_{k}$. Let $v_1,\dots, v_{k}$ be the vertices of $G_k$ corresponding to the crossings $c_1, \dots, c_{k}$. 

Consider the graph $G'_{k} = G_{k} \setminus \{v_{k-1}\}$ for $k \geq 3$, where we also remove the edges adjacent to $v_{k-1}$; see Figure~\ref{twistinggraph}. To define $G'_2$, we add an edge to $G_{2} \setminus \{v_{1}\}$, as follows. Consider the two edges to the left of $v_1$, and denote the incoming edge by $e_{li} := (v_{li},v_1)$ and the outgoing edge by $e_{lo} := (v_1,v_{lo})$. Let $G'_{2} := G_{2} \setminus \{v_{1}\} \cup \{e_{io}\}$, where $e_{io} := (v_{li},v_{lo})$. Let the weight of the new edge $e_{io}$ be the product of weights of $e_{li}$ and $e_{lo}$. We can view the graph $G'_{k}$ as the non-oriented smoothing of $G_{k}$ at $v_{k-1}$. Let $L_{k}(c_{k-1},||)$ denote the $1$-smoothing of $L_k$ at $c_{k-1}$. Ignoring the weights and the edge orientations, $G'_{k}$ is isomorphic to the Crowell graph corresponding to $L_{k}(c_{k-1},||)$, after deleting two loop edges when $k \ge 3$ and one loop edge when $k = 2$. Note that the two graphs have different weights and orientations; see Figure~\ref{twistinggraph}.

\begin{figure}[h]
\centering
\includegraphics[scale=0.4]{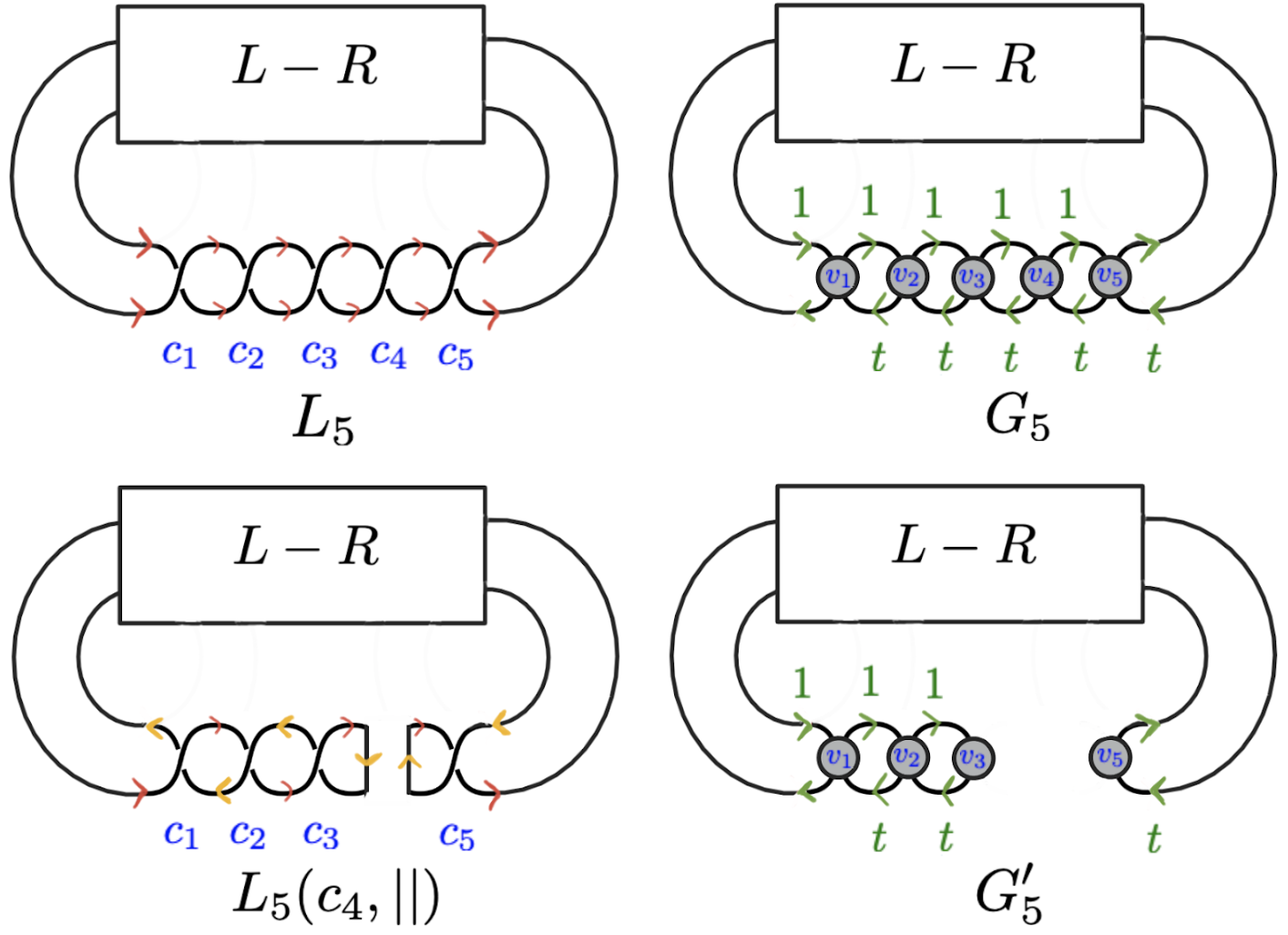}
\caption{Twist region in the Crowell graph.}\label{twistinggraph}
\end{figure}

The following lemma is the recursive relation at the heart of the stabilization phenomenon. It is an unoriented skein relation for the Crowell polynomial.

\begin{lemm}\label{CrowellSkein}
Let $P_{k}=P_{G_{k},v_{k}}$ and $P'_{k}=P_{G'_{k},v_{k}}$; i.e., the Crowell polynomials of the graphs $G_k$ and $G_k'$ rooted at $v_{k}$. Then
\[
P_{k}=tP_{k-1} + P'_{k}.
\]
\end{lemm}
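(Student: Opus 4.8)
The plan is to prove the skein relation by analyzing how maximal oriented spanning trees of $G_k$ rooted at $v_k$ behave relative to the distinguished vertex $v_{k-1}$ and its incident edges. The key observation is that $v_{k-1}$ sits inside the twist region, so it has exactly two incoming and two outgoing edges in $G_k$: the two edges connecting it to $v_{k-2}$ (or to the boundary of the region when $k=2$) on the left, and the two edges connecting it to $v_k$ on the right. First I would fix notation for these four edges at $v_{k-1}$, recording their orientations and their weights (one of each pair carrying weight $t$ and the other weight $1$, by the labelling convention). Since every tree $T \in \mathcal{T}(G_k, v_k)$ is an oriented spanning tree rooted at $v_k$, each non-root vertex — in particular $v_{k-1}$ — has exactly one outgoing edge in $T$ pointing toward the root. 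So the trees partition according to which of the two outgoing edges of $v_{k-1}$ is used.

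The main step is to match these two families bijectively with the trees counted by $tP_{k-1}$ and by $P'_k$. For the first family, where $v_{k-1}$ uses the outgoing edge directed toward $v_k$ carrying weight $t$: I would argue that contracting or deleting the appropriate edge identifies these trees with $\mathcal{T}(G_{k-1}, v_k)$, with the factor of $t$ accounting for the weight of the edge from $v_{k-1}$ to $v_k$ that is forced to be present; removing the two crossings-worth of structure at $v_{k-1}$ recovers the smaller twist region $G_{k-1}$ exactly, and the remaining weights are preserved. For the second family, where $v_{k-1}$ routes its outgoing edge to the left, I would show that deleting $v_{k-1}$ and its incident edges, then inserting the spliced edge $e_{io}$ (whose weight is the product of the weights of $e_{li}$ and $e_{lo}$, matching the convention in the definition of $G'_k$), sets up a weight-preserving bijection with $\mathcal{T}(G'_k, v_k)$. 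The weight bookkeeping is the crux: I must check that the product of edge weights of each tree is preserved under these identifications, so that $W(T)$ on the left matches $t \cdot W$ or $W$ on the right summand by summand.

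The hardest part will be verifying that the two families of trees are described \emph{exactly} by the outgoing-edge choice at $v_{k-1}$, together with checking the low-case $k=2$ where the graph $G'_2$ is defined by a splicing construction rather than plain vertex deletion. In particular, I would need to confirm that no tree is double-counted and none is omitted: the connectivity and acyclicity constraints defining a spanning tree must survive the surgery on $v_{k-1}$, which requires ruling out that both outgoing edges at $v_{k-1}$ could ever be simultaneously used (immediate, since $v_{k-1}$ has out-degree one in any rooted tree) and that the contraction/smoothing does not create or destroy cycles. I expect the verification to hinge on the planar and local structure of the twist region depicted in Figure~\ref{twistinggraph}, so I would lean on that picture to confirm the adjacencies of $v_{k-1}$ are precisely as claimed.
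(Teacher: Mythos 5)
Your overall strategy is the same as the paper's: partition $\mathcal{T}(G_k,v_k)$ according to the tree edge at $v_{k-1}$, identify one class with the trees of $G_{k-1}$ by contracting that edge into the root (producing the factor $t$), identify the other class with $\mathcal{T}(G'_k,v_k)$ by deleting $v_{k-1}$, and check weights. However, there is a genuine error in the setup: you partition by the unique \emph{outgoing} edge of $v_{k-1}$, i.e., you treat the elements of $\mathcal{T}(G_k,v_k)$ as trees oriented \emph{toward} the root. The Crowell polynomial of this paper sums over trees oriented \emph{away} from the root, so that every non-root vertex has a unique \emph{incoming} edge; this is how these trees are used throughout (for instance, in the proof of Lemma~\ref{Supportof3braid}: ``any oriented spanning tree $T$ rooted at $c_1$ contains a unique edge ending at each vertex other than $c_1$''), and the correct partition is by the incoming edge at $v_{k-1}$, whose two possible values are $(v_k,v_{k-1})$ and $(v_{k-2},v_{k-1})$. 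This is not a harmless change of convention: the Crowell graph is a directed graph containing \emph{both} edges $(v_k,v_{k-1})$ and $(v_{k-1},v_k)$ (and both $(v_{k-2},v_{k-1})$ and $(v_{k-1},v_{k-2})$); see Figure~\ref{twistinggraph}. Your partition forces into the tree the reverses of the correct edges, and reversed edges carry different weights.

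Here is where it breaks quantitatively. Each edge of the Crowell graph gets its weight at its head, the crossing where the corresponding arc goes under, and each crossing has one incoming edge of weight $t$ and one of weight $1$. At $v_{k-1}$ these are $(v_k,v_{k-1})$, of weight $t$ (contracting it is what produces the factor $t$ in $tP_{k-1}$), and $(v_{k-2},v_{k-1})$, of weight $1$ (which is why deleting $v_{k-1}$ is weight preserving and yields exactly $P'_k$). Your forced edges $(v_{k-1},v_k)$ and $(v_{k-1},v_{k-2})$ are instead incoming at $v_k$ and at $v_{k-2}$, and by the same local rule, repeated along the twist region, they carry weights $1$ and $t$ respectively --- the opposite of your assertion that ``the outgoing edge directed toward $v_k$'' carries weight $t$. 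Carried out consistently, your bookkeeping would give the relation $P_{k}=P_{k-1}+tP'_{k}$ for the toward-root enumeration, not the claimed $P_k=tP_{k-1}+P'_k$; moreover, the toward-root tree sum is in general a different polynomial from the Crowell polynomial in the statement (already for a two-vertex graph with edges $a\to b$ of weight $t$ and $b\to a$ of weight $1$, the away-from-root and toward-root sums at $a$ are $t$ and $1$). The repair is straightforward: partition by the unique incoming edge at $v_{k-1}$, contract $(v_k,v_{k-1})$ into the root to land in $\mathcal{T}(G_{k-1},v_{k-1})$ (note the root of the contracted graph is the merged vertex, matching $P_{k-1}=P_{G_{k-1},v_{k-1}}$, not $v_k$ as you wrote), and delete $v_{k-1}$ in the other case. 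The remaining ingredients of your plan --- that no other edge at $v_{k-1}$ can occur in the second family, and the separate treatment of $k=2$ via the spliced edge $e_{io}$ --- are correct and match the paper.
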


\begin{proof}
We decompose $\mathcal{T}(G_{k},v_{k})$ into two sets $\mathcal{T}_R$ and $\mathcal{T}_L$ of oriented spanning trees depending the edge leading to $v_{k-1}$.

Let $\mathcal{T}_R$ be the set of oriented spanning trees containing the edge $(v_k,v_{k-1})$. By contracting this edge to the root vertex $v_k$ and deleting the resulting loop edge, one can build a one-to-one matching between this set and $\mathcal{T}(G_{k-1},v_{k-1})$. The matching decreases the exponent of $t$ in $W(T)$ by one; see Figure~\ref{TR}.

\begin{figure}[h]
\centering
\includegraphics[scale=0.3]{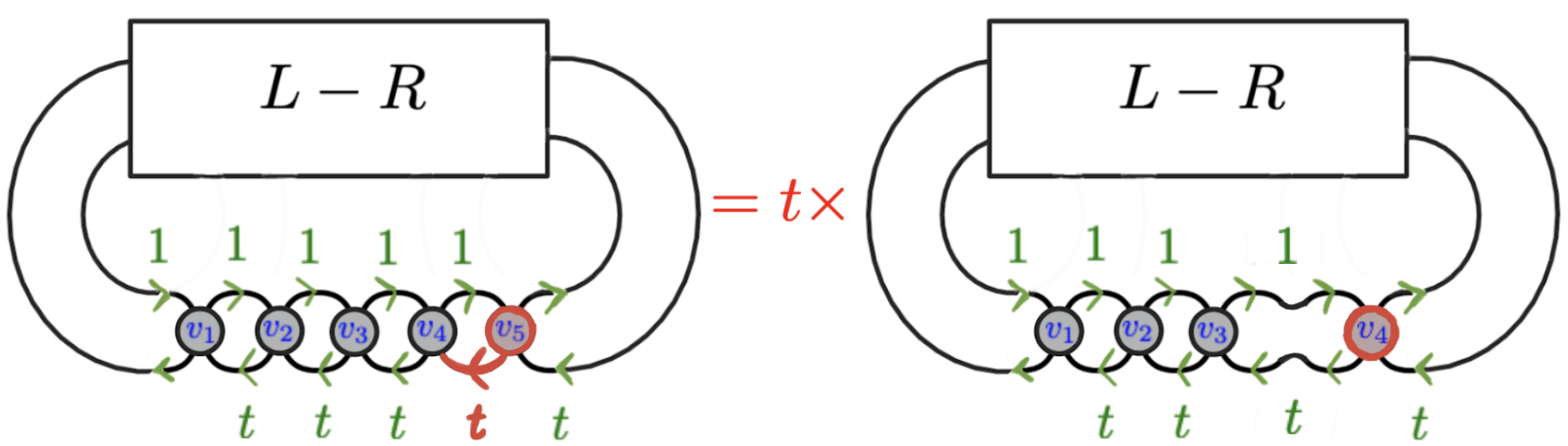}
\caption{The set $\mathcal{T}_R$ is in bijection with $\mathcal{T}(G_{k-1},v_{k-1})$. Trees containing the red edge are included in $\mathcal{T}_R$.}\label{TR}
\end{figure}

We write $\mathcal{T}_L$ for the set of oriented spanning trees containing the edge $(v_{k-2},v_{k-1})$. These trees do not contain any other edge adjacent to $v_{k-1}$. As a result, $\mathcal{T}_L$ is in bijection with $\mathcal{T}(G'_{k},v_{k})$. This bijection preserves the weights; see Figure~\ref{TL}.

\begin{figure}[h]
\centering
\includegraphics[scale=0.3]{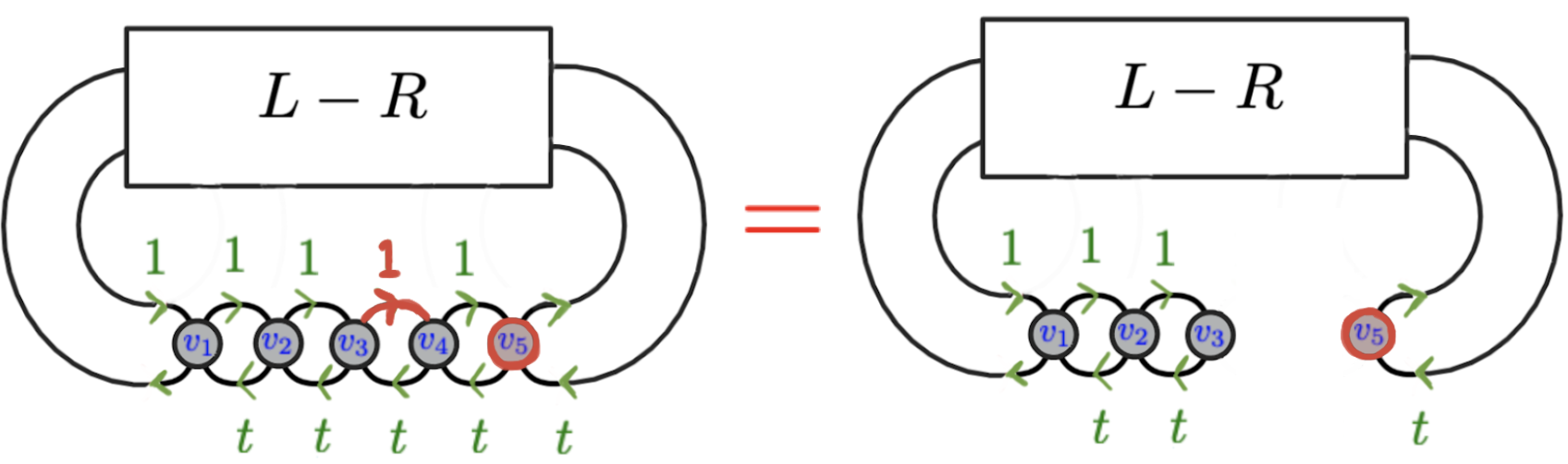}
\caption{The set $\mathcal{T}_L$ is in bijection with $\mathcal{T}(G'_{k},v_{k})$. Trees containing the red edge are included in $\mathcal{T}_L$.}\label{TL}
\end{figure}

By definition of $P_k$, we have 
\[
\begin{split}
P_k &=\sum_{T \in \mathcal{T}(G_{k},v_{k})} W(T) \\
&= \sum_{T \in \mathcal{T}_R} W(T)+\sum_{T \in \mathcal{T}_L} W(T) \\
&= \sum_{T \in \mathcal{T}(G_{k-1},v_{k-1})} t W(T)+\sum_{T \in \mathcal{T}(G'_{k},v_{k})} W(T)\\
&= t P_{k-1} + P'_{k},
\end{split}
\]
as claimed.
\end{proof}

\begin{defi}\label{half-trap}
We say that the sequence of positive integers $(a_1,\ldots,a_m)$ is \textit{half-trapezoidal} if
    \begin{enumerate} 
    \item $a_1 \leq a_2 \leq \dots \leq a_{m-1} \leq a_m $;
    \item if $a_i = a_{i+1}$, then $a_i = a _{i+1} = \dots = a_m$;
    \end{enumerate} 
    i.e., if it strictly increases and then possibly stabilizes. 
\end{defi}

Note that, if $(a_1,\ldots,a_n)$ is a symmetric sequence of positive integers and $(a_1,\dots,a_m)$ is half-trapezoidal for $m=\lfloor n/2 \rfloor$, then $(a_1,\ldots,a_n)$ is trapezoidal.

\begin{proof}[Proof of Theorem~\ref{twistconcentrated}]
Using the notation of Theorem~\ref{Crowell} and Lemma~\ref{CrowellSkein}, for $L = L_n$, we have $\Delta_L(-t) \dot{=} P_n$. Note that when $L_n$ is alternating, $L_1$ is also alternating and hence $\Delta_{L_1}(-t) \dot{=} P_1$.

We prove the Theorem~\ref{twistconcentrated} when $n$ is odd. We are going to use the fact that $|L_n|=|L_1|$ for odd $n$. The same argument can be repeated for even $n$, but one needs to use $L_2$ instead of $L_1$.

We show that the sequence of coefficients of $P_n$ is trapezoidal if $n-2 \geq g(L) + |L|/2$. By applying Lemma~\ref{CrowellSkein} repeatedly, we can deduce that
\begin{equation}\label{iterated}
P_n=t^{n-1}P_1+(P'_n + t P'_{n-1}+ \dots + t^{n-2} P'_{2}).
\end{equation}
Note that the only difference between $G'_k$, for different values of $k$, is the length of the chain of vertices $v_1,\dots,v_{k-2}$. 
Any tree in $\mathcal{T}(G'_{k},v_{k})$ must contain the edges $(v_1,v_2),\dots,(v_{k-3},v_{k-2})$. 
Deleting the vertices $v_1,\dots,v_{k-2}$ and the incident edges from each maximal rooted tree, then replacing $e_{li}$ with $e_{io}$ if both $e_{li}$ and $e_{io}$ are in the original tree results in a one-to-one correspondence between $\mathcal{T}(G'_{k},v_{k})$ and $\mathcal{T}(G'_{2},v_{2})$; see Figure~\ref{smoothingcollapse}.

This correspondence preserves weights, so $P'_k = P'_2$. Combining this with equation~\eqref{iterated} gives that 
\begin{equation}\label{iterated2}
P_n=t^{n-1}P_1+(1+\dots+t^{n-2})P'_2.
\end{equation}
Let $\mindeg(P)$ and $\maxdeg(P)$ be the minimal and maximal power of $t$ that appear in a polynomial $P(t)$, respectively. Let $(a'_i)$ denote the sequence of coefficients of $P'_2$ and $(c'_i)$ denote the sequence of coefficients of ${(1+\cdots+t^{n-2})P'_2}$. The first $n-1$ terms of $(c'_i)$ are the cumulative sums of the sequence $(a'_i)$; i.e., 
\[
c'_{\mindeg(P'_2)+j} = a'_{\mindeg(P'_2)}+\dots+a'_{\mindeg(P'_2)+j}
\]
for all $j \in \{0, \dots, n-2\}$.
Note that the coefficients of Crowell polynomials are positive; see Remark~\ref{positivecoeff}. As a result, their cumulative sums form a half-trapezoidal sequence. This means that the first $n-1$ coefficients of $(1+\dots+t^{n-2})P'_2$ form a half-trapezoidal sequence.

\begin{figure}[h]
\centering
\includegraphics[scale=0.3]{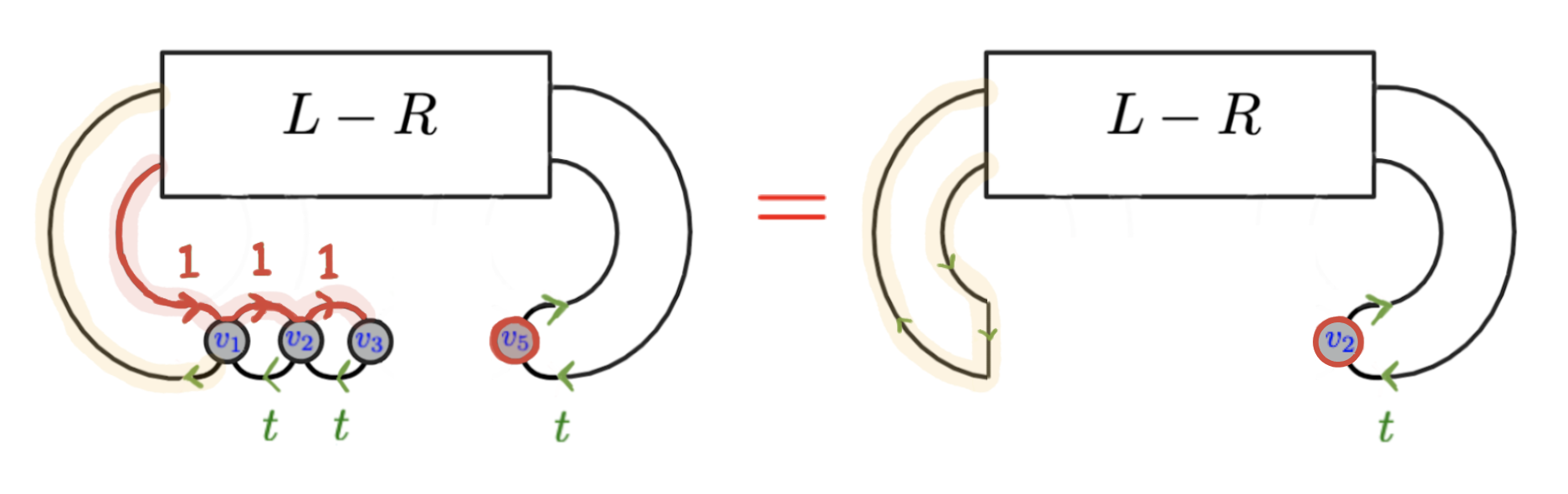}
\caption{The bijection between $\mathcal{T}(G'_{k},v_{k})$ and $\mathcal{T}(G'_{2},v_{2})$. The oriented spanning trees in $\mathcal{T}(G'_{k},v_{k})$ contain the red edges. The edges $e_{lo}$ and $e_{io}$ are shown in yellow.}\label{smoothingcollapse}
\end{figure}

We are going to show that the first $n-2$ coefficients of $P_n$ agree with the first $n-2$ terms of $(c'_i)$ and hence they are also half-trapezoidal. We need to separate the terms that $(1+\dots+t^{n-2})P'_2$ contribute to $P_n$ from the ones coming from $t^{n-1}P_1$. To be more precise, we will prove that
\begin{equation}\label{seperatedterms}
n-1+ \mindeg(P_1) = \mindeg(t^{n-1}P_1) = \mindeg(P_n)+n-1.
\end{equation}
In fact, one can prove that, for large enough $n$, we have $\mindeg(P_n) = \mindeg(P_1)$ by analysing spanning trees, but to get to our desired lower bound for $n$, we use an alternative argument. We are going to show that
\begin{equation}\label{highdegeq}
\maxdeg(P_n) = \maxdeg(t^{n-1}P_1) \text{, and}
\end{equation}
\begin{equation}\label{Spaneq}
\Span(P_n) - \Span(t^{n-1}P_1)=n-1.  
\end{equation}
Equations~\eqref{highdegeq} and~\eqref{Spaneq} and the fact that $\Span(P) = \maxdeg(P) - \mindeg(P)$ give us equation~\eqref{seperatedterms}.

Note that any oriented spanning tree in $\mathcal{T}(G'_{2},v_{2})$ is also a spanning tree in $\mathcal{T}(G_{1},v_{1})$, which means all the terms in $P'_2$ also contribute to in $P_1$. As a result,
$\maxdeg(P_1) \geq \maxdeg(P'_2)$, hence
\[
\maxdeg(t^{n-1}P_1) > \maxdeg \bigl((1+\dots+t^{n-2})P'_2 \bigr).
\]
Combining this with equation~\eqref{iterated2} implies that the term with the highest degree in $P_n$ comes from $t^{n-1}P_1$. This is equation~\eqref{highdegeq}.
 
We can compare the span of $P_1$ and $P_n$ by comparing the genera of $L_1$ and $L_n$. The links are both alternating and the span of the Alexander polynomial of an alternating link $J$ is known to be $2g(J) + |J| - 1$. We can write  
\[
\Span(P_n) - \Span(P_1) = 2(g(L_n)-g(L_1)) + |L_n| - |L_1|.
\]
Since $n$ is odd, $L_n$ and $L_1$ differ in a number of full twists and $|L_n| = |L_1|$. To compare their genera, we can compare their Seifert graphs (i.e., the result of the Seifert algorithm). The Seifert graph of $L_n$ has $n$ parallel edges (i.e., half-twisted bands)  between two vertices (i.e., Seifert cycles) where $L_1$ has one edge; see Figure~\ref{seifertcycs}. Since $L_n$ and $L_1$ are alternating, their genera is realised by the Seifert algorithm and is equal to half of the rank of the first homology of the Seifert graph.  We can deduce that $g(L_n)-g(L_1)=\frac{n-1}{2}$ which gives us equation~\eqref{Spaneq}.

\begin{figure}[h]
\centering
\includegraphics[scale=0.3]{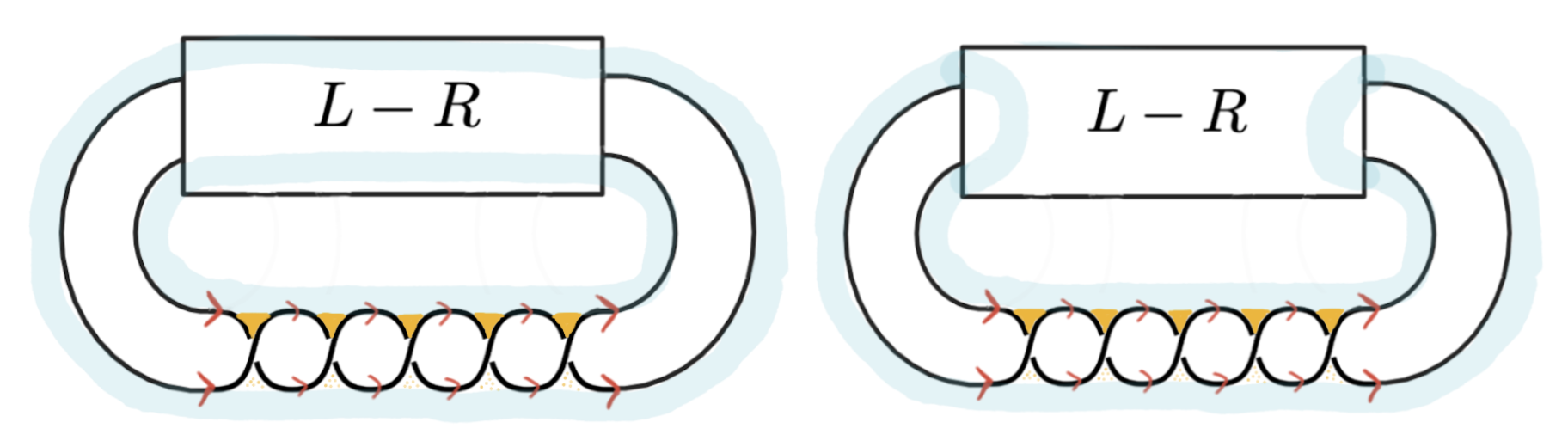}
\caption{A twist region is a family of parallel bands between a pair of Seifert cycles (not necessarily different cycles as seen in the right figure).}\label{seifertcycs}
\end{figure}
 
We have proved that the first $n-2$ coefficients of $P_n$ come from 
 \[
 {(1+\dots+t^{n-2})P'_2} 
 \]
 and are hence half-trapezoidal. Furthermore, $\Span(P_n) = 2g(L_n) + |L_n| - 1$. Now, using the assumption that $n-2 \geq g(L_n) + |L_n|/2$, we can deduce that all the coefficients in the first half of $\Span(P_n)$ are half-trapezoidal. The symmetry of $P_n$ means that it is trapezoidal. 
\end{proof}

\begin{rema}\label{positivecoeff}
  It is well know that the coefficient of $t^i$ in the Crowell polynomial $P$ of an alternating knot is positive for $i \in [\mindeg(P), \maxdeg(P)]$. It is non-negative by definition and proven to be non-zero. It turns out that this is true for the Crowell polynomial of any weighted, oriented, rooted graph $(G,v)$. This follows from the fact that $\mathcal{T}(G,v)$ is the basis of a greedoid and hence it has a connected adjacency graph (See Corollary 5.7. of \cite{BJORNER198544}). This means that, for any $T$, $T' \in \mathcal{T}(G,v)$, there exist two ordered sets of edges $\{e_1,\dots,e_n\} \subseteq E(T)$ and $\{e'_1,\dots,e'_n\} \subseteq E(T')$ such that 
  \[
  T \setminus \{e_1,\dots,e_t\} \cup \{e'_1,\dots,e'_t\} \in \mathcal{T}(G,v)
  \]
  for every $t \in \{1, \dots, n\}$, and
  \[
  T \setminus \{e_1,\dots,e_n\} \cup \{e'_1,\dots,e'_n\} = T'.
  \]
\end{rema}

We end Subsection~\ref{stablisesection} by mentioning some corollaries of Theorem~\ref{twistconcentrated}. First, the proof of Theorem~\ref{twistconcentrated} immediately implies a more general result:

\begin{theo}\label{twistconcentratedgeneral}
Let $L$ be an alternating link and 
\[
\Delta_L(t)\dot{=}\sum\limits_{i=0}^{2g(L)+|L|-1} (-1)^{i}a_i t^i. 
\]
The sequence $\{a_i \ :  \ 0\leq i \leq \MT(L)-3\}$ is half-trapezoidal; i.e., it is strictly increasing until it possibly stabilizes.  
\end{theo}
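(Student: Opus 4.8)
The plan is to transcribe the proof of Theorem~\ref{twistconcentrated} and simply omit its concluding step, in which the hypothesis $\MT(L)-2\ge g(L)+|L|/2$ is used to push half-trapezoidality all the way to the middle coefficient. Write $n=\MT(L)$ and fix a reduced alternating diagram $D=D_n$ of $L$ carrying a coherent twist region of $n$ crossings. By Theorem~\ref{Crowell} we have $\Delta_L(-t)\dot{=}P_n$, and since $\Delta_L(-t)=\sum_i a_i t^i$ while $P_n$ has strictly positive coefficients throughout $[\mindeg(P_n),\maxdeg(P_n)]$ by Remark~\ref{positivecoeff}, the coefficient sequence of $P_n$ read from its lowest degree is precisely $(a_0,a_1,\ldots)$. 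It therefore suffices to show that the first $\MT(L)-2$ coefficients of $P_n$ are half-trapezoidal in the sense of Definition~\ref{half-trap}.

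First I would unroll the skein recursion of Lemma~\ref{CrowellSkein} together with $P'_k=P'_2$. For odd $n$ this reproduces equation~\eqref{iterated2},
\[
P_n=t^{n-1}P_1+(1+\cdots+t^{n-2})P'_2;
\]
for even $n$ I would stop the recursion at $P_2$ (rather than $P_1$), obtaining
\[
P_n=t^{n-2}P_2+(1+\cdots+t^{n-3})P'_2.
\]
In each case the second summand is a geometric factor times $P'_2$, so its coefficients at the bottom are the cumulative sums of the positive coefficients of $P'_2$ and hence strictly increase until they stabilize.

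Second, I would separate this contribution from that of $t^{n-1}P_1$ (resp.\ $t^{n-2}P_2$) at low degrees, exactly via the $\maxdeg$/$\Span$ bookkeeping of equations~\eqref{highdegeq}, \eqref{Spaneq} and~\eqref{seperatedterms}. Every monomial of $P'_2$ already occurs in $P_1$, and through $P_2=tP_1+P'_2$ also in $P_2$ with nonnegative coefficient, so the top term of $P_n$ comes from $t^{n-1}P_1$ (resp.\ $t^{n-2}P_2$). The span difference is controlled by the Seifert graph: inserting the extra parallel bands of the twist region raises its first Betti number, and hence $\Span=2g+|L|-1$, by the number of bands added, giving $\Span(P_n)-\Span(P_1)=n-1$ for odd $n$ and $\Span(P_n)-\Span(P_2)=n-2$ for even $n$ (here one uses $|L_n|=|L_1|$, resp.\ $|L_n|=|L_2|$, since $L_n$ differs from $L_1$, resp.\ $L_2$, by full twists). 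Combining the two displays forces $\mindeg(P_n)=\mindeg(P'_2)$, so the first $n-1$ (odd) or $n-2$ (even) coefficients of $P_n$ are supplied solely by $(1+\cdots)P'_2$; being cumulative sums of positive numbers they are half-trapezoidal. In both parities this covers the indices $0\le i\le n-3=\MT(L)-3$, which is the assertion.

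The only genuinely delicate point, and the sole place the two parities must be treated separately, is this second step: one must guarantee that $t^{n-1}P_1$ (resp.\ $t^{n-2}P_2$) does not reach down into the lowest $\MT(L)-2$ degrees of $P_n$, for otherwise those coefficients would no longer be clean cumulative sums. This is precisely what equating $\maxdeg$ and computing $\Span$ achieves, and it is where one must remember to unroll to $P_2$ and compare with $L_2$---not $L_1$---when $\MT(L)$ is even. Everything else is a verbatim specialization of the proof of Theorem~\ref{twistconcentrated}.
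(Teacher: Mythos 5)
Your proposal is correct and takes essentially the same route as the paper: the paper obtains Theorem~\ref{twistconcentratedgeneral} as an immediate consequence of the proof of Theorem~\ref{twistconcentrated}, which is exactly the transcription you give, with the even case handled by unrolling only to $P_2$ and comparing with $L_2$ (this parity bookkeeping is recorded in the paper as Proposition~\ref{summaryofsection1.2}). Your $\maxdeg$/$\Span$ separation argument, the identification $\mindeg(P_n)=\mindeg(P'_2)$, and the cumulative-sum conclusion all coincide with the paper's reasoning.
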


It is not straightforward to find knots that satisfy the conditions of Theorem~\ref{twistconcentrated}. If $L$ is a non-trivial link, then $c(L)/2 \ge g(L) + |L|/2$, and it is easy to check whether the stronger condition ${\MT(L)-3 \geq c(L)/2}$ holds. For instance, if $B=\sigma_{i_1}^{n_1} \cdots \sigma_{i_k}^{n_k}$ is an alternating braid, then $\MT(L) = \max \{n_j : 1 \leq j \leq k\}$. We obtain the following corrollary:

\begin{coro}\label{twisconc3braids}
    Let $L_B$ be the closure of the braid $B=\sigma_1^{p_1}\sigma_2^{-q_1}\cdots\sigma_1^{p_n}\sigma_2^{-q_n}$. Assume that 
    \[
    p_1 \geq \sum\limits_{i=2}^n p_i + \sum_{j=1}^n q_j + 6.
    \]
    Then the trapezoidal conjecture holds for $L_B$. 
\end{coro}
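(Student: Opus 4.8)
The plan is to deduce Corollary~\ref{twisconc3braids} as a direct application of Theorem~\ref{twistconcentrated} to the braid closure $L_B$. First I would verify that $L_B$ is alternating: since $B = \sigma_1^{p_1}\sigma_2^{-q_1}\cdots\sigma_1^{p_n}\sigma_2^{-q_n}$ alternates the sign of the generators $\sigma_1$ (positive powers) and $\sigma_2$ (negative powers), its standard closure diagram is alternating, so the hypotheses of Theorem~\ref{twistconcentrated} apply. The braid word is a product of coherent twist regions, one for each syllable $\sigma_{i_j}^{\pm k}$, and as noted in the paragraph preceding the corollary, for an alternating braid one has $\MT(L_B) = \max\{n_j\}$, the largest syllable length. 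The key observation is that if $p_1$ is strictly larger than every other exponent, then $\MT(L_B) = p_1$; the hypothesis $p_1 \geq \sum_{i=2}^n p_i + \sum_{j=1}^n q_j + 6$ certainly forces $p_1$ to dominate each individual exponent, so this identification holds.

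Next I would control the quantity $g(L_B) + |L_B|/2$ using the crossing number. The total number of crossings in the diagram is $c(L_B) = \sum_{i=1}^n p_i + \sum_{j=1}^n q_j$. Invoking the general inequality stated just before the corollary, namely $c(L)/2 \geq g(L) + |L|/2$ for a non-trivial alternating link $L$, it suffices to check the stronger and more convenient bound $\MT(L_B) - 3 \geq c(L_B)/2$. Substituting $\MT(L_B) = p_1$ and $c(L_B) = \sum_i p_i + \sum_j q_j$, this becomes
\[
p_1 - 3 \geq \frac{1}{2}\Bigl(\sum_{i=1}^n p_i + \sum_{j=1}^n q_j\Bigr).
\]
Multiplying through by $2$ and isolating $p_1$ on the left, one rearranges to
\[
p_1 \geq 2\sum_{i=2}^n p_i + 2\sum_{j=1}^n q_j + 6,
\]
after subtracting $p_1$ from the $\sum_i p_i$ term. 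The hypothesis as stated uses single multiples rather than double, so I would reconcile this by working directly with the genus formula from the Seifert graph rather than the crude $c(L)/2$ bound: since the diagram is alternating and the Seifert algorithm realizes the genus, $g(L_B)$ is computed from the first homology of the Seifert graph, and the correct accounting of Seifert circles recovers the single-coefficient bound in the statement.

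The cleaner route, which I expect to be the intended one, is to compute $2g(L_B) + |L_B| - 1 = \Span(\Delta_{L_B})$ directly as the number of crossings minus the corank contribution, giving $2g(L_B) + |L_B| - 1 = c(L_B) - s + 1$ where $s$ is the number of Seifert circles; for the three-braid closure $s = 3$, so $2g(L_B) + |L_B| = c(L_B) - 1$, whence $g(L_B) + |L_B|/2 = (c(L_B)-1)/2$. Then the condition $\MT(L_B) - 3 \geq g(L_B) + |L_B|/2$ becomes $p_1 - 3 \geq (\sum_i p_i + \sum_j q_j - 1)/2$, which after clearing denominators and isolating $p_1$ yields exactly $p_1 \geq \sum_{i=2}^n p_i + \sum_{j=1}^n q_j + 5$, matching the hypothesis (with one unit to spare for the parity split between the odd and even cases of $\MT$ in Theorem~\ref{twistconcentrated}). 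Thus the trapezoidal conclusion follows immediately from Theorem~\ref{twistconcentrated}.

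The main obstacle will be the careful bookkeeping in the genus computation: I must ensure the Seifert-circle count for the three-braid closure is exactly $3$ and that no syllable cancellation or reducibility shrinks the diagram, and I must handle the parity of $\MT(L_B) = p_1$ correctly, since Theorem~\ref{twistconcentrated} requires $\MT - 2 \geq g + |L|/2$ in the odd case but $\MT - 3 \geq g + |L|/2$ in the even case. Taking the weaker even-case bound uniformly absorbs both parities, which is why the additive constant $6$ rather than $5$ appears in the hypothesis, giving the needed slack regardless of the parity of $p_1$.
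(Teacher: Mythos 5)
Your final argument is correct, but the detour that produced it was forced by an algebra slip, and the route you abandoned is exactly the paper's proof. Rearranging $p_1 - 3 \geq \tfrac{1}{2}\bigl(\sum_{i=1}^n p_i + \sum_{j=1}^n q_j\bigr)$: multiply by $2$ to get $2p_1 - 6 \geq p_1 + \sum_{i=2}^n p_i + \sum_{j=1}^n q_j$, then subtract the single copy of $p_1$ appearing on the right to obtain
\[
p_1 \geq \sum_{i=2}^n p_i + \sum_{j=1}^n q_j + 6,
\]
which is the hypothesis verbatim; your claimed rearrangement $p_1 \geq 2\sum_{i=2}^n p_i + 2\sum_{j=1}^n q_j + 6$ is simply wrong (cancelling $p_1$ does not double the remaining terms). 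So there is no mismatch to reconcile: the intended proof is the crude chain $\MT(L_B) = p_1$, then $\MT(L_B) - 3 \geq c(L_B)/2$ (which is equivalent to the hypothesis), then $c(L)/2 \geq g(L) + |L|/2$ for non-trivial links, which verifies condition (2) of Theorem~\ref{twistconcentrated} and a fortiori condition (1), whatever the parity of $p_1$.

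Your fallback route is nevertheless valid and marginally stronger: Seifert's algorithm on the standard closure diagram produces three Seifert circles, so for this alternating diagram $2g(L_B) + |L_B| - 1 = c(L_B) - 2$, and the condition $\MT(L_B) - 3 \geq g(L_B) + |L_B|/2$ becomes $p_1 \geq \sum_{i=2}^n p_i + \sum_{j=1}^n q_j + 5$. Two corrections to how you present it. First, the spare unit has nothing to do with parity: checking $\MT - 3 \geq g + |L|/2$ already covers both cases of Theorem~\ref{twistconcentrated}, since it implies $\MT - 2 \geq g + |L|/2$; the paper's constant $6$ arises only from using $c(L_B)/2$ in place of the exact value $(c(L_B)-1)/2$. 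Second, your reducedness worry is genuine only in degenerate cases such as $n = 1$, $q_1 = 1$, where the lone $\sigma_2^{-1}$ crossing is nugatory; there $L_B = T_{2,p_1}$ and the conclusion is immediate. Note also that the exact genus computation requires a reduced alternating diagram, whereas the paper's crossing-number bound does not, which is one more reason the cruder estimate is the cleaner route here.
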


We will see some other corollaries of Theorem~\ref{twistconcentrated} in the form of bounds on the length of the stable part in Section~\ref{Section2}.
In light of the results of this subsection, the following definition is useful.

\begin{defi}\label{twistconcentrateddefi}
    We call a link $L$ \textit{twist-concentrated} if it satisfies the inequality 
    \[
    \MT(L)- 3 \ge g(L)+ |L|/2.
    \]
\end{defi}

\subsection{Plumbings of special alternating knots}\label{plumbingofspecial}

In this subsection, we prove the trapezoidal conjecture for diagrammatic plumbings of special alternating links, using the fact that the conjecture holds for special alternating links~\cite{KarolaLogconcavityOT}. We explain the proof for a plumbing of a pair of special alternating links, but it can be extended to all diagrammatic plumbings. First, we define the plumbing operation, and, more generally, Murasugi sums.
The following is the classical definition of Murasugi sum; see \cite{Kawauchi1996ASO}, \cite{Gabai1983TheMS} and \cite{Hongler2005AMD}.

\begin{defi}\label{GabaiMurasugi}
Consider two links $L_1$ and $L_2$ in $S^3$ with Seifert surfaces $S_1$ and $S_2$. Let $S^3 = B_1 \cup_{S^2} B_2$ be the standard decomposition of the 3-sphere into 3-balls. Assume $S_{i} \subset B_{i}$ such that  $S_{1} \cap S_{2}$ is a disk ${D \subset S^2}$. The disk $D$ is the interior of a $2n$-gon; i.e., $\partial D$ is decomposed into $2n$ arcs $a_1,b_1,\dots,a_n,b_n$, labelled counter-clockwise, such that $a_i \subseteq \partial S_1$ and $a_i$ is properly embedded in $S_2$, and $b_i \subseteq \partial S_2$ and $b_i$ is properly embedded in $S_1$ for $i \in \{1,\dots,n\}$. The compact oriented surface $S=S_1 \cup_{D} S_2$ is called the Murasugi sum of $S_1$ and $S_2$ along the disk $D$. The link $L=\partial S$ is called the Murasugi sum of $L_1$ and $L_2$ along $D$; see Figures~\ref{Murasugisum} and~\ref{6gonMurasugisum}. Define the \emph{gonality} of the Murasugi sum to be $2n$.
\end{defi}

\begin{figure}[h]
\centering
\includegraphics[scale=0.4]{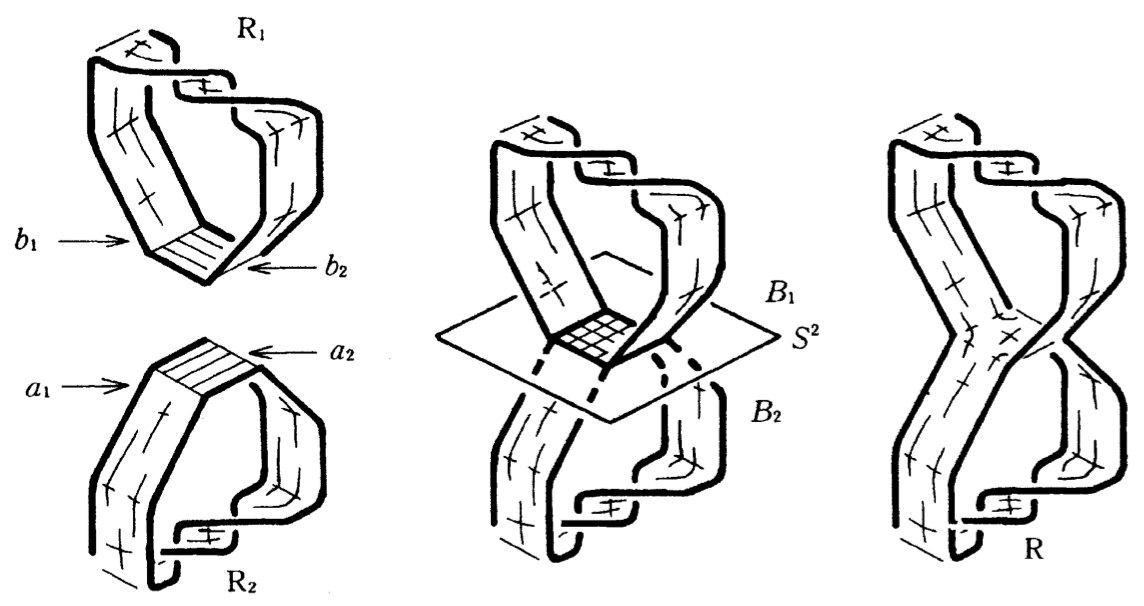}
\caption{A 4-gon Murasugi sum; i.e., plumbing~\cite{Kawauchi1996ASO}.}\label{Murasugisum}
\end{figure}

\begin{figure}[h]
\centering
\includegraphics[scale=0.3]{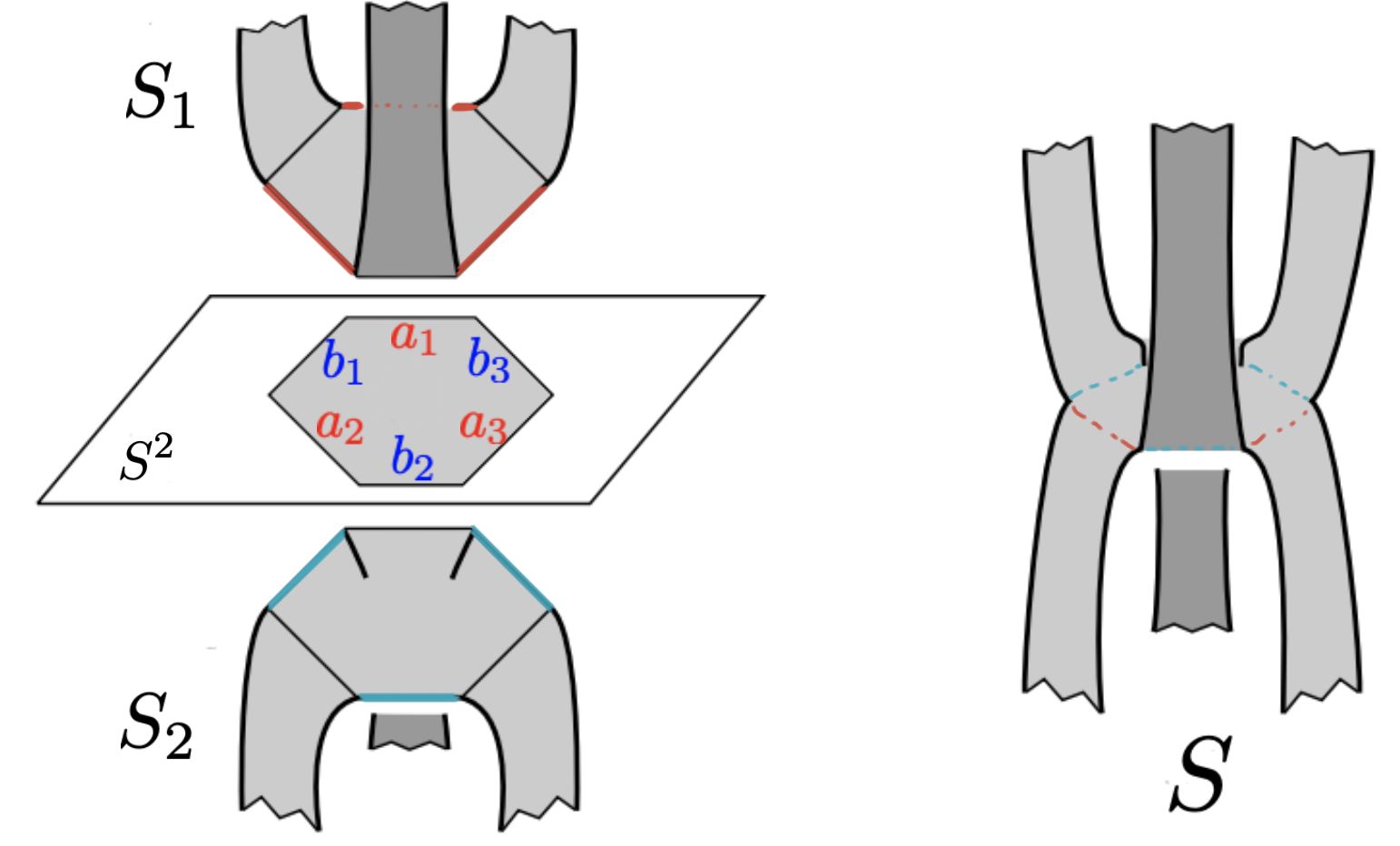}
\caption{A 6-gon Murasugi sum~\cite{able2021construction}.}\label{6gonMurasugisum}
\end{figure}

Taking 2-gon Murasugi sum is simply the connected sum operation. The 4-gon Mutasugi sum operation is called \textit{plumbing}. As mentioned before, links that can be constructed as boundaries of plumbings of twisted bands over a tree are called \textit{arborescent links}. Figures~\ref{skein} and~\ref{plumbing-tait} show simple examples of arborescent links. General arborescent links look like Figure~\ref{generalalgebraic}. Another class of such links is that of 2-bridge links; see Figure~\ref{2bridgealgebraic}.

\begin{figure}[h]
\centering
\includegraphics[scale=0.4]{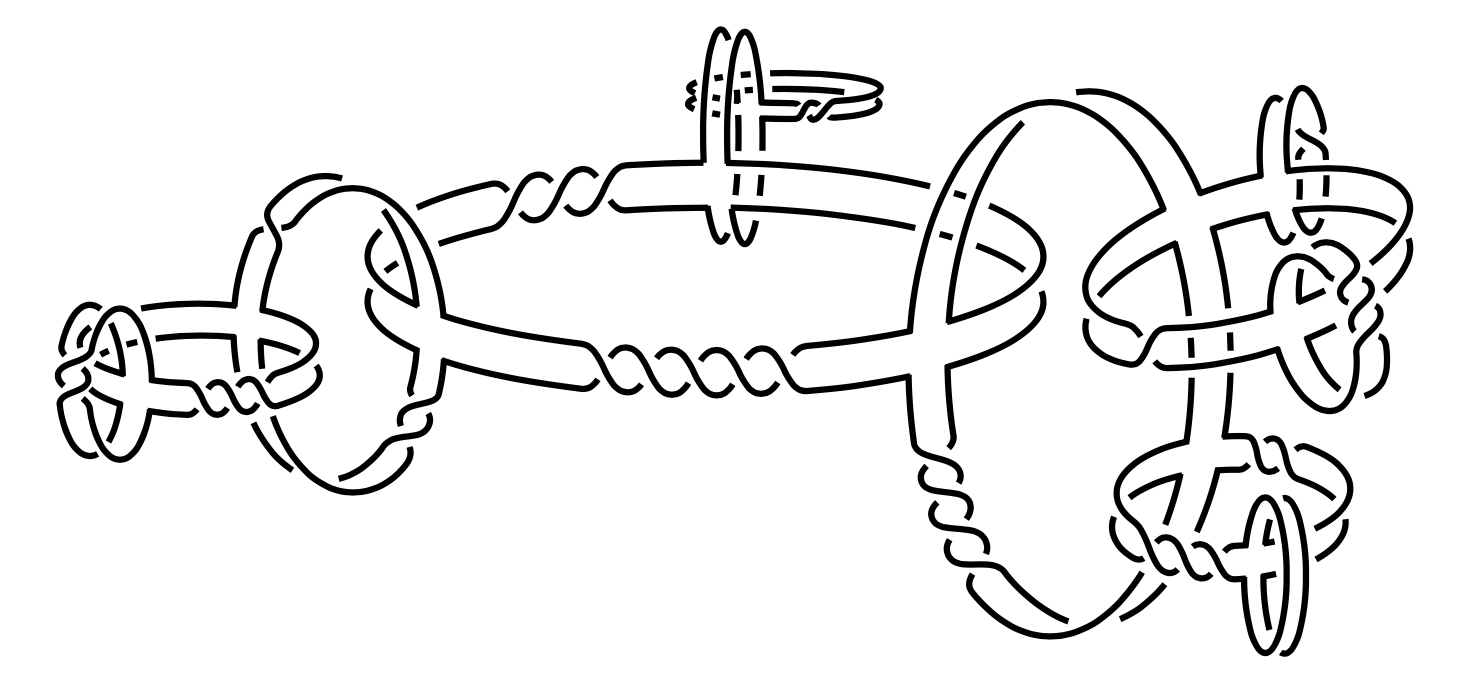}
\caption{A general arborescent link \cite{BonSieb}.}\label{generalalgebraic}
\end{figure}

\begin{figure}[h]
\centering
\includegraphics[scale=0.4]{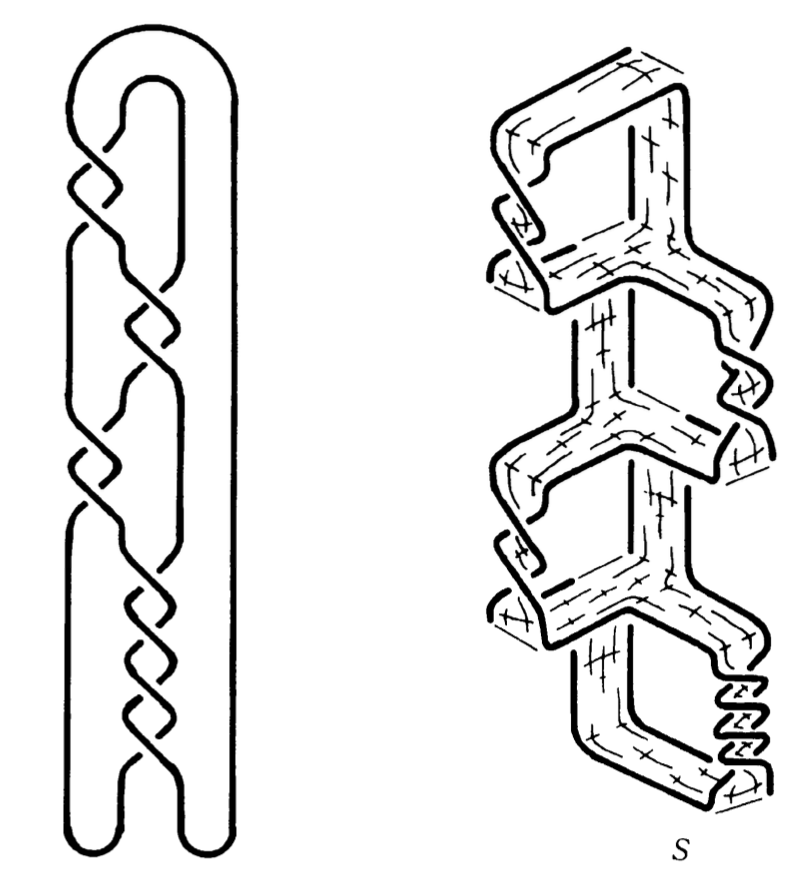}
\caption{An illustration showing why 2-bridge links are arborescent \cite{Thurston2bridge}.}\label{2bridgealgebraic}
\end{figure}

Gabai proved a series of important results about Murasugi sums, which we summarize in the following theorem.

\begin{theo}\label{Gabai}\cite{Gabai1983TheMS}
The Murasugi sum of two incompressible (resp.\ minimal genus) Seifert surfaces is incompressible (resp.\ minimal genus), and, as a result, genus is additive under Murasugi sums along minimal genus Seifert surfaces. Furthermore, $S$ is a fibre surface for $L$ if and only if $S_i$ is a fibre surface for $L_i$ for $i \in \{1,2\}$.
\end{theo}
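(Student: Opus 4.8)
The final statement to prove is Theorem~\ref{Gabai}, which summarizes Gabai's foundational results on Murasugi sums. Let me sketch how I would approach proving it.

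\medskip

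The plan is to establish three separate assertions: incompressibility (equivalently minimal genus) is preserved under Murasugi sum, genus is additive, and fibredness is preserved. My overall strategy relies on the key geometric observation underlying all of Gabai's work on sutured manifolds, namely that a Murasugi sum decomposes the link complement along a \emph{product disk}. Concretely, I would first set up the sutured manifold machinery: view $S^3$ cut along the Seifert surface $S = S_1 \cup_D S_2$ as a sutured manifold, and observe that the splitting sphere $S^2 = \partial B_1 = \partial B_2$ meets the complementary sutured manifold in a product annulus (coming from the $2n$-gon $D$). The heart of the argument is to show that the complement of $S$ in $S^3$ can be reconstructed from the complements of $S_1$ in $B_1$ and $S_2$ in $B_2$ by gluing along this product region.

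\medskip

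For \emph{incompressibility}, I would argue by contradiction: suppose $S$ is compressible, so there is a compressing disk $\Delta$ with $\partial \Delta$ essential on $S$. Using an innermost-disk / outermost-arc argument, I would isotope $\Delta$ to meet the splitting sphere $S^2$ transversally in a minimal number of circles, then eliminate intersections with $S^2$ one at a time (an intersection circle bounds a subdisk on one side, which can be pushed across using the product structure near $D$). Once $\Delta$ lies entirely in one of the balls $B_i$, it becomes a compressing disk for $S_i$, contradicting the incompressibility of $S_i$. The same cut-and-paste scheme, applied to a genus-reducing surgery rather than a compression, upgrades ``incompressible'' to ``minimal genus.'' \emph{Genus additivity} then follows formally: since $S_1 \cup_D S_2$ has $\chi(S) = \chi(S_1) + \chi(S_2) - \chi(D) = \chi(S_1) + \chi(S_2) - 1$ and this surface is minimal genus, its genus is forced to be the sum $g(S_1) + g(S_2)$ (tracking the correction from the number of boundary components via the relation $\chi = 2 - 2g - |\partial|$).

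\medskip

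For the \emph{fibredness} equivalence, which I expect to be the main obstacle, I would frame everything in the language of sutured manifold hierarchies. The forward direction uses that a fibre surface is taut and that tautness is inherited by the summands via the product-disk decomposition. The harder converse direction requires showing that if each $(B_i \setminus S_i)$ is a product sutured manifold $S_i \times I$, then gluing them along the product region produced by $D$ yields a product sutured manifold $S \times I$; here one must verify that the gluing respects the product structures and does not introduce any non-product pieces. The delicate point is controlling the behaviour of the monodromy across the plumbing region and ensuring the two product structures match up along the annulus coming from $\partial D$. I would handle this by an explicit model of the product region near $D$ and a Mayer--Vietoris-type decomposition of the complement, checking directly that the resulting sutured manifold has no essential product annuli or disks obstructing the product structure. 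Since this theorem is quoted from \cite{Gabai1983TheMS}, a complete treatment would cite that paper for the technical verifications; the sketch above indicates the route one would take to reconstruct the proof.
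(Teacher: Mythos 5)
The paper does not prove this theorem at all: it is stated purely as background and attributed to Gabai \cite{Gabai1983TheMS}, so there is no proof in the paper to compare yours against; the honest resolution is exactly the citation. Judged on its own terms, your sketch has a genuine gap at its core. The claim that incompressibility, and then minimal genus, can be obtained by an innermost-circle/outermost-arc argument pushing a compressing disk off the splitting sphere is precisely the naive cut-and-paste strategy that is known \emph{not} to work for Murasugi sums with $n \geq 2$ (it does work for the $2$-gon case, i.e.\ connected sum, which is why genus additivity under connected sum is classical while the plumbing case remained open until Gabai). Two concrete failures: first, after cutting a compressing disk $\Delta$ along $S^2$, the outermost pieces have boundary partly on $S_i$ and partly on the disk $D' = \overline{S^2 \setminus D}$ (which, incidentally, is a disk, not a product annulus; its boundary crosses the sutures of the complementary sutured manifold $2n$ times, so it is not a product disk when $n \geq 2$), and no elementary move eliminates such pieces --- the assertion that tautness of the two complementary sutured manifolds implies tautness of the manifold glued along such a decomposing disk is exactly the deep content of Gabai's theorem, whose actual proof constructs taut foliations of the pieces, glues them to a taut foliation of the complement of $L$ having $S$ as a leaf, and invokes Thurston's inequality that compact leaves of taut foliations are norm-minimizing. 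Second, your proposed ``upgrade'' from incompressible to minimal genus cannot work even in principle: an incompressible Seifert surface need not have minimal genus (a knot can admit incompressible Seifert surfaces of arbitrarily large genus), and an argument about compressing disks for $S$ says nothing about comparison with an arbitrary competing Seifert surface for $L$, which is what minimality requires.

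Your treatment of fibredness is closer in spirit to the real proofs: the sutured-manifold/product-decomposition framework is indeed how Gabai proves the harder direction (sum fibred implies summands fibred, in the sequel paper), while the other direction also has Stallings' earlier algebraic proof. But you explicitly defer the hard verification to the citation, so this part is a correct frame rather than an argument. In short, the fibredness outline is reasonable as a pointer to the literature, but the elementary route you sketch for the incompressibility/minimal-genus half would not go through, and its failure is the very reason the theorem was a breakthrough.
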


%

Definition~\ref{GabaiMurasugi} is the most general form of the Murasugi sum operation, but it has some limitations. One of these is that the operation cannot easily be seen in a diagram. Murasugi~\cite{Murasugisums} originally used a more restrictive definition, which we call \emph{diagrammatic Murasugi sum}. We now recall that definition; see also \cite{Hongler2004OnTT} and \cite{Hongler2005AMD}. It is more convenient to phrase it in terms of a \textit{Murasugi decomposition algorithm} that decomposes any link diagram as a Murasugi sum, as follows.

Consider a link $L$ with a connected diagram $D_L$, which consists of crossings $c_1,\dots,c_n$ and arcs $e_1,\dots,e_{2n}$. One can also view the diagram as a 4-valent plane graph with vertices $c_i$, edges $e_j$, and regions $r_1,\dots,r_{n+2}$. The set $\{e_1,\dots,e_{2n}\}$ can be decomposed into Seifert cycles $T_1,\dots,T_q$ using Seifert's algorithm. One can split Seifert cycles into two types. We say that $T_i$ is a \textit{type~1} Seifert cycle if it bounds a region $r_k$, otherwise it is \textit{type~2}.

If all the Seifert cycles are type~1, then we call the link diagram \textit{special}. Applying Seifert's algorithm to a special diagram gives a Seifert surface which is obtained by considering the disks bounded by the Seifert cycles of $L$ and attaching half-twisted bands over each crossing of $L$. Note that this definition is the same as the median construction applied to the adjacency graph of the regions of the same color in a checkerboard coloring of $S^2 \setminus D_L$. We call this graph the \textit{Tait graph}. In the literature, this surface is also known as the \emph{checkerboard} or \emph{Tait surface}. For the Tait surface to be orientable, the Tait graph needs to be bipartite. Figure~\ref{turnspecial} shows a special diagram and a two-coloring (blue and green) of the vertices of the Tait graph (regions with the same checkerboard color).
Any link has a special diagram. See Burde~\cite{Burdespecialalgorithm}, Stoimenow~\cite{Stoimenow2011DiagramGG}, and Hirasawa~\cite{Hirasawaspecialdiagram} for algorithms to turn any diagram of a link into a special one, which is shown in Figure~\ref{turnspecial}.

\begin{figure}[h]
\centering
\includegraphics[scale=0.3]{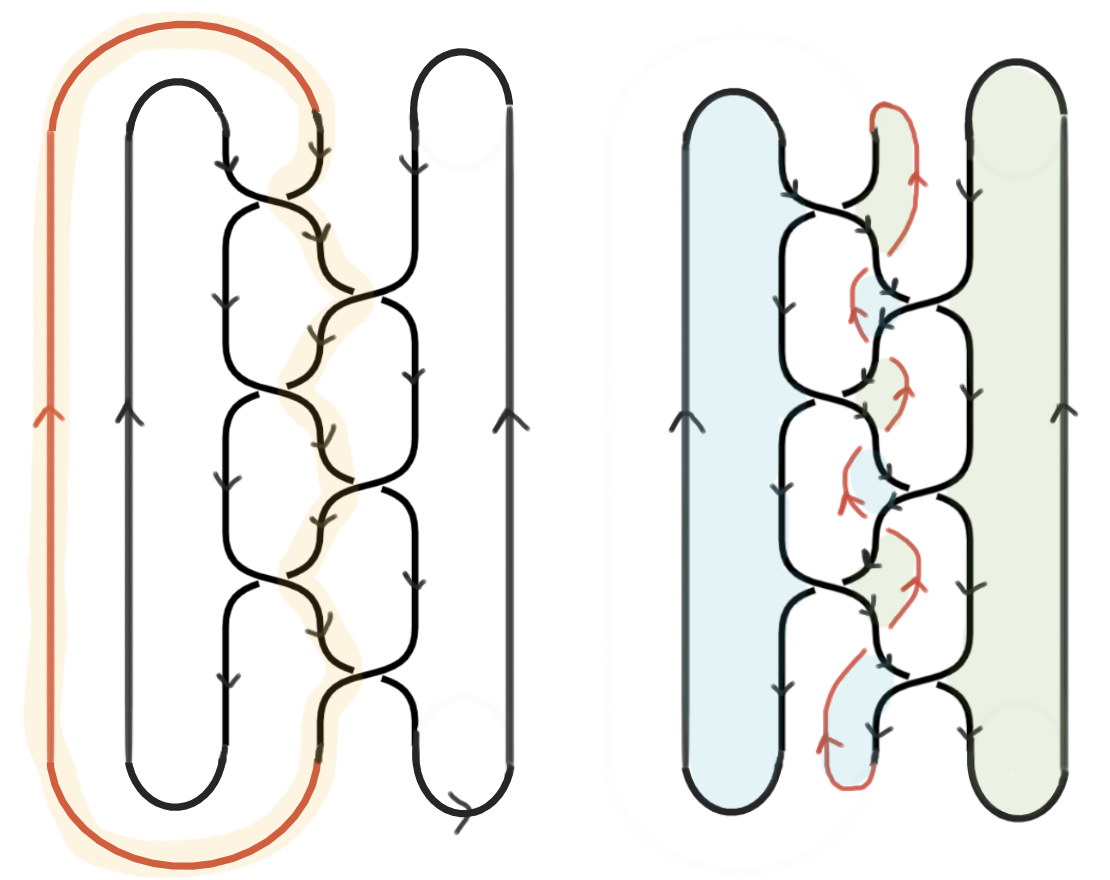}
\caption{Non-special and special diagrams of a link. The left diagram is non-special as it contains a type~2 Seifert cycle, colored yellow. One can turn the diagram into the special diagram on the right by moving the red segment.}\label{turnspecial}
\end{figure}

Let $C_1,\dots,C_m$ be the type~2 Seifert cycles in $D$. Each $C_i$ is a simple closed curve in $S^2$ that divides $S^2$ into disks $C_i^{+}$ and $C_i^{-}$. Let $C_i^{+}$ be the disk containing $\infty$. For $\gamma_i \in \{+,-\}$, let $D(\gamma_1,\cdots,\gamma_m)$ be the closure of $C_1^{\gamma_1} \cap \dots \cap C_m^{\gamma_m}$. Exactly $m+1$ of these sets will be nonempty, which we call \emph{Seifert domains} and denote by $D_1,\dots,D_{m+1}$; see Figure~\ref{Seifertdomains}. For every $i \in \{1,\dots,n\}$, at least two of the four regions adjacent to crossing $c_i$ belong to the same Seifert domain $D_j$ for some $j \in \{1,\dots,m+1\}$. In this case, we say that $c_i$ \textit{belongs} to $D_j$. Each vertex belongs to exactly one Seifert domain. For $j \in \{1,\dots,m+1\}$, let $L_j$ be the link obtained by smoothing all the crossings along $\partial D_j$ that do not belong to $D_j$. We call $\{L_1,\dots,L_{m+1}\}$ the \textit{Murasugi decomposition} of the diagram $D_L$, and denote it by 
\[
L=L_1 * L_2 * \cdots * L_{m+1}.
\]
Note that one can construct the links $L_j$ by adding vertical segments in the middle of the half-twisted bands which are used in Seifert's algorithm; see Figure~\ref{Murasugidecompose}. In this way, one can see that gluing the canonical Seifert surfaces for $L_1,..., L_{m+1}$ results in a canonical Seifert surface for $L$. It is not trivial that this gluing is a Murasugi sum in the sense of Definition~\ref{GabaiMurasugi}; see Remark~\ref{diagrammaticisaspecialcase}. For an example of a Murasugi decomposition; see Figure~\ref{Murasugidecompose}.

\begin{rema}\label{diagrammaticisaspecialcase}
Murasugi's $*$-product is a special case of Murasugi sum in the sense of Definition~\ref{GabaiMurasugi}. We do not give a proof of this fact, but we show it holds for the example presented in Figure~\ref{Murasugidecompose}. This can be seen in Figure~\ref{Murasugisumandstar}. Another example of this fact can be seen in Figure~\ref{Murasugisumandstar2}. 
\end{rema}

\begin{figure}[h]
\centering
\includegraphics[scale=0.3]{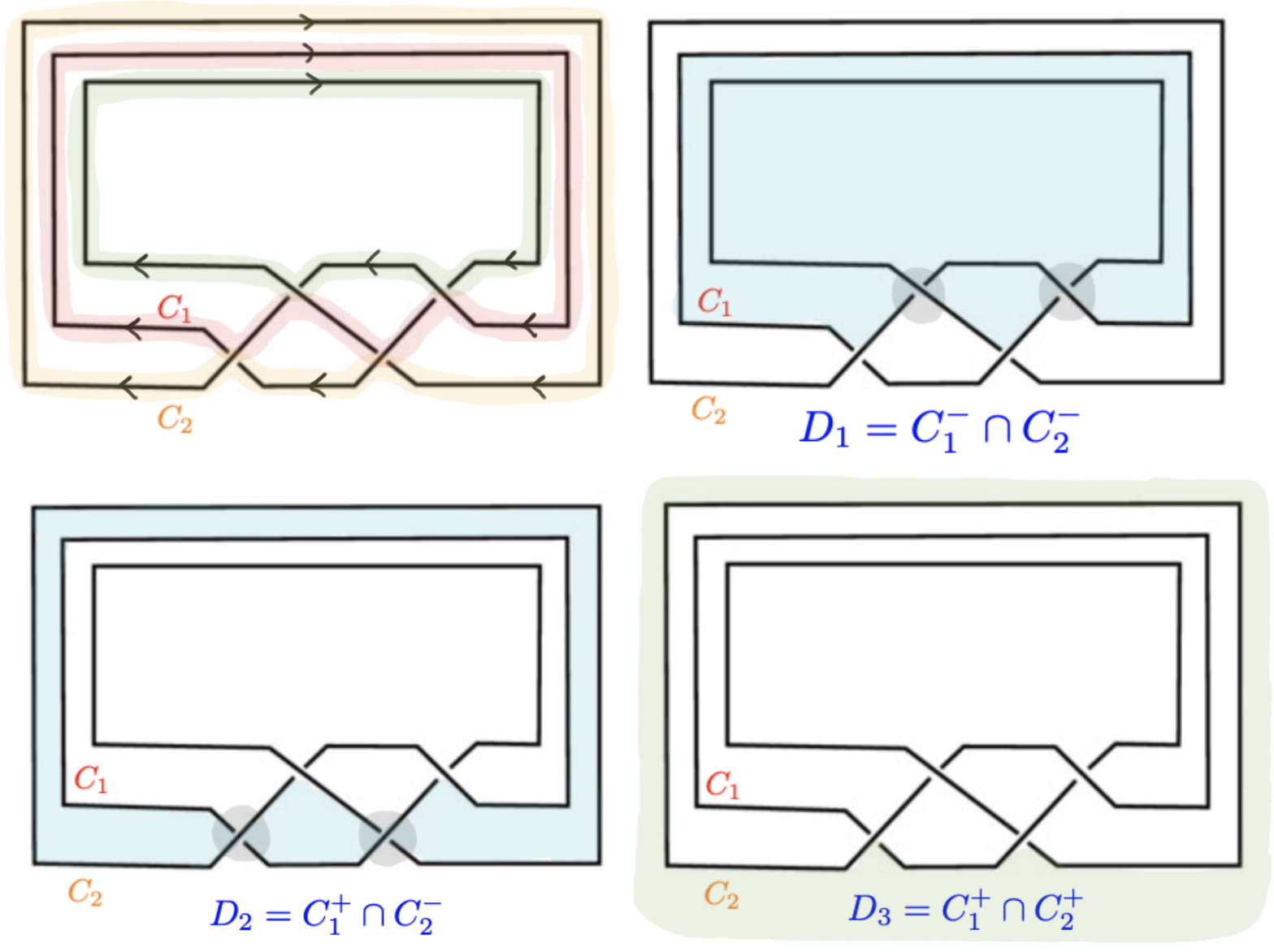}
\caption{Seifert domains and their crossings. Note that none of the crossings belong to $D_3$.}\label{Seifertdomains}
\end{figure}

\begin{figure}[h]
\centering
\includegraphics[scale=0.3]{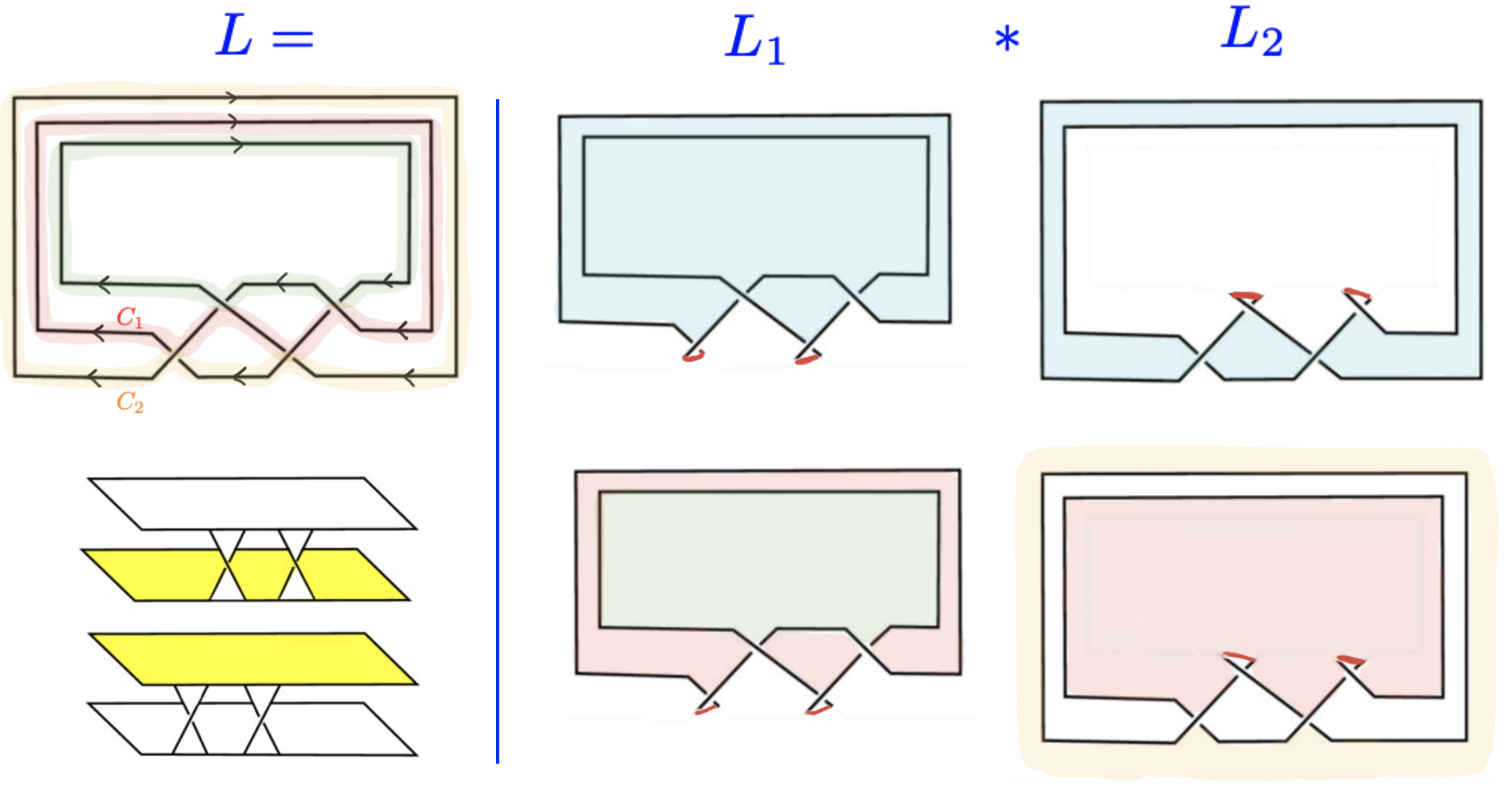}
\caption{An example of a Murasugi decomposition. The first row shows two of the Seifert domains and the associated Murasugi summands. Note that the third summand $L_3$ is trivial and is hence not depicted. The leftmost picture in the second row shows the Murasugi decomposition as a Murasugi sum (related to Figure~\ref{Murasugisumandstar}). Other figures in the second row depict Seifert surfaces of the Murasugi summands constructed by cutting the canonical Seifert surface of $L$.}\label{Murasugidecompose}
\end{figure}

\begin{figure}[h]
\centering
\includegraphics[scale=0.3]{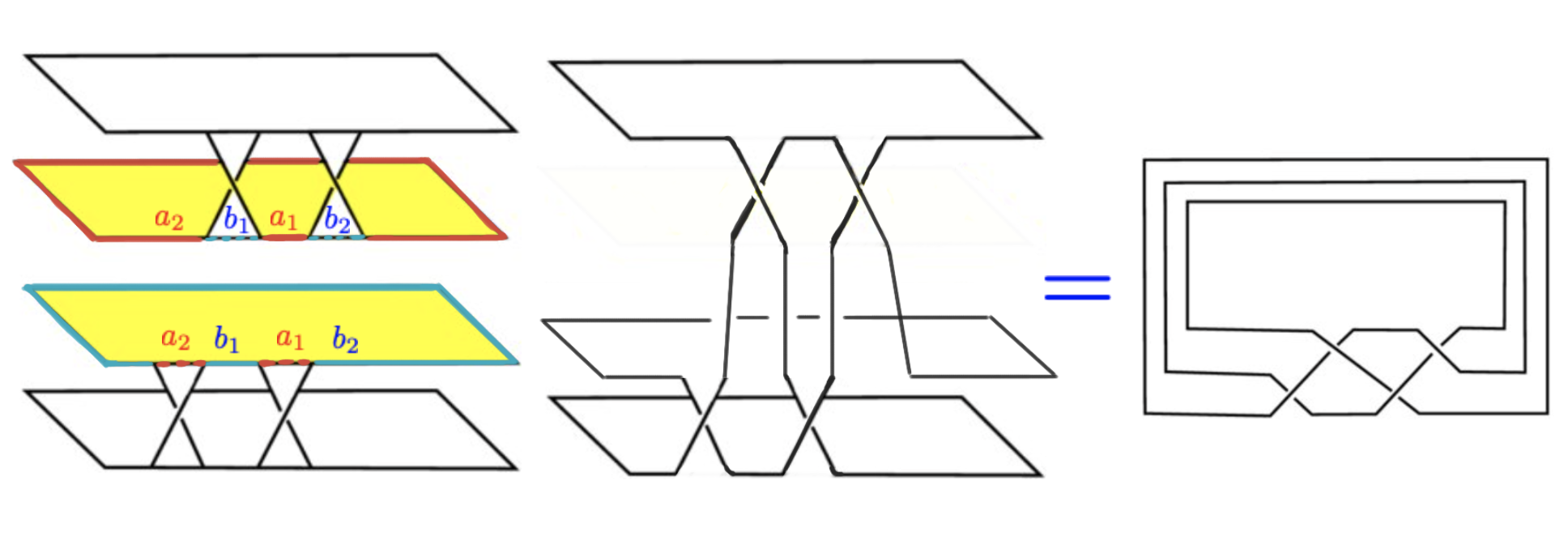}
\caption{The $*$-product is a special case of Murasugi sum (shown in the example of Figure~\ref{Murasugidecompose}).}\label{Murasugisumandstar}
\end{figure}
\begin{figure}[h]
\centering
\includegraphics[scale=0.45]{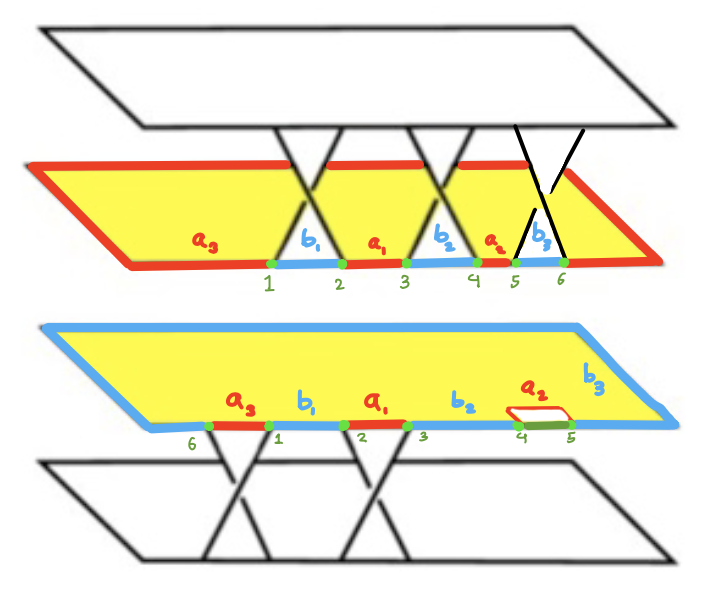}
\includegraphics[scale=0.35]{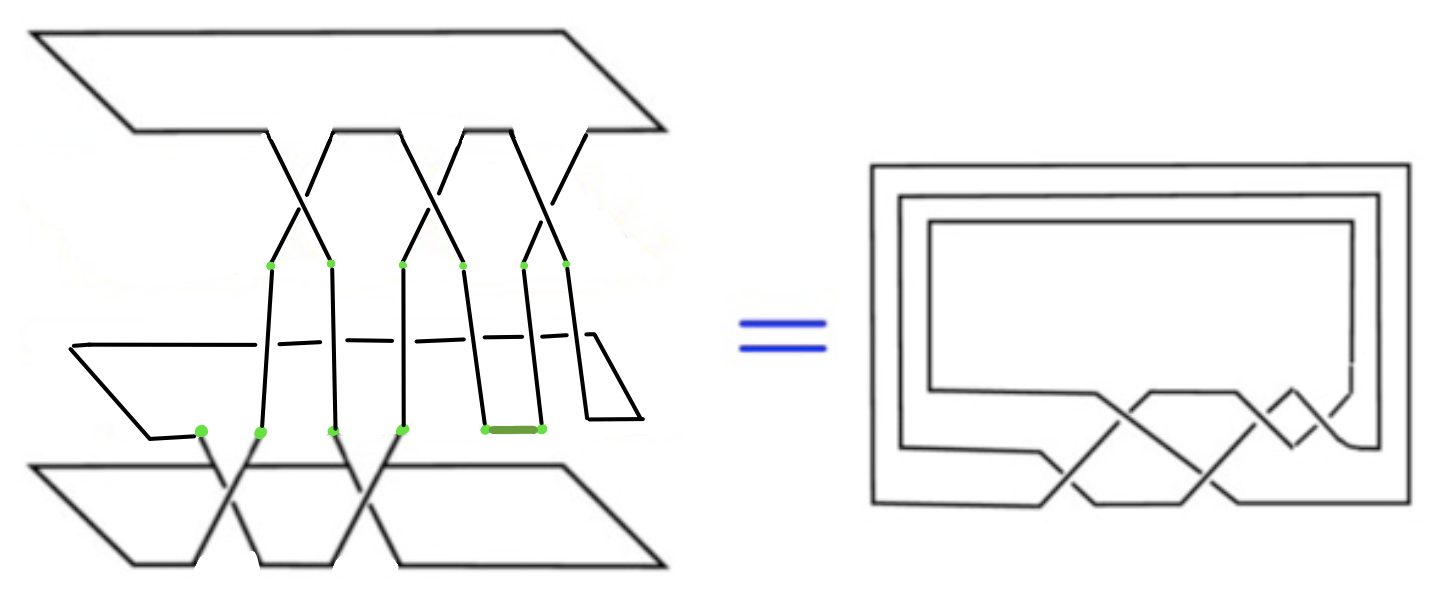}
\caption{A second example showing that $*$-product is a special case of Murasugi sum. Note how the gluing disk of the Murasugi sum is slightly different from the one in Figure~\ref{Murasugisumandstar}}\label{Murasugisumandstar2}
\end{figure}

\begin{rema}\label{diagramgonality}
Each of the $*$-products (i.e., diagrammatic Murasugi sums) in the Murasugi decomposition happens \emph{over a Seifert cycle of type~2}. To be more precise, consider a Seifert cycle $C$ of type~2 which decomposes $S^2$ into disks $C^{+}$ and $C^{-}$. Note that each of the Seifert domains is either in $C^{+}$ or in $C^{-}$. Assume that $D_{1},\dots, D_{i} \subseteq C^{-}$ and $D_{i+1},\dots, D_{m+1} \subseteq C^{+}$. Furthermore, assume $C$ sits between the domains $D_i$ and $D_{i+1}$. Let $L^{+}$ and $L^{-}$ be the result of the restriction of the link $L$ to $C^{+}$ and $C^{-}$, respectively. Then 
\[
L=L^{-} * \ L^{+},
\]
where the Murasugi sum is taken over the disk associated with the Seifert cycle $C$ in the canonical Seifert surface; see the yellow disks in Figures~\ref{Murasugisumandstar} and \ref{Murasugisumandstar2}. This follows from the fact that applying the Murasugi decomposition algorithm to $L^{-}$ and $L^{+}$ gives us
\[
L^{-} = L_1 * \cdots * L_i \  \text{ and } L^{+} = L_{i+1} * \cdots * L_{m+1}.
\]
Let the gonality of the Murasugi sum over the Seifert cycle $C$ be $2s$. We can compute $2s$ from the diagram as follows. Crossings on $C$ are of two types: ones that belong to $D_i$ and ones that belong to $D_{i+1}$ (shown in red and blue in Figure~\ref{diagramPlumbingpatch}). Moving along $C$, one alternately encounters $x_j$ crossings belonging to $D_i$, then $y_j$ crossings belonging to $D_{i+1}$ for $j = 1,\dots,l$, for some $l$. Using this notation, we have
\begin{equation}\label{gonalityequation}
s=\sum_{j=1}^{l} x_j + \sum_{j=1}^{l} y_j - l. 
\end{equation}
We call $l$ the \textit{length} of the diagrammatic Murasugi sum.

\begin{figure}[h]
\centering
\includegraphics[scale=0.3]{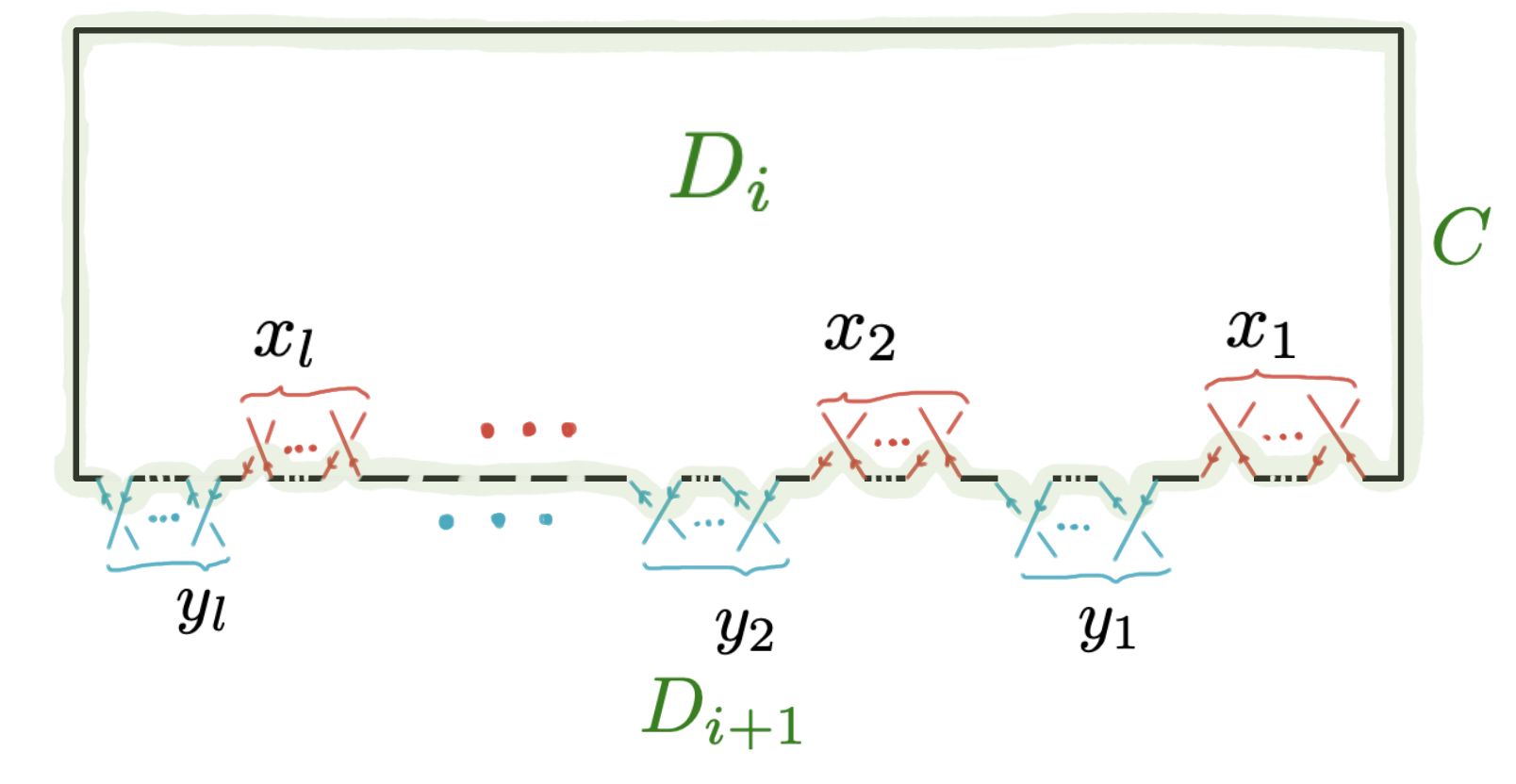}
\caption{Local picture of a type~2 Seifert cycle $C$.}\label{diagramPlumbingpatch}
\end{figure}

\end{rema}

Murasugi's decomposition has an important property. If $D_L$ is a reduced alternating diagram, then all summands $L_i$ will be alternating as well. This leads to the first part of the following result. 

\begin{prop}\cite{Murasugisums}\label{Murasugidecomp}
Any alternating link can be written as a $*$-product (i.e., diagrammatic Murasugi sum) of special alternating links. Furthermore, signature and genus are additive under this decomposition.
\end{prop}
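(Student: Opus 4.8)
The plan is to prove the two assertions separately: first the existence of the decomposition into special alternating summands, then the additivity of genus and signature.

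For the decomposition, I would fix a reduced alternating diagram $D_L$ of $L$ and argue by strong induction on the number $m$ of type~2 Seifert cycles. If $m=0$ then $D_L$ is special by definition, and since it is alternating $L$ is special alternating, giving the trivial one-term $*$-product. If $m\geq 1$, pick any type~2 Seifert cycle $C$ and split $D_L$ over $C$ as $L=L^-*L^+$ in the sense of Remark~\ref{diagramgonality}, where $L^-$ and $L^+$ are the restrictions of $L$ to the two disks $C^-$ and $C^+$ bounded by $C$. Every type~2 cycle of $D_L$ other than $C$ lies strictly inside $C^-$ or strictly inside $C^+$, while $C$ itself becomes a region-bounding (hence type~1) cycle on each side; consequently both $L^-$ and $L^+$ carry strictly fewer than $m$ type~2 cycles. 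The inductive hypothesis applied to each piece then expresses them as $*$-products of special alternating links, and concatenating yields $L=L_1*\cdots*L_{m+1}$ with every factor special alternating.

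Two facts must be checked to drive this induction, and they are where the real work lies. The first, flagged immediately before the statement, is that $L^-$ and $L^+$ are again alternating. I would verify this locally using the checkerboard coloring of $D_L$: the splitting operation inserts vertical segments into the half-twisted bands running along $C$ (exactly the construction of the $L_j$) without altering any crossing, so the coloring restricts to each side and every crossing of $L^\pm$ keeps the same sign relative to the inherited coloring, which is precisely the alternating condition. This is the step that genuinely uses the reduced and alternating hypotheses, since one must rule out the appearance of non-alternating crossings after smoothing. The second fact is that each $*$-product is a Murasugi sum in the sense of Definition~\ref{GabaiMurasugi}, with gluing disk the disk spanned by $C$ in the canonical Seifert surface; this is the content of Remark~\ref{diagrammaticisaspecialcase}, and it is what allows Gabai's machinery to be invoked.

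Granting those two points, genus additivity is essentially free: each canonical Seifert surface in the decomposition is minimal genus for a reduced alternating diagram by Gabai~\cite{Gabai1986GeneraOT}, so Theorem~\ref{Gabai} applies and gives $g(L)=\sum_j g(L_j)$ upon iterating the binary additivity across the decomposition tree. The signature is the delicate part, and I expect it to be the main obstacle of the whole proposition. I would reduce to the binary case $L=L^-*L^+$ and iterate. One route works directly with the Seifert form: because the two summand surfaces lie in complementary balls meeting in the gluing disk, in a basis adapted to $H_1(S)=H_1(S^-)\oplus H_1(S^+)$ the Seifert matrix is block triangular, so the symmetrized form $V+V^{\top}$ has diagonal blocks $V_\pm+V_\pm^{\top}$ and a single off-diagonal block, and one must show this off-diagonal block does not shift the signature. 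The cleaner route computes $\sigma$ from a checkerboard surface via the Gordon--Litherland formula: a $*$-decomposition over a type~2 Seifert cycle corresponds to a cut-vertex ($1$-sum) decomposition of the Tait graph, under which the Goeritz form becomes an orthogonal direct sum and the correction term is additive, whence $\sigma(L)=\sigma(L^-)+\sigma(L^+)$. Verifying that the decomposition is indeed a $1$-sum of Tait graphs, and that the correction term splits accordingly, is the technical heart; this follows Murasugi~\cite{Murasugisums}.
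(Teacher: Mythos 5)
Your decomposition argument and your genus argument are fine, and they essentially coincide with the paper's treatment: the paper does not prove this proposition but cites Murasugi~\cite{Murasugisums}, having described the decomposition algorithm beforehand, asserted (without proof) that summands of a reduced alternating diagram are alternating, and remarked that genus additivity follows from Theorem~\ref{Gabai} together with the fact that Seifert's algorithm applied to an alternating diagram yields a minimal genus surface. Your induction over type~2 Seifert cycles is a reasonable reconstruction of that algorithm.

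The genuine gap is the signature, and neither of your two routes survives scrutiny. For route (a), block-triangularity of the Seifert matrix is indeed a standard fact, but ``showing the off-diagonal block does not shift the signature'' is not a routine verification: signature is simply \emph{not} additive under arbitrary Murasugi sums. Concretely, the untwisted positive-clasped Whitehead double of any knot bounds a genus-one surface that is a plumbing of the zero-framed annulus (whose boundary link has signature $0$) with a Hopf band (signature $\mp 1$), yet the Whitehead double has signature $0$; here the off-diagonal block does shift the signature. So any correct argument must use the alternating hypothesis in an essential way, and your sketch never indicates where it enters.

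For route (b), the premise is false. A $*$-decomposition over a type~2 Seifert cycle induces a cut-vertex splitting of the \emph{Seifert} graph, not of the Tait graph on which the Goeritz form lives. The paper's own analysis of plumbings makes this explicit: by the Tait blow-up of Figures~\ref{Taitblowup} and~\ref{generalplumbedtrees}, the black Tait graph of $L_1 * L_2$ is assembled from $B_{L_2}$ and the \emph{white} graph $W_{L_1}$ by deleting the vertices $R_2$ and $R_1$ and adding two new edges $(b,w)$ and $(b',w')$ --- this is not a wedge at a vertex. For instance, the braid-closure diagram of the figure-eight knot is a diagrammatic plumbing of two Hopf links, and neither of its Tait graphs has a cut vertex, so the Goeritz form does not decompose orthogonally. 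The elementary route that does work, and is the one the paper implicitly leans on via Lemma~\ref{signatureformula}, is Murasugi's formula $\sigma(D) = \#(\text{black regions}) - \#(\text{positive crossings}) - 1$ for reduced alternating diagrams: since every crossing of $L$ survives in exactly one summand and one then only has to track how the black regions distribute over the summands, additivity becomes a counting statement, with the alternating hypothesis doing its work through the validity of the formula itself.
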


Note that, applying Seifert's algorithm to an alternating diagram leads to a minimal genus Seifert surface, and the diagrammatic Murasugi sum is taken using such surfaces. Based on Theorem~\ref{Gabai}, genus is additive under Murasugi decomposition of an alternating link. Murasugi proved that signature is also additive under such a decomposition. We will revisit these facts in Subsection~\ref{dual}.


\begin{defi}
We say that a link diagram is \emph{decorated} if it has a marked edge, or, equivalently, two marked adjacent regions.
Fix a checkerboard coloring of the regions of a decorated link diagram. The black (resp.\ white) \emph{Tait graph} has vertices the black (resp.\ white) regions, and we add an edge between two regions through each crossing they share. We use the notation $B$ (resp.\ $W$) for this graph. The graph $B$ (resp.\ $W$) is rooted at the vertex corresponding to the marked region; see Figure~\ref{Taitgraphs}. Note that $B$ and $W$ are planar duals. 
    
\begin{figure}[h]
\centering
\includegraphics[scale=0.2]{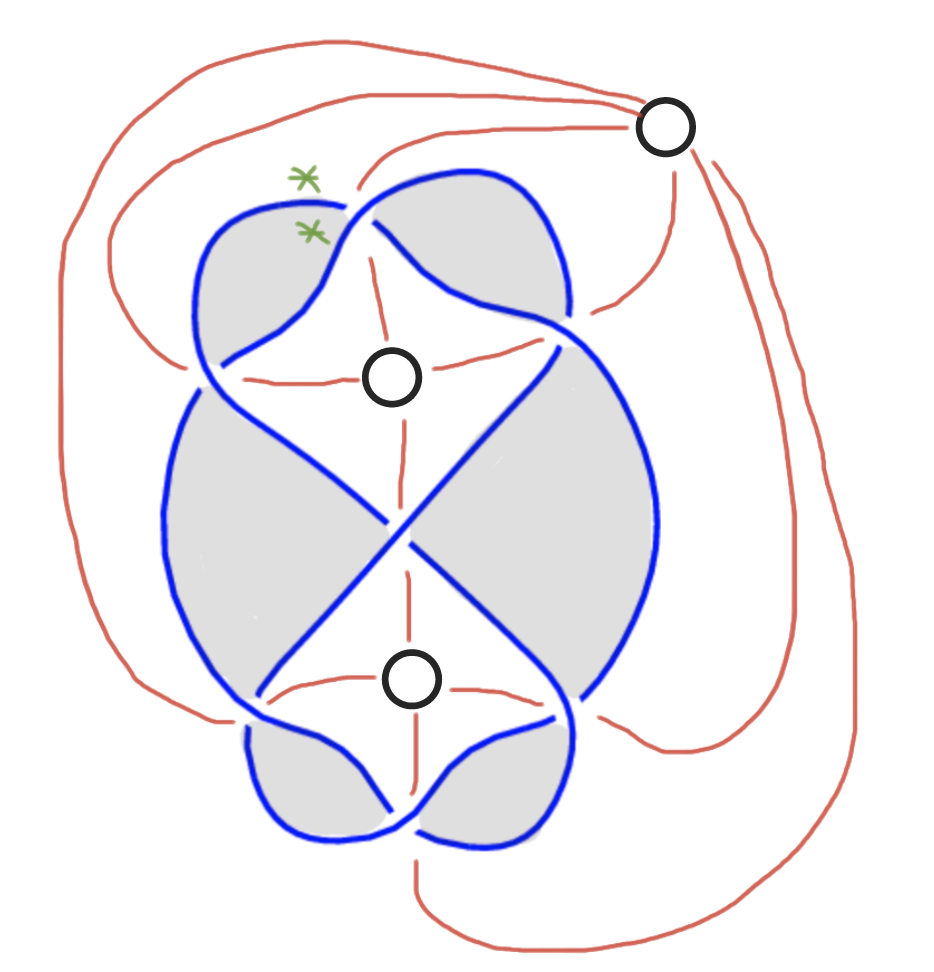}
\includegraphics[scale=0.2]{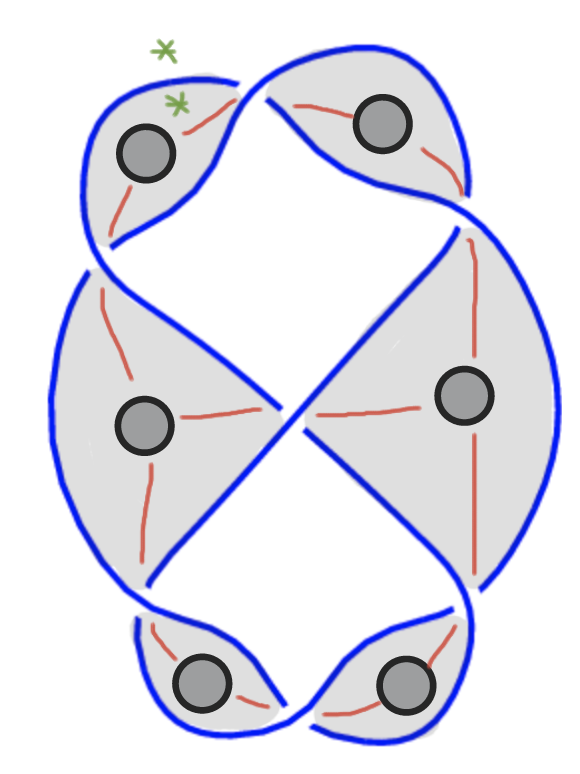}
\caption{The white (left) and black (right) Tait graphs in a marked link diagram.}\label{Taitgraphs}
\end{figure}

\end{defi}

Let $\mathcal{T}_B$ (resp.\ $\mathcal{T}_W$) be the set of rooted spanning trees of $B$ (resp.\ $W$), oriented away from the root. 
There is a natural one-to-one correspondence between $\mathcal{T}_B$ and $\mathcal{T}_W$ coming from planar duality. Let $\mathcal{D}_{(B,W)}$ denote the set of all dual pairs in $\mathcal{T}_B \times \mathcal{T}_W$; i.e.,
\[
\mathcal{D}_{(B,W)} := \{\, (T, T') \in \mathcal{T}_B \times \mathcal{T}_W  \,:\,  T \text{, } T'  \text{ are planar duals}\,\}.
\]
We call each dual pair a \emph{Kauffman state}, and write $\mathcal{D}_{(B,W)}$ for the set of all Kauffman states.

Kauffman's state-sum formula computes the Alexander polynomial as a weighted sum of Kauffman states. The weights are assigned as follows. A directed edge of either of the Tait graphs passes through a crossing $c$. It is directed towards one of the four corners around the crossing $c$. Figure~\ref{KKauffmanweights} shows the weights assigned to each corner. The weight of a rooted tree is defined as the product of the weights assigned to its edges.

\begin{figure}[h]
\centering
\includegraphics[scale=0.2]{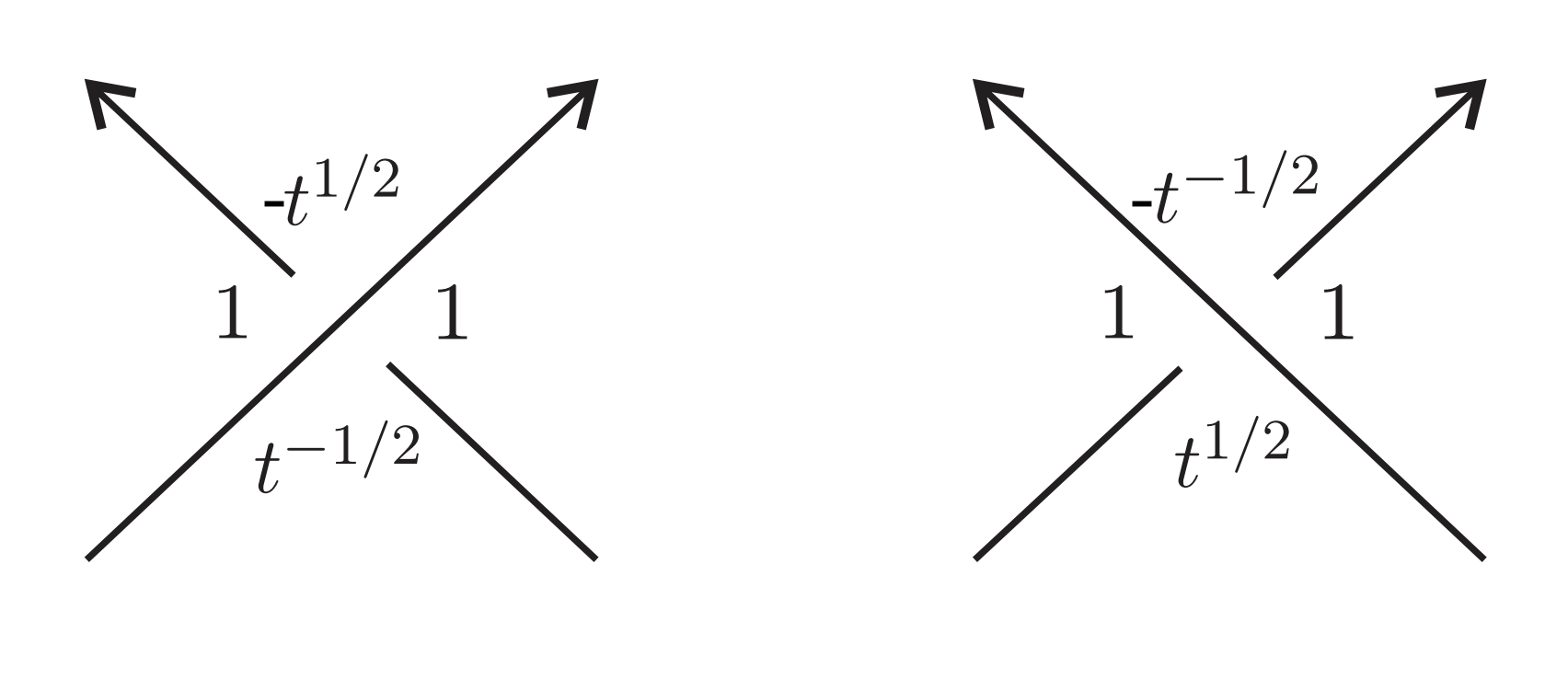}
\caption{Kauffman weights.}\label{KKauffmanweights}
\end{figure}

Kauffman's state-sum formula for the Alexander polynomial is
\begin{equation}
\tilde{\Delta}_{K}(t)=\sum\limits_{(T,T') \in \mathcal{D}_{(B,W)}} \ w(T) \cdot w(T').
\end{equation}
We are now ready to prove the main theorem of this subsection.

\begin{theo}\label{plumbingofspecialthm}
The trapezoidal conjecture holds for any link $L = L_1 * L_2$ which is a diagrammatic plumbing (4-gon Murasugi sum) of two special alternating links $L_1$ and $L_2$.
\end{theo}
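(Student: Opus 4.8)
The plan is to reduce the statement to the special alternating case, which is known by Hafner--Mészáros--Vidinas~\cite{KarolaLogconcavityOT}, by first establishing a skein-type formula for the Alexander polynomial of a diagrammatic plumbing and then feeding it into a combinatorial argument about log-concave sequences. I would fix a reduced alternating diagram exhibiting $L = L_1 * L_2$ as a $4$-gon Murasugi sum and mark the (crossingless) gluing square as the root region, so that Kauffman's state-sum formula applies to the rooted Tait graphs $B$ and $W$ of $L$. The first step is to record how $B$ and $W$ are built from the Tait graphs $B_i$, $W_i$ of the summands: they are amalgamated along the plumbing square, and the asymmetry between the two colors (a vertex-type identification on one graph and the dual edge-type identification on its planar dual) is exactly what separates plumbing from connected sum.

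Next I would derive the plumbing formula (the analogue of Proposition~\ref{Alexanderformulaplumbing}), mirroring the bijective splitting used in Lemma~\ref{CrowellSkein}. Partitioning the Kauffman states of $L$ according to their local behavior at the gluing square into two families -- one matching $\mathcal{D}_{(B_1,W_1)} \times \mathcal{D}_{(B_2,W_2)}$ and one matching the analogous product for the oriented (Seifert) smoothings $\hat L_1$, $\hat L_2$ of $L_1$, $L_2$ at the plumbing band -- yields
\[
\tilde{\Delta}_{L_1 * L_2} \dot{=} \tilde{\Delta}_{L_1}\,\tilde{\Delta}_{L_2} \pm \tilde{\Delta}_{\hat L_1}\,\tilde{\Delta}_{\hat L_2}.
\]
Each $\hat L_i$ is again special alternating (or a split link, in which case $\tilde{\Delta}_{\hat L_i} = 0$ and there is nothing to correct), and its span is one less than that of $\tilde{\Delta}_{L_i}$; since span (equivalently $b_1$) is additive under the sum by Theorem~\ref{Gabai} and Proposition~\ref{Murasugidecomp}, the correction term is symmetric and supported in the interior of the main term. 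The global sign -- whether the correction adds to or subtracts from the leading product -- is pinned down by Murasugi's alternation of signs together with the relative checkerboard parity of the two summands.

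The trapezoidality argument then becomes a statement about the absolute-value sequences. Write $F$ and $G$ for the coefficient sequences of $|\tilde{\Delta}_{L_1}\tilde{\Delta}_{L_2}|$ and $|\tilde{\Delta}_{\hat L_1}\tilde{\Delta}_{\hat L_2}|$. By~\cite{KarolaLogconcavityOT} each of $|\tilde{\Delta}_{L_i}|$ and $|\tilde{\Delta}_{\hat L_i}|$ is log-concave with no internal zeros, and this property is preserved under convolution, so $F$ and $G$ are log-concave, symmetric, and $G$ lives in the interior of $F$; in particular both are trapezoidal, since a log-concave sequence with no internal zeros stops increasing once it stops increasing strictly. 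In the additive case the conclusion is immediate: writing $A = F + G$, each forward difference $A_{i+1} - A_i = (F_{i+1} - F_i) + (G_{i+1} - G_i)$ is non-negative on the first half, so $A$ is unimodal, and a flat step forces both $F$ and $G$ to be flat thereafter, giving the strict-then-constant condition.

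The hard part will be the \emph{subtractive} case $A = F - G$, which genuinely occurs (for instance for the plumbing of two equally-signed Hopf bands, i.e.\ the trefoil). Here $A_{i+1} - A_i = (F_{i+1} - F_i) - (G_{i+1} - G_i)$ has indefinite sign, and a priori the interior bump $G$ could overshoot the growth of $F$ and destroy unimodality. Ruling this out requires the precise relationship between a special alternating link and its smoothing -- a deletion/contraction, equivalently an interlacing, relation between $\tilde{\Delta}_{L_i}$ and $\tilde{\Delta}_{\hat L_i}$. The cleanest route, and the one I would pursue, is to avoid the signed single-variable cancellation altogether by upgrading the state sum to the multivariable, all-unit-coefficient refinement of~\cite{KarolaLogconcavityOT}: amalgamating the M-convex supports that they associate to $L_1$ and $L_2$ along the shared plumbing variable should produce a multivariable refinement of $\tilde{\Delta}_L$ whose support is again M-convex, whence the coefficients $a_i$ -- now counts of lattice points in parallel slices -- are automatically log-concave, and a fortiori trapezoidal. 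Verifying that this amalgamated support is genuinely M-convex (a generalized-permutohedron condition) is the crux on which the whole argument rests.
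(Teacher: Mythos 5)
Your first half is essentially the paper's route: the paper also partitions the Kauffman states of a rooted Tait-graph state sum according to their behaviour at the gluing square and obtains exactly your formula (Proposition~\ref{Alexanderformulaplumbing}), with $\widetilde{L}_1,\widetilde{L}_2$ the smoothed summands, together with the degree drop by one. The gap is everything after that. You leave the relative sign of the two terms undetermined, and your plan for the ``subtractive case'' is where the argument breaks. In fact, under the hypotheses of the theorem the subtractive case cannot occur, and your motivating example is out of scope: the trefoil is \emph{not} a diagrammatic plumbing of two special alternating links --- its reduced alternating diagram is special (no type~2 Seifert cycle), and plumbing two positive Hopf bands diagrammatically produces a non-alternating diagram. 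The point you are missing is that both summands in the formula are sub-sums of the Kauffman state sum of a \emph{single} reduced alternating diagram $D$ of $L$, and in an alternating diagram all states with a given Alexander grading contribute with the same sign (the thinness property $M-A=\sigma/2$ recalled in Subsection~\ref{survey}). Hence the two terms agree in sign at every common exponent, absolute values simply add, and your ``easy additive case'' is the only case; the paper then closes by computing that $\rank\bigl(H_1\bigl(B_{\widetilde{L}_i}\bigr)\bigr)=\rank\bigl(H_1\bigl(B_{L_i}\bigr)\bigr)-1$ and quoting \cite[Proposition~2.1]{murasugi_1985}. Without this sign-coherence step your argument proves nothing, because a subtractive correction genuinely can destroy trapezoidality even with the correct degree gap: absolute values $(1,2,5,2,1)$ minus $(0,1,1,1,0)$ give $(1,1,4,1,1)$, which violates condition~(2) of Conjecture~\ref{foxconjecture}.

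Your proposed way around the (nonexistent) hard case --- amalgamating the multivariable refinements of \cite{KarolaLogconcavityOT} into an all-unit-coefficient, M-convex refinement of $\tilde{\Delta}_L$ --- is not a deferrable verification; the paper's own results show it fails for the natural candidate. The figure-eight knot, the closure of $\sigma_1\sigma_2^{-1}\sigma_1\sigma_2^{-1}$, \emph{is} a diagrammatic plumbing of two Hopf links, and Propositions~\ref{extermalA} and~\ref{notLorentzian} show that its refined Crowell polynomial has coefficients greater than $1$ and is not denormalized Lorentzian. Since identifying variables preserves the Lorentzian property (\cite[Lemma~2.5]{KarolaLogconcavityOT}, as used in the proof of Theorem~\ref{evenalternating}), any finer amalgamated refinement specializing to it is ruled out as well. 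So the self-identified crux of your plan is false as stated, not merely unverified; the correct closing move is the sign-coherence observation above, which costs nothing once the formula is derived from a partition of the states of one alternating diagram.
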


\begin{proof}
First, we give a diagrammatic characterization of the plumbing. Choose a reduced alternating diagram $D$ of $L$. Since we only have two summands, there is only one type~2 Seifert cycle $C$, and the Seifert domains are the two disks it bounds in $S^2$. Using the notation of equation~\eqref{gonalityequation}, assume first that $l=1$. If $y_1 = 1$, the crossing connecting $C$ to its exterior is reducible (also called nugatory), contradicting the assumption that $D$ is reduced. Hence $y_1 \geq 2$. Similarly, $x_1 \geq 2$. By equation~\eqref{gonalityequation}, the gonality of the Murasugi sum is at least six, which contradicts the fact that we have a 4-gon Murasugi sum. So $l \geq 2$, which, together with equation~\eqref{gonalityequation}, implies that $l=2$ and $x_j=y_j=1$ for $j \in \{1,2\}$. So the diagram $D$ looks like Figure~\ref{diagramplumbing} in a neighbourhood of $C$.
  
Note that Seifert cycles in a special diagram bound regions of the link diagram. As a result, one can describe the diagrammatic plumbing of two special alternating links as follows. Consider the special alternating links $L_1$ and $L_2$, and let $R_i$ be a region of $L_i$ for $i \in \{1, 2\}$ such that $R_i$ has degree two as a vertex of the Tait graph of $L_i$ (i.e., it has two crossings along its boundary). Change $R_1$ to the infinity region (i.e., the region containing $\infty \in S^2 = \mathbb{R}^2 \cup \{\infty\}$) in the diagram of $L_1$. Then place the diagram of $L_1$ inside region $R_2$ in the diagram of $L_2$, identifying the boundaries of $R_1$ and $R_2$ according to Figure~\ref{diagramplumbing}. See Figure~\ref{diagramplumbingexample} for an example.

\begin{figure}[h]
\centering
\includegraphics[scale=0.2]{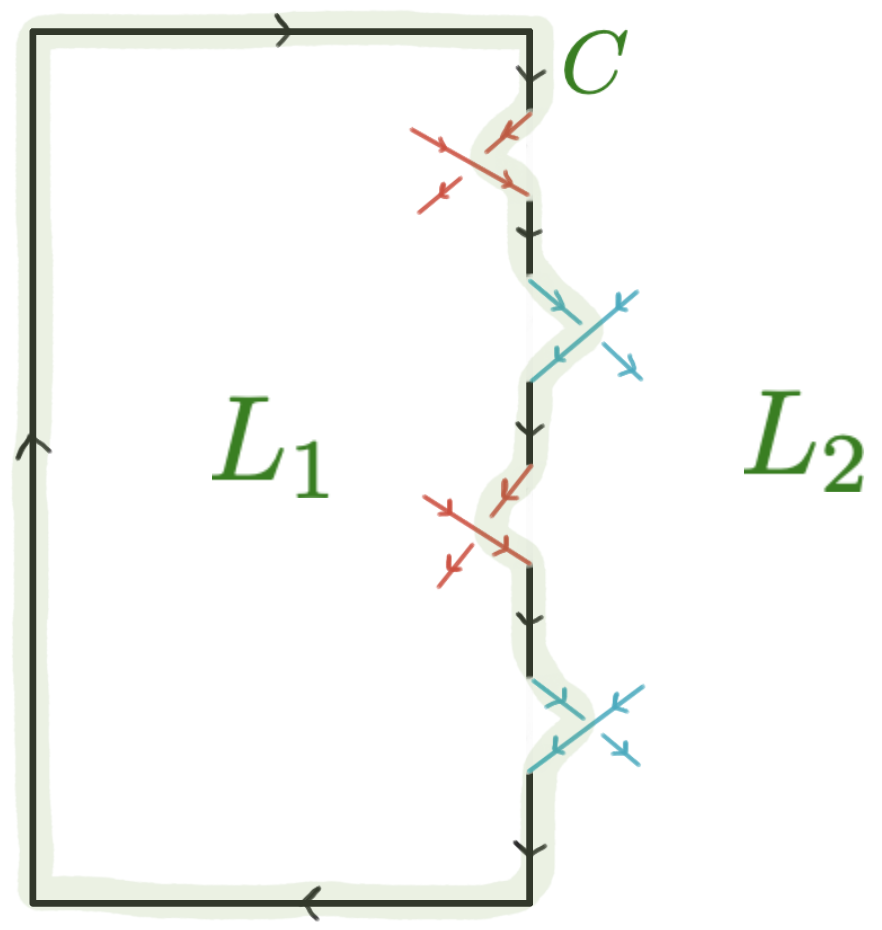}
\caption{Plumbing in a Murasugi decomposition.}\label{diagramplumbing}
\end{figure}

Using this diagrammatic characterization, we can see the effect of the operation on Tait graphs. Let $B_{L_i}$ and $W_{L_i}$ denote the black and white Tait graphs of the special alternating link $L_i$, respectively, for $i \in \{1,2\}$. Without loss of generality, assume that $R_1$ and $R_2$ are both black regions in the respective colorings of the diagram of $L_1$ and $L_2$. We also use $R_1$ and $R_2$ to denote the corresponding vertices of $B_{L_1}$ and $B_{L_2}$, respectively. Furthermore, we can assume that the checkerboard coloring of $L_2$ extends to the coloring of $L$, and let $B_L$ and $W_L$ denote the black and white Tait graphs of $L$. These graphs can be described as follows. See Figures~\ref{Taitblowup} and~\ref{generalplumbedtrees} for an example and some notation.

\begin{figure}[h]
\centering
\includegraphics[scale=0.3]{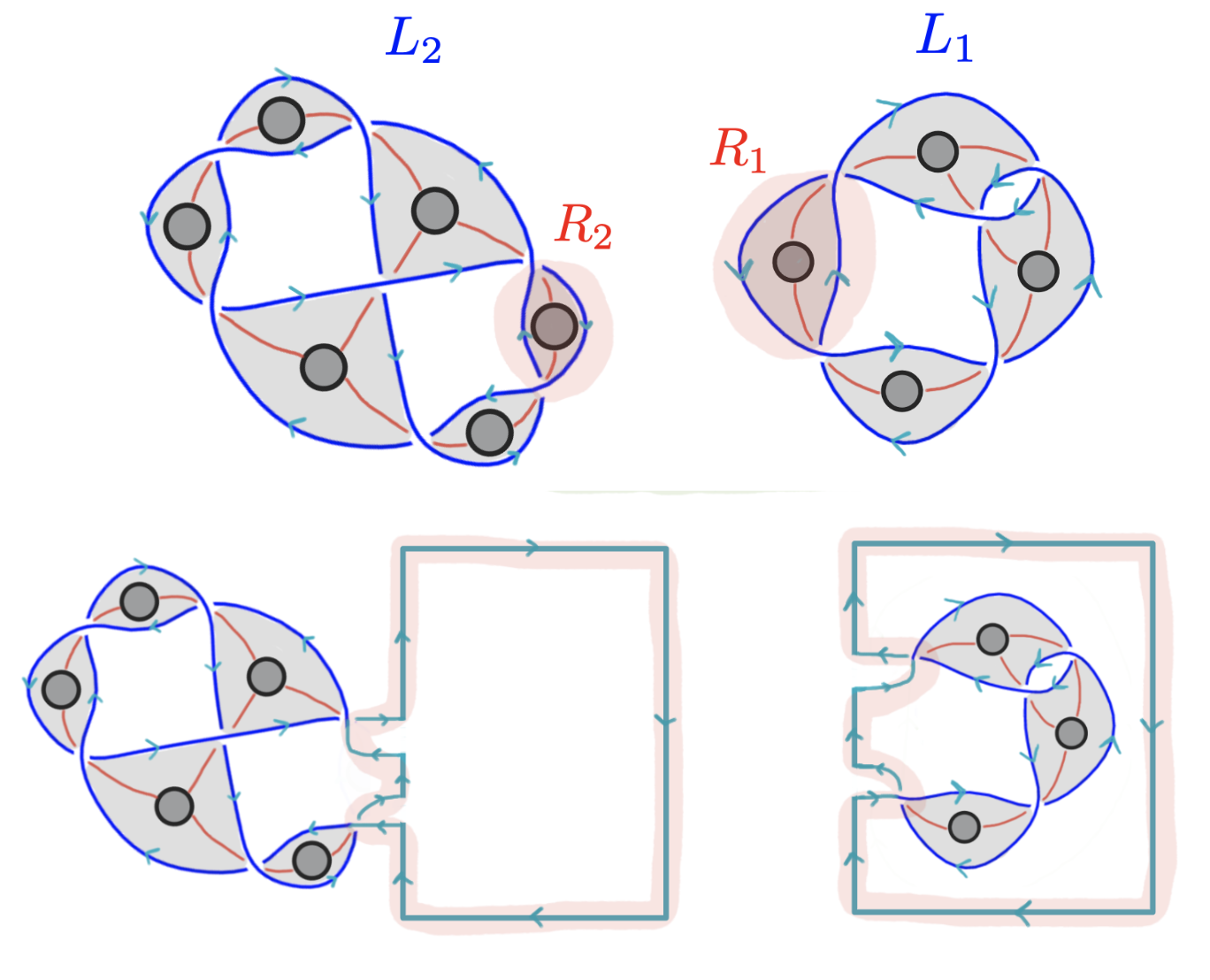}
\includegraphics[scale=0.3]{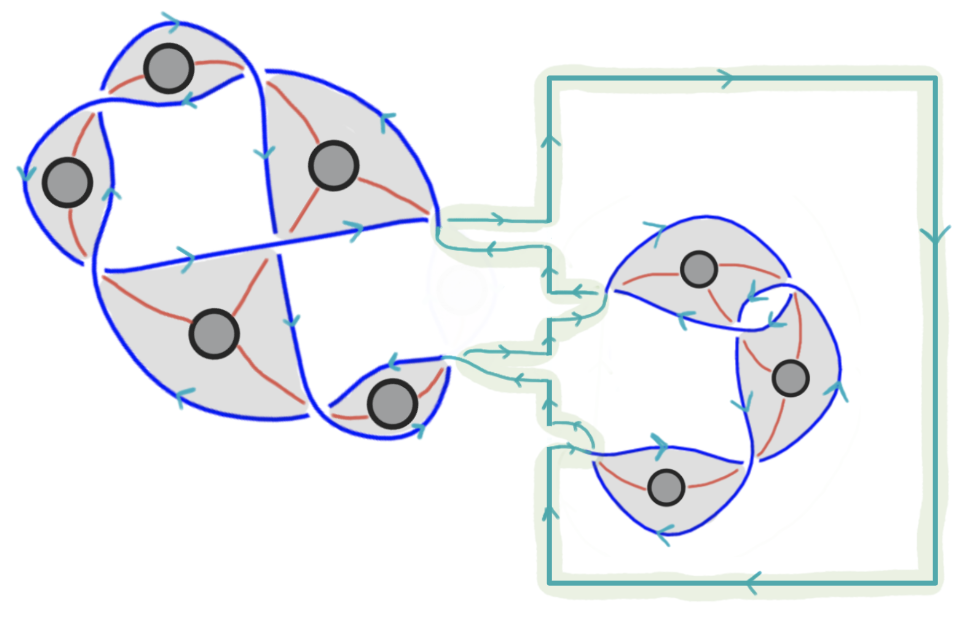}
\caption{Example of diagrammatic plumbing. The links from left to right are $L_2$ and $L_1$. The red highlighted regions are $R_2$ and $R_1$. The green Seifert cycle is $C$, whose neighborhood is shown in Figure~\ref{diagramplumbing}.}\label{diagramplumbingexample}
\end{figure}

The gaph $B_L$ can be constructed from $B_{L_2}$ by replacing the vertex $R_2$ with the graph $W_{L_1}$, as follows. Let $w$ and $w'$ be the neighbors of $R_1$ in $W_{L_1}$, and let $b$ and $b'$ be the neighbors of $R_2$ in $B_{L_2}$. 
We remove $R_1$ from $W_{L_1}$ and $R_2$ from $B_{L_2}$, then add the edges $(b,w)$ and $(b',w')$; see Figure~\ref{generalplumbedtrees}. We call this operation a \emph{Tait blow-up of the region $R_2$}. The graph $W_L$ can be constructed using a similar process, with the roles of the two links reversed.

The red stars in Figure~\ref{generalplumbedtrees} are the roots (i.e., marked vertices). The yellow vertex $y$ is associated with the unbounded region. It also shows a pair of dual spanning trees $(T,T') \in \mathcal{T}_{B_L}\times \mathcal{T}_{W_L}$, colored red and yellow, respectively. Note that the crossings in Figure~\ref{generalplumbedtrees} both belong to $L_2$; cf.\ Figure~\ref{Taitblowup}.

\begin{figure}[h]
\centering
\includegraphics[scale=0.2]{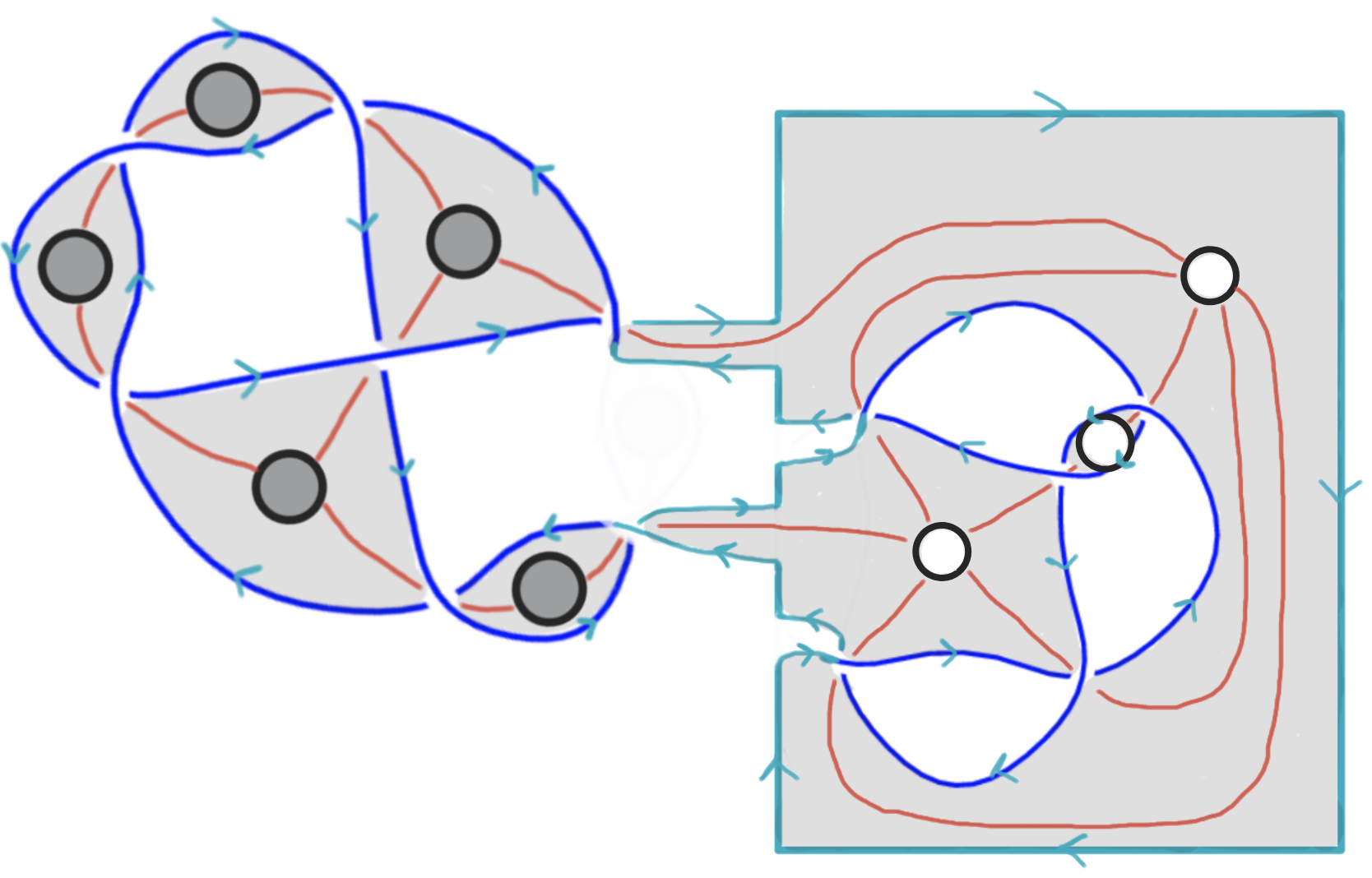}
\caption{Tait blow-up (Tait graph of a plumbing).}\label{Taitblowup}
\end{figure}

\begin{figure}[h]
\centering
\includegraphics[scale=0.3]{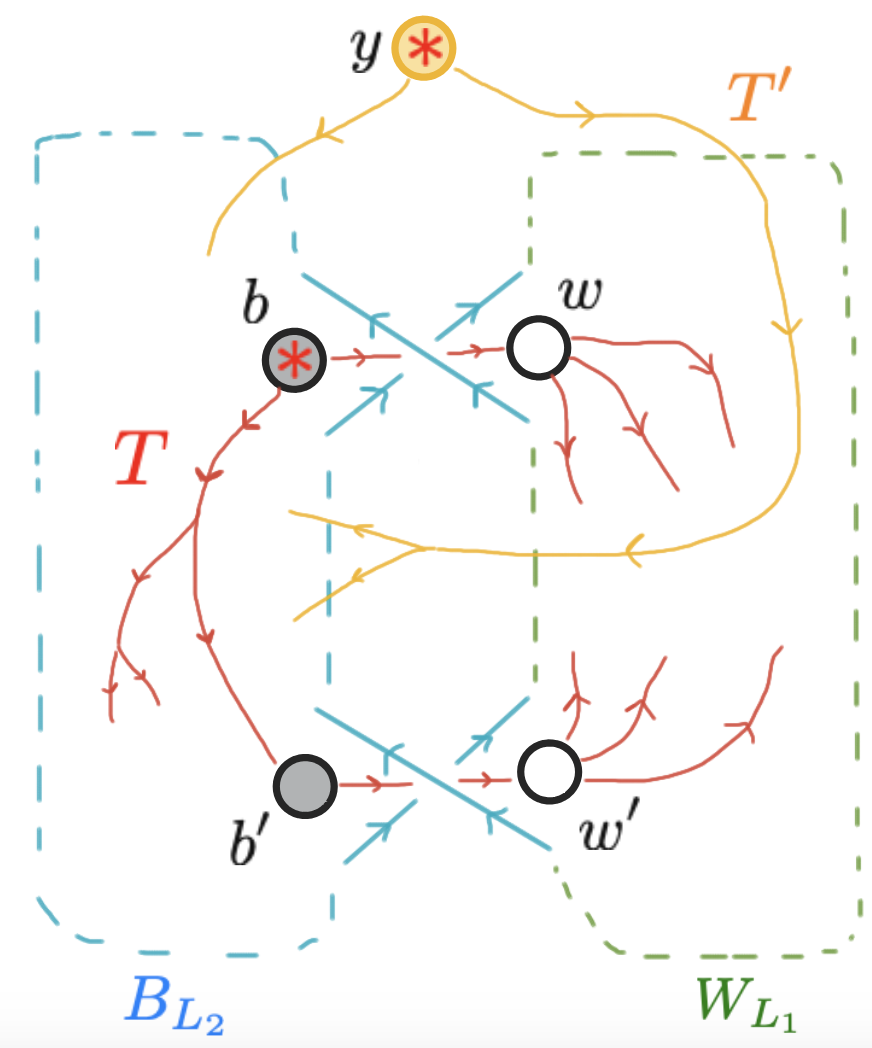}
\caption{A pair of dual trees in $\mathcal{T}_{B_L}\times \mathcal{T}_{W_L}$.}\label{generalplumbedtrees}
\end{figure}

Since $T$ is a spanning tree rooted at $b$, it contains unique directed paths $\beta$ from $b$ to $b'$ and $\omega$ from $w$ to $w'$.
Depending on $\beta$ and $\omega$, there are three distinct possibilities for a pair of dual spanning trees $(T,T') \in \mathcal{T}_{B_L}\times \mathcal{T}_{W_L}$; see Figure~\ref{Trichotomypftrees}:

\noindent\textbf{Case (i):} $\beta$ passes through $w$ and then $w'$.

\noindent\textbf{Case (ii):} $\beta \subseteq B_{L_2}$ and $\omega \subseteq W_{L_1}$. This case has two subcases:

\textbf{Case (ii).1:} $T$ contains the edge $(b,w)$.

\textbf{Case (ii).2:} $T$ contains the edge $(b',w')$.

\noindent\textbf{Case (iii):} $\beta \subseteq B_{L_2}$ and $T$ contains both $(b,w)$ and $(b',w')$. 

For $* \in \{(i), (ii), (ii).1, (ii).2, (iii)\}$, we denote the set of $(T,T')$ in Case~$*$ by $\mathcal{D}^{*}_{(B_L,W_L)}$. By analyzing the possible options for the paths $\beta$ and $\omega$, we obtain the partition
\[
\mathcal{D}_{(B_L,W_L)} = \mathcal{D}^{(i)}_{(B_L,W_L)} \sqcup \mathcal{D}^{(ii)}_{(B_L,W_L)} \sqcup \mathcal{D}^{(iii)}_{(B_L,W_L)}.
\]

\begin{figure}[h]
\centering
\includegraphics[scale=0.35]{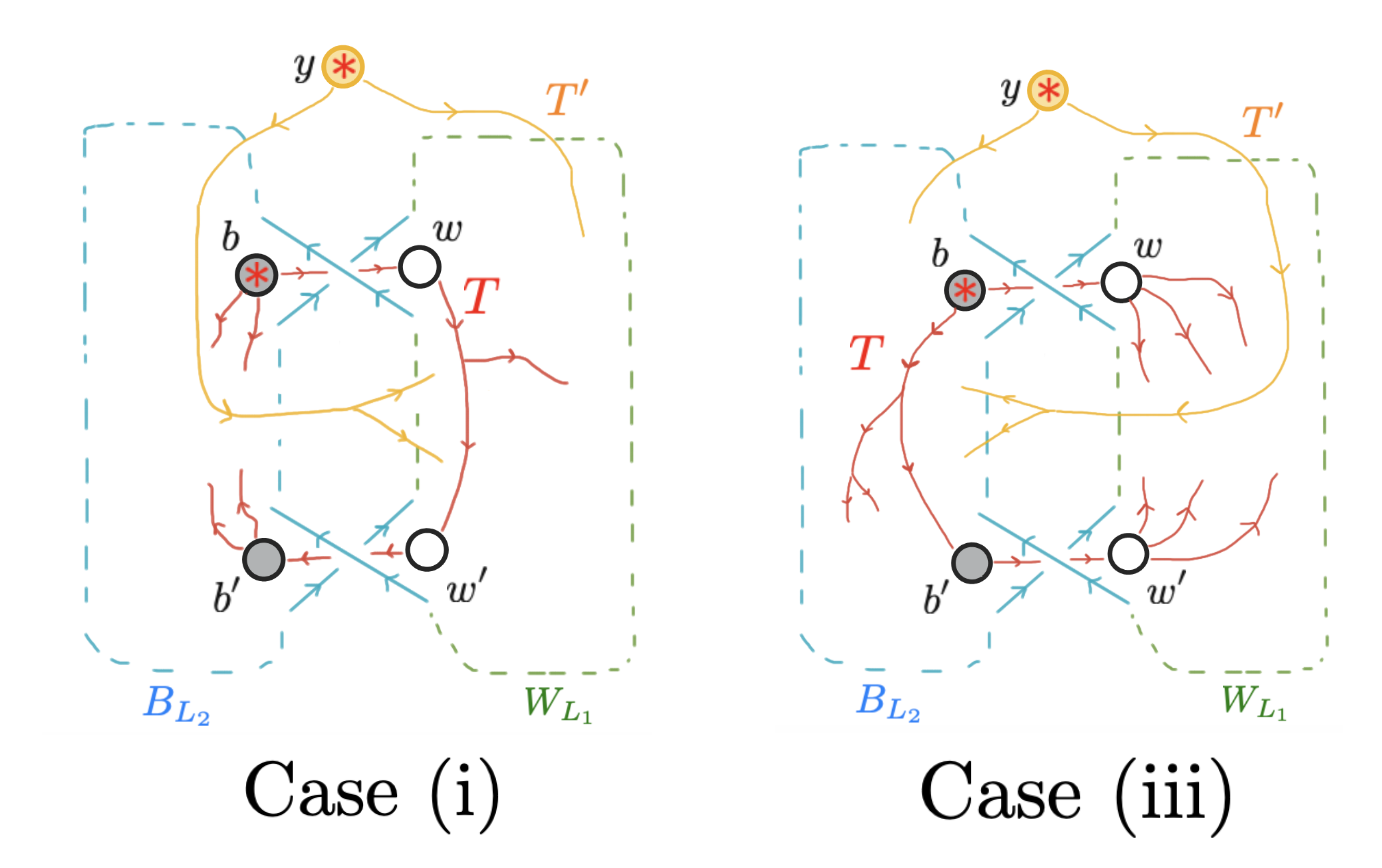}
\includegraphics[scale=0.35]{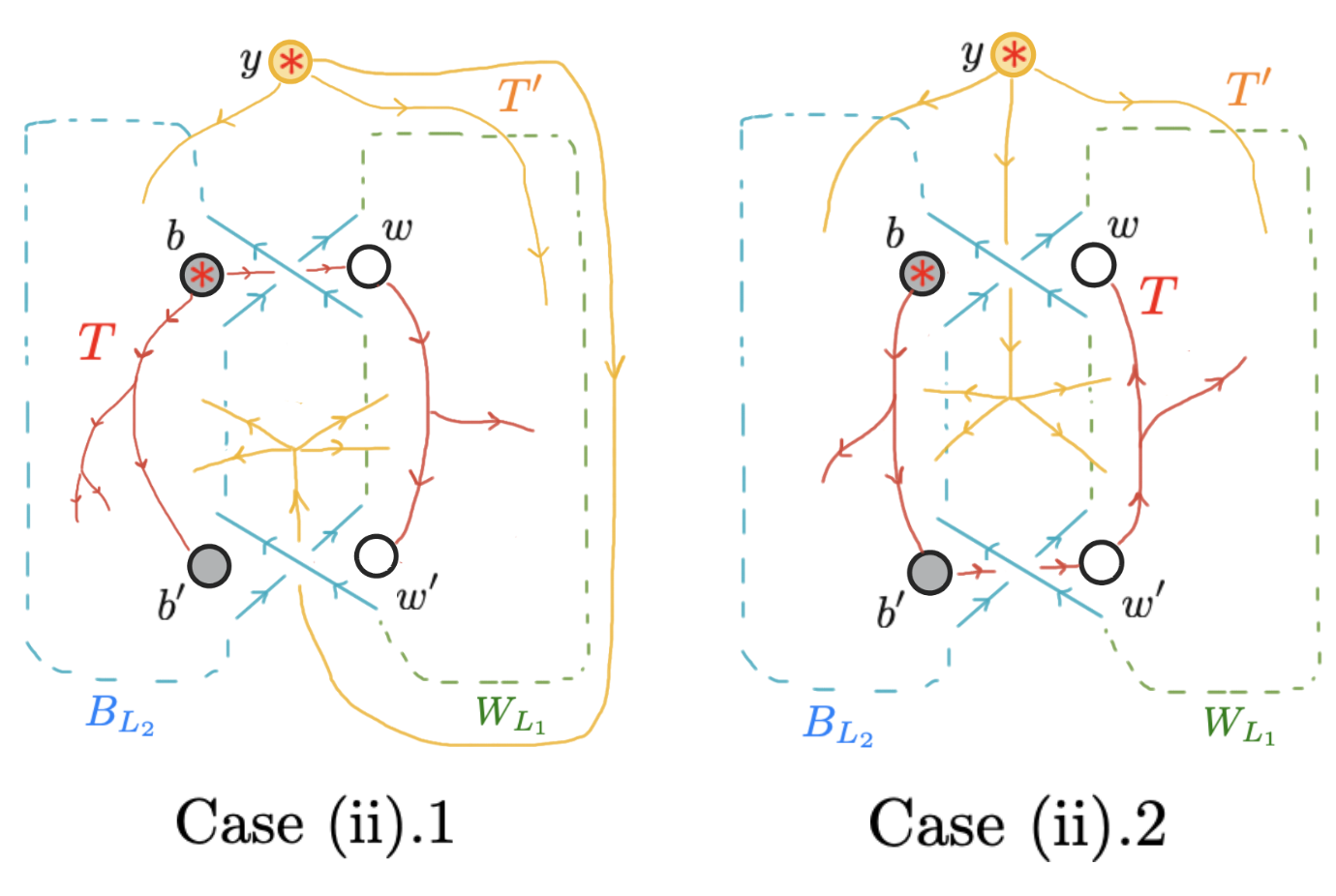}
\caption{The possible cases for the paths $\beta$ from $b$ to $b'$ and $\omega$ from $w$ to $w'$ in a rooted, directed spanning tree $T$ of $B_L$, together with the dual spanning tree $T'$ of $W_L$.}\label{Trichotomypftrees}
\end{figure}

We now look at these cases one-by-one. In each case, we decompose the dual spanning trees $T$ and $T'$ into dual spanning trees of the Tait graphs of $L_1$ and $L_2$, and relate their weights to the weights of Kauffman states in $\mathcal{D}_{(B_{L_1},W_{L_1})}$ and $\mathcal{D}_{(B_{L_2}, W_{L_2})}$.

\begin{figure}[h]
\centering
\includegraphics[scale=0.35]{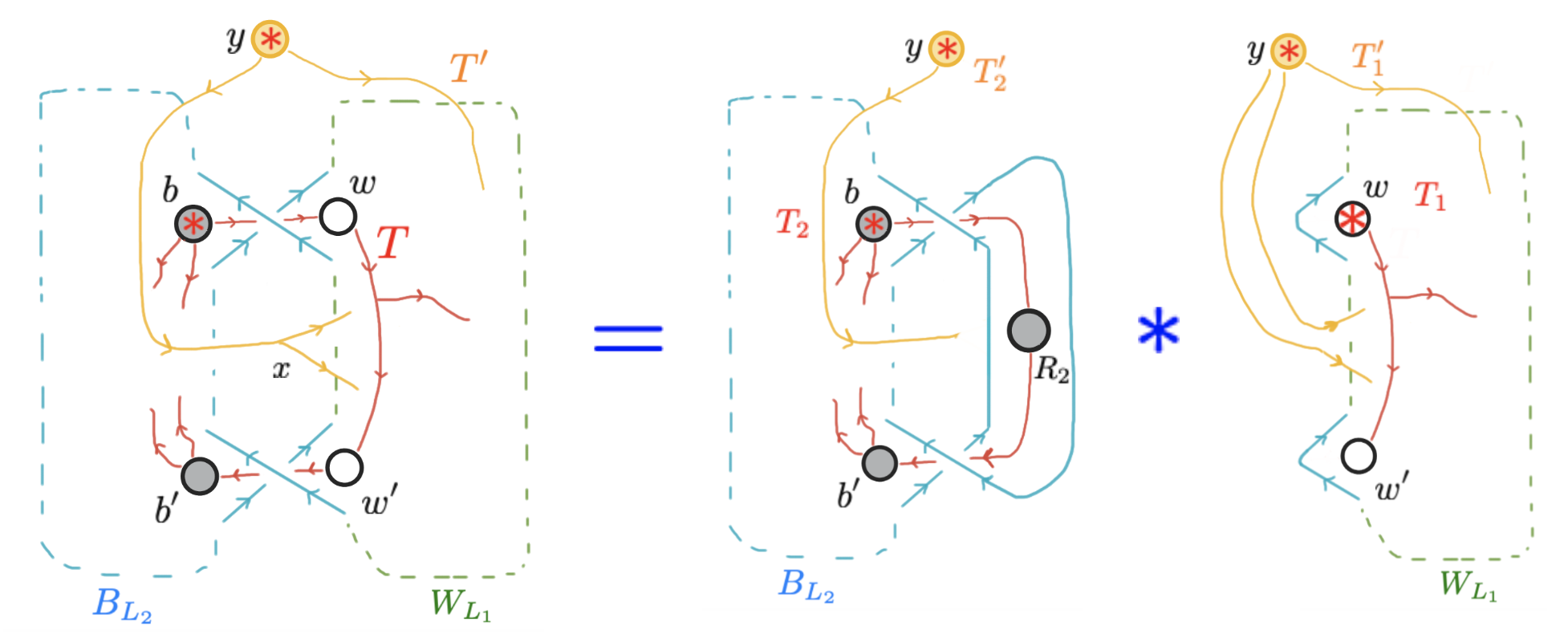}
\caption{Decomposition of spanning trees in Case (i).}\label{Caseianalysis}
\end{figure}

\textbf{Case (i):} The edges of $T \cap W_{L_1}$ form a spanning tree $T_1$ of $W_{L_1}$ rooted at $w$. The rest of the edges form a spanning tree $T_2$ of $B_{L_2}$ after replacing the edges $(b,w)$ and $(w',b')$ with the edges $(b,R_2)$ and $(R_2,b')$. There is an analogous decomposition of the planar dual $T'$ of $T$ into spanning trees $T'_1$ of $B_{L_1}$ and $T'_2$ of $W_{L_2}$, with one small modification. Consider the vertex $x$ of $T'$ that lies between the summands $L_1$ and $L_2$, and between the two crossings between the two summands. The outgoing edges of $x$ become the outgoing edges of $y$ in $T'_1$. See Figure~\ref{Caseianalysis} for an illustration of the decomposition.

This decomposition gives us the map  
\begin{equation}\label{mapiinj}
\begin{split}
\psi \colon \mathcal{D}^{(i)}_{(B_L,W_L)} &\hookrightarrow  \mathcal{D}_{(B_{L_2},W_{L_2})} \times \mathcal{D}_{(W_{L_1},B_{L_1})}.\\
(T,T') &\mapsto \bigl((T_2,T'_2),(T_1,T'_1)\bigr)
\end{split}
\end{equation}
If $T_2$ contains the edges $(b, R_2)$ and $(R_2,b')$, then the construction can be reversed, so $\psi$ is injective. Let 
\[
\mathcal{D}^{(i)}_{(B_{L_2},W_{L_2})} := \{(T_2,T'_2) \in \mathcal{D}_{(B_{L_2},W_{L_2})} \, : \, (b, R_2),(R_2,b') \in T_2\}.
\]
Then
\begin{equation}\label{mapi}
\psi \colon \mathcal{D}^{(i)}_{(B_L,W_L)} \xrightarrow{\sim} \mathcal{D}^{(i)}_{(B_{L_2},W_{L_2})} \times \mathcal{D}_{(W_{L_1},B_{L_1})}  
\end{equation}
is a bijection.
This map just changes the ends of some of the edges and does not change their position with respect to crossings, hence it preserves the weights of states: If $\psi(T,T') = \bigl((T_2,T_2'), (T_1,T_1')\bigr)$, then
\begin{equation}\label{weightpres}
w(T) = w(T_2) \cdot w(T_1) \text{ \,and\, } w(T') = w(T'_2) \cdot w(T'_1).    
\end{equation}

\textbf{Case (ii):} We decompose dual pairs of spanning trees in $\mathcal{D}^{(ii).1}_{B_L, W_L}$ as in Figures~\ref{Caseii1analysis} and those in $\mathcal{D}^{(ii).2}_{B_L, W_L}$ as in Figure~\ref{Caseii2analysis}. Similarly to Case (i), we define the following sets of dual pairs of spanning trees: 
\[
\mathcal{D}^{(ii).1}_{(B_{L_2},W_{L_2})} := \{\,(T_2,T'_2) \in \mathcal{D}_{(B_{L_2},W_{L_2})} \, : \, (b, R_2) \in T_2\ , \ (R_2,b') \not\in T_2 \,\},
\]
\[
\mathcal{D}^{(ii).2}_{(B_{L_2},W_{L_2})} := \{\,(T_2,T'_2) \in \mathcal{D}_{(B_{L_2},W_{L_2})} \, : \, (R_2, b') \in T_2\ , \ (b,R_2) \not\in T_2 \,\}.
\]
Decomposing dual pairs of spanning trees as
\[
(T,T') \mapsto \bigl((T_2,T'_2),(T_1,T'_1)\bigr)
\]
give bijections
\begin{equation}\label{mapii.1}
\mathcal{D}^{(ii).1}_{(B_L,W_L)} \xrightarrow{\sim} \mathcal{D}^{(ii).1}_{(B_{L_2},W_{L_2})} \times \mathcal{D}_{(W_{L_1},B_{L_1})},
\end{equation}
\begin{equation}\label{mapii.2}
\mathcal{D}^{(ii).2}_{(B_L,W_L)} \xrightarrow{\sim} \mathcal{D}^{(ii).2}_{(B_{L_2},W_{L_2})} \times \mathcal{D}_{(W_{L_1},B_{L_1})}.
\end{equation}
These maps preserve the weights of the Kauffman states in the sense of equation~\eqref{weightpres}.

A spanning tree of $B_{L_2}$ rooted at $b$ contains unique directed paths from $b$ to $b'$ and from $b$ to $R_2$. There are three possibilities for these paths, giving the partition
\[
\mathcal{D}_{(B_{L_2},W_{L_2})} = \mathcal{D}^{(i)}_{(B_{L_2},W_{L_2})} \sqcup \mathcal{D}^{(ii).1}_{(B_{L_2},W_{L_2})} \sqcup \mathcal{D}^{(ii).2}_{(B_{L_2},W_{L_2})}.
\]
We can now compute the weighted sum of all the dual pairs in Cases~(i) and~(ii). Using the weight-preserving maps in equations~\eqref{mapi}, \eqref{mapii.1} and~\eqref{mapii.2}, we can write 
\[
\sum\limits_{\mathcal{D}^{(i)}_{(B_{L},W_{L})} \ \sqcup\  \mathcal{D}^{(ii).1}_{(B_{L},W_{L})}\  \sqcup\  \mathcal{D}^{(ii).2}_{(B_{L},W_{L})}} \ w(T) \cdot w(T') = 
\]
\begin{equation*}
\begin{split}
&\left(\sum\limits_{\mathcal{D}^{(i)}_{(B_{L_2},W_{L_2})} \ \sqcup\  \mathcal{D}^{(ii).1}_{(B_{L_2},W_{L_2})}\  \sqcup\  \mathcal{D}^{(ii).2}_{(B_{L_2},W_{L_2})}} \ w(T_2) \cdot w(T'_2)\right) \cdot\\
&\left(\sum\limits_{\mathcal{D}_{(W_{L_1},B_{L_1})}} \ w(T_1) \cdot w(T'_1)\right) =
\end{split}
\end{equation*}
\[
\left(\sum\limits_{\mathcal{D}_{(B_{L_2},W_{L_2})}} \ w(T_2) \cdot w(T'_2)\right) \left(\sum\limits_{\mathcal{D}_{(W_{L_1},B_{L_1})}} \ w(T_1) \cdot w(T'_1)\right)
= \tilde{\Delta}_{L_2} \cdot \tilde{\Delta}_{L_1}.
\]
The last equality comes from the Kauffman state-sum formula. Note that, although the roots of the spanning trees of $W_{L_1}$ is $w$ in Cases (i) and (ii).1 and is $w'$ in Case (ii).2, this does not affect the result of the state-sum.

The product of two symmetric trapezoidal polynomials (i.e., the convolution of their coefficient sequences) with coefficients of alternating signs is also trapezoidal; see \cite[Proposition~2.1]{murasugi_1985}. Since the trapezoidal conjecture for special alternating links by Hafner, Mészáros, and Vidinas~\cite{KarolaLogconcavityOT}, the sequence of coefficients of $\tilde{\Delta}_{L_1} \cdot \tilde{\Delta}_{L_2}$ is trapezoidal.

\begin{figure}[h]
\centering
\includegraphics[scale=0.35]{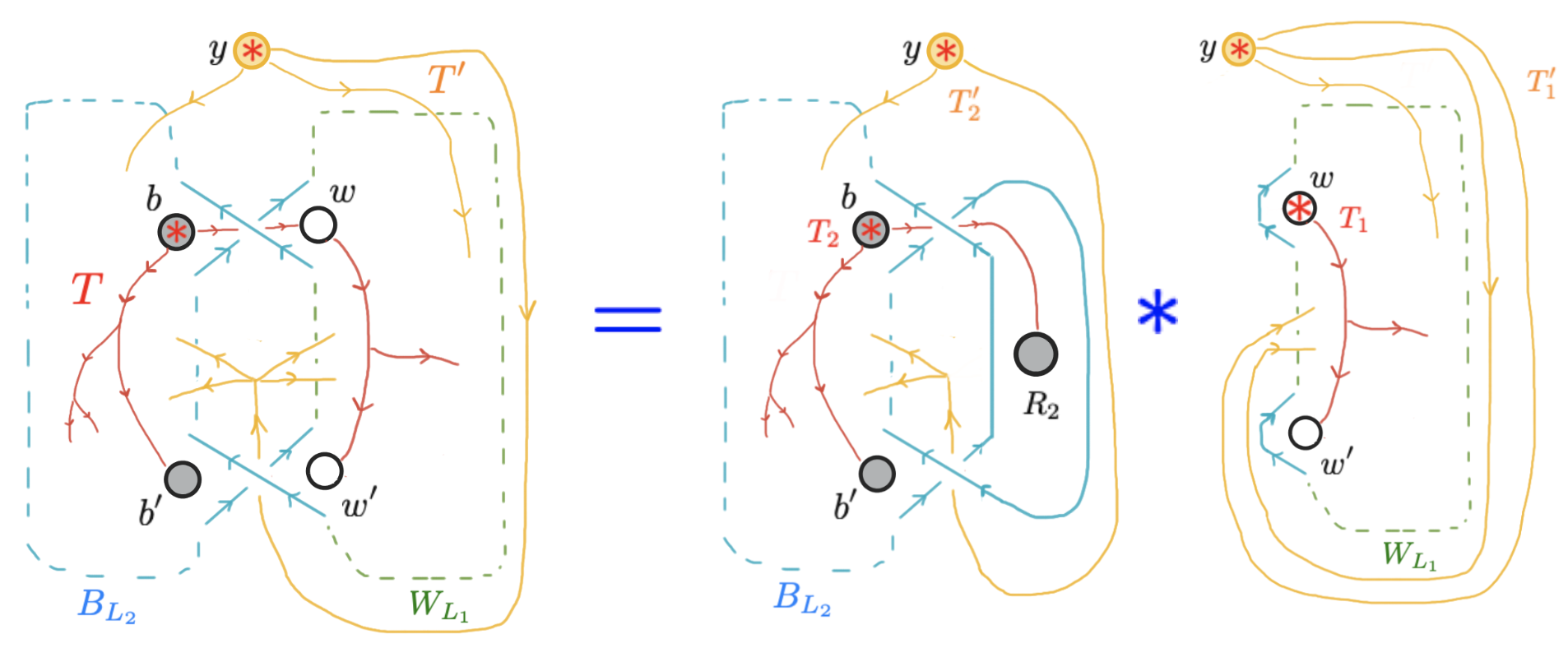}
\caption{Decomposition of spanning trees in Case (ii).1.}\label{Caseii1analysis}
\end{figure}
\begin{figure}[h]
\centering
\includegraphics[scale=0.35]{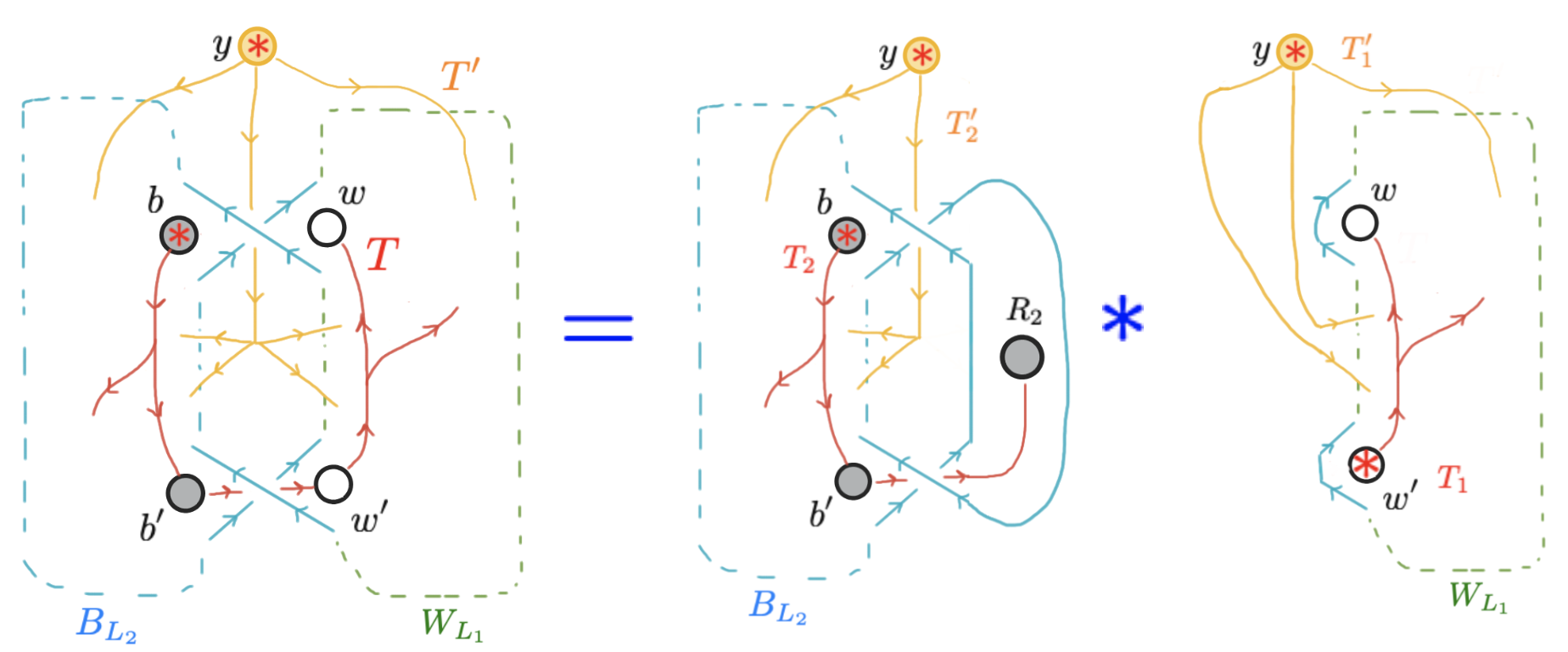}
\caption{Decomposition of spanning trees in Case (ii).2.}\label{Caseii2analysis}
\end{figure}

\textbf{Case (iii):} What remains is to show that the weighted sum of all the spanning trees in Case (iii) is also trapezoidal. Let $\widetilde{L}_2$ be the result of deleting $R_2$ from $L_2$ and smoothing the two crossings on its boundary. Let $W_{\widehat{L}_1}$ be the result of contracting $w$ and $w'$ in $W_{L_1}$ to a single vertex $\{w,w'\}$. The link $\widehat{L}_1$ can be constructed using the median construction from this graph. Note that $B_{\widetilde{L}_2}$ sits inside $B_L$. For a spanning tree $T$ of $B_L$, let $\widetilde{T}_2 := T \cap B_{\widetilde{L}_2}$. Due to the definition of Case (iii), the graph $\widetilde{T}_2$ is a rooted spanning tree of $B_{\widetilde{L}_2}$.  The rest of the edges of $T$ form a spanning forest of $W_{L_1}$ with two components rooted at $w$ and $w'$. By contracting $w$ and $w'$, this forest becomes a spanning tree $\widehat{T}_1$ of $W_{\widehat{L}_1}$ rooted at $\{w,w'\}$. This decomposition also extends to the planar duals $T'$, $\widehat{T}_1'$, and $\widetilde{T}_2'$, with a small modification. The outgoing edges of $x$ turn into the outgoing edges of $y$ in $\widetilde{T}_2'$. See Figure~\ref{Caseiii1analysis}.

\begin{figure}[h]
\centering
\includegraphics[scale=0.35]{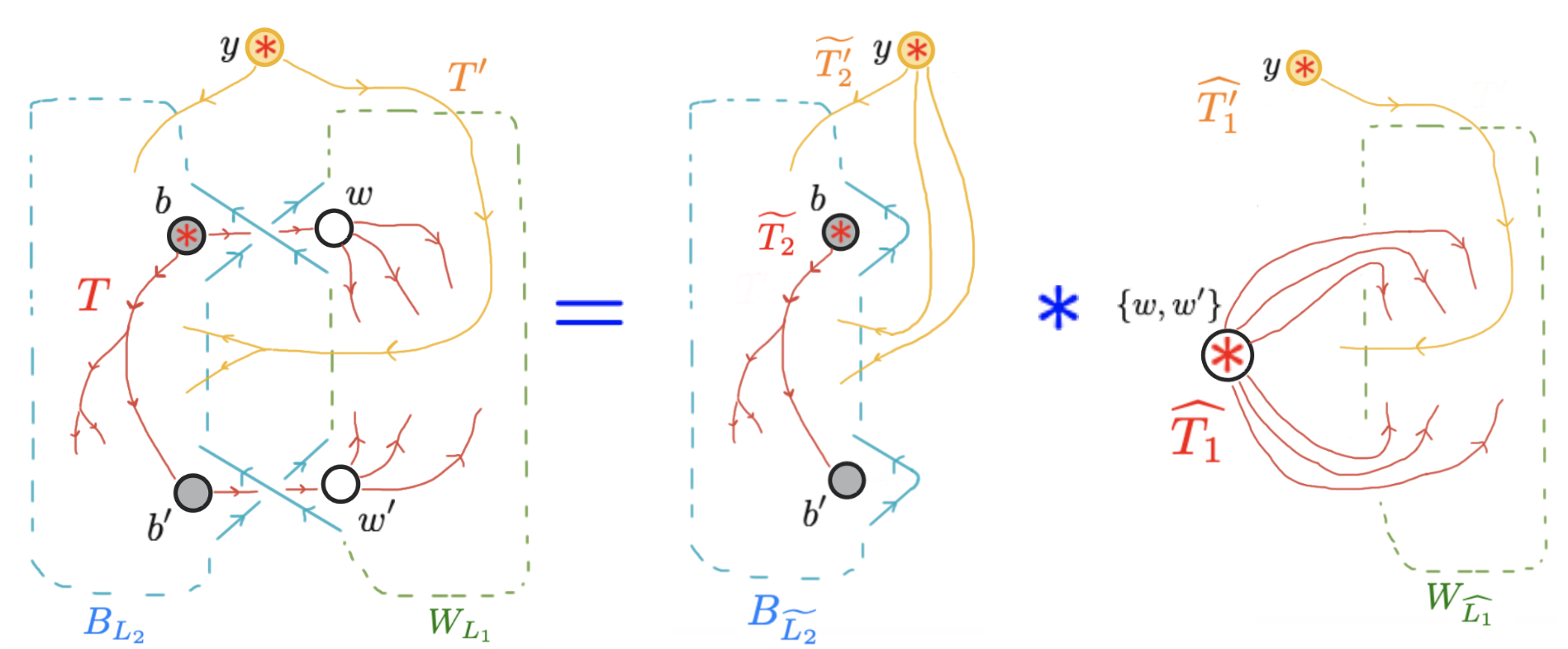}
\caption{Decomposition of spanning trees in Case (iii).}\label{Caseiii1analysis}
\end{figure}

Similarly to the previous cases, this construction is reversible, so gives us a bijection
\begin{equation}
\begin{split}
\mathcal{D}^{(iii)}_{(B_L,W_L)} &\xrightarrow{\sim} \mathcal{D}_{(B_{\widetilde{L}_2},W_{\widetilde{L}_2})} \times \mathcal{D}_{(W_{\widehat{L}_1},B_{\widehat{L}_1})}.\\
(T, T') &\mapsto \bigl((\widetilde{T}_2,\widetilde{T}_2'),(\widehat{T}_1,\widehat{T}_1')\bigr)
\end{split}
\end{equation}
This map is also weight-preserving in the sense of equation~\ref{weightpres}. This part of the argument is a bit different from the previous cases since this decomposition deletes the edges $(b,w)$ and $(b',w')$. Both of these edges have weight~$1$, and hence this does not affect the overall weight of the Kauffman states. This decomposition also deletes two of the edges of $T'$. These are the edges that pass over the other two crossings on the boundary of the type~2 Seifert cycle $C$. One can see that these two edges also have weight $1$.

Using this weight-preservation, we can finally compute the sum of the weights of all the dual pairs of spanning trees in Case (iii) as follows:
\[
\sum_{\mathcal{D}^{(iii)}_{(B_{L},W_{L})}} \ w(T) \cdot w(T') = 
\]
\[
\left(\sum\limits_{\mathcal{D}_{\bigl(B_{\widetilde{L}_2},W_{\widetilde{L}_2}\bigr)}} \ w \bigl(\widetilde{T}_2 \bigr) \cdot w \bigl(\widetilde{T}_2' \bigr)\right) \left(\sum_{\mathcal{D}_{\bigl(W_{\widehat{L}_1},B_{\widehat{L_1}}\bigr)}} \ w \bigl(\widehat{T}_1 \bigr) \cdot w \bigl(\widehat{T}_1' \bigr)\right) = \tilde{\Delta}_{\widetilde{L}_2} \cdot \tilde{\Delta}_{\widehat{L}_1}.
\]

This polynomial is also trapezoidal as both $\widetilde{L}_2$ and $\widehat{L}_1$ have bipartite Tait graphs and hence are special alternating links. This is clear for $B_{\widetilde{L}_2} = B_{L_2} \setminus \{R_2\}$. The graph $B_{\widehat{L}_1}$ is also bipartite as its planar dual $W_{\widehat{L}_1}$ comes from contracting two non-adjacent vertices in $W_{L_1}$. Since $B_{L_1}$ is bipartite, degrees of all vertices in $W_{L_1}$ are even and as a result all of the degrees in $W_{\widehat{L}_1}$ are also even.

Let $\widetilde{L}_1$ be the result of deleting $R_1$ from $L_1$ and smoothing the two crossings on its boundary. In $W_{\widetilde{L}_1}$, the vertices $w$ and $w'$ (corresponding to neighboring regions of $R_1$) are contracted to a single vertex $\{w,w'\}$. One can see that $W_{\widetilde{L}_1}$ and $W_{\widehat{L}_1}$ only differ in some loop edges connected to the vertex $\{w,w'\}$. Loop edges of the Tait graph pass over reducible crossings and do not change the isotopy class of the link. Hence, the links $\widetilde{L}_1$ and $\widehat{L}_1$ are equivalent, so $\tilde{\Delta}_{\widetilde{L}_1}=\tilde{\Delta}_{\widehat{L}_1}$.

These computations lead to the following final equation for the Alexander polynomial of the diagrammatic plumbing:
\begin{equation} \label{Alexanderplumbingformula}
    \tilde{\Delta}_{L} = \tilde{\Delta}_{L_1} \cdot \tilde{\Delta}_{L_2} + \tilde{\Delta}_{\widetilde{L}_1} \cdot \tilde{\Delta}_{\widetilde{L}_2}.
\end{equation}
We only need to compare the degrees of the two trapezoidal summands $\tilde{\Delta}_{L_1} \cdot \tilde{\Delta}_{L_2}$ and $\tilde{\Delta}_{\widetilde{L}_1} \cdot \tilde{\Delta}_{\widetilde{L}_2}$ to prove that $\tilde{\Delta}_L$ is trapezoidal. To be precise, we show that
\begin{equation}\label{degreedif}
 \text{deg} \bigl(\tilde{\Delta}_{L_1} \cdot \tilde{\Delta}_{L_2} \bigr) = \text{deg} \bigl(\tilde{\Delta}_{\widetilde{L}_1} \cdot \tilde{\Delta}_{\widetilde{L}_2} \bigr) + 1;
\end{equation}
see \cite[Proposition 2.1]{murasugi_1985} and note that we are working with symmetrized Alexander polynomials.
We determine the degrees of these polynomials by computing the genera of $\widehat{L}_1$ and $\widetilde{L}_2$. Since the median construction on the black Tait graphs yield minimal genus Seifert surfaces, we only need to compute the rank of the first homology of these graphs. For a graph $B$, we have
\[
\rank(H_1(B))=\#(\text{edges})-\#(\text{vertices})+\#(\text{components}).
\]
This formula can be used to deduce that 
\[
\rank \bigl(H_1 \bigl(B_{\widetilde{L}_2} \bigr) \bigr) = \rank \bigl(H_1 \bigl(B_{L_2} \bigr) \bigr)-1. 
\]
Deleting $R_2$ does not increase the number of components of the black Tait graph since the diagram is reduced and the degree of $R_2$ is two. Similarly, we have 
\[
\rank \bigl(H_1 \bigl(B_{\widetilde{L}_1} \bigr) \bigr) = \rank \bigl(H_1 \bigl(B_{L_1} \bigr) \bigr)-1.
\]
This proves equation~\eqref{degreedif} and concludes the proof of Theorem~\ref{plumbingofspecialthm}.
\end{proof}

We end Subsection~\ref{plumbingofspecial} by mentioning some corollaries of Theorem~\ref{plumbingofspecialthm} and its proof. The formula derived for the Alexander polynomial of diagrammatic plumbings might be of independent interest:

\begin{prop}\label{Alexanderformulaplumbing}
    Let the link $L = L_1 * L_2$ be the diagrammatic plumbing of the links $L_1$ and $L_2$. Construct $\widetilde{L}_1$ and $\widetilde{L}_2$ as in the proof of Theorem~\ref{plumbingofspecial}; i.e., by deleting the regions involved in the plumbing of $L_1$ and $L_2$. Then
    \[
    \tilde{\Delta}_{L}=\tilde{\Delta}_{L_1} \cdot \tilde{\Delta}_{L_2} + \tilde{\Delta}_{\widetilde{L}_1} \cdot \tilde{\Delta}_{\widetilde{L}_2}.
    \]
\end{prop}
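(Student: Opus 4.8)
The plan is to observe that the desired identity is precisely equation~\eqref{Alexanderplumbingformula}, which was already established in the course of proving Theorem~\ref{plumbingofspecialthm}. The crucial point is that the derivation of that equation used the special alternating hypothesis \emph{nowhere}: it relied only on Kauffman's state-sum formula and on the purely combinatorial analysis of dual spanning trees governed by the local diagrammatic structure of the plumbing near the type~2 Seifert cycle $C$. So I would simply extract that computation and record that it is valid for an arbitrary diagrammatic plumbing $L = L_1 * L_2$.

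Concretely, I would proceed as follows. First, fix a checkerboard coloring and the associated Tait graphs $B_L$, $W_L$, $B_{L_i}$, $W_{L_i}$, together with the Tait blow-up description of $B_L$ (and dually of $W_L$) recalled in the proof of Theorem~\ref{plumbingofspecialthm}; this description depends only on the local picture of Figure~\ref{diagramplumbing}, which is forced by the plumbing being a $4$-gon Murasugi sum. Next, I would reproduce the trichotomy of Kauffman states $\mathcal{D}_{(B_L,W_L)} = \mathcal{D}^{(i)}_{(B_L,W_L)} \sqcup \mathcal{D}^{(ii)}_{(B_L,W_L)} \sqcup \mathcal{D}^{(iii)}_{(B_L,W_L)}$, classified by the behavior of the directed paths $\beta$ and $\omega$, together with the weight-preserving bijections~\eqref{mapi}, \eqref{mapii.1}, \eqref{mapii.2} and the Case~(iii) bijection. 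Summing by Kauffman's formula, Cases~(i) and~(ii) contribute $\tilde{\Delta}_{L_1} \cdot \tilde{\Delta}_{L_2}$, while Case~(iii) contributes $\tilde{\Delta}_{\widehat{L}_1} \cdot \tilde{\Delta}_{\widetilde{L}_2} = \tilde{\Delta}_{\widetilde{L}_1} \cdot \tilde{\Delta}_{\widetilde{L}_2}$ (using the equivalence $\widehat{L}_1 \simeq \widetilde{L}_1$ from the theorem's proof), which yields the stated formula.

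The only point requiring care — and the main, if modest, obstacle — is the diagrammatic setup. In Theorem~\ref{plumbingofspecialthm} the summands were special alternating, and the plumbing was presented by inserting one diagram into a degree-two region of the other; one must check that this picture, hence the blow-up of the Tait graphs and the ensuing trichotomy of Kauffman states, depends only on the local structure of Figure~\ref{diagramplumbing} and not on any global feature of $L_1$ and $L_2$. To secure the weight bookkeeping in the general case, I would verify that the two edges $(b,w)$ and $(b',w')$ deleted in the Case~(iii) bijection, as well as the two dual edges passing over the remaining crossings on $\partial C$, all carry weight $1$, so that deleting them does not alter any monomial. Granting this, every bijection and every weight identity in the proof of Theorem~\ref{plumbingofspecialthm} is local and combinatorial, and the formula follows verbatim.
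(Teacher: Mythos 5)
Your proposal matches the paper's own treatment: Proposition~\ref{Alexanderformulaplumbing} is stated there precisely as a record of equation~\eqref{Alexanderplumbingformula}, which was derived inside the proof of Theorem~\ref{plumbingofspecialthm} via the Kauffman state-sum trichotomy of dual spanning trees, and the special alternating hypothesis enters that derivation only where trapezoidality (not the formula itself) is invoked. Your added care about checking that the setup depends only on the local picture of Figure~\ref{diagramplumbing}, and that the deleted edges in Case~(iii) carry weight~$1$, flags exactly the points the paper passes over silently.
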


As mentioned before, one can use an inductive argument to generalize Theorem~\ref{plumbingofspecialthm} to the diagrammatic plumbing of several special alternating links.

\begin{theo}
    The trapezoidal conjecture holds for the diagrammatic plumbing $L_1 * \cdots * L_n$ of special alternating links $L_1, \dots, L_n$.
\end{theo}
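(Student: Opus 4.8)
The plan is to induct on the number $n$ of special alternating summands, with the base case $n=1$ being the result of Hafner, Mészáros, and Vidinas~\cite{KarolaLogconcavityOT} and the case $n=2$ being Theorem~\ref{plumbingofspecialthm}. For the inductive step, suppose we have a diagrammatic plumbing $L = L_1 * \cdots * L_n$ with $n \geq 3$. Since each $*$-product happens over a type~2 Seifert cycle (as explained in Remark~\ref{diagramgonality}), I would isolate one such cycle $C$ that separates the last summand $L_n$ from the rest. Concretely, I would write $L = L' * L_n$, where $L' = L_1 * \cdots * L_{n-1}$ is the diagrammatic plumbing of the first $n-1$ summands. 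The key point is that $L'$ is itself an alternating link whose canonical Seifert surface is the Murasugi sum of those of $L_1, \dots, L_{n-1}$, and by the inductive hypothesis its Alexander polynomial $\tilde{\Delta}_{L'}$ is trapezoidal.

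The obstacle is that Theorem~\ref{plumbingofspecialthm} was proved for a plumbing of two \emph{special} alternating links, whereas $L'$ is in general only alternating, not special. I would therefore re-examine the proof of Theorem~\ref{plumbingofspecialthm} and observe that the only place where speciality of the summands was used is at the very end, to invoke~\cite{KarolaLogconcavityOT} and conclude that each of $\tilde{\Delta}_{L_1}\cdot\tilde{\Delta}_{L_2}$ and $\tilde{\Delta}_{\widetilde{L}_1}\cdot\tilde{\Delta}_{\widetilde{L}_2}$ is trapezoidal. The entire combinatorial analysis of the Tait blow-up, the trichotomy of Kauffman states (Cases (i), (ii), (iii)), and the resulting formula
\[
\tilde{\Delta}_L = \tilde{\Delta}_{L'} \cdot \tilde{\Delta}_{L_n} + \tilde{\Delta}_{\widetilde{L}'} \cdot \tilde{\Delta}_{\widetilde{L}_n}
\]
goes through verbatim, since it depends only on the diagrammatic structure of the plumbing near $C$ and not on any global property of the two pieces. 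Here $\widetilde{L}'$ and $\widetilde{L}_n$ are again obtained by deleting the degree-two regions involved in the plumbing.

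What must be checked, then, is that the two summands in this formula are trapezoidal polynomials with alternating signs, after which \cite[Proposition~2.1]{murasugi_1985} finishes the proof exactly as before, once the degree comparison~\eqref{degreedif} is re-established. For the first summand, $\tilde{\Delta}_{L'}$ is trapezoidal by induction and $\tilde{\Delta}_{L_n}$ is trapezoidal by~\cite{KarolaLogconcavityOT}, so their product is trapezoidal by~\cite[Proposition~2.1]{murasugi_1985}. The delicate point is the second summand: I need to argue that $\widetilde{L}'$ and $\widetilde{L}_n$ are again diagrammatic plumbings of special alternating links (with fewer summands, or special outright), so that induction applies to them as well. For $\widetilde{L}_n$ this follows as in Theorem~\ref{plumbingofspecialthm}, since deleting a degree-two region from the special link $L_n$ yields another special link. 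For $\widetilde{L}'$, deleting the corresponding degree-two region affects only the summand of $L'$ adjacent to $C$, so $\widetilde{L}'$ is a diagrammatic plumbing of $n-1$ pieces, each still special (or a contracted variant that remains special by the bipartiteness argument used for $\widehat{L}_1$ in the proof of Theorem~\ref{plumbingofspecialthm}), and the inductive hypothesis again yields trapezoidality.

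The remaining step is purely a genus bookkeeping computation: using the additivity of genus under Murasugi sums (Theorem~\ref{Gabai} and Proposition~\ref{Murasugidecomp}) together with the rank-of-$H_1$ formula for Tait graphs, I would verify that deleting the degree-two regions drops each relevant genus by the right amount, reproducing the degree identity
\[
\deg\bigl(\tilde{\Delta}_{L'}\cdot\tilde{\Delta}_{L_n}\bigr) = \deg\bigl(\tilde{\Delta}_{\widetilde{L}'}\cdot\tilde{\Delta}_{\widetilde{L}_n}\bigr) + 1.
\]
I expect the main conceptual obstacle to be confirming that the decomposition $L = L' * L_n$ is legitimate and that $L'$ inherits a reduced alternating diagram to which the induction applies; this is essentially an associativity statement for diagrammatic Murasugi sums, and it should follow from the Murasugi decomposition algorithm described before Proposition~\ref{Murasugidecomp}. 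Once associativity and the speciality-preservation of the region deletions are in place, the proof is a clean induction with the degree comparison as its only computational content.
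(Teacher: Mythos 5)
Your proposal is correct and follows essentially the same route as the paper: the paper also inducts on the number of summands, peels off a leaf summand $L_n$ (the one inside the innermost type~2 Seifert cycle) to write $L = L' * L_n$, applies Proposition~\ref{Alexanderformulaplumbing} --- which the paper already states for arbitrary, not necessarily special, links, so your ``re-examine where speciality is used'' step is exactly that proposition --- and closes the induction by observing that $\widetilde{L}' = L_1 * \cdots * \widetilde{L}_i * \cdots * L_{n-1}$ is again a diagrammatic plumbing of $n-1$ special alternating links. Your explicit re-establishment of the degree comparison and of the legitimacy of the regrouping $L = L' * L_n$ (which the paper handles via Remark~\ref{diagramgonality}) is, if anything, slightly more careful than the paper's own write-up of this step.
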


\begin{proof}
 The proof proceeds by considering the innermost Seifert cycle $C_m$ of type~2. The interior disk $C_m^{+}$ bounded by $C_m$ is also a Seifert domain and contains a summand of the diagrammatic plumbing. Without loss of generality, assume this summand is $L_n$ and define $L'=L_1*\cdots*L_{n-1}$.
 
 Now note that a diagrammatic Murasugi decomposition always happens over a tree $T$. This is due to the inclusion hierarchy of type~2 Seifert cycles, which induces a partial order on the summands. In this Murasugi sum tree, $L_n$ corresponds to a leaf vertex $v$ in $T$. Let $L_i$ be the summand corresponding to the neighbor of $v$ in $T$.
 
 We use an induction on the number of summands in the diagrammatic plumbing. Applying Proposition~\ref{Alexanderformulaplumbing} to $L = L_n * L'$ gives us 
    \[
    \tilde{\Delta}_{L}=\tilde{\Delta}_{L_n} \cdot \tilde{\Delta}_{L'} + \tilde{\Delta}_{\widetilde{L}_n} \cdot \tilde{\Delta}_{\widetilde{L}'}.
    \]
 The trapezoidal conjecture holds for $L_n$ and $\widetilde{L}_n$ which are special alternating. It also holds for $L'$ due to the induction hypothesis. Furthermore, one can show that $$\widetilde{L}' = L_1 * \cdots *\widetilde{L}_i * \cdots * L_{n-1},$$
 and hence the trapezoidal conjecture holds for $\widetilde{L}'$ as well. Thic concludes the proof. 
\end{proof}

The proof of Theorem~\ref{plumbingofspecialthm} does not need the Murasugi sums to be plumbings and a weaker condition also suffices. One can modify the proof to work in the case when all diagrammatic Murasugi sums are of length one or two. This leads to Theorem~\ref{Trapezoidalsumovertree}.

\begin{theo}\label{Trapezoidalsumovertree}
The trapezoidal conjecture holds for the diagrammatic Murasugi sum $L_1 * \cdots * L_n$ of special alternating links $L_1, \dots, L_n$ such that the length of all Murasugi sums are less than three. 
\end{theo}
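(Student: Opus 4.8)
The plan is to generalize the proof of Theorem~\ref{plumbingofspecialthm} from plumbings (length~$2$ sums) to diagrammatic Murasugi sums of length~$1$ or~$2$, and then to iterate over the Murasugi sum tree exactly as in the proof of the multi-summand plumbing theorem. The main new ingredient is a length-$1$ analogue of the Alexander polynomial formula in Proposition~\ref{Alexanderformulaplumbing}. For a length-$1$ diagrammatic Murasugi sum over a type~$2$ Seifert cycle $C$, equation~\eqref{gonalityequation} gives $s = x_1 + y_1 - 1$ with $x_1, y_1 \geq 2$, so unlike the plumbing case the two summands are glued along a polygon of higher gonality, and the local picture near $C$ has $x_1$ consecutive crossings belonging to one domain followed by $y_1$ belonging to the other. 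First I would redo the Tait blow-up construction of Theorem~\ref{plumbingofspecialthm} in this local model: the vertex $R_2$ of $B_{L_2}$ is again replaced by (a rooted subgraph built from) $W_{L_1}$, but now there are several crossings between the summands rather than two, so the paths $\beta$ from $b$ to $b'$ and $\omega$ between the attaching vertices of $W_{L_1}$ must be analyzed through a chain of gluing edges.

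Next I would carry out the same case analysis on the pair of dual spanning trees $(T,T') \in \mathcal{D}_{(B_L,W_L)}$, sorting them by how $\beta$ and $\omega$ interact with the gluing edges, in direct parallel with Cases~(i), (ii), and~(iii) above. The key point to verify is that all the gluing edges carry weight~$1$ (they pass over the crossings on $\partial C$ that do not belong to either interior domain, exactly as the two weight-$1$ edges deleted in Case~(iii)), so that the weight-preserving bijections of equations~\eqref{mapi}, \eqref{mapii.1}, \eqref{mapii.2} and their Case~(iii) analogue survive in the longer chain. Summing the Kauffman state weights as in the displayed computations should yield a formula of the shape
\begin{equation*}
\tilde{\Delta}_{L} = \tilde{\Delta}_{L_1} \cdot \tilde{\Delta}_{L_2} + \tilde{\Delta}_{\widetilde{L}_1} \cdot \tilde{\Delta}_{\widetilde{L}_2},
\end{equation*}
where $\widetilde{L}_1$ and $\widetilde{L}_2$ are again obtained by deleting the relevant boundary regions and smoothing, and where all four links remain special alternating by the same bipartiteness argument (deleting a degree-two region, or contracting two non-adjacent white vertices, preserves bipartiteness of the black Tait graph). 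By Proposition~\ref{Murasugidecomp} and Theorem~\ref{Gabai}, genus is additive, so the degree comparison of equation~\eqref{degreedif} goes through via the same rank-of-$H_1$ computation, giving that the two convolution summands have degrees differing by~$1$. Invoking the trapezoidal conjecture for special alternating links~\cite{KarolaLogconcavityOT} together with \cite[Proposition~2.1]{murasugi_1985} on products and offset sums of symmetric trapezoidal sequences then shows $\tilde{\Delta}_L$ is trapezoidal.

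With the length-$1$ and length-$2$ cases established, the multi-summand statement follows by the induction already used in the proof of the plumbing version: pass to the innermost type~$2$ Seifert cycle $C_m$, whose interior domain contains a single summand $L_n$; write $L = L_n * L'$ with $L' = L_1 * \cdots * L_{n-1}$; apply the appropriate (length~$1$ or~$2$) two-summand formula; and use that $L_n$, $\widetilde{L}_n$ are special alternating, that $L'$ is trapezoidal by the induction hypothesis, and that $\widetilde{L}' = L_1 * \cdots * \widetilde{L}_i * \cdots * L_{n-1}$ is again a length-$\!<\!3$ diagrammatic Murasugi sum of special alternating links, hence trapezoidal by induction. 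The main obstacle I anticipate is the length-$1$ local analysis: with $x_1, y_1$ larger than in the plumbing case the Tait blow-up attaches $W_{L_1}$ along a longer arc of the boundary of $R_2$, so one must check carefully that the trichotomy of dual trees is still exhaustive and that every edge created or deleted in the decomposition carries weight~$1$. Once that verification is in place, the algebra and the degree bookkeeping are formally identical to the plumbing case.
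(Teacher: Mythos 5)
Your high-level plan --- redo the two-summand Kauffman-state analysis of Theorem~\ref{plumbingofspecialthm}, then induct over the Murasugi sum tree via the innermost type~2 Seifert cycle --- is exactly the paper's route, but two things go wrong in the execution. First, you conflate plumbings with length-2 sums: the proof of Theorem~\ref{plumbingofspecialthm} shows that a plumbing is precisely a length-2 sum with $x_1=x_2=y_1=y_2=1$, so a general length-2 sum with some $x_j$ or $y_j$ larger than one (e.g.\ the closure of $\sigma_1^{p_1}\sigma_2^{-q_1}\sigma_1^{p_2}\sigma_2^{-q_2}$, a length-2 sum of two torus links) has gonality greater than four, is \emph{not} covered by Theorem~\ref{plumbingofspecialthm}, and needs the new argument just as much as the length-1 case you concentrate on.

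Second, and this is the step that actually fails: you keep the definition of $\widetilde{L}_1,\widetilde{L}_2$ as ``deleting the relevant boundary regions and smoothing.'' The paper identifies the redefinition of these links as the most important change: in the general setting one must instead glue an unknotted band to $L_i$ inside $R_i$, separating the two groups of twists along $\partial R_i$; on the Tait graphs this splits the vertex $R_i$ into two vertices (equivalently, contracts the two attaching vertices $w,w'$ in the dual), which is exactly the operation that the Case~(iii) bijection produces. The two definitions agree only for plumbings, where $R_i$ is a bigon. With your definition the formula is concretely false: for the length-1 sum $L$ given by the closure of $\sigma_1^{p}\sigma_2^{-q}$, which is the connected sum $T_{2,p}\# T_{2,-q}$, deleting $R_i$ and smoothing turns both $\widetilde{L}_1$ and $\widetilde{L}_2$ into unknots, so your formula would yield $\tilde{\Delta}_{L}=\tilde{\Delta}_{L_1}\cdot\tilde{\Delta}_{L_2}+1$, whereas the correct value is $\tilde{\Delta}_{L_1}\cdot\tilde{\Delta}_{L_2}$; the band-glued links instead acquire a split unknot component, hence have vanishing Alexander polynomial, and the formula does reduce to the product. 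The degree bookkeeping breaks for the same reason: deleting a Tait vertex of degree $d$ and smoothing drops $\rank(H_1)$ of the Tait graph by $d-1$, so equation~\eqref{degreedif} holds only when $d=2$, i.e.\ for plumbings, and Murasugi's product criterion \cite[Proposition~2.1]{murasugi_1985} no longer applies; the band gluing keeps every edge and drops the rank by exactly one, whatever $x_j,y_j$ are. So your case analysis and induction scheme are sound, but until delete-and-smooth is replaced by the band gluing (vertex splitting), neither the Alexander polynomial formula nor the degree comparison you rely on goes through.
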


\begin{rema}
The condition that the length of a Murasugi sum is less than three means that we allow $x_1$, $x_2$, $y_1$, $y_2$ to be any positive integers, but there are no $x_i$, $y_i$ for $i \geq 3$, using the notation of Figure~\ref{diagramPlumbingpatch}. The most important change is in the definition of $\widetilde{L}_1$ and $\widetilde{L}_2$ in the formula of Proposition~\ref{Alexanderformulaplumbing}. In this general setting, one must construct $\widetilde{L}_1$ (resp.\ $\widetilde{L}_2$) by gluing an unknotted band to $L_1$ (resp.\ $L_2$) inside the region $R_1$ (resp.\ $R_2$) and separating the two groups of twists along its boundary. Gluing these bands changes the Tait graph by a planar contraction of two vertices (or a decomposition of a vertex into two in the dual plane graph), and this is exactly what we need for generalizing the proof of Theorem~\ref{plumbingofspecialthm}. See Figures~\ref{diagMurasugisumoflen2}, \ref{L1tilde}, and~\ref{L2tilde} for some examples.

Note that the gluing construction generalizes the definition in the case of plumbings, as gluing a band to a bigon region results in two kinks that can be removed by Reidmeister~1 moves, hence is equivalent to smoothing the two crossings along the boundary. 
\end{rema}

\begin{rema}
Suppose one tries to use the same method for Murasugi sums of length at least three. In that case, extra terms appear in the decomposition that cannot be easily grouped together such that they sum up to known trapezoidal polynomials. This can also be seen in Alrefai--Chbili~\cite{small3braids}, where the terms appearing in the formula for the Alexander polynomial of 3-braids of length $3$ is much more complicated than 3-braids of length $2$.    
\end{rema}

\begin{figure}[h]
\centering
\includegraphics[scale=0.35]{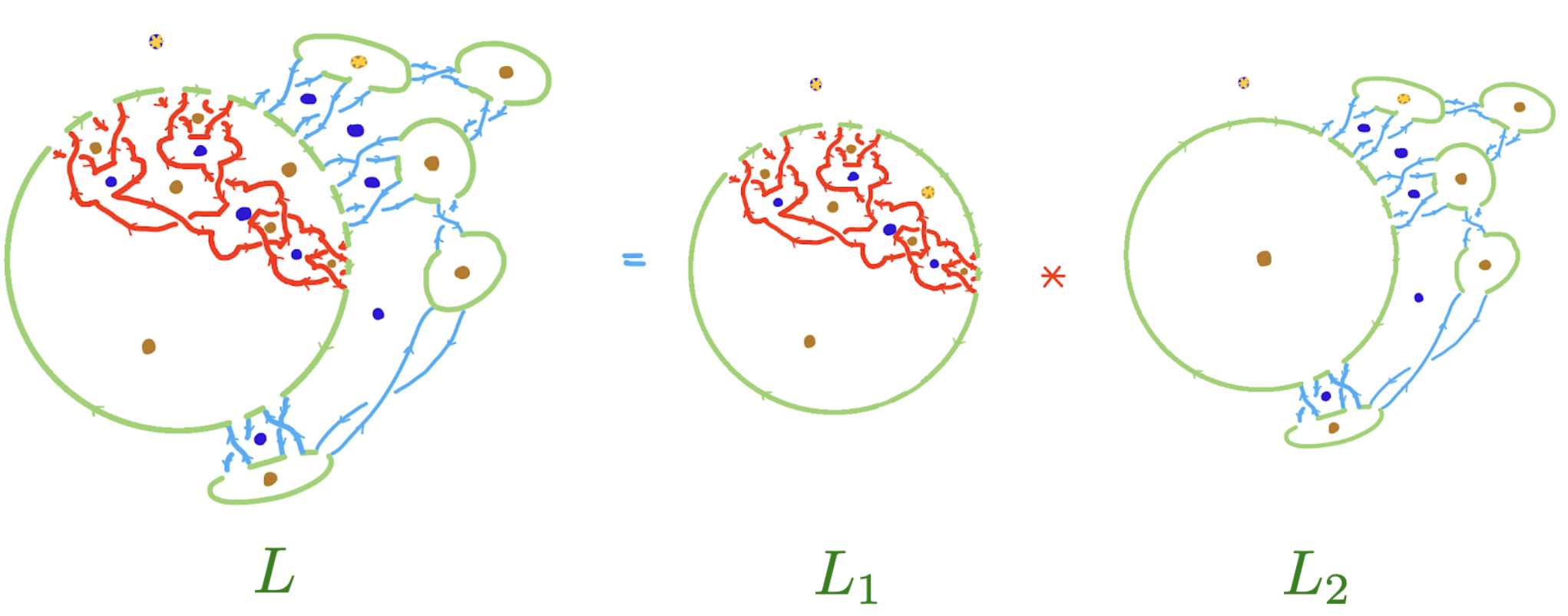}
\caption{A diagrammatic Murasugi sum of length two.}\label{diagMurasugisumoflen2}
\end{figure}

\begin{figure}[h]
\centering
\includegraphics[scale=0.35]{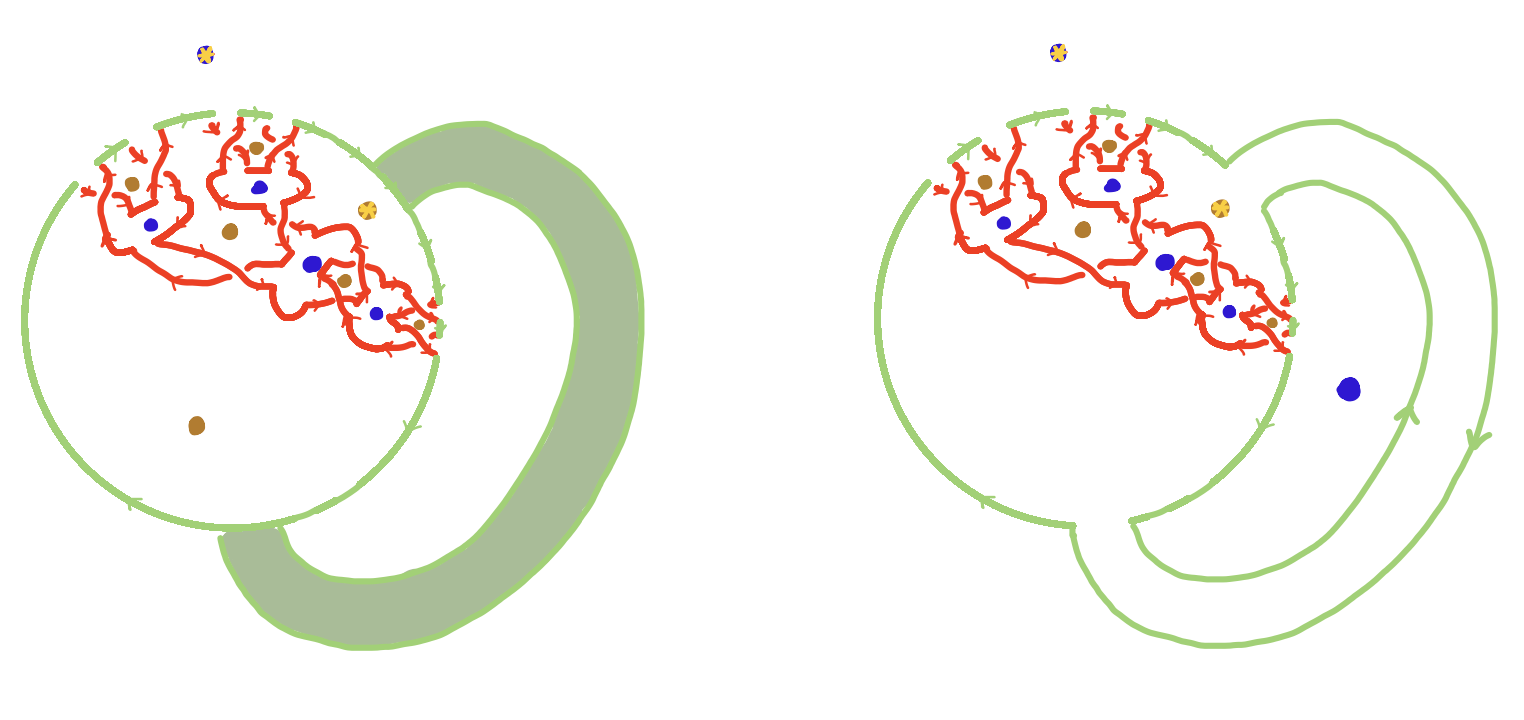}
\caption{Construction of $\widetilde{L}_1$ in the example of Figure~\ref{diagMurasugisumoflen2}.}\label{L1tilde}
\end{figure}

\begin{figure}[h]
\centering
\includegraphics[scale=0.35]{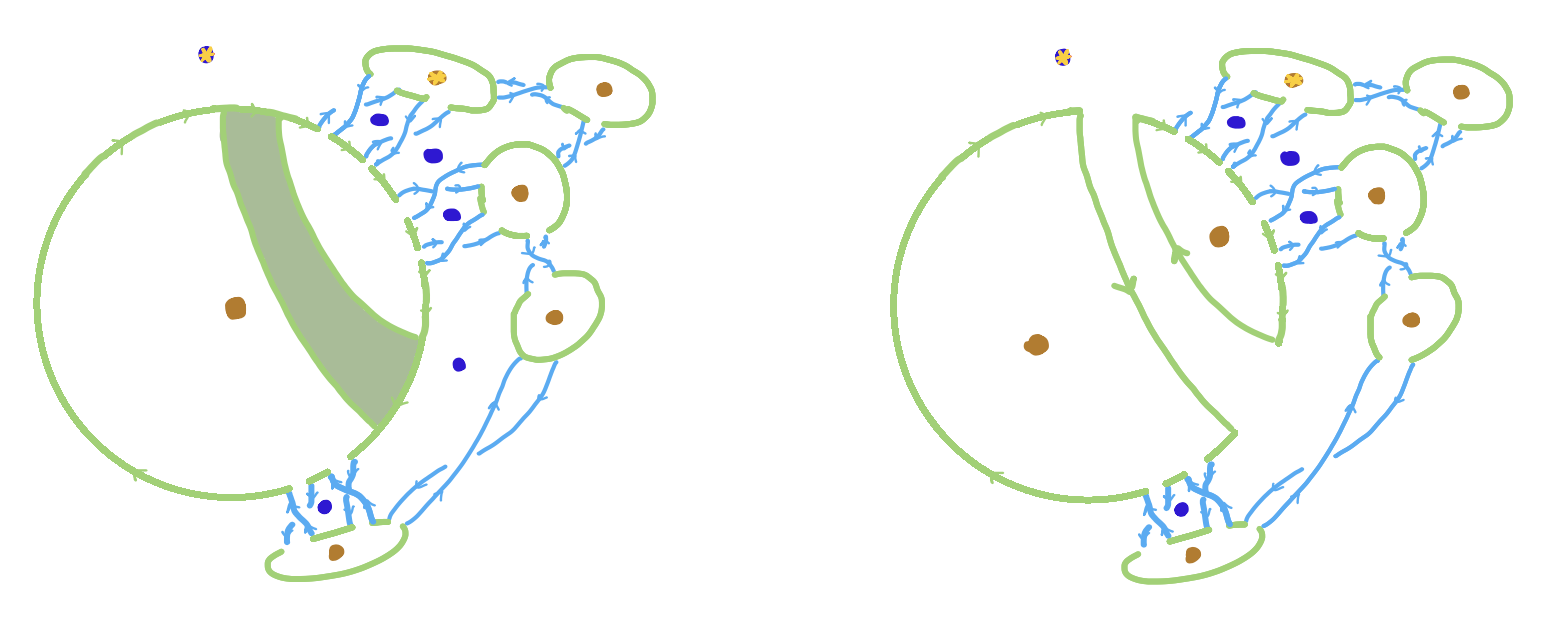}
\caption{Construction of $\widetilde{L}_2$ in the example of Figure~\ref{diagMurasugisumoflen2}.}\label{L2tilde}
\end{figure}

\subsection{A multivariable refinement of the Alexander polynomial for alternating 3-braids}\label{3braids}

In this subsection, we prove some inequalities between coefficients of the Alexander polynomial of alternating 3-braids. We start with a combinatorial generalization of the work of Hafner, Mészáros, and Vidinas~\cite{KarolaLogconcavityOT} on special alternating knots. First, we review some of their main ideas.

Consider a special alternating link $L$ with diagram $D$. Based on Subsection~\ref{plumbingofspecial} (see Figures~\ref{turnspecial} and~\ref{SpecialCrowellgraph}), we know that one of the Tait graphs of $L$ is a bipartite plane graph. We denote this graph by $B$. Furthermore, let $G_D$ be the Crowell graph associated to $D$.

\begin{figure}[h]
\centering
\includegraphics[scale=0.4]{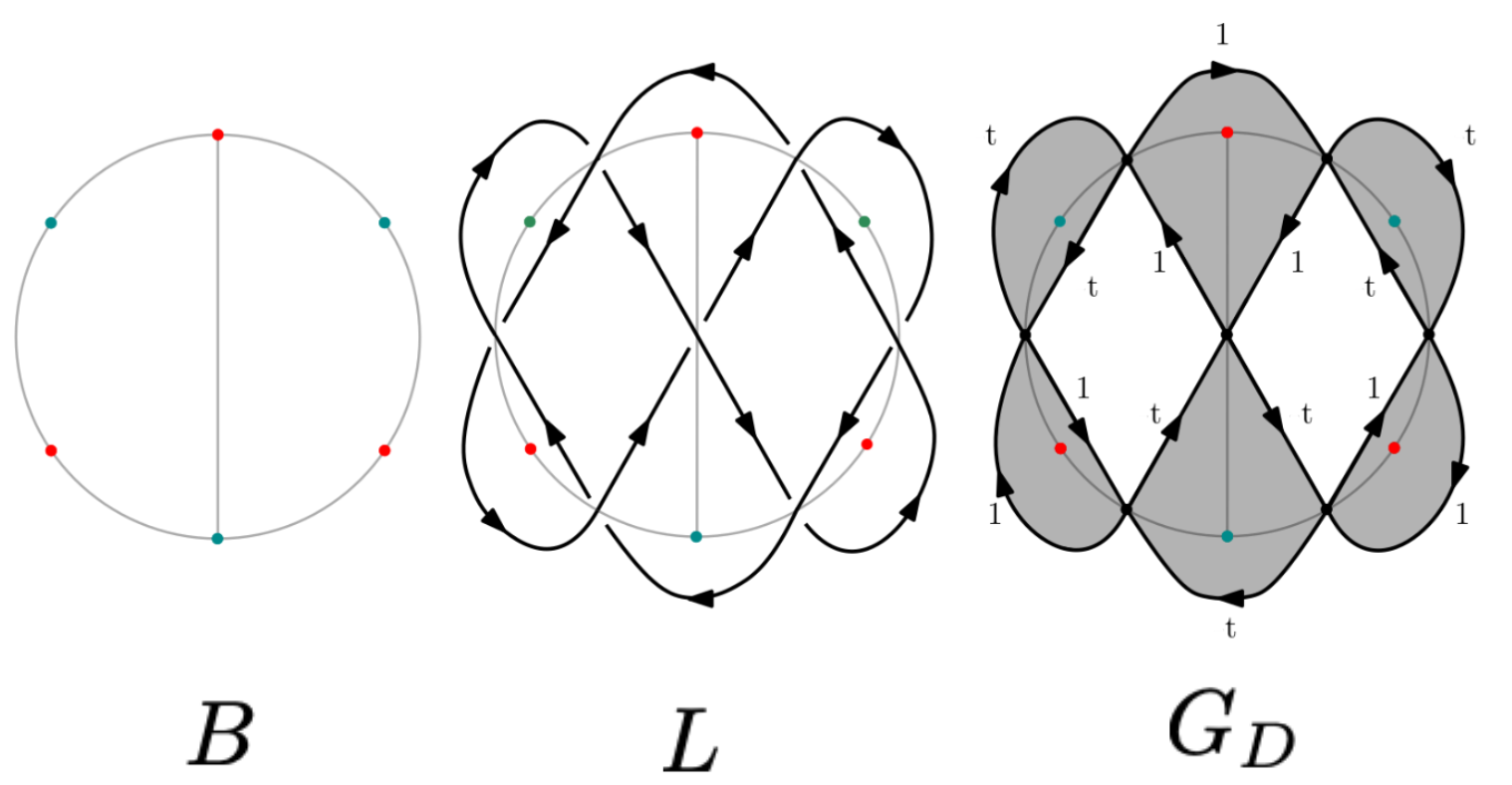}
\caption{A bipartite Tait graph $B$, the corresponding special alternating link $L$, and the Crowell graph $G_D$; see~\cite{KarolaLogconcavityOT}.}\label{SpecialCrowellgraph}
\end{figure}

Note that a special alternating link is either positive or negative. Indeed, suppose that there were two consecutive crossings of different signs. See Figure~\ref{PosNeg}, where the boundaries of three different Seifert cycles are highlighted. These Seifert cycles cannot all bound regions of the link diagram, so they cannot all be Seifert cycles of type~1. Hence, all crossings have the same sign in a special alternating diagram.

\begin{figure}[h]
\centering
\includegraphics[scale=0.3]{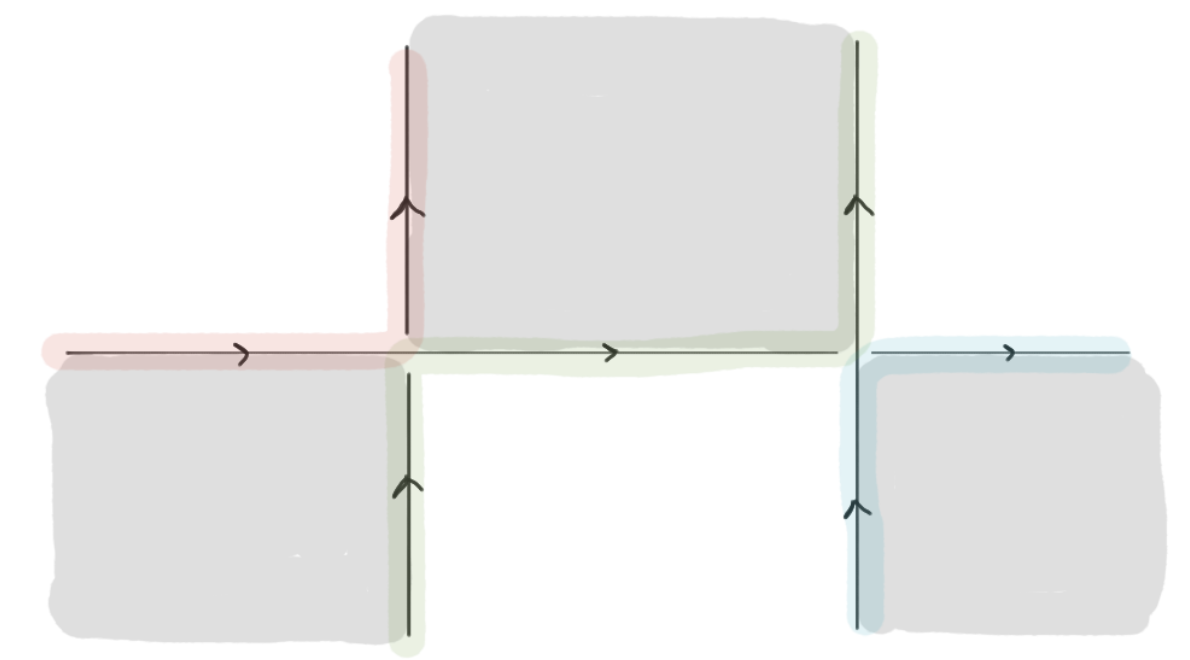}
\caption{Two consecutive crossings of different signs and the Seifert cycles in their neighbourhood.}\label{PosNeg}
\end{figure}

Up to taking its mirror, one can assume that the special alternating diagram is positive. Since the mirror of a knot has the same Alexander polynomial, we only consider positive diagrams in this subsection.

One of the main ideas of Hafner, Mészáros, and Vidinas is to assign new weights to the edges of $G_D$. They choose the weights carefully so that Crowell's spanning tree weighted sum gives a multi-variable refinement of the Alexander polynomial.

Recall that vertices of $B$ correspond to black regions in $G_D$. We use the same symbol to refer to a vertex and the corresponding region.
The Crowell graph of a special alternating diagram has the following property. 

\begin{prop}\label{SpecialCrowellprop}
The edges of $G_D$ are oriented such that the boundary of any black region $v \in B$ is oriented clockwise. Furthermore, there is a partition $V(B)= S \sqcup R$ of the vertex set of $B$ such that all the edges on the boundary of a region in $S$ have weight $t$, and all the edges on the boundary of a region in $R$ have weight $1$.
\end{prop}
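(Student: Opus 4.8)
The plan is to reduce everything to the local picture at a single crossing and then propagate it around each black region using the alternating condition. Since all crossings of a positive special diagram have the same sign, one model crossing suffices: I would fix one and record the Crowell orientation (each edge points from the crossing where it is an over-arc to the crossing where it is an under-arc, so the two under-arcs are exactly the incoming edges) together with the weighting of the Crowell graph, under which these two incoming under-arcs carry weight $t$ on the right and weight $1$ on the left of the oriented over-strand. Because the diagram is special, each black region is a disk bounded by a Seifert cycle, and the two black regions at the crossing occupy opposite corners (the other two corners being white). A direct inspection of this model then records three facts, valid at every crossing: (a) each black region lies to the right of both of its Crowell-oriented boundary arcs; (b) the black region on the right of the over-strand receives the weight-$t$ incoming under-arc, and the one on the left receives the weight-$1$ under-arc; and (c) with respect to the orientation induced on $\partial v$ by the link, the right-of-over region is entered along an under-arc and exited along an over-arc, while the left-of-over region is entered along an over-arc and exited along an under-arc.

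Part (1) is then immediate from (a): since $v$ stays on the right of each of its Crowell-oriented boundary arcs, its boundary $\partial v$ is oriented clockwise.

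For part (2) I would first show that each black region has a single \emph{type}: it is on the right of the over-strand at all of its crossings, or on the left at all of them. In a special diagram $\partial v$ is a single Seifert cycle; I traverse it following the link orientation. At each crossing the two boundary arcs of $\partial v$ are one over-arc and one under-arc, and by the alternating condition every edge is an over-arc at one endpoint and an under-arc at the other. Hence if at one crossing we enter $v$ along an under-arc and leave along an over-arc, that over-arc is an under-arc at the next crossing, so we again enter along an under-arc; thus the property ``entered along an under-arc'' (equivalently, by (c), ``right of the over-strand'') propagates around all of $\partial v$, and likewise for ``entered along an over-arc''. Since a weight is assigned to an edge at its under-endpoint, and each edge of $\partial v$ is the incoming under-arc at exactly one of its crossings, (b) now shows that all edges bounding a right-of-over region have weight $t$ and all edges bounding a left-of-over region have weight $1$. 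Taking $S$ (resp. $R$) to be the set of right-of-over (resp. left-of-over) black regions yields the partition $V(B)=S\sqcup R$; since the two black regions at each crossing are of opposite type, this is precisely the bipartition of $B$.

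The main obstacle is the propagation step establishing that the type is globally constant along $\partial v$; the surrounding local bookkeeping of orientations and weights is routine, the one subtlety being that the propagation must be run with the link orientation of $\partial v$ rather than the Crowell orientation, since the latter makes every black boundary clockwise and therefore does not separate the two types.
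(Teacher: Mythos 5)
Your proposal is correct and takes essentially the same approach as the paper: the paper's proof of Proposition~\ref{SpecialCrowellprop} is precisely a local analysis of the orientations and weights of the Crowell graph around the vertices of the bipartite Tait graph (Figure~\ref{localSpecialCrowell}), which is exactly what your model crossing encodes. Your only addition is the explicit propagation, via the alternating condition and the link orientation of $\partial v$, of the entered-along-an-under-arc property around each black region, which makes rigorous the coherence step that the paper delegates to its figure.
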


\begin{proof}
    Using the fact that the link is special alternating, the result follows from a local analysis of the orientations and weights of the edges of the Crowell graph around vertices of the bipartite Tait graph $B$, as shown in Figure~\ref{localSpecialCrowell}; see also \ Figure~\ref{SpecialCrowellgraph}.
\end{proof}
    
\begin{figure}[h]
\centering
\includegraphics[scale=0.3]{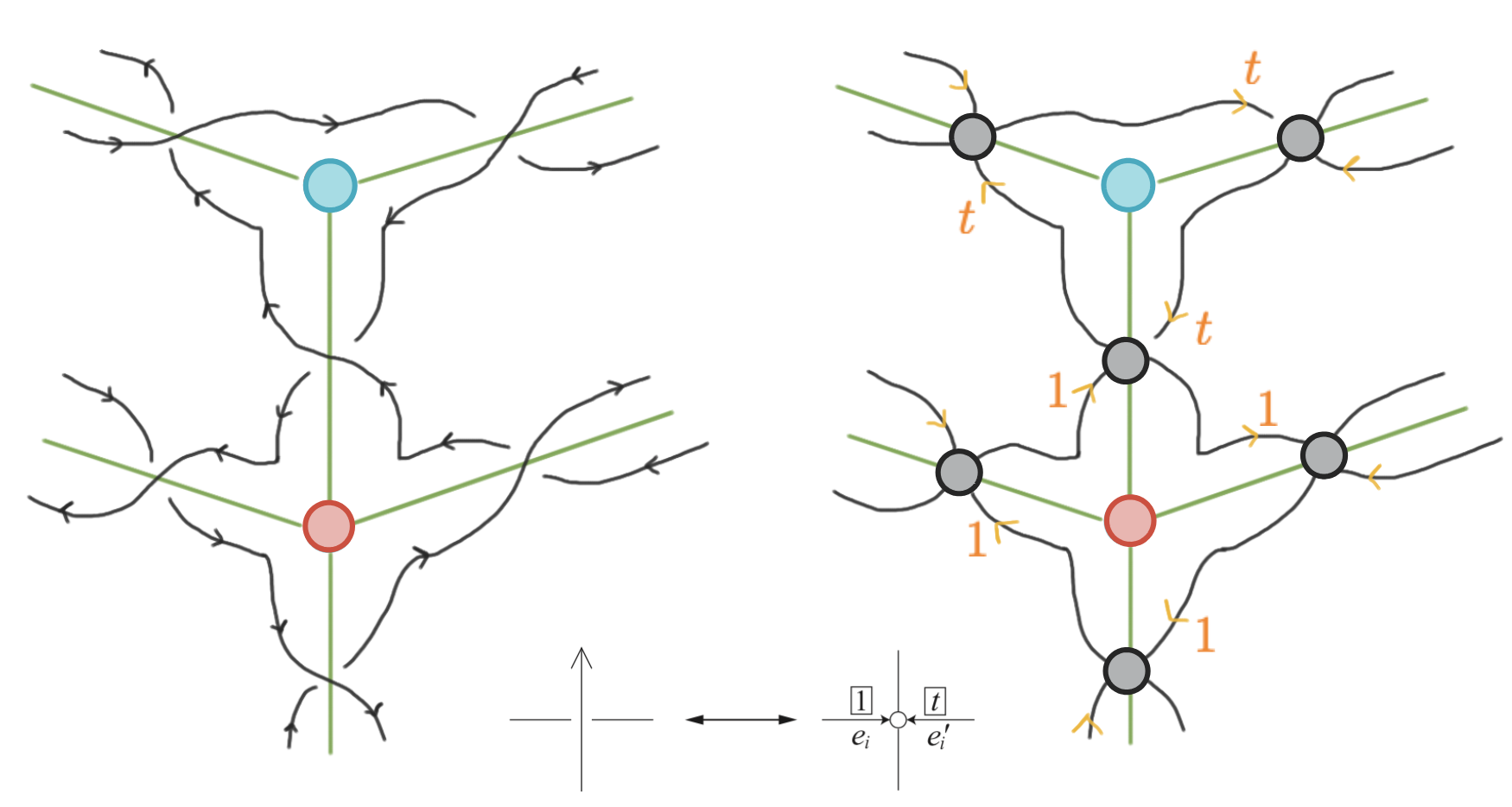}
\caption{A special alternating link (left) and its Crowell graph (right). The blue and red vertices are in $S$ and $R$, respectively.}\label{localSpecialCrowell}
\end{figure}

We define the multi-variable refinement of the Alexander polynomial as follows. Let $S =\{v_1,\dots,v_n\}$ and $R = \{v_{n+1},\dots,v_{n+m}\}$, and let $\widetilde{G}_D$ be the same oriented rooted graph as the Crowell graph $G_D$, but with different weights. Let the weight of the edges along the boundary of the region corresponding to $v_i$ be $x_i$. Recall that $\mathcal{T}\bigl(\widetilde{G}_D, c_1 \bigr)$ is the set of all maximal oriented trees in $\widetilde{G}_D$ rooted at $c_1$. We define
\[
P_{\widetilde{G}_D, c_1}(x_1,\dots,x_n,x_{n+1},\dots,x_{n+m}) = \sum_{T \in \mathcal{T}\left(\widetilde{G}_D, c_1 \right)} W(T),
\]
where $W(T)$ is the product of the weights of the edges in $T$; see Figure~\ref{refinedCrowell}.

\begin{figure}[h]
\centering
\includegraphics[scale=0.35]{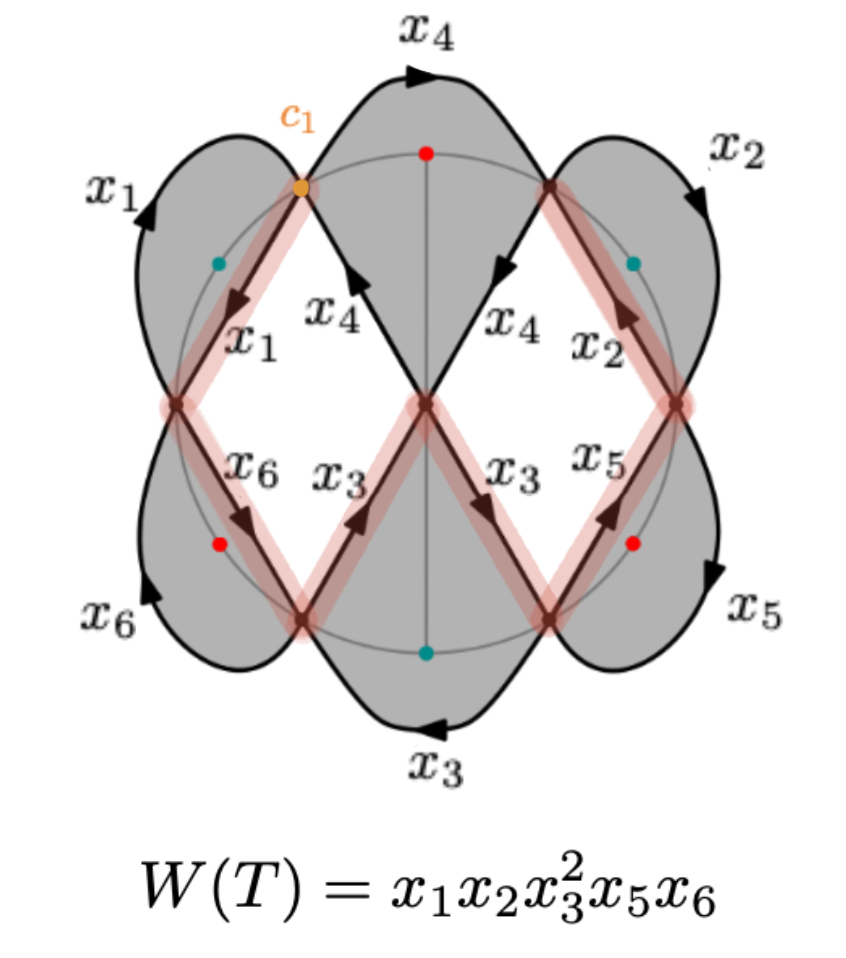}
\caption{Weights on the graph $\widetilde{G}_D$; see~\cite{KarolaLogconcavityOT}.}\label{refinedCrowell}
\end{figure}

Note that $P_{\widetilde{G}_D, c_1}$ is a refinement of the Crowell polynomial for special alternating links, as 
\[
P_{\widetilde{G}_D,c_1}(t,\dots,t,1,\dots,1) = P_{(G_D,c_1)}(t) = \Delta_{L}(-t).
\]
The advantage of working with this refinement is that all of its non-zero coefficients turn out to be $1$. Furthermore, its support has a specific combinatorial convexity property called \textit{M-convexity}; see Definition~\ref{Mconvexdef}. (Here, the support $\text{Supp}(P)$ of a multi-variable polynomial $P$ is the set of all $(a_1,\dots, a_{n+m}) \in \mathbb{Z}^{n+m}$ such that the monomial $x_1^{a_1}\cdots x_{n+m}^{a_{n+m}}$ has non-zero coefficient in $P$.) These facts follow from a result proved by the third author~\cite{KALMAN2013823}; see also \cite{KarolaLogconcavityOT}:

\begin{lemm}\label{Kalman}
Let $C_1,\dots,C_n,C_{n+1},\dots,C_{n+m}$ be cycles in $\widetilde{G}_D$ such that $C_i$ is the boundary of the region $v_i$. Let $T$ be any spanning tree in the unoriented graph underlying $\widetilde{G}_D$. Let $a_i(T)$ be the number of edges of $T$ lying in $C_i$. Then there exists a unique oriented tree $A$ rooted at $c_1$ such that $a_i(A) = a_i(T)$ for all $i \in \{1, \dots, n+m\}$.
\end{lemm}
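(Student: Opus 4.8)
The plan is to translate the statement into a question about orientations of the Tait graph $B$ and then to recognize the set of achievable profiles as the lattice points of a polymatroid base polytope. First I would record the combinatorial structure supplied by Proposition~\ref{SpecialCrowellprop}. In the checkerboard coloring each arc of $D$ separates one black region from one white region, so every edge of $\widetilde{G}_D$ lies on the boundary of exactly one black region; hence the cycles $C_1,\dots,C_{n+m}$ partition $E(\widetilde{G}_D)$, with $|C_i|=\deg_B(v_i)$ and $\sum_i a_i(T)=|V(\widetilde{G}_D)|-1$ for every spanning tree $T$. The coherent clockwise orientation forces in-degree and out-degree $2$ at every crossing, and the two outgoing arcs at a crossing $c$ lie on the boundaries of the two \emph{distinct} black regions incident to $c$, i.e.\ on the two vertices of $B$ joined by the edge $e_c$ corresponding to $c$. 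Choosing, for each non-root crossing, the one outgoing arc it contributes to an oriented tree $A$ rooted at $c_1$ is therefore the same as orienting $e_c$ in $\mathcal{G}:=B\setminus e_{c_1}$ toward one of its endpoints. Under this dictionary $a_i(A)$ is exactly the number of edges of $\mathcal{G}$ directed toward $v_i$, that is, the in-degree of $v_i$; thus oriented trees rooted at $c_1$ correspond to those orientations of $\mathcal{G}$ whose associated arc-subgraph of $\widetilde{G}_D$ is acyclic, and the profile of $A$ is read off as the in-degree sequence of that orientation.

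Next I would pin down the realizable profiles. Let $f(I):=\rank\bigl(\bigcup_{i\in I}C_i\bigr)$ in the graphic matroid of $\widetilde{G}_D$. This $f$ is a polymatroid rank function, and by standard matroid theory the profiles $(a_i(T))_i$ of spanning trees $T$ are precisely the integer points of the base polytope
\[
\mathcal{B}(f)=\Bigl\{x\in\mathbb{R}^{n+m}_{\ge 0}:\ \sum_i x_i=f([n+m]),\ \sum_{i\in I}x_i\le f(I)\ \text{for all }I\Bigr\}.
\]
The integrality here — every lattice point of $\mathcal{B}(f)$ is attained by an honest spanning tree — is exactly the assertion that the projection of a matroid base polytope onto block-sums of coordinates is an integral polymatroid base polytope. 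With this in hand the lemma becomes the single claim that the profile map $A\mapsto(a_i(A))_i$ is a \emph{bijection} from oriented trees rooted at $c_1$ onto the lattice points of $\mathcal{B}(f)$: uniqueness is its injectivity and existence is its surjectivity.

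For injectivity I would argue that if two oriented trees $A,A'$ share a profile, then the associated orientations $\theta,\theta'$ of $\mathcal{G}$ have equal in-degree sequences, so their symmetric difference is a disjoint union of $\theta$-directed cycles; if $A\neq A'$ there is a nonempty such cycle $\gamma$ along which $\theta'$ reverses $\theta$. This must be shown to be incompatible with both arc-subgraphs being acyclic: tracing $\gamma$ through the coherent orientation and the rotation system of the plane graph $B$ (every black face clockwise) should produce a directed closed walk among the chosen outgoing arcs of $\widetilde{G}_D$, contradicting that $A$ is a tree. Making this translation precise — that a $\theta$-directed cycle in $\mathcal{G}$ forces a directed cycle in the arc-subgraph — is the \textbf{main obstacle}, since it is precisely where planarity and the ribbon structure must be used, and it cannot be settled by the degree bookkeeping alone.

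Finally, for surjectivity I would lean on the greedoid structure of oriented trees recorded in Remark~\ref{positivecoeff}: given a realizable target $b\in\mathcal{B}(f)$ and an oriented tree $A$ with profile $a\neq b$, the connectivity of the basis-exchange graph should let me swap a single edge of $A$ in an over-represented class $C_j$ (where $a_j>b_j$) for one in an under-represented class $C_i$ while remaining an oriented tree, strictly decreasing $\|a-b\|_1$ and iterating to $b$. Alternatively, once injectivity is known the count closes the argument: the number of oriented trees rooted at $c_1$ equals $P_{\widetilde{G}_D,c_1}(1,\dots,1)=|\Delta_L(-1)|=\tau(B)$, and a separate evaluation of the number of lattice points of $\mathcal{B}(f)$ gives the same value, so an injection between equal finite sets is automatically a bijection. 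Combining the bijection with the description of $\mathcal{B}(f)$ then yields the lemma and, at the same time, the two facts used afterwards: that every coefficient of $P_{\widetilde{G}_D,c_1}$ equals $1$, and that its support is M-convex.
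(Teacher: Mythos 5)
A preliminary remark: the paper does not prove Lemma~\ref{Kalman} at all --- it quotes the statement from the third author's earlier work \cite{KALMAN2013823} --- so your proposal has to stand on its own merits. Your translation of the problem is sound and is a reasonable way to organize it: the cycles $C_1,\dots,C_{n+m}$ do partition $E(\widetilde{G}_D)$; at each crossing the two \emph{incoming} arcs (not the outgoing ones --- with the paper's convention, e.g.\ in Lemma~\ref{Supportof3braid}, an oriented tree rooted at $c_1$ is directed away from the root, so each non-root vertex has exactly one incoming tree edge; this slip is harmless, since the incoming pair also meets the two distinct black regions) lie on the boundaries of the two black regions joined by the corresponding Tait edge, so oriented rooted trees correspond to orientations of $B\setminus e_{c_1}$ whose arc-subgraph is acyclic, with profile equal to the in-degree sequence; and identifying the spanning-tree profiles with the lattice points of $\mathcal{B}(f)$ is indeed standard polymatroid theory. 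The problem is everything after that.

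For uniqueness, you reduce correctly to the claim that two equal-in-degree orientations differ by reversing directed cycles, and that a directed cycle in the orientation of $B\setminus e_{c_1}$ forces a directed cycle among the chosen arcs of $\widetilde{G}_D$. You then flag exactly this implication --- the only step where planarity and the clockwise-boundary structure of Proposition~\ref{SpecialCrowellprop} can enter --- as the ``main obstacle'' and leave it unproven. But that implication \emph{is} the uniqueness assertion of the lemma; everything preceding it is bookkeeping, so no part of uniqueness has actually been established. For existence, both routes you sketch are gapped. The greedoid exchange property of Remark~\ref{positivecoeff} connects \emph{arborescences} to one another, whereas your target profile $b$ is a priori realized only by an \emph{unoriented} spanning tree, so there is no arborescence at the other end to exchange toward; moreover, Bj\"orner's connectivity statement gives no control over which classes $C_i$ gain or lose an edge in a single exchange, so the claim that $\|a-b\|_1$ can always be strictly decreased is unsupported. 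The counting alternative is circular: granted injectivity, the equality between the number of arborescences rooted at $c_1$ and the number of lattice points of $\mathcal{B}(f)$ is \emph{equivalent} to the surjectivity you want, and you offer no independent evaluation of the lattice-point count --- nor is one routine, since the coincidence of these two numbers (both equal to $|\Delta_L(-1)|$, the spanning-tree count of the Tait graph) is precisely the substance of the cited result, not an input to it.
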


Using Lemma~\ref{Kalman}, one can conclude that all non-zero coefficients of $P_{\widetilde{G}_D,c_1}$ are $1$. Furthermore,
\[
\text{Supp}\bigl(P_{\widetilde{G}_D,c_1}\bigr) = \bigl\{(a_1(T),a_2(T),\dots,a_{n+m}(T))\ : \ T\in T(\widetilde{G}_D )\bigr\},
\]
where $T(\widetilde{G}_D)$ is the set of all spanning trees of the unoriented graph underlying $\widetilde{G}_D$. The set $T(\widetilde{G}_D)$ is \textit{M-covex} as a subset of $2^{E(\widetilde{G}_D)}$. (In fact, it is the basis of the graphical matroid of the unoriented graph underlying $\widetilde{G}_D$.)

These facts were used by Hafner, Mészáros, and Vidinas~\cite{KarolaLogconcavityOT} to prove that $P_{\widetilde{G}_D,c_1}$ is \textit{denormalized Lorentzian}, which is a generalization of log-concavity developed by Brändén and Huh~\cite{Huh}. This implies that the Alexander polynomial is log-concave.

The main problem with generalizing this idea to non-special diagrams is that Proposition~\ref{SpecialCrowellprop} does not hold. Type~2 Seifert cycles still correspond to oriented cycles in the Crowell graph, but they contain edges with weights both $t$ and $1$. To construct a multi-variable refinement of the Alexander polynomial, one needs to use two variables for the edges of a type~2 Seifert cycle. See Figure~\ref{refinedCrowell3braid} for an example. Note that, we also use the term ``Seifert cycle'' to refer to the corresponding oriented cycle in the Crowell graph.

The simplest class of non-special alternating links is given by closures of alternating 3-braids, which have only one type~2 Seifert cycle. We are going to focus on this family of links in the rest of this subsection.

Let $D$ be the standard diagram of the closure of an alternating 3-braid
\[
\sigma_1^{p_1}\sigma_2^{-q_1}\cdots\sigma_1^{p_n}\sigma_2^{-q_n}
\]
for $p_i$, $q_i \in \mathbb{Z}_{>0}$, where $\sigma_1$ and $\sigma_2$ are the standard generators of the braid group on three strands.
This diagram $D$ is depicted on the left of Figure~\ref{refinedCrowell3braid} as a long knot, with the second strand of the braid closing at $\infty$. This diagram contains two type~1 Seifert cycles, one following the leftmost strand and one following the rightmost. We denote them by $L$ and $R$, respectively, shown in blue and green in Figure~\ref{refinedCrowell3braid}. The diagram also has a type~2 Seifert cycle following the middle strand. 

In the Crowell graph $G_D$, the left Seifert cycle is oriented counterclockwise with weight~$t$ and the right one is oriented clockwise with weight~$1$. The middle strand is directed from top to bottom, and it contains two types of edges. The edges ending in $L$, denoted by $\textit{EL}$, with weight $1$, and the edges ending in $R$, denoted by $\textit{ER}$, with weight $t$. The refined Crowell graph $\widetilde{G}_D$ is shown on the right of Figure~\ref{refinedCrowell3braid}. The refined weights are $x_1$, $x_2$, $x_3$, and $x_4$ on $L$, $\textit{ER}$, $\textit{EL}$, and $R$, respectively. In order to simplify the notation in the following argument, we use the color-coding of Figure~\ref{refinedCrowell3braid}. We use colors \textit{blue, red, black, green} instead of $x_1,x_2,x_3,x_4$, respectively. 

\begin{figure}[h]
\centering
\includegraphics[scale=0.4]{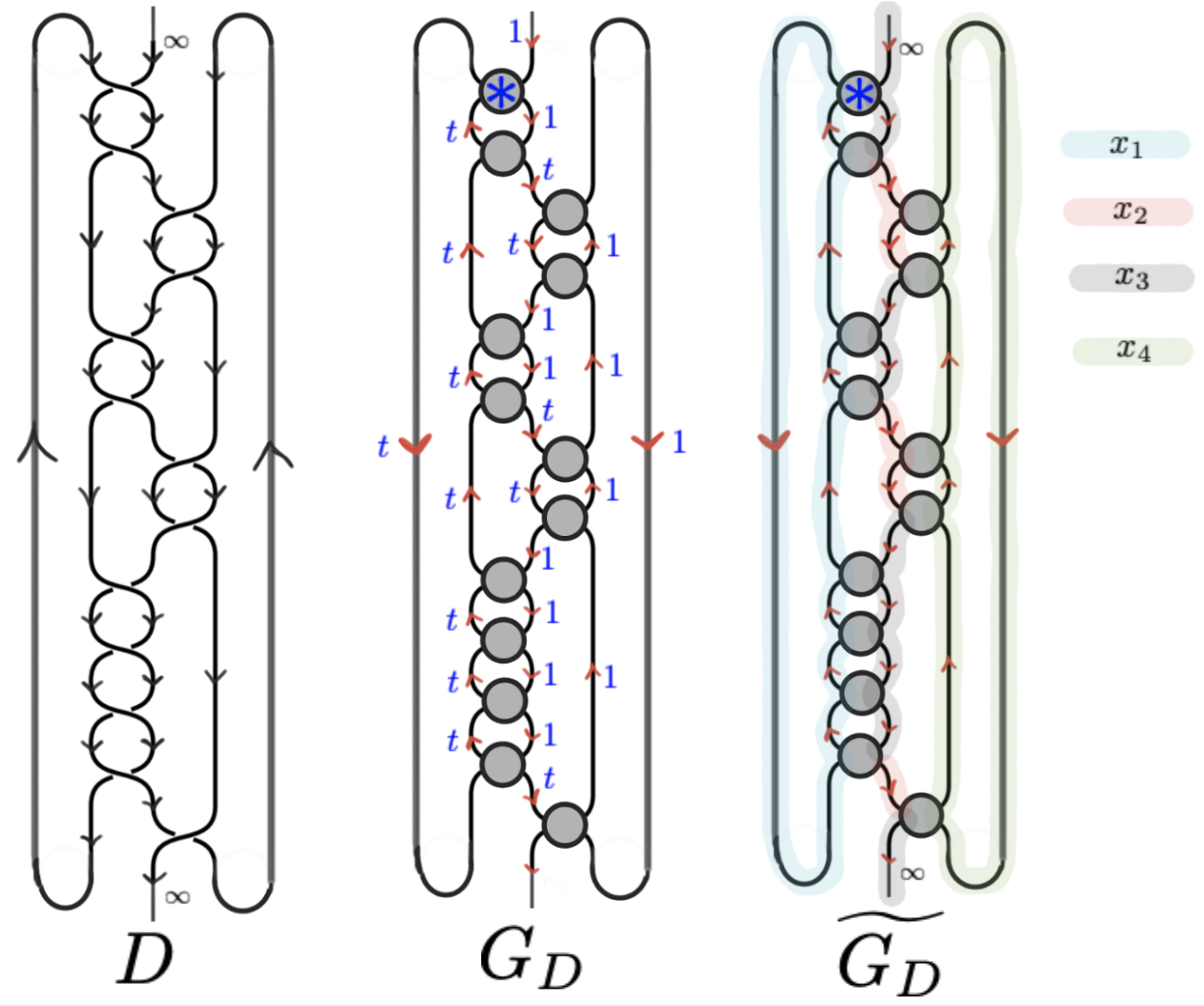}
\caption{An alternating 3-braid (left), its Crowell graph (middle), and the refined Crowell graph (right). The root vertex $c_1$ is denoted by a star.}\label{refinedCrowell3braid}
\end{figure}

The refined multi-variable polynomial $P_{\widetilde{G}_D,c_1}(x_1,x_2,x_3,x_4)$ is defined analogously to the case of special alternating links; i.e., as the weighted sum of oriented spanning trees rooted at $c_1$. Unfortunately, this polynomial has coefficients that are greater than $1$, but it has a nice support:

\begin{lemm}\label{Supportof3braid}
Let $P$ and $Q$ be the number of vertices in $L$ and $R$, respectively. Note that $P = p_1+\cdots+p_n$ and $Q = q_1+\cdots+q_n$. Then 
\[
Supp\bigl(P_{\widetilde{G}_D,c_1}\bigr) =
\]
\[
\{\,(a_1,a_2,a_3,a_4) \in \mathbb{Z}^4 \,:\, a_1,a_3,a_4\geq0 ,\ a_2>0,  \ a_1+a_3=P-1 , \ a_2+a_4=Q \,\}.
\]
\end{lemm}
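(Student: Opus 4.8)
The plan is to prove the two inclusions separately, after first recording the reduction that makes the problem purely combinatorial. Since the coefficients of $P_{\widetilde{G}_D,c_1}$ are non-negative integers (each counts oriented spanning trees), a vector $(a_1,a_2,a_3,a_4)$ lies in the support if and only if it equals $(a_1(T),a_2(T),a_3(T),a_4(T))$ for at least one oriented spanning tree $T$ rooted at $c_1$, where $a_i(T)$ is the number of edges of $T$ of color $x_i$. So the entire problem reduces to determining which color-intersection vectors are realized by oriented spanning trees of $\widetilde{G}_D$.

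For the necessity direction (every such vector satisfies the stated (in)equalities) I would exploit the orientation. Orienting a spanning tree away from the root $c_1$ yields an arborescence in which every non-root vertex has exactly one incoming edge. By the description of $\widetilde{G}_D$, an edge is colored blue ($x_1$) or black ($x_3$, i.e.\ an \textit{EL}-edge) precisely when its head lies on $L$, and red ($x_2$, i.e.\ an \textit{ER}-edge) or green ($x_4$) precisely when its head lies on $R$. Since $c_1\in L$, summing the unique incoming edge over the $P-1$ non-root vertices of $L$ gives $a_1+a_3=P-1$, while summing over the $Q$ vertices of $R$ gives $a_2+a_4=Q$. The inequality $a_2>0$ then follows from connectivity: to reach the vertices of $R$ from the root $c_1\notin R$, the arborescence must contain an edge whose tail lies outside $R$ and whose head lies in $R$; among the edges with head on $R$ only the red \textit{ER}-edges have their tail outside $R$ (the green edges run inside the cycle $R$), so at least one red edge occurs. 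The bounds $a_1,a_3,a_4\ge 0$ are automatic, and together with the two equalities they cut out exactly the set in the statement.

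For the sufficiency direction I would construct, for each admissible vector, an explicit oriented spanning tree realizing it. Given $a_1+a_3=P-1$ and $a_2+a_4=Q$ with $a_2\ge 1$, the idea is to prescribe the incoming edge of each non-root vertex: let $a_1$ of the $L$-vertices receive their blue in-cycle edge and the remaining $a_3$ receive their black \textit{EL}-edge, and likewise distribute $a_2$ red and $a_4$ green incoming edges among the vertices of $R$, always selecting at least one red edge so that $R$ is attached to the rest of the diagram. One must then check that these local choices assemble into a connected, acyclic arborescence rather than closing up a directed cycle; here the cyclic structure of $L$ and $R$ together with the monotone top-to-bottom ordering of the middle strand should let one route the prescribed edges without creating a cycle, for instance by first fixing a spanning path inside each of $L$ and $R$ and then re-attaching the required number of \textit{EL}- and \textit{ER}-edges. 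Alternatively, one may prove a realizability statement in the spirit of Lemma~\ref{Kalman}, adapted from the special alternating case to the present Crowell graph, to promote an unoriented spanning tree with the correct color-intersection numbers to an oriented one rooted at $c_1$.

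I expect the sufficiency direction to be the main obstacle. The necessity is a clean in-degree count, but realizing \emph{every} admissible vector requires exhibiting genuine spanning trees that draw prescribed numbers of edges from each of the two cycles $L$, $R$ and from the two families of middle-strand edges while staying acyclic. Making this construction uniform in the braid word $\sigma_1^{p_1}\sigma_2^{-q_1}\cdots\sigma_1^{p_n}\sigma_2^{-q_n}$, so that the full rectangle $0\le a_1\le P-1$ and $0\le a_4\le Q-1$ is covered, is where the real combinatorial work lies.
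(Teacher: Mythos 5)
Your reduction and your necessity argument are correct and coincide with the paper's: in an oriented spanning tree rooted at $c_1$ each non-root vertex has a unique incoming edge, which gives $a_1+a_3=P-1$ and $a_2+a_4=Q$, and since the only edges with head in $R$ and tail outside $R$ are red (\textit{ER}) edges, connectivity forces $a_2>0$. (Your write-up of this half is if anything cleaner than the paper's, which at this point says ``edge of type \textit{EL}'' where \textit{ER} is meant.) One small imprecision: it is not true that every red edge has its tail outside $R$ --- inside a block $\sigma_2^{-q_j}$ with $q_j\geq 2$, consecutive middle-strand edges are red with both endpoints in $R$. This does not hurt your necessity argument (you only need that an edge entering $R$ from outside must be red), but it is exactly the detail that sinks the naive sufficiency construction.

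The genuine gap is the one you flag yourself: the sufficiency direction is only sketched, and the sketch as stated would fail. Prescribing, for each non-root vertex, which of its two possible in-edges it receives, subject only to the count constraints and to ``at least one red edge,'' need not produce a tree. Concretely, if $q_1\geq 2$, give $c'_2$ its red in-edge $(c'_1,c'_2)$ and give $c'_1$ its green in-edge $(c'_2,c'_1)$: these two edges form a directed $2$-cycle disconnected from the root, even though a red edge has been selected, and the remaining in-edges can be completed to any admissible count vector. So one needs both a red edge with tail in $L$ and a global mechanism excluding directed cycles; this ``routing'' step is the whole content of the lemma. The paper supplies it with a specific choice: $E_1=$ the first $a_1$ blue edges of $L$ starting at $c_1$, $E_2=$ the topmost $a_2$ red edges, $E_3=$ the topmost $a_3$ black edges, $E_4=$ the first $a_4$ green edges of $R$ starting at $c'_1$. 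Two structural facts then close the argument: (i) $E_1$ reaches the lowest $a_1$ vertices of $L$ while $E_3$ reaches the topmost $a_3$ non-root vertices of $L$ (and similarly for $E_2$, $E_4$ in $R$), so each non-root vertex gets exactly one in-edge; and (ii) the parent of every vertex is strictly higher in the diagram except along the initial cycle segments $E_1\cup E_4$, so iterating the parent map always terminates at $c_1$, giving connectivity and acyclicity at once. Your fallback of invoking a statement in the spirit of Lemma~\ref{Kalman} is also not free: that lemma is proved for Crowell graphs of special alternating diagrams, where each region cycle carries a single variable, and the obstruction here is precisely that the middle Seifert cycle carries two colors.
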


\begin{proof}
Consider the braid diagram shown in Figure~\ref{refinedCrowell3braid}, with the standard height function in the plane. Let $c_1$ be the topmost vertex of $L$. Any oriented spanning tree $T$ rooted at $c_1$ contains a unique edge ending at each vertex other than $c_1$. So $T$ must contain at least one edge of type $\textit{EL}$. The set of edges in $T$ with weight $x_1$ or $x_3$ is precisely the set of edges ending in $L$.  Hence, the number of edges in $T$ with weight $x_1$ or $x_3$ is $|L|-1 = P-1$. The same argument shows that the number of edges in $T$ with weight $x_2$ or $x_4$ is $|R| = Q$.

Hence, it suffices to construct a tree $T \in \mathcal{T}(\widetilde{G}_D,c_1)$ with weight $W(T) = x_1^{a_1}x_2^{a_2}x_3^{a_3}x_4^{a_4}$ for any $(a_1,a_2,a_3,a_4) \in \mathbb{Z}^4$ with $a_1, a_3, a_4 \geq 0$, $a_2>0$, $a_1 + a_3 = P-1$, and $a_2+a_4 = Q$. 
Denote the topmost vertex of $R$ by $c'_1$. Consider the following subsets of the edge set $E(\widetilde{G}_D)$:
\begin{align*} 
      &E_1=\text{the first}\  a_1 \  \text{edges of}\  L\  \text{starting at} \ c_1,\\
      &E_2=\text{the topmost}\  a_2\  \text{red edges,}\ \\
      &E_3=\text{the topmost}\  a_3\  \text{black edges,}\ \\
      &E_4=\text{the first}\ a_4 \ \text{edges of}\  R\  \text{starting at} \  c'_1.
\end{align*}
Let $E_T=E_1 \cup E_2 \cup E_3 \cup E_4$. We claim that the edges of $E_T$ form the desired oriented rooted tree $T$.

Since $a_1 + a_3 = P-1$ and $a_2 + a_4 = Q$, for any vertex $c$ other than the root $c_1$, there is exactly one edge in $E_T$ leading to $c$. Indeed, the edges in $E_1$ lead to the lowest $a_1$ vertices of $L$, and the edges in $E_3$ lead to the topmost $a_3$ vertices of $L$, excluding $c_1$. 
Similarly, edges in $E_2$ lead to the topmost $a_2$ vertices of $R$, and edges in $E_4$ lead to the lowest $a_4$ vertices of $R$. Since $a_2 > 0$, the edges $E_T$ do not contain the cycle $R$.
See the right of Figure~\ref{Producttrees} for the edges in $E_T$ when $(a_1,a_2,a_3,a_4)=(3,3,4,2)$, for the diagram in Figure~\ref{refinedCrowell3braid}.

By the above, for any vertex $v \neq c_1$, we can define the parent vertex $p(v)$ of $v$ such that $(p(v),v) \in E_{T}$; i.e., as the source of the unique edge in $E_T$ leading to $v$. Note that $p(v)$ is higher than $v$, unless possibly when $(p(v),v) \in E_1 \cup E_4$; i.e., when $(p(v),v)$ is an edge in one of the cycles $L$ and $R$. For some $k \in \mathbb{N}$, the sequence 
\[
(p^k(v), \dots, p(v),v)
\]
is a path from $c_1$ to $v$ in $E_T$. So $E_T$ is the edge set of an oriented rooted tree $T$ with the desired weight.
\end{proof}
\begin{figure}[h]
\centering
\includegraphics[scale=0.4]{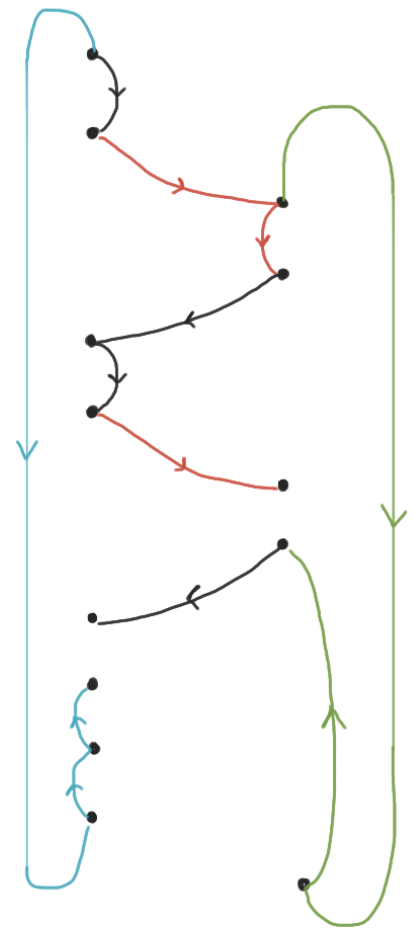}
\caption{An oriented spanning tree $T$ of the refined Crowell graph shown in Figure~\ref{refinedCrowell3braid} for $(a_1,a_2,a_3,a_4)=(3,3,4,2)$.}\label{Producttrees}
\end{figure}

There is a $P \times Q$ matrix $A=[A_{i,j}]$ for $(i, j) \in \{0,\dots, P-1\} \times \{0,\dots, Q-1\}$ such that
\begin{equation}\label{formof4-varmatrix}
P_{\widetilde{G}_D,c_1}= \sum\limits_{\substack{0\leq i \leq P-1 \\ 0 \leq j \leq Q-1}} A_{i,j}\ x_1^{P-1-i}x_2^{Q-j}x_3^{i}x_4^{j}.
\end{equation}
We can recover the Crowell polynomial by setting $x_1=x_2=t$ and $x_3=x_4=1$; i.e.,
\begin{equation}\label{Alexanderfrom4var}
    \Delta_{L}(-t)\doteq P_{G_D,c_1} =\sum\limits_{0 \leq k \leq P+Q-2}\ \left(\sum\limits_{i+j=k} A_{i,j} \right) \ t^{P+Q-1-k}.
\end{equation}
The main result of this subsection are inequalities between coefficients $A_{i,j}$, which lead to inequalities between coefficients of the Alexander polynomial.

\begin{theo}\label{badinequality}
Let $D$ be the diagram of the closure $L$ of the alternating 3-braid
\[
\sigma_1^{p_1}\sigma_2^{-q_1}\cdots\sigma_1^{p_n}\sigma_2^{-q_n}
\]
shown in Figure~\ref{refinedCrowell3braid}. Let $P=p_1+\cdots+p_n$ and $Q=q_1+\cdots+q_n$. If $A_{i,j}$ is the $P\times Q$ matrix of coefficients of $P_{\widetilde{G}_D,c_1}$, as above, then 
\[
n A_{i,j+1} \geq A_{i,j}.
\]
If 
\[
\Delta_{L}(-t) = \sum\limits_{0 \leq k \leq P+Q-2} a_k t^{k}, 
\]
then $a_{k+1} \geq a_{k}/n + A_{0,k+1} > a_k/n$ for $k \in \{0, \dots, P-2\}$.
\end{theo}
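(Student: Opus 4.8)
The plan is to prove the consequence in two stages: first establish the term-wise inequality $nA_{i,j+1}\ge A_{i,j}$ by a direct combinatorial argument on oriented spanning trees of $\widetilde{G}_D$, and then deduce the inequality between Alexander coefficients by summing along the anti-diagonals $i+j=\mathrm{const}$ of the matrix $(A_{i,j})$.

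For the combinatorial step, recall from the proof of Lemma~\ref{Supportof3braid} that $A_{i,j}$ counts the oriented spanning trees $T$ rooted at $c_1$ whose edge multiset consists of $P-1-i$ edges of the cycle $L$ (blue), $i$ edges of type $\mathit{EL}$ (black), $Q-j$ edges of type $\mathit{ER}$ (red), and $j$ edges of the cycle $R$ (green). Equivalently, among the $Q$ edges of $T$ terminating on $R$, exactly $r=Q-j$ are red and the remaining $j$ are green; the $r$ red edges are the \emph{entry points} of $T$ into $R$, and the green edges fill the arcs of $R$ between consecutive entry points. Passing from $j$ to $j+1$ thus amounts to decreasing the number of entry points by one, i.e.\ merging two consecutive green arcs by replacing one red edge with the green edge of $R$ feeding that vertex, while leaving the $L$-side data (the $i$ black and $P-1-i$ blue edges) untouched. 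I would construct a map $\Phi\colon\mathcal{A}_{i,j}\to\mathcal{A}_{i,j+1}$ realizing a canonical such merge, where $\mathcal{A}_{i,j}$ denotes the set of trees counted by $A_{i,j}$, and show it is at most $n$-to-one; the multiplicity $n$ should arise because the red edges are organized into the $n$ syllables $\sigma_2^{-q_1},\dots,\sigma_2^{-q_n}$ of the braid word, so an $(r-1)$-entry tree can be \emph{un-merged} in at most $n$ essentially different ways. The at-most-$n$-to-one property gives $A_{i,j}=|\mathcal{A}_{i,j}|\le n\,|\mathcal{A}_{i,j+1}|=nA_{i,j+1}$.

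I expect this construction of $\Phi$ to be the main obstacle. The choices on the $R$-side interact with the $L$-side through the middle strand, so one must check that trading a red edge for a green one keeps $T$ an oriented spanning tree rooted at $c_1$ (no directed cycle is created and no vertex loses its unique incoming edge), and one must verify that the fibre of $\Phi$ never exceeds $n$. Here it is the syllable structure of the braid, rather than just the shape of the support from Lemma~\ref{Supportof3braid}, that pins the bound to exactly $n$.

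For the second stage, I would combine the shape of the coefficient matrix in \eqref{formof4-varmatrix} with \eqref{Alexanderfrom4var} and the palindromic symmetry of the Alexander polynomial to identify $a_k=\sum_{i+j=k}A_{i,j}$, the sum of $(A_{i,j})$ along the anti-diagonal $i+j=k$. Fix $k\in\{0,\dots,P-2\}$. Applying $nA_{i,(k-i)+1}\ge A_{i,k-i}$ to each term of $a_k$ and summing over the relevant range of $i$ gives
\[
a_k=\sum_{i}A_{i,k-i}\le n\sum_{i}A_{i,k+1-i}.
\]
The sum on the right runs over exactly the anti-diagonal $i+j=k+1$ with a single corner entry omitted, so it equals $a_{k+1}$ minus that unpaired corner coefficient $A_{0,k+1}$; the constraint $k\le P-2$ is precisely what guarantees, via Lemma~\ref{Supportof3braid}, that this corner entry lies in the support and is therefore a positive integer. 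Dividing by $n$ and rearranging yields $a_{k+1}\ge a_k/n + A_{0,k+1}$, and positivity of the corner term upgrades this to the strict inequality $a_{k+1}>a_k/n$. The one delicate point here is the bookkeeping at the ends of the anti-diagonal, where a paired entry could fall outside the support (i.e.\ correspond to a configuration with no red edge); keeping $k$ within the stated range is exactly what controls this.
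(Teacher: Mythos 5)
Your strategy for the core inequality $nA_{i,j+1}\geq A_{i,j}$ --- an at most $n$-to-one map $\mathcal{A}_{i,j}\to\mathcal{A}_{i,j+1}$ that trades one red edge for one green edge --- is exactly the paper's strategy, but your proposal stops where the paper's proof actually begins: the construction you defer as ``the main obstacle'' is the entire mathematical content of the argument, and it is not routine. The paper makes the merge canonical as follows: call a vertex of $R$ \emph{maximal} for a tree $T$ if the $T$-path from the root $c_1$ to it stays in $L$ except for its last edge; take the lowermost maximal vertex $c'_m$, follow the longest directed path of green edges of $T$ starting at $c'_m$, say ending at $c'_k$, and replace the red edge of $T$ entering the next vertex $c'_{k-1}$ by the green edge $(c'_k,c'_{k-1})$. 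This specific choice is what makes both of your unverified claims checkable: the result is again an oriented rooted spanning tree because $c'_k\notin T_{c'_{k-1}}$ (otherwise $c'_m$ would not be maximal), and the fibre over a tree $A$ is bounded because any preimage is obtained from $A$ by exchanging a green edge of the green path $J$ issuing from the (unchanged) lowermost maximal vertex for a red edge entering $J$ from \emph{outside} $J$; such an edge is either one of the $n$ red edges running from $L$ into $R$ --- of which the one ending at $c'_m$ is unavailable, being already in $A$ --- or the unique red edge into the far end of $J$, giving at most $n$ options. Note that your heuristic that ``the multiplicity $n$ comes from the $n$ syllables'' is only half of this: a naive count along those lines gives $n+1$, and one needs the exclusion at $c'_m$ together with the green-path structure to land on $n$. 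Without this (or an equivalent) construction and fibre analysis, the proposal is a plan rather than a proof.

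There is also a concrete slip in your second stage. The pairing $(i,k-i)\mapsto(i,k-i+1)$ covers precisely the entries of the $(k+1)$-st anti-diagonal with positive second index, so the unpaired corner is $A_{k+1,0}$ (zero green edges), not $A_{0,k+1}$; your own observation that $k\leq P-2$ keeps the corner inside the support confirms this, since it is the \emph{first} index that must not exceed $P-1$, whereas $A_{0,k+1}$ would require $k\leq Q-2$. With the corner $A_{k+1,0}$, which equals $1$ by Proposition~\ref{extermalA}, your computation does yield $a_{k+1}\geq a_k/n+1>a_k/n$, which is the substance of the claim; the printed corner $A_{0,k+1}$ cannot be produced by the row inequality and this pairing, so the statement as written appears to have its indices transposed rather than your argument being fixable toward it. Finally, when $P>Q+1$ and $Q-1\leq k\leq P-2$, the term $A_{k-Q+1,Q-1}$ of $a_k$ has no row-partner in $a_{k+1}$, so the summation needs an extra repair in that range; contrary to your closing remark, the constraint $k\leq P-2$ does not control this boundary effect (a gap which, to be fair, the paper's one-line deduction of this step shares).
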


\begin{proof}
    Let $\mathcal{T}_{i,j}$ be the set of oriented rooted spanning trees of $\widetilde{G}_D$ with weight $x_1^{P-1-i}x_2^{Q-j}x_3^{i}x_4^{j}$; i.e., ones contributing to the coefficient $A_{i,j}$. Equivalently, $\mathcal{T}_{i,j}$ is the set of maximal oriented rooted trees in $\widetilde{G}_D$  with $i$ black edges and $j$ green edges.
    
    By manipulating trees, we build a map $L \colon \mathcal{T}_{i,j} \rightarrow \mathcal{T}_{i,j+1}$ which is at most $n$-to-$1$. This implies that $n A_{i,j+1} \geq A_{i,j}$. The construction of the map $L$ is as follows.

    Recall that $c_1$ and $c'_1$ were used to denote the highest vertices of $L$ and $R$, respectively. Let $c_1, \dots, c_P$ be the vertices of $L$, and $c'_1, \dots, c'_Q$ be the vertices of $R$, both labelled from top to bottom.

    Let $T \in \mathcal{T}_{i,j}$ be an oriented, rooted spanning tree. We call $c'_i$ a \textit{maximal vertex} if the path in $T$ from $c_1$ to $c'_i$ only includes vertices in $L$, except for $c'_i$. Let the lowermost maximal vertex be $c'_m$. Consider the longest oriented path of green edges in $T$ originating from $c'_m$. Note that the path might be of length zero. Assume that this path ends at $c'_k$ and hence does not reach $c'_{k-1}$. Then there is a red edge in $T$ leading to $c'_{k-1}$, which we denote by $e$. The map $L$ is defined by 
    \[
    L(T) := T \setminus \{e\} \cup \{(c'_k , c'_{k-1})\}.
    \]
    See Figure~\ref{nto1functionL} for an example, where the maximal vertices are $c'_1$ and $c'_7$, the lowest maximal vertex is $c'_m = c'_7$, and the green path goes from $c'_7$ to $c'_k = c'_6$.

    We first show that $\text{Im}(L) \subseteq \mathcal{T}_{i,j+1}$. Based on the construction of $L(T)$, the weight $W(L(T)) = W(T) x_4 x_2^{-1}$. Hence, we only need to show that it is an oriented rooted spanning tree. The rooted tree $T$ induces a partial order $>_{T}$ on the vertices $V(\widetilde{G}_D)$ of $\widetilde{G}_D$. The relation $v >_{T} w$ holds if and only if there is a path from $v$ to $w$ in $T$. For any vertex $v$, the subset $T_{v} = \{x \in V(\widetilde{G}_D) :  v \geq_{T} x\}$ forms an oriented subtree of $T$ rooted at $v$. On the other hand, any partial order on $V(\widetilde{G}_D)$ with $c_1$ as its maximum also gives an oriented rooted spanning tree of $\widetilde{G}_D$.

    Note that the construction of $L(T)$ can be summarized by setting $c'_k$ to be the parent of $c'_{k-1}$. This operation results in an oriented rooted spanning tree if and only if $c'_{k} \notin T_{c'_{k-1}}$; see Figure~\ref{treeoperations2}. This condition holds for the operation in the construction of $L(T)$. Indeed, if $c'_{k} \in T_{c'_{k-1}}$, then there is an oriented path from $c'_{k-1}$ to $c'_{k}$, which contradicts the assumption that $c'_{m}$ is a maximal vertex.
    
\begin{figure}[h]
\centering
\includegraphics[scale=0.3]{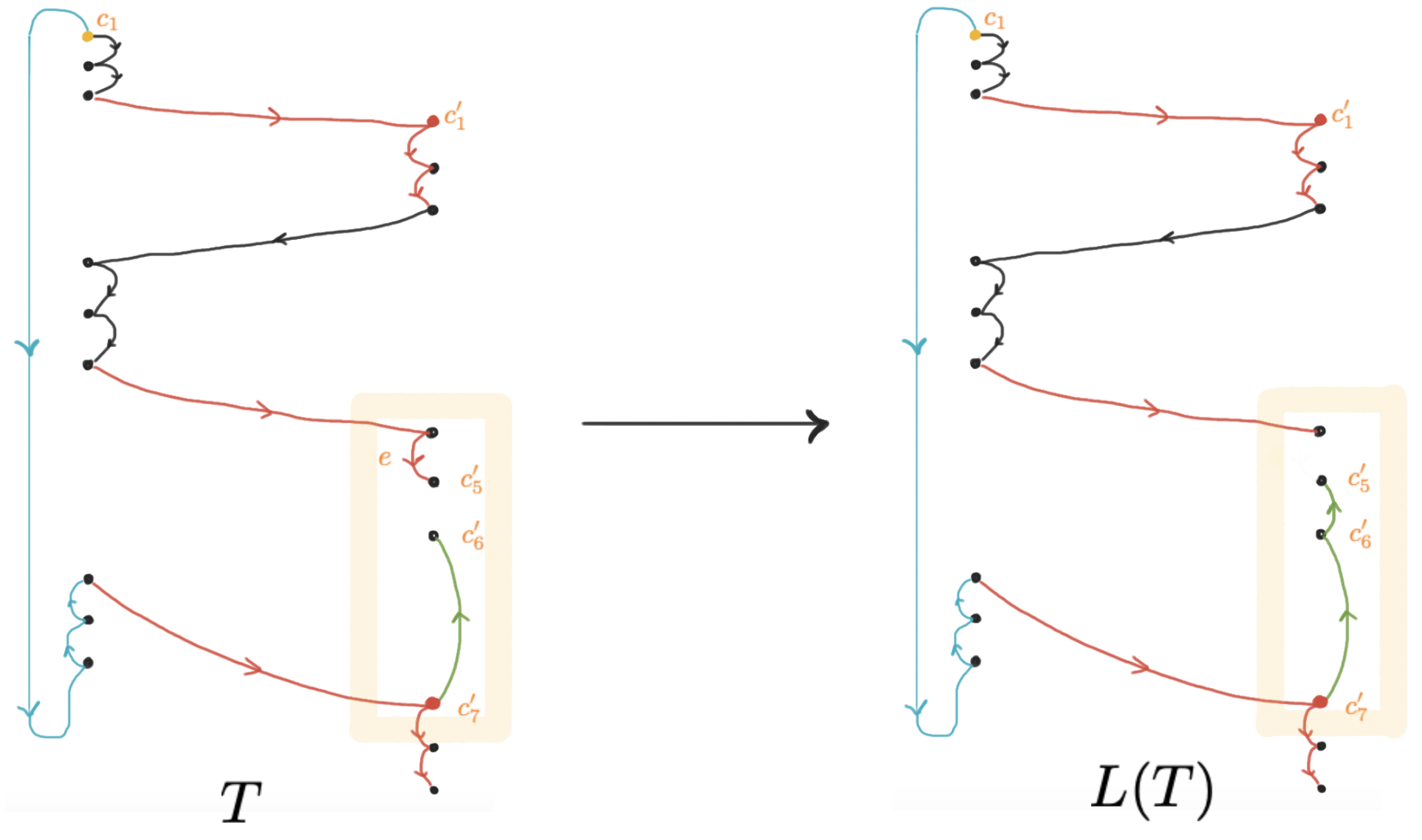}
\caption{The map $L \colon \mathcal{T}_{i,j} \rightarrow \mathcal{T}_{i,j+1}$.}\label{nto1functionL}
\end{figure}

We now prove that $L$ is at most $n$-to-$1$; i.e., we show that, for any $A \in \mathcal{L}_{i,j+1}$, we have $|L^{-1}(A)| \leq n$.
To find the elements of $L^{-1}(A)$, first notice that the lowermost maximal vertices of $T$ and $L(T)$ are the same, since the construction of $L(T)$ does not affect the path from $c_1$ to $c'_m$, and just turns $c'_{k-1}$ from a possibly maximal vertex to a non-maximal one.

Using this fact, consider the lowermost maximal vertex of $A$ and denote it by $c'_m$. Denote by $J$ the largest path of green edges in $A$ starting from $c'_m$. By construction of $L$, we know that, if $T \in L^{-1}(A)$, then $A$ can be constructed by taking the longest path of green edges in $T$ starting from $c'_m$ and adding an edge at the end. The trees $T$ and $A$ only differ in two edges, one green and one red. Denote the one extra red edge of $T$ by $e_T=(x,y)$. Based on the explanation above, $y$ must be in $J$.

\begin{figure}[h]
\centering
\includegraphics[scale=0.4]{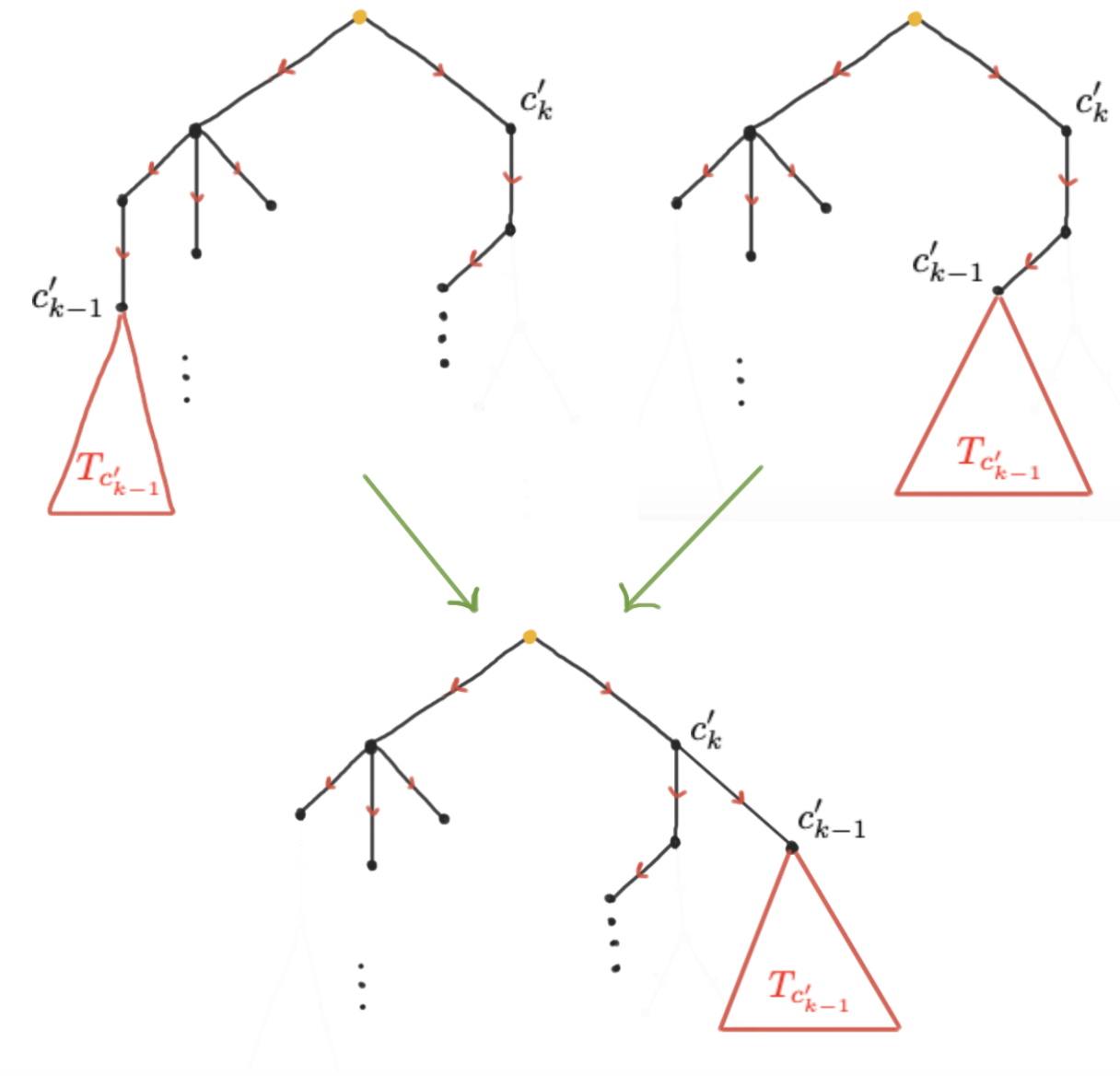}
\caption{Two different examples of the \textit{change of parent operation} in a rooted tree. In both examples, $c'_k \notin T_{c'_{k-1}}$ and the result of the operation is still an oriented rooted tree.}\label{treeoperations2}
\end{figure}

\begin{figure}[h]
\centering
\includegraphics[scale=0.4]{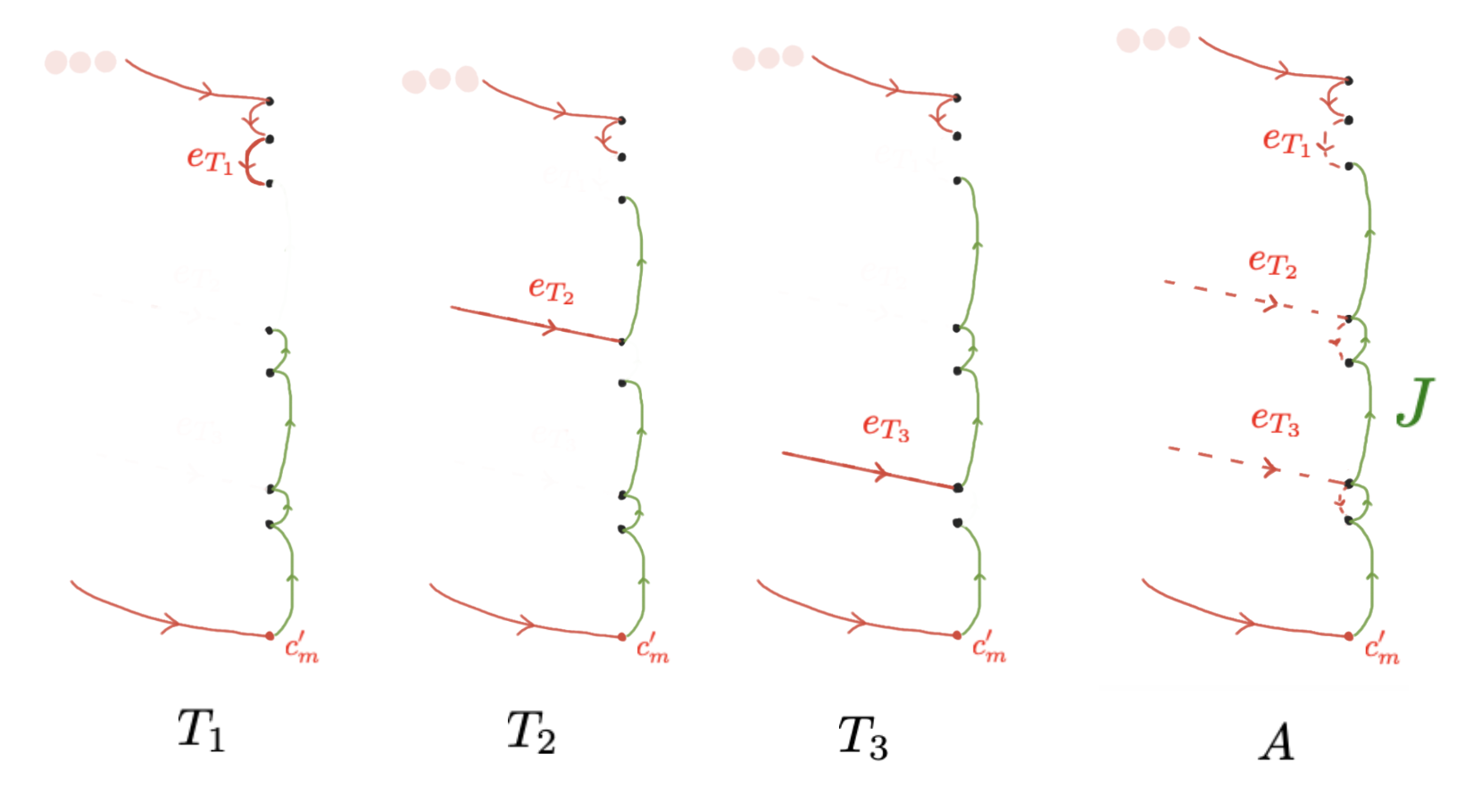}
\caption{Construction of the inverse image $L^{-1}(A)$. The possible options for $e_T$ come from the red edges leading to $J$ from outside. One can see that using a red edge with both ends in $J$ does not work.}\label{inverseimageofL}
\end{figure}

To construct $T$ from $A$, we delete the green edge leading to $y$ and replace it with $e_T$. As a result, $x$ must be outside of $J$, otherwise the green edge from $(y,x)$ in $A$ is also in $T$ and forms a cycle with $e_T$. This restricts the options for $e_T$: it is either one of the edges starting in $L$ and ending in $R$, or it is an edge leading to the final leaf of $J$. So there are at most $n$ options for $e_T$, hence $|L^{-1}(A)| \le n$. See Figure~\ref{inverseimageofL} for an example.
This completes our proof of the first inequality. The inequality on the Alexander polynomial coefficients follows from equation~\eqref{Alexanderfrom4var}. 
\end{proof}

We continue this subsection with some corollaries of Theorem~\ref{badinequality}. First, note that, by taking the mirror of the braid diagram and turning it upside down, one can replace the roles of $\sigma_1$ and $\sigma_2$ without changing the Alexander polynomial.

Furthermore, one can improve the bound on the number of choices for $e_T$, and obtain the following strengthening of Theorem~\ref{badinequality}:

\begin{coro}\label{strongbadinequality}
Let $D$ be the standard diagram of the closure $L$ of the alternating 3-braid
\[
\sigma_1^{p_1}\sigma_2^{-q_1}\cdots\sigma_1^{p_n}\sigma_2^{-q_n}.
\]
Let $P = p_1+\cdots+p_n$ and $Q = q_1+\cdots+q_n$. If 
\[
\Delta_{L}(-t) = \sum\limits_{0 \leq k \leq P+Q-2} a_k t^{k}
\]
and $[A_{i,j}]$ is the $P \times Q$ matrix of coefficients of $P_{\widetilde{G}_D,c_1}$, as above, then 
\[
a_{k+1} \geq \frac{a_{k}}{\min(n,k+1)} + A_{0,k+1} > \frac{a_k}{\min(n,k+1)}
\]
for $k \in \{0, \dots, \max(P-2,Q-2)\}$.
\end{coro}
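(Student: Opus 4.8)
The plan is to reopen the fiber analysis in the proof of Theorem~\ref{badinequality} and extract a second, \emph{path-length} bound on $|L^{-1}(A)|$ to run alongside the bound $|L^{-1}(A)|\le n$ already established there. Recall that for a target tree $A\in\mathcal{T}_{i,j+1}$, every preimage $T\in L^{-1}(A)$ is obtained by deleting a single green edge of the distinguished path $J$ (the longest green path of $A$ starting at its lowermost maximal vertex $c'_m$) and reattaching its head $y$ by a red edge $e_T=(x,y)$ with $x\notin J$. The first step is to record that each admissible $e_T$ is determined by its target $y$, that $y$ ranges only over the vertices of $J$ other than $c'_m$, and that each such $y$ carries a \emph{unique} admissible red edge coming from outside $J$ (the red edges crossing from $L$ into $R$, together with the final-leaf edge, all having distinct targets). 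Consequently distinct preimages have distinct cut vertices, and $|L^{-1}(A)|$ is at most the number of green edges of $J$.

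Next I would bound that number. Since $J$ is a subpath of the green edges of $A$, and $A\in\mathcal{T}_{i,j+1}$ has exactly $j+1$ green edges in total, the length of $J$ is at most $j+1\le k+1$ whenever $A$ contributes to $a_{k+1}$, i.e.\ $i+j+1=k+1$. Equivalently, the admissible cross edges correspond to syllable boundaries of the braid falling within the contiguous arc of $R$ spanned by $J$, of which there are at most $\min(n,k+1)$. Combining with the old count gives $|L^{-1}(A)|\le\min(n,k+1)$ uniformly over all $A$ contributing to $a_{k+1}$, hence $\min(n,k+1)\,A_{i,j+1}\ge A_{i,j}$ for all $i+j=k$. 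Summing over $i+j=k$ and feeding the result into equation~\eqref{Alexanderfrom4var} exactly as in Theorem~\ref{badinequality}, while isolating the boundary coefficient (positive on its range by Lemma~\ref{Supportof3braid}), yields
\[
a_{k+1}\ \ge\ \frac{a_k}{\min(n,k+1)}+A_{0,k+1}\ >\ \frac{a_k}{\min(n,k+1)}.
\]

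For the extension of the range to $k\in\{0,\dots,\max(P-2,Q-2)\}$, I would invoke the symmetry noted just before the statement: taking the mirror of the braid and turning it upside down interchanges the roles of $\sigma_1$ and $\sigma_2$, hence swaps $P$ and $Q$, while leaving $\Delta_L$ (and therefore the sequence $(a_k)$) unchanged and keeping both $n$ and $\min(n,k+1)$ fixed. Running the entire argument in the swapped diagram proves the inequality for $k\in\{0,\dots,Q-2\}$ with the corresponding extremal coefficient of the swapped matrix in place of $A_{0,k+1}$; since the terms involving $(a_k)$ and $\min(n,k+1)$ are insensitive to the swap, the two admissible ranges combine to $\{0,\dots,\max(P-2,Q-2)\}$, and in each case the relevant boundary coefficient is positive, so the strict inequality persists.

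I expect the main obstacle to be the sharpened counting in the first step: one must verify that each admissible choice of $e_T$ is pinned down by its target on $J$, and that each target admits at most one admissible red edge from outside $J$, so that the fiber size is genuinely governed by the length of $J$ rather than merely by $n$. This is precisely where the local structure of the refined Crowell graph around the right cycle $R$ enters, and it is the only place where the improvement over Theorem~\ref{badinequality} is obtained; the remaining steps are the same bookkeeping as in that proof.
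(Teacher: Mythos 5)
Your proof is correct and follows essentially the same route as the paper's: the paper likewise improves the fiber bound $|L^{-1}(A)|$ from $n$ to the length of the green path $J$ (using that each candidate edge $e_T$ is a red edge pointing into $J$), bounds that length by the number of weight-one edges of $A$ — which on the diagonal contributing to $a_{k+1}$ is at most $k+1$ — and then reruns the bookkeeping of Theorem~\ref{badinequality}, with the $\sigma_1\leftrightarrow\sigma_2$ mirror symmetry supplying the extended range $\{0,\dots,\max(P-2,Q-2)\}$. The only difference is presentational: the paper phrases the bound on $|J|$ through the $t$-degree $\text{pw}(A)$ of the specialized weight rather than by counting green edges directly, an inessential variation.
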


\begin{proof}
The majority of the proof is the same as proof of Theorem \ref{badinequality}. We only improve the upper bound on $|L^{-1}(A)|$. As we discussed, $|L^{-1}(A)|$ is the same as the number of possible choices for the edge $e_T$, which is a red edge leading to $J$. An upper bound on the number of these choices is the length of the path $J$. Each edge in $J$ counts with weight $1$ in the computation of $W(T)$. Hence $|J| < P+Q-2-\text{pw}(A)$, where $W(A) = t^{\text{pw}(A)}$.
\end{proof}

One might ask if this argument can be generalized to alternating links other than alternating 3-braids. The main ingredient of the proof of Theorem~\ref{badinequality} is the shape of the Crowell graph around the Seifert cycle $R$. In fact, this only relies on the Seifert cycle $R$ being of type~1.

The local shape of the diagram around a Seifert cycle of type~1 is depicted in the top of Figure~\ref{Seiferttypeone}. The diagram consists of $n$ twist regions along the boundary of the cycle. The orientations and weights of the Crowell graph are shown on bottom of Figure~\ref{Seiferttypeone}. The edges are colored analogously to a neighborhood of the Seifert cycle $R$ in the Crowell graph of an alternating 3-braid. Using this coloring, the argument of the proof of Theorem~\ref{badinequality} can be repeated, leading to Theorem~\ref{badinequalitygeneral}.

\begin{figure}[h]
\centering
\includegraphics[scale=0.4]{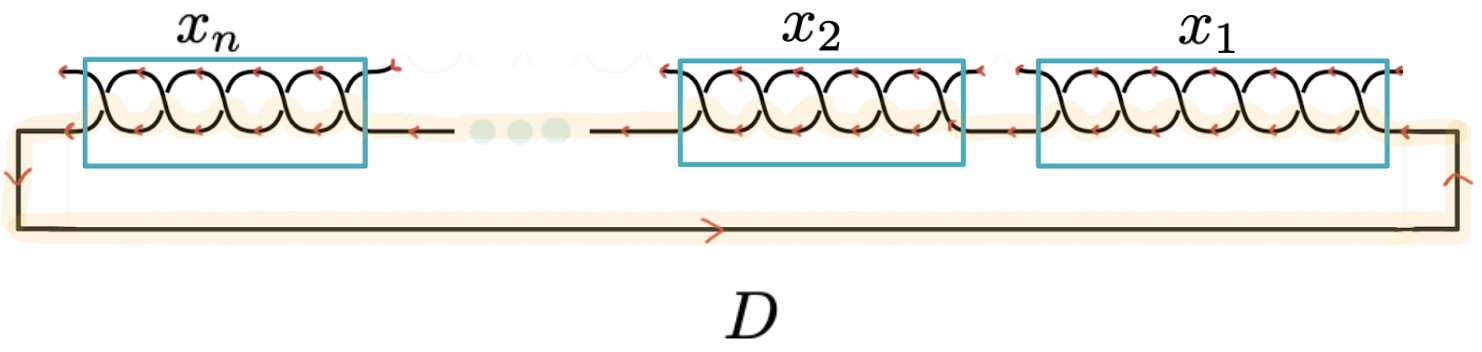}
\includegraphics[scale=0.4]{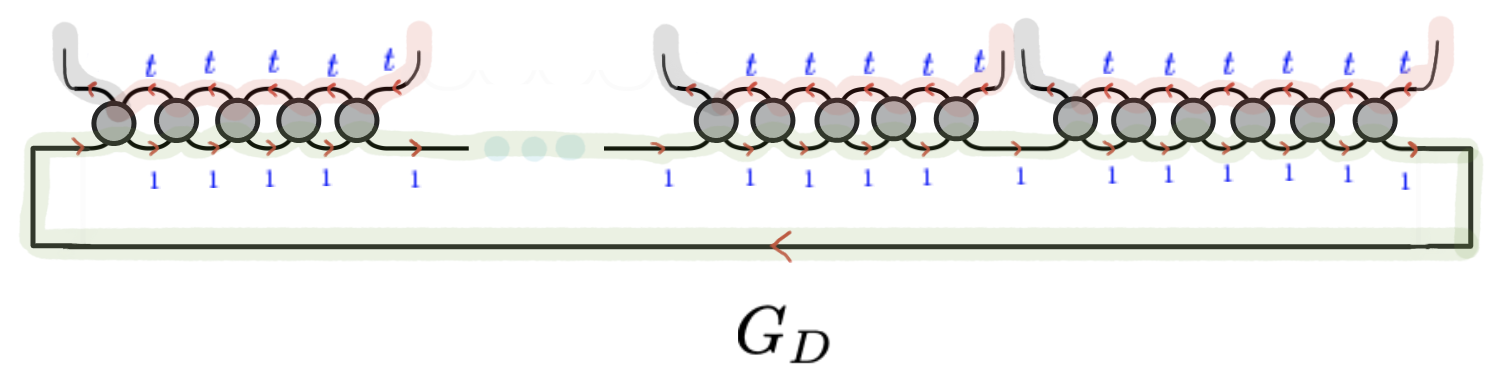}
\caption{Top: neighborhood of a type~1 Seifert cycle in an alternating diagram. Bottom: the corresponding Crowell graph, together with a coloring of its edges.}\label{Seiferttypeone}
\end{figure}

\begin{theo}\label{badinequalitygeneral}
Let $D$ be an alternating diagram of a link $L$. Assume that $D$ contains a Seifert cycle $C$ of type~1 such that there are $n$ twist regions around the $C$ with $x_1,\dots,x_n$ crossings, respectively. Let $c = x_1+\cdots+x_n$ and $\Delta_{L}(-t) = \sum\limits_{0 \leq k} a_k t^{k}$. Then 
\[
a_{k+1} \geq a_{k}/n
\]
for $k \in \{0, \dots, c-2\}$.
\end{theo}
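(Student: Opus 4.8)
The plan is to adapt the combinatorial argument of Theorem~\ref{badinequality} essentially verbatim, replacing the global structure of the alternating 3-braid with the purely local structure of a single type~1 Seifert cycle $C$. First I would set up the refined Crowell graph $\widetilde{G}_D$ as before, but now only the weights near $C$ matter: as indicated on the bottom of Figure~\ref{Seiferttypeone}, the $c = x_1 + \cdots + x_n$ crossings along $\partial C$ give rise to a cycle of vertices in the Crowell graph together with incoming ``red'' edges (those whose removal is governed by the twist regions) and the outgoing edges of $C$ itself. The key observation is that the proof of Theorem~\ref{badinequality} never used that the complementary Seifert cycle $L$ was of type~1, nor that there was exactly one type~2 cycle; it only used the shape of the Crowell graph in a neighbourhood of $R$, i.e.\ that $R$ is a type~1 cycle flanked by $n$ twist regions. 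So I would recast the earlier notation with $C$ playing the role of $R$.

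The main step is to reconstruct the at-most-$n$-to-$1$ map $L$. Let $c'_1, \dots, c'_c$ be the vertices along $C$, labelled consistently with the height function. For a coefficient index $k$ (now tracking the number of ``green'' edges along $C$ in a spanning tree, which contributes $t^{\text{low degree}}$ exactly as before), let $\mathcal{T}_k$ be the set of oriented rooted spanning trees of $\widetilde{G}_D$ whose restriction to $C$ uses $k$ of the ``green'' boundary edges. I would then define the lowermost maximal vertex $c'_m$ of $C$ (the lowest vertex reachable from the root through vertices outside $C$), take the longest oriented green path of $C$-edges starting at $c'_m$, say ending at $c'_{\kappa}$, locate the red edge $e$ leading into $c'_{\kappa-1}$, and set $L(T) := T \setminus \{e\} \cup \{(c'_{\kappa}, c'_{\kappa-1})\}$. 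The verification that $L(T)$ is again an oriented rooted spanning tree is the same change-of-parent argument as in Theorem~\ref{badinequality}: the new parent assignment is acyclic precisely because $c'_m$ is maximal, so $c'_{\kappa}$ cannot lie in the subtree below $c'_{\kappa-1}$.

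For the fibre bound, I would repeat the inverse-image analysis: given $A$ in the image, its lowermost maximal vertex agrees with that of any preimage, the longest green path $J$ from $c'_m$ in $A$ is determined, and any preimage $T$ differs from $A$ in one green and one red edge $e_T = (x,y)$ with $y \in J$ and $x \notin J$. The red edges from outside $C$ into $J$ are exactly the edges crossing from the $n$ twist regions, so there are at most $n$ of them; hence $|L^{-1}(A)| \leq n$, giving the local coefficient inequality $n\,B_{k+1} \geq B_k$ for the counts $B_k := |\mathcal{T}_k|$. Summing over the contributions exactly as in equation~\eqref{Alexanderfrom4var}, these translate into $a_{k+1} \geq a_k / n$ for the Alexander coefficients in the stated range $k \in \{0,\dots,c-2\}$, since the range $0 \le k \le c-2$ is precisely where the green-edge count along $C$ controls the coefficient.

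The step I expect to be the main obstacle is bookkeeping in the general setting: in the 3-braid case the two type~1 cycles $L$ and $R$ neatly partitioned the edges with weights splitting as $x_1 + x_3 = P-1$ and $x_2 + x_4 = Q$ (Lemma~\ref{Supportof3braid}), whereas now $C$ is only one cycle inside a possibly complicated diagram, so I must be careful that the "green-edge count along $C$'' really does index a single power of $t$ in the specialization $x_i \mapsto t$ or $1$, and that the red edges entering $J$ from outside $C$ are still in bijection with the $n$ twist regions rather than with something larger. Establishing that the local picture of Figure~\ref{Seiferttypeone} is an accurate and complete description of $\widetilde{G}_D$ near a type~1 cycle — independent of the rest of the diagram — is what makes the map well-defined and the fibre bound valid; once that is granted, the rest of the argument is a faithful transcription of the proof of Theorem~\ref{badinequality}.
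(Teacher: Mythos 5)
Your proposal is correct and follows essentially the same route as the paper: the paper's entire proof of Theorem~\ref{badinequalitygeneral} is the observation that the proof of Theorem~\ref{badinequality} uses only the local structure of the Crowell graph near the type~1 Seifert cycle (Figure~\ref{Seiferttypeone}), so the at-most-$n$-to-$1$ tree map, the change-of-parent verification, and the fibre analysis can be repeated with $C$ in place of $R$. Your write-up is in fact more detailed than the paper's one-paragraph argument, and the bookkeeping concern you flag --- that the local picture near $C$ is independent of the rest of the diagram and that the red edges entering $C$ from outside are in bijection with the $n$ twist regions --- is precisely the content the paper delegates to Figure~\ref{Seiferttypeone}.
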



Recall that the construction of the refined Crowell graph for alternating 3-braids (Figure~\ref{refinedCrowell3braid}) was motivated by the result of Hafner, Mészáros, and Vidinas~\cite{KarolaLogconcavityOT}. One might try to repeat their argument for alternating 3-braids. Unfortunately, this does not work, as stated in the following Proposition.
\begin{prop}\label{notLorentzian}
Let $P_{\widetilde{G}_D,c_1}$ be the refined Crowell polynomial of the closure of an alternating 3-braid $B=\sigma_1^{p_1}\sigma_2^{-q_1}\cdots\sigma_1^{p_n}\sigma_2^{-q_n}$. Then $P_{\widetilde{G}_D,c_1}$ is not denormalized Lorentzian unless the 3-braid is a connected sum of two torus links; i.e., $B = \sigma_1^{p_1}\sigma_2^{-q_1}$.    
\end{prop}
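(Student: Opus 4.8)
The plan is to show that denormalized Lorentzianity forces a strong structural constraint on the support and the coefficients of $P_{\widetilde{G}_D,c_1}$, and then to check that this constraint fails as soon as $n \geq 2$. First I would recall the two necessary conditions a denormalized Lorentzian polynomial must satisfy: its support must be M-convex (which, by Lemma~\ref{Supportof3braid}, it is, since the support is the set of lattice points of a product of two segments, hence a translate of a matroid polytope), and, more importantly, along any two-dimensional coordinate slice the coefficient sequence must be \emph{log-concave with no internal zeros}. Since the essential variation happens in the pair $(x_3,x_4)$ (equivalently the indices $(i,j)$), the Lorentzian property would in particular force, for each fixed value of one index, the sequence $A_{i,\bullet}$ (and $A_{\bullet,j}$) to be log-concave; combined with the homogeneity relations $a_1+a_3 = P-1$ and $a_2+a_4 = Q$ from Lemma~\ref{Supportof3braid}, the whole two-variable reduction $P_{\widetilde{G}_D,c_1}(1,1,x_3,x_4)$ must be Lorentzian in the classical (bivariate) sense.

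The key step is to produce the coefficient matrix $[A_{i,j}]$ explicitly enough to detect a violation of log-concavity. Here I would exploit the combinatorial description of spanning trees from the proof of Lemma~\ref{Supportof3braid}: an oriented spanning tree with $i$ black edges and $j$ green edges is determined by how the red/black edges on the middle strand interleave with the crossings coming from the $n$ syllables $\sigma_1^{p_a}\sigma_2^{-q_a}$. The quantity $A_{i,j}$ counts the ways to distribute the ``transitions'' between the two types of middle-strand edges among the $n$ twist blocks, so it is naturally a sum of products of binomial-type counts governed by the compositions $(p_1,\dots,p_n)$ and $(q_1,\dots,q_n)$. For $n=1$ the braid is $\sigma_1^{p_1}\sigma_2^{-q_1}$, a connected sum of two torus links $T_{2,P}\,\#\,T_{2,Q}$, and $P_{\widetilde{G}_D,c_1}$ factors as the product of the two (single-variable, Lorentzian) torus-link refinements; products of Lorentzian polynomials are Lorentzian, giving the forward direction.

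For the converse, the main obstacle — and the crux of the argument — is to exhibit, for every $n \geq 2$, a concrete $2\times 2$ minor of the Hessian-type inequality that is violated, i.e.\ indices for which $A_{i,j-1}\,A_{i,j+1} > A_{i,j}^2$ (internal concavity fails) or, more robustly, a pair of adjacent slices along which an internal zero appears or the discrete log-concavity inequality is strictly reversed. The cleanest route is to use Theorem~\ref{badinequality}, which already gives $n\,A_{i,j+1}\geq A_{i,j}$: this factor of $n$ is exactly the signature of non-log-concavity. Concretely, I would compute the small end of the matrix where $A_{0,j}$ counts trees with no black and no green edges past a controlled point; there one finds $A_{0,0}=1$ while the first column grows like the number of syllables, producing a ratio incompatible with $A_{i,j-1}A_{i,j+1}\le A_{i,j}^2$. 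I expect the verification to reduce to a single explicit inequality among binomial coefficients indexed by the $p_a$ and $q_a$, which fails precisely when at least two syllables are present, i.e.\ when $n\geq 2$ and the braid is not of the form $\sigma_1^{p_1}\sigma_2^{-q_1}$.

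\textbf{Remark on strategy.} Throughout I would lean on the fact that denormalized Lorentzianity is preserved under the specializations $x_1=x_2=t$, $x_3=x_4=1$ only in one direction, so a failure detected in the full four-variable polynomial is genuine; the hard part is organizing the tree count $A_{i,j}$ so that the obstruction is visible without a full closed form, and I anticipate the decisive computation to be a comparison of $A_{0,1}^2$ against $A_{0,0}\,A_{0,2}$ (or the corresponding minor in the $x_2$ direction), where the multiplicity $n$ from Theorem~\ref{badinequality} enters directly.
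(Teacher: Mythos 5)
There is a genuine gap, and it sits exactly where you anticipate the decisive computation. Your strategy is to detect failure of the denormalized Lorentzian property through a violation of log-concavity along a row or column of $[A_{i,j}]$ (you propose comparing $A_{0,1}^2$ with $A_{0,0}A_{0,2}$). But no such violation exists in general: in the paper's own examples, every row and every column is log-concave with no internal zeros even though $n=3$. For $B=\sigma_1\sigma_2^{-1}\sigma_1\sigma_2^{-1}\sigma_1\sigma_2^{-1}$, the matrix in equation~\eqref{Coeffmatrix1} has rows $(1,3,3)$, $(1,2,3)$, $(1,1,1)$ and columns $(1,1,1)$, $(3,2,1)$, $(3,3,1)$, all log-concave; in particular your target inequality fails, since $A_{0,1}^2=9\geq A_{0,0}A_{0,2}=3$. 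The same holds for every row and column of the $7\times 5$ matrix in equation~\eqref{Coeffmatric2}. Moreover, Theorem~\ref{badinequality} is not a ``signature of non-log-concavity'': the bound $nA_{i,j+1}\geq A_{i,j}$ only limits how fast entries can decrease and is perfectly compatible with log-concave slices, as these examples show. So while slice-wise log-concavity is indeed a necessary condition (it follows from the stronger characterization below), it is too weak to prove the proposition, and a proof built on finding such a violation cannot be completed.

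The obstruction the paper actually uses is an \emph{equality}, not an inequality. Because the support is bi-homogeneous ($a_1+a_3=P-1$, $a_2+a_4=Q$), the quadratics $x^{-\beta}f_{\beta}$ with $\beta=(P-2-i,\,Q-1-j,\,i,\,j)$ involve only the monomials $x_1x_2$, $x_2x_3$, $x_1x_4$, $x_3x_4$; their Hessians have the degenerate block pattern of Lemma~\ref{rootsofmatrices}, which has at most one positive eigenvalue \emph{if and only if} $A_{i,j}A_{i+1,j+1}=A_{i+1,j}A_{i,j+1}$. Thus (Proposition~\ref{Lorisproduct}) denormalized Lorentzianity forces all adjacent $2\times 2$ minors to vanish, i.e.\ $[A_{i,j}]$ must be a rank-one matrix $A_{i,j}=z_iy_j$, with log-concavity surviving only as a condition on the marginal sequences $(z_i)$ and $(y_j)$. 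This multiplicative separability is what collides with the extremal-entry computation (Proposition~\ref{extermalA}): since $A_{i,0}=1$ for all $i$ and $A_{P-1,j}=1$ for all $j$, a rank-one matrix must be constant equal to $1$, contradicting $A_{0,j}\geq n\geq 2$. (In the first example, concretely, $A_{0,0}A_{1,1}=2\neq A_{0,1}A_{1,0}=3$.) If you want to salvage your outline, replace the step ``find a log-concavity violation'' by ``show $[A_{i,j}]$ is not rank one when $n\geq 2$''; your idea of exploiting the extremal counts ($A_{i,0}=1$, $A_{P-1,j}=1$, $A_{0,j}\geq n$) then finishes the argument exactly as in the paper. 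Your remaining claims (M-convexity of the support, and the $n=1$ case being Lorentzian via factorization into torus-link pieces) are fine, though note the proposition as stated only requires the implication ``Lorentzian $\Rightarrow n=1$,'' so the $n=1$ direction is not strictly needed.
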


Proposition~\ref{notLorentzian} follows from the results we prove in the rest of this section, more specifically Propositions~\ref{Lorisproduct} and \ref{extermalA}. To explain these results, we first need to recall some of the basic definitions regarding Lorentzian polynomials.

\begin{defi}\label{Mconvexdef}
A subset $J \subset \mathbb{N}^n$ is called \textit{M-convex} if, for any $\alpha=(\alpha_1,\dots,\alpha_n) \in J$ and $\beta=(\beta_1,\dots, \beta_n) \in J$ such that $\alpha_i > \beta_i$ for some $i \in \{1,\dots,n\}$, there exist $j \in \{1,\dots,n\}$ satisfying 
\[
\alpha_j < \beta_j, \ \ \alpha-e_i+e_j \in J, \ \text{and} \ \beta-e_j+e_i \in J,
\]
where $e_i$ and $e_j$ are the $i$-th and $j$-th standard basis vectors of $\mathbb{R}^n$, respectively.    
\end{defi}

Let $H_n^{d}$ be the space of degree $d$ homogeneous polynomials with real coefficients in the variables $x_1,\dots,x_n$. The Hessian of a homogeneous quadratic polynomial $f \in H_n^{2}$ is the symmetric matrix $H=(H_{ij})_{i,j \in [n]}$, where $H_{ij}=\frac{\partial}{\partial x_i}\frac{\partial}{\partial x_j}f$ and $[n] = \{1,\dots,n\}$. The set $L_n^{d}$ of \textit{Lorentzian polynomials} of degree $d$ in $n$ variables is defined as follows. Let $L_n^{1} \subseteq H_n^{1}$ be the set of all linear polynomials with nonnegative coefficients. Let  $L_n^{2} \subseteq H_n^{2}$ be the subset of quadratic polynomials with nonnegative coefficients and M-convex support whose Hessians have at most one positive eigenvalue. For $d>2$, define $L_n^{d} \subseteq H_n^{d}$ as the subset of polynomials with nonnegative coefficients and M-convex support that satisfy
\[
\frac{\partial}{\partial x_{i_1}} \frac{\partial}{\partial x_{i_2}} \cdots \frac{\partial}{\partial x_{i_{d-2}}} f \in L_n^{2} \ \text{for all} \ i_1,i_2,\dots,i_{d-2} \in [n].
\]
The \textit{normalization operator} $N$ on $\mathbb{R}[x_1,\dots,x_n]$ is defined by linear extension of the map 
\[
N(x^{\alpha})=\frac{x^{\alpha}}{\alpha!},
\]
where $\alpha=(\alpha_1,\dots,\alpha_n)\in\mathbb{N}^n$, $x^{\alpha}:=x_1^{\alpha_1}\cdots x_n^{\alpha_n}$, and $\alpha!:=\prod\limits_{1 \leq i \leq n} \alpha_i!$.

\begin{defi}
A polynomial $f \in H_n^{d}$ is called \textit{denormalized Lorentzian} if $N(f) \in L_n^{d}$.
\end{defi}

Note that 
\[
\frac{\partial^{\alpha_1}}{\partial x^{\alpha_1}_{1}}\cdots\frac{\partial^{\alpha_i-1}}{\partial x^{\alpha_i-1}_{i}}\cdots\frac{\partial^{\alpha_j-1}}{\partial x^{\alpha_j-1}_{j}}\cdots \frac{\partial^{\alpha_n}}{\partial x^{\alpha_n}_{n}} \ \left(\frac{x^{\alpha}}{\alpha!} \right) = x_i x_j.
\]
Hence, the denormalized Lorentzian property can be reformulated as follows. Let $\beta=(\beta_1,\dots,\beta_n) \in \mathbb{N}^n$ such that $\beta_1+\dots+\beta_n=d-2$. For $f=\sum\limits_{\alpha}a_{\alpha}x^{\alpha} \in H^d_n$, define $f_{\beta} := \sum\limits_{\alpha \geq \beta}a_{\alpha}x^{\alpha}$. Then 
\[
\frac{\partial^{\beta}}{\partial x^{\beta}}N(f) = x^{-\beta}f_{\beta} = \sum\limits_{\alpha \geq \beta}a_{\alpha}x^{\alpha-\beta}.
\]
So $f$ is denormalized Lorentzian if and only if $x^{-\beta}f_{\beta} \in L_n^{2}$ for any $\beta = (\beta_1,\dots,\beta_n) \in \mathbb{N}^n$ such that $\beta_1 + \dots + \beta_n = d-2$. 
This means that $f$ has nonnegative coefficients and M-convex support, and the Hessian of $x^{-\beta}f_{\beta}$ has at most one positive eigenvalue for any $\beta$ as above.

Now let us consider a polynomial of the form
\[
f=\sum\limits_{\substack{0\leq i \leq P-1 \\ 0 \leq j \leq Q-1}} A_{i,j}\ x_1^{P-1-i}x_2^{Q-j}x_3^{i}x_4^{j}.
\]
The possible choices for $\beta$ for which $x^{-\beta} f_\beta$ 
is non-zero and quadratic fall into the following three cases.

\textbf{Case I.} We have
\[
\beta=(P-2-i, Q-1-j,i,j)
\]
for $i \in \{0,\dots,P-2\}$ and $j \in \{0,\dots,Q-2\}$. Then
\[
x^{-\beta}f_{\beta} = A_{i,j}x_1x_2+ A_{i+1,j}x_2x_3+A_{i,j+1}x_1x_4+A_{i+1,j+1}x_3x_4,
\]
and the Hessian is
\begin{equation}\label{Hessian1}
\begin{bmatrix}
0 & A_{i,j} & 0 & A_{i,j+1}\\
A_{i,j} & 0 & A_{i+1,j} & 0\\
0 & A_{i+1,j} & 0 & A_{i+1,j+1}\\
A_{i,j+1} & 0 & A_{i+1,j+1} & 0\\
\end{bmatrix}.
\end{equation} 

\textbf{Case II.} We have 
\[
\beta=(0, Q-2-j,P-1,j)
\]
for $j \in \{0,\dots,Q-2\}$. Then
\[
x^{-\beta}f_{\beta} = A_{P-1,j} x_2^2 + A_{P-1,j+1} x_2x_4 + A_{P-1,j+2} x_4^2,
\]
and the Hessian is
\begin{equation}\label{Hessian2} 
\begin{bmatrix}
A_{P-1,j} & A_{P-1,j+1}\\
A_{P-1,j+1} & A_{P-1,j+2} \\
\end{bmatrix}.
\end{equation}

\textbf{Case III.} We have
\[
\beta=(P-3-i, 1,i,Q-1)
\]
for $i \in \{0,\dots,P-3\}$. Then 
\[
x^{-\beta}f_{\beta} = A_{i,Q-1} x_1^2 + A_{i+1,Q-1}x_1x_3 + A_{i+2,Q-1} x_3^2,
\]
and the Hessian is
\begin{equation}\label{Hessian3} 
\begin{bmatrix}
A_{i,Q-1} & A_{i+1,Q-1}\\
A_{i+1,Q-1} & A_{i+2,Q-1} \\
\end{bmatrix}.
\end{equation}
Based on this computation, we can derive a necessary and sufficient condition for $f$ to be denormalized Lorentzian.  In the following lemma, we find the signs of the eigenvalues of the matrices occurring as Hessians of $x^{-\beta}f_{\beta}$. The condition is then stated in Proposition~\ref{Lorisproduct}.

\begin{lemm}\label{rootsofmatrices}
For $a,b,c,d \in \mathbb{N}$, let
\[
A=
\begin{bmatrix}
0 & a & 0 & d\\
a & 0 & b & 0\\
0 & b & 0 & c\\
d & 0 & c & 0\\
\end{bmatrix}
\ \text{ and } \
B=\begin{bmatrix}
a & b\\
b & c \\
\end{bmatrix}.
\]
The matrix $B$ has at most one positive eigenvalue if and only if $b^2 \geq ac$. The matrix $A$ has at most one positive eigenvalue if and only if $ac=bd$.
\end{lemm}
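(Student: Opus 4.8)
The plan is to treat the two matrices separately, since the real content is in the $4\times4$ case. For $B$, I would use that a real symmetric $2\times2$ matrix is positive definite (equivalently, has two positive eigenvalues) if and only if $\operatorname{tr}B>0$ and $\det B>0$. Here $\operatorname{tr}B=a+c\ge0$ because $a,c\ge0$, with equality only when $a=c=0$, in which case $\det B=-b^2\le0$; hence $\det B>0$ already forces $\operatorname{tr}B>0$, and $B$ has two positive eigenvalues precisely when $\det B=ac-b^2>0$. Taking the negation, $B$ has at most one positive eigenvalue if and only if $ac-b^2\le0$, i.e. $b^2\ge ac$. The only subtlety is the degenerate case $a=c=0$, which I would dispatch in one line as above.

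For $A$, the key observation is structural: $A$ is the weighted adjacency matrix of the $4$-cycle $1$–$2$–$3$–$4$–$1$ with edge weights $a,b,c,d$, and this cycle is bipartite with colour classes $\{1,3\}$ and $\{2,4\}$. Reordering the basis as $(x_1,x_3,x_2,x_4)$ therefore puts $A$ into block anti-diagonal form $\left[\begin{smallmatrix}0 & M\\ M^{\mathsf T}& 0\end{smallmatrix}\right]$ with $M=\left[\begin{smallmatrix}a & d\\ b & c\end{smallmatrix}\right]$. For such a bipartite block matrix the spectrum is symmetric about $0$: the nonzero eigenvalues are exactly $\pm$ the singular values of $M$, so the number of positive eigenvalues of $A$ equals the number of nonzero singular values, i.e. $\operatorname{rank}M$. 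Hence $A$ has at most one positive eigenvalue if and only if $\operatorname{rank}M\le1$, which happens exactly when $\det M=ac-bd=0$, i.e. $ac=bd$.

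If one prefers to avoid singular values, I would run the same argument through a direct computation of the characteristic polynomial as a cross-check. Expanding $\det(A-\lambda I)$ (the bipartite structure kills the odd-degree terms) gives $\lambda^4-(a^2+b^2+c^2+d^2)\lambda^2+(ac-bd)^2$, and substituting $\mu=\lambda^2$ reduces this to $\mu^2-(a^2+b^2+c^2+d^2)\mu+(ac-bd)^2$, whose two roots $\mu_1,\mu_2$ are nonnegative since their sum and product are. A positive eigenvalue of $A$ corresponds to a strictly positive root $\mu$, and the product of the roots is $(ac-bd)^2$: if $ac\neq bd$ both roots are strictly positive and $A$ has two positive eigenvalues, while if $ac=bd$ one root vanishes and $A$ has at most one. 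I expect no serious obstacle here; the one step that makes everything transparent is recognizing the bipartite block form, after which the only care needed is the bookkeeping that matches a positive eigenvalue of $A$ to a positive root in $\mu$ and the handling of the all-zero degenerate case.
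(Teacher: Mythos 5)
Your proof is correct, and for the $4\times 4$ matrix your primary argument takes a genuinely different route from the paper's. For $B$ the two arguments coincide: the eigenvalue sum is $a+c\ge 0$ and the product is $\det B = ac-b^2$, so two positive eigenvalues occur exactly when $ac-b^2>0$. For $A$, the paper argues by direct computation: it expands the characteristic polynomial $\chi_A = x^4-(a^2+b^2+c^2+d^2)x^2+(a^2c^2+b^2d^2-2abcd)$, checks that its discriminant as a quadratic in $x^2$ is nonnegative, factors $\chi_A=(x^2-r_1)(x^2-r_2)$ with $r_1,r_2\ge 0$, and reads off the sign pattern --- this is precisely your ``cross-check'' argument, since $a^2c^2+b^2d^2-2abcd=(ac-bd)^2$. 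Your main argument instead exploits structure: recognizing $A$ as the weighted adjacency matrix of a bipartite $4$-cycle, reordering the basis to obtain $\left[\begin{smallmatrix}0 & M\\ M^{\mathsf T} & 0\end{smallmatrix}\right]$ with $M=\left[\begin{smallmatrix}a & d\\ b & c\end{smallmatrix}\right]$, and invoking the standard fact that the nonzero eigenvalues of such a matrix are $\pm$ the nonzero singular values of $M$, so that the number of positive eigenvalues equals $\operatorname{rank} M$. This is more conceptual: it explains \emph{why} the answer is $ac=bd$ (it is the vanishing of $\det M$), it sidesteps the discriminant verification entirely, and it would generalize to larger bipartite configurations; the paper's computation is more elementary and self-contained but offers no such insight. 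One small caveat on your cross-check version only: asserting that the roots $\mu_1,\mu_2$ of $\mu^2-(a^2+b^2+c^2+d^2)\mu+(ac-bd)^2$ are nonnegative ``since their sum and product are'' presupposes that they are real; this follows from the symmetry of $A$ (all eigenvalues of $A$ are real, so the quadratic cannot have a conjugate pair of non-real roots) or from the paper's discriminant estimate, and deserves a sentence --- but your singular-value argument has no such issue, so the proof as a whole stands.
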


\begin{proof}
    The characteristic polynomial of $B$ is $x^2-(a+c)x+(ac-b^2)$, so $a+c$ is the sum and $ac-b^2$ is the product of the roots. As $a+c \ge 0$, at least one of the roots is nonnegative. So at most one of the roots is positive if and only if $ac-b^2 \leq 0$.

    The characteristic polynomial of $A$ is 
    \[
    \chi_A=x^4-(a^2+b^2+c^2+d^2)x^2 + (a^2c^2+b^2d^2-2abcd).
    \]
    The discriminant of this polynomial with respect to $x^2$ is 
    \[
    \Delta_{\chi_A}= (a^2+b^2+c^2+d^2)^2-4a^2c^2 - 4b^2d^2 + 8abcd \geq 0.
    \]
    So there exist real numbers $r_1$ and $r_2$ such that 
    \[
    \chi_A=(x^2-r_1)(x^2-r_2),
    \]
    \[
    r_1 +r_2 = a^2 + b^2 +c^2 + d^2 \geq 0, \text{ and}
    \]
    \[
    r_1r_2 = a^2c^2+b^2d^2-2abcd \geq 0.
    \]
    Hence $r_1$, $r_2 \geq 0$. If both of them are positive, then $A$ has exactly two positive eigenvalues. If one of them is zero, then $0=r_1r_2=a^2c^2+b^2d^2-2abcd$, so $ac = bd$. The result follows.
\end{proof}

\begin{prop}\label{Lorisproduct}
    For a $P \times Q$ matrix $A=(A_{ij})$ with positive entries, the polynomial 
    \[
    f=\sum\limits_{\substack{0\leq i \leq P-1 \\ 0 \leq j \leq Q-1}} A_{i,j}\ x_1^{P-1-i}x_2^{Q-j}x_3^{i}x_4^{j}
    \]
    is denormalized Lorentzian if and only if there exist sequences $(z_0,\dots,z_{P-1})$ and $(y_0,\dots,y_{Q-1})$ with positive entries such that:
    \begin{enumerate}
    \item $A_{i,j} = z_iy_j$,
    \item $(z_i)_{0 \leq i \leq P-1}$ is log-concave; i.e., $z_i^2 \geq z_{i-1}z_{i+1}$ for $i \in \{1,\dots,P-2\}$, and
    \item $(y_j)_{0 \leq j \leq Q-1}$ is log-concave; i.e., $y_j^2 \geq y_{j-1}y_{j+1}$ for $j \in \{1,\dots,Q-2\}$.
    \end{enumerate}
\end{prop}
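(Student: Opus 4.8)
The plan is to apply the reformulation of the denormalized Lorentzian property recorded just above: because every entry $A_{i,j}$ is positive, $f$ has positive coefficients, so $f$ is denormalized Lorentzian exactly when its support is M-convex and, for every $\beta \in \mathbb{N}^4$ with $\beta_1+\beta_2+\beta_3+\beta_4 = P+Q-3$, the Hessian of $x^{-\beta}f_\beta$ has at most one positive eigenvalue. The support is
\[
\{(P-1-i,\,Q-j,\,i,\,j) : 0 \le i \le P-1,\ 0 \le j \le Q-1\},
\]
which is the product of the two segments $\{(P-1-i,i)\}$ and $\{(Q-j,j)\}$ sitting in the disjoint coordinate pairs $(x_1,x_3)$ and $(x_2,x_4)$. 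Since $\alpha_1+\alpha_3$ and $\alpha_2+\alpha_4$ are constant on the support, the exchange axiom of Definition~\ref{Mconvexdef} is satisfied by swapping within whichever tied pair contains the coordinate in which $\alpha$ exceeds $\beta$; hence the support is M-convex and this condition holds automatically. Everything therefore reduces to the Hessian condition, and by the case analysis already performed the only nonzero quadratic specializations $x^{-\beta}f_\beta$ are those of Cases~I, II and~III.

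Next I would feed each of the three Hessians into Lemma~\ref{rootsofmatrices}. For the $4\times 4$ matrix of Case~I, identifying $(a,b,c,d) = (A_{i,j}, A_{i+1,j}, A_{i+1,j+1}, A_{i,j+1})$, the lemma yields the \emph{equality}
\[
A_{i,j}A_{i+1,j+1} = A_{i+1,j}A_{i,j+1}, \qquad 0 \le i \le P-2,\ 0 \le j \le Q-2,
\]
whereas the $2\times 2$ matrices of Cases~II and~III yield the log-concavity inequalities $A_{P-1,j+1}^2 \ge A_{P-1,j}A_{P-1,j+2}$ along the bottom row and $A_{i+1,Q-1}^2 \ge A_{i,Q-1}A_{i+2,Q-1}$ along the last column. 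Thus $f$ is denormalized Lorentzian if and only if these three families of relations hold, and it remains to show that they are equivalent to the asserted factorization with log-concave factors.

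The backward implication is immediate: substituting $A_{i,j}=z_i y_j$ makes the Case~I identity hold termwise, and turns the bottom-row and last-column inequalities into $z_{P-1}^2 y_{j+1}^2 \ge z_{P-1}^2 y_j y_{j+2}$ and $z_{i+1}^2 y_{Q-1}^2 \ge z_i z_{i+2} y_{Q-1}^2$, which are exactly conditions~(3) and~(2). For the forward implication, the crucial point is that the Case~I relations are equalities: dividing by $A_{i,j}A_{i,j+1}$ shows that the ratio $A_{i+1,j}/A_{i,j}$ does not depend on $j$. Calling this common ratio $r_i$ and setting $z_i := \prod_{k=0}^{i-1} r_k$ (with $z_0:=1$) and $y_j := A_{0,j}$, an induction on $i$ gives $A_{i,j}=z_i y_j$ for all $i,j$. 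Plugging this factorization into the bottom-row and last-column inequalities and cancelling the positive constants $z_{P-1}^2$ and $y_{Q-1}^2$ yields the log-concavity of $(y_j)$ and of $(z_i)$, which completes the equivalence.

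The M-convexity check and the substitutions are routine; the one step that really carries the argument is that Case~I produces an \emph{equality} $ac=bd$ rather than an inequality. This is precisely what forces the multiplicative rank-one structure of $A$ and hence the factorization $A_{i,j}=z_iy_j$, so the main care is to apply Lemma~\ref{rootsofmatrices} with the correct identification of $(a,b,c,d)$ before running the telescoping argument; everything else then follows formally.
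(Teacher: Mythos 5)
Your proposal is correct and follows essentially the same route as the paper's proof: M-convexity of the rectangular support, Lemma~\ref{rootsofmatrices} applied to the Case~I, II, III Hessians, with the Case~I equalities $A_{i,j}A_{i+1,j+1}=A_{i+1,j}A_{i,j+1}$ forcing the rank-one factorization $A_{i,j}=z_iy_j$ and Cases~II and~III supplying log-concavity of the factors. Your explicit telescoping construction of $(z_i)$ and $(y_j)$ and your separate verification of the backward implication merely spell out steps that the paper's proof leaves implicit.
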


\begin{proof}
Note that since the matrix $A$ has positive entries, we have 
\[
\text{Supp}(f) = \{(P-1-i,Q-j,i,j) \in \mathbb{Z}^4 \,:\, 0\leq i \leq P-1 ,\ 0 \leq j \leq Q-1\}.
\]
Hence, $f$ has M-convex support. In fact, $\text{Supp}(f)$ is exactly the integer points inside a $P \times Q$ rectangle embedded in a $2$-dimensional hyperplane in $\mathbb{R}^4$. 

We now look at the Hessians. Using the criteria of Lemma~\ref{rootsofmatrices} on Hessians of case~I, which have form \eqref{Hessian1}, leads to 
\[
A_{i,j}A_{i+1,j+1}=A_{i+1,j}A_{i,j+1}
\]
for $i \in \{0,\dots, P-2\}$ and $j \in \{0, \dots, Q-2\}$.
This means that all the rows of $A$ are multiples of a vector $\mathbf{y} = (y_0,\dots,y_{Q-1})$, and all the columns of $A$ are multiples of a vector $\mathbf{z} = (z_0,\dots,z_{P-1})$. Hence $A_{i,j} = z_i y_j$, possibly after scaling $\mathbf{y}$ or $\mathbf{z}$.

Using the criteria of Lemma~\ref{rootsofmatrices} on Hessians of case~II, which have form~\eqref{Hessian2}, and case~III, which have form~\eqref{Hessian3}, leads to log-concavity of the last row and column of $A$. This implies that $\mathbf{y}$ and $\mathbf{z}$ are log-concave.
\end{proof}

The criteria of Proposition \ref{Lorisproduct} are too restrictive. For the refined Crowell polynomial of the closure of an alternating 3-braids $B = \sigma_1^{p_1}\sigma_2^{-q_1}\cdots\sigma_1^{p_n}\sigma_2^{-q_n}$ with coefficient matrix $A = [A_{i,j}]$, a calculation of extremal entries proves that these criteria only hold when $n=1$, which means that the closure of the alternating 3-braid is $T_{2,p_1} \# T_{2,-q_1}$. This is an immediate corollary of the following: 

\begin{prop}\label{extermalA}
    The extermal entries of the matrix $A = [A_{i,j}]$ of coefficients of the refined Crowell polynomial of the closure of an alternating 3-braid $B=\sigma_1^{p_1}\sigma_2^{-q_1}\cdots\sigma_1^{p_n}\sigma_2^{-q_n}$ satisfy the following: 
    \begin{itemize}
        \item $A_{i,0}=1$ for $i \in \{0,\dots,P-1\}$,
        \item $A_{P-1,j}=1$ for $j \in \{0,\dots,Q-1\}$, and
        \item $A_{0,j} \geq n$ for $j \in \{1,\dots, Q-1\}$.
    \end{itemize}
    In other words,
    \begin{equation*}
    A=
 \begin{bmatrix}
   1 & \geq n & \cdots & \geq n \\
   1 & . & \cdots & . \\
   \vdots  & \vdots  & \ddots & \vdots  \\
   1 & 1 & \cdots & 1
 \end{bmatrix}.
\end{equation*}
\end{prop}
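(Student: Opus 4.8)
The plan is to work directly with the tree interpretation of the entries: by definition $A_{i,j}$ is the number of oriented spanning trees of $\widetilde{G}_D$ rooted at $c_1$ that use exactly $i$ black and $j$ green edges (equivalently $P-1-i$ blue and $Q-j$ red edges), as in the proof of Lemma~\ref{Supportof3braid}. I would first record the explicit shape of $\widetilde{G}_D$ that is implicit there: it is the union of the directed blue cycle through the $L$-vertices $c_1,\dots,c_P$, the directed green cycle through the $R$-vertices $c'_1,\dots,c'_Q$, and the directed ``middle'' Hamiltonian cycle $M$ visiting all $P+Q$ vertices in braid-word order (the block $c_1,\dots,c_{p_1}$, then $q_1$ many $R$-vertices, and so on through the $n$ syllables), whose edges pointing into an $L$-vertex are black and into an $R$-vertex are red. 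An oriented spanning tree rooted at $c_1$ is the same as a choice, for each non-root vertex, of its unique incoming edge, subject to acyclicity, so the whole problem becomes one of counting acyclic ``parent functions'' with a prescribed number of edges of each color.

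For the two equalities I would exploit that fixing a coordinate to its extreme value freezes one side of the diagram. For $A_{i,0}$, the condition $j=0$ means no green edge is used, so every $R$-vertex is forced onto its unique red incoming edge; these forced red edges form chains running monotonically up $M$ and terminating at an $L$-vertex, so contracting them reduces the problem to a one-dimensional one on $c_1,\dots,c_P$, where each non-root $c_t$ chooses either its blue edge (``down'', to $c_{t+1}$) or its contracted black edge (``up'', to $c_{t-1}$). A short argument shows that such a parent function is acyclic if and only if it is ``ups-then-downs'' (a segment $c_2,\dots,c_s$ going up followed by $c_{s+1},\dots,c_P$ going down), since a down edge immediately above an interior up edge creates a $2$-cycle; there is exactly one such configuration with $i$ black edges, giving $A_{i,0}=1$. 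The identity $A_{P-1,j}=1$ is the mirror statement: here $i=P-1$ forces every non-root $L$-vertex onto its black edge, the resulting black chains run up $M$ and attach each $R$-block to the previous one, and after contraction we obtain the same one-dimensional ``ups-then-downs'' count on $c'_1,\dots,c'_Q$, now with a single exit to the root, again unique for each $j$.

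The inequality $A_{0,j}\ge n$ is where the asymmetry producing the factor $n$ appears, and this is the step I expect to be the crux. Setting $i=0$ forces every non-root $L$-vertex onto its blue edge, and --- in contrast to the $A_{P-1,j}$ case --- the blue chain now runs \emph{down} to $c_P$ and then back to the root $c_1$. Consequently the red edge feeding each of the $n$ $R$-block-starts (these are exactly the $n$ edges ``starting in $L$ and ending in $R$'' identified in the proof of Theorem~\ref{badinequality}) leads, through its all-blue $L$-block, straight to the root. There are therefore $n$ genuinely distinct ``exits'' to the root among the $R$-vertices, and I would use them to manufacture $n$ distinct valid trees: for each block-start, let that block-start use its red edge (an exit) and place a run of exactly $j$ consecutive green ``down'' edges ending just before it, with all remaining $R$-vertices using red. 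Checking that each such configuration is acyclic and connected to the root (the green run is absorbed at the chosen exit, and every other $R$-vertex reaches the root through some block-start) and that the $n$ configurations are pairwise distinct, since their green runs end at the $n$ distinct block-starts, then yields $A_{0,j}\ge n$ for $1\le j\le Q-1$.

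The main obstacle, beyond pinning down the edge orientations precisely enough to make ``up/down'' unambiguous, is the acyclicity bookkeeping: justifying that contracting the forced monochromatic chains neither creates nor destroys directed cycles (so that the reduced one-dimensional count is faithful), and verifying in the last part that the $n$ advertised configurations really are spanning arborescences for arbitrary block sizes $q_1,\dots,q_n$, including the cases where a green run spans several $R$-blocks. Everything else reduces to the elementary ``ups-then-downs'' classification, which I would state once and reuse in all three parts.
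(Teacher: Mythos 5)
Your proposal is correct and takes essentially the same route as the paper: the paper also characterizes the trees in $\mathcal{T}_{i,0}$ and $\mathcal{T}_{P-1,j}$ by showing the forced red (resp.\ black) edges leave only a prefix/suffix choice of blue versus black (resp.\ red versus green) edges --- your contraction-plus-``ups-then-downs'' bookkeeping is just a repackaging of its forced-path argument --- and its bound $A_{0,j}\ge n$ uses exactly your construction, namely picking one of the $n$ red edges from $L$ into an $R$-block-start and hanging an oriented path of $j$ green edges off it, with all remaining $R$-vertices on red edges. The acyclicity and distinctness checks you flag as the crux go through as you describe (including wrap-around runs), so there is no gap.
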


\begin{proof}
One can characterize the oriented rooted spanning trees contributing to each term and then count them. We will show that the families $\mathcal{T}_{i,0}$ and $\mathcal{T}_{P-1,j}$ only contain one tree each; see Figure~\ref{extermaltrees1}.

A tree $T \in \mathcal{T}_{i,0}$ contains all the red edges and no green edge. Let $J=(c_{i},c_{i-1},\dots,c_{i-t})$ be a maximal path of blue edges in $T$. Then either $c_{i}=c_{1}$, or the parent of vertex $c_{i}$ in $T$ must be a vertex in $R$. This leads to a contradiction, as edges of $J$, red edges, and the black edge leading to $c_{i}$ form a cycle. Hence, all the blue edges form a path of length $P-1-i$ starting at $c_1$. The black edges of $T$ are uniquely determined, as they form a disjoint union of paths reaching the vertices in $L \setminus J$.

A tree $T \in \mathcal{T}_{P-1,j}$ contains all the black edges and no blue edge. Let $J$ be the maximal path in $T$ starting at $c_1$. This path must go outside $L$, hence it contains the red edge leading to $c'_1$. Deleting this part of the tree, one can think of $c'_1$ as the root of the remaining subtree, and we repeat the argument of the previous paragraph.

The trees in the family $\mathcal{T}_{0,j}$ contain all the blue edges and no black ones. So the restriction of $T$ to the cycle $L$ is just a path of length $P-1$. One can then pick one of the $n$ red edges connecting $L$ to $R$. Assume this edge leads to $c'_{i}$. Add an oriented path $J$ consisting of $j$ green edges starting at $c'_{i}$. Then one can build an element of $\mathcal{T}_{0,j}$ by adding a disjoint union of paths of red edges leading to vertices in $R \setminus J$. See Figure~\ref{extermaltrees2} for an example. 
\end{proof}

\begin{figure}[h]
\centering
\includegraphics[scale=0.4]{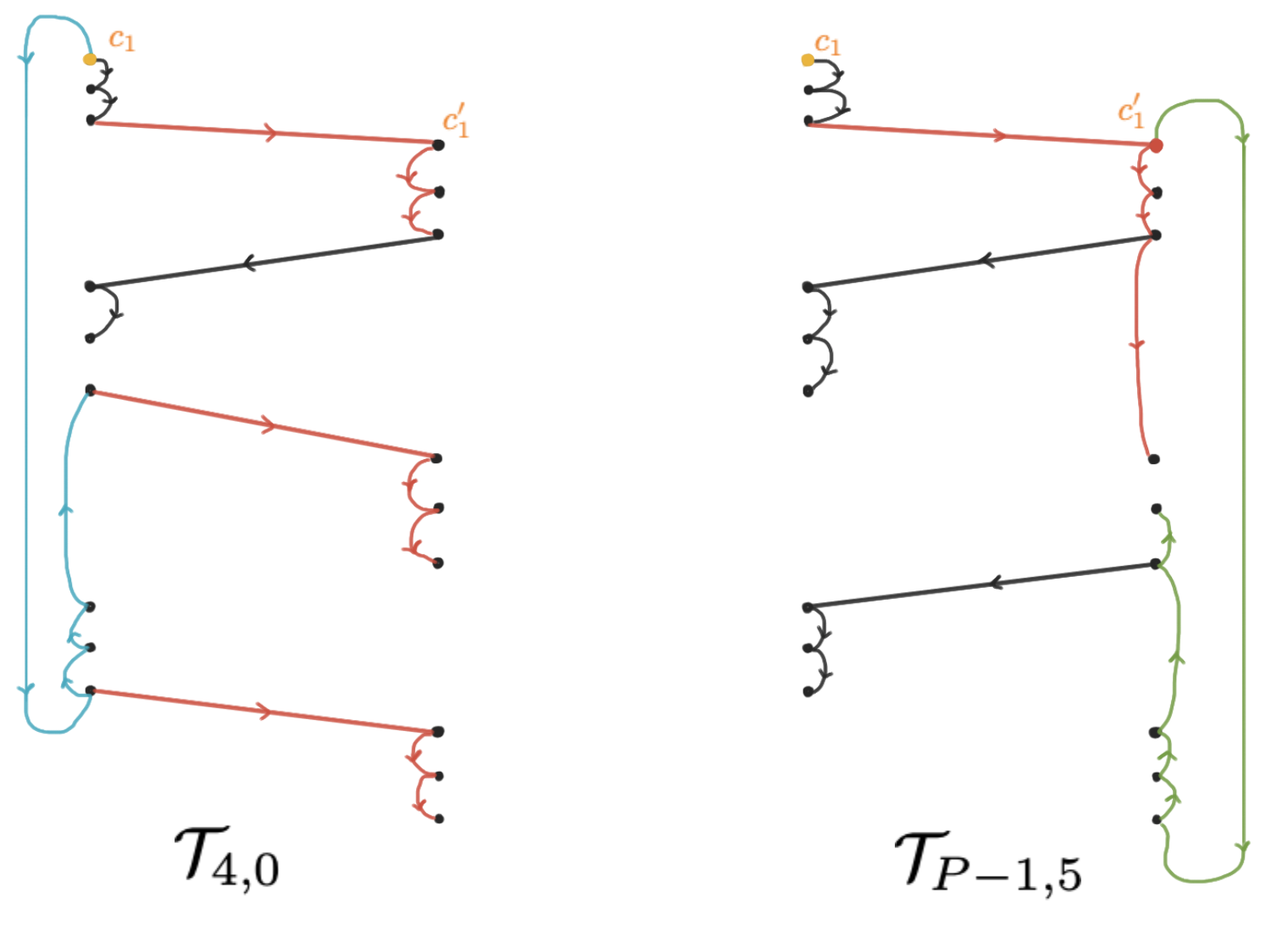}
\caption{The trees in $\mathcal{T}_{i,0}$ and $\mathcal{T}_{P-1,j}$.}\label{extermaltrees1}
\end{figure}

\begin{figure}[h]
\centering
\includegraphics[scale=0.4]{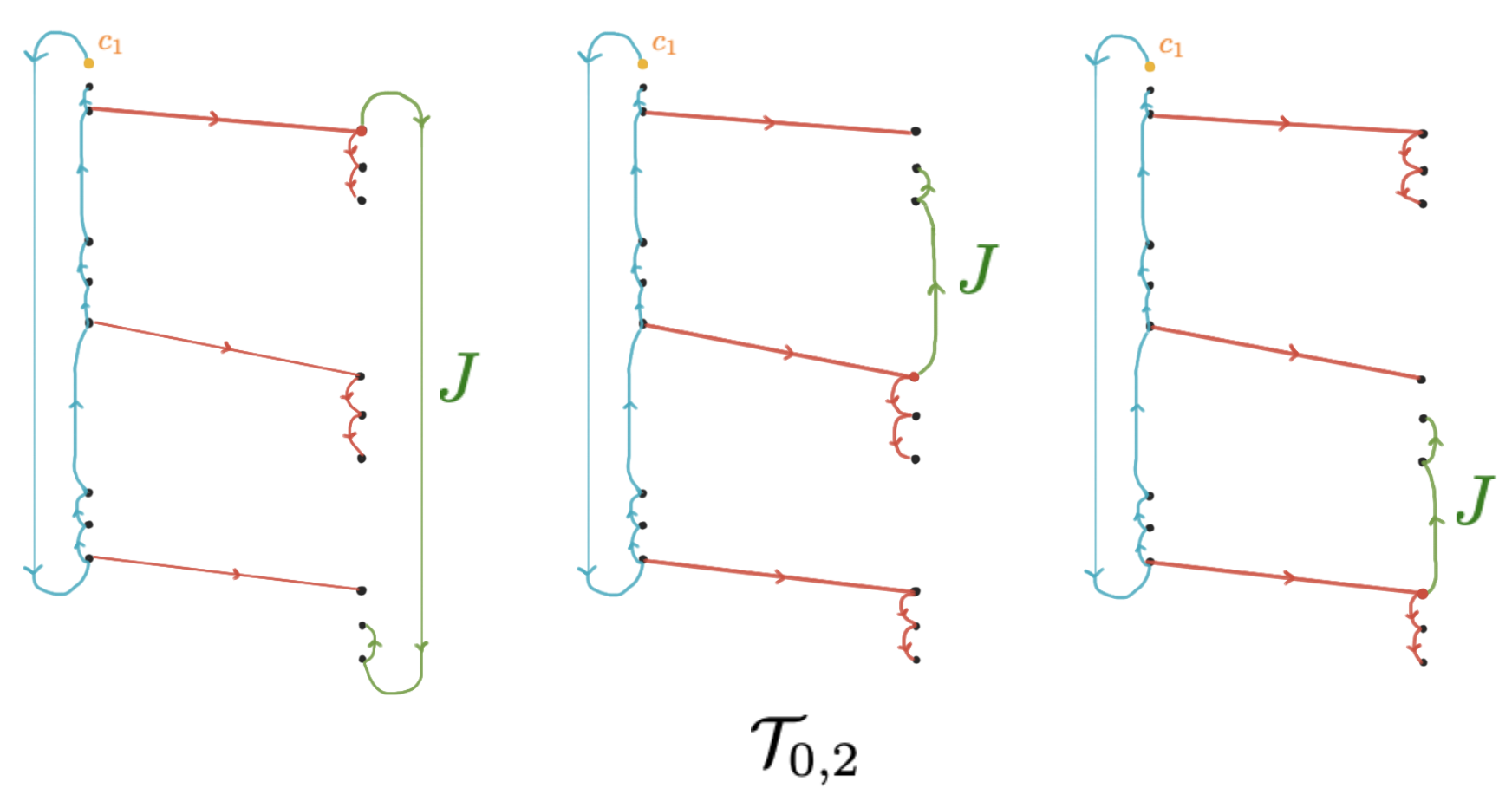}
\caption{This shows $n$ trees in $\mathcal{T}_{0,j}$.}\label{extermaltrees2}
\end{figure}

We conclude this subsection with some final remarks. As we discussed, a direct generalization of the multi-variable refinement idea does not work for 3-braids. One can try other weight refinements of the Crowell graph, but even for simple alternating 3-braids the support of the resulting polynomial fails to be M-convex. Another idea is to try to decompose this multi-variable polynomial into denormalized Lorentzian parts; i.e., decompose the $P \times Q$ matrix $A$ into matrices of the form of Proposition~\ref{Lorisproduct}. 
Finding a general scheme for such a decomposition seems improbable based on small examples. Equations~\ref{Coeffmatrix1} and~\ref{Coeffmatric2} show a few examples.

\begin{equ}[!ht]
    \begin{equation}\label{Coeffmatrix1}
    B= \sigma_1 \sigma_2^{-1} \sigma_1 \sigma_2^{-1} \sigma_1 \sigma_2^{-1},\ \ \ A=
 \begin{bmatrix}
   1 & 3 & 3 \\
   1 & 2 & 3 \\
   1 & 1 & 1  \\
 \end{bmatrix}
\end{equation}
\begin{equation}\label{Coeffmatric2}
    B= \sigma_1 ^{2} \sigma_2^{-2} \sigma_1^{2} \sigma_2^{-2} \sigma_1^{3} \sigma_2^{-1},\ \ \ A=
 \begin{bmatrix}
 1 & 3 & 5 & 5 & 3 \\
 1 & 5 & 10 & 11 & 6\\
 1 & 5 & 11 & 14 & 8 \\
 1 & 4 & 9 & 13 & 8 \\
 1 & 3 & 6 & 9 & 6 \\
 1 & 2 & 3 & 4 & 3 \\
 1 & 1 & 1 & 1 & 1 \\
 \end{bmatrix}.
\end{equation}
\caption*{Small examples of the refined Crowell polynomial.}
\end{equ}

In special cases, one can find certain log-concavity properties of the coefficient matrix $A$. As mentioned before, for alternating knots, being special is equivalent to one of the Tait graphs being bipartite. For alternating links, this is not true. One can change the orientation of some of the components of the link without changing the Tait graph. We call alternating links with a bipartite Tait graph \textit{almost special alternating links}. The Crowell graph of an almost special alternating link only differs from the Crowell graph of the corresponding special alternating link in some of its weights. Alternating $3$-braids $\sigma_1^{p_1} \sigma_2^{-q_1} \cdots \sigma_1^{p_n} \sigma_2^{-q_n}$, where all $p_i$ (or $q_i$) are even for $i \in \{1,\dots,n\}$, are examples of such braids. See Figure~\ref{almostspecial} when $B = \sigma_1^{-2}\sigma_2\sigma_1^{-2}\sigma_2$.

\begin{figure}[h]
\centering
\includegraphics[scale=0.3]{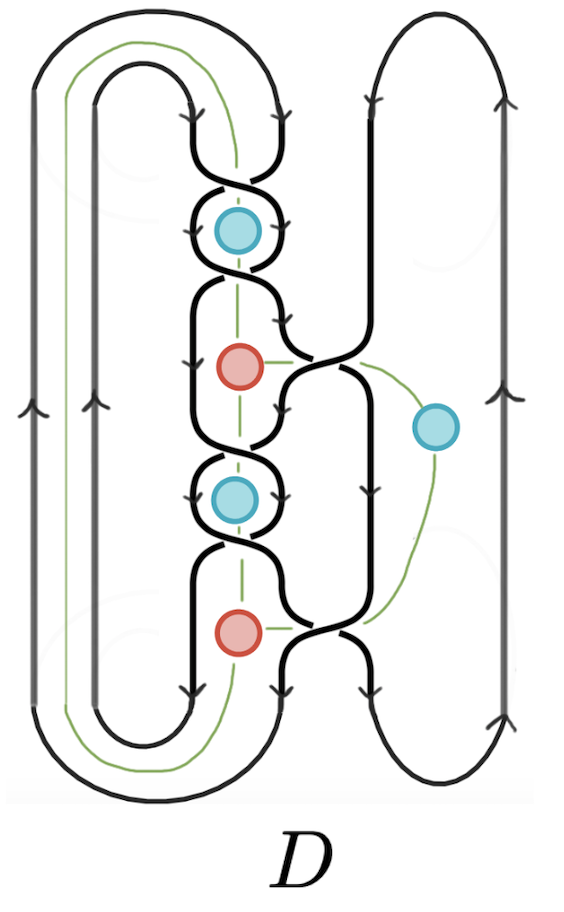}
\includegraphics[scale=0.3]{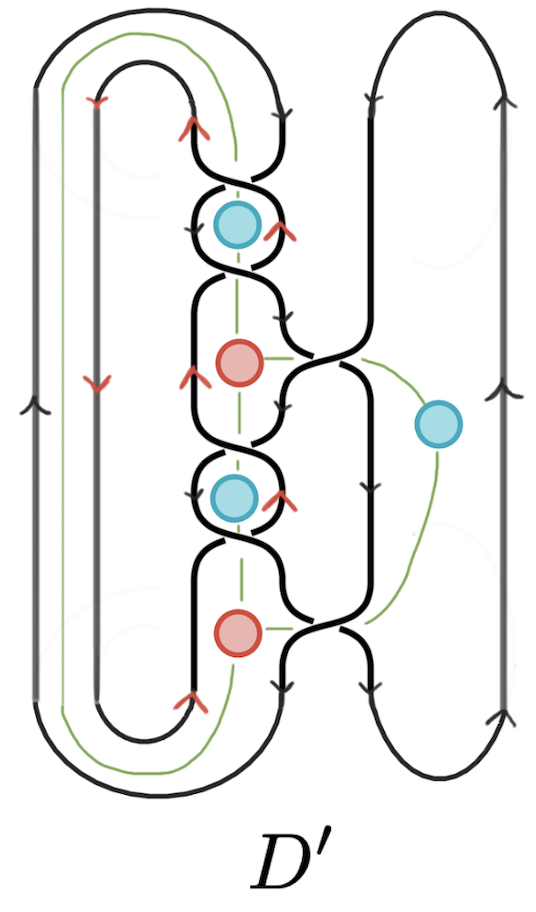}
\includegraphics[scale=0.3]{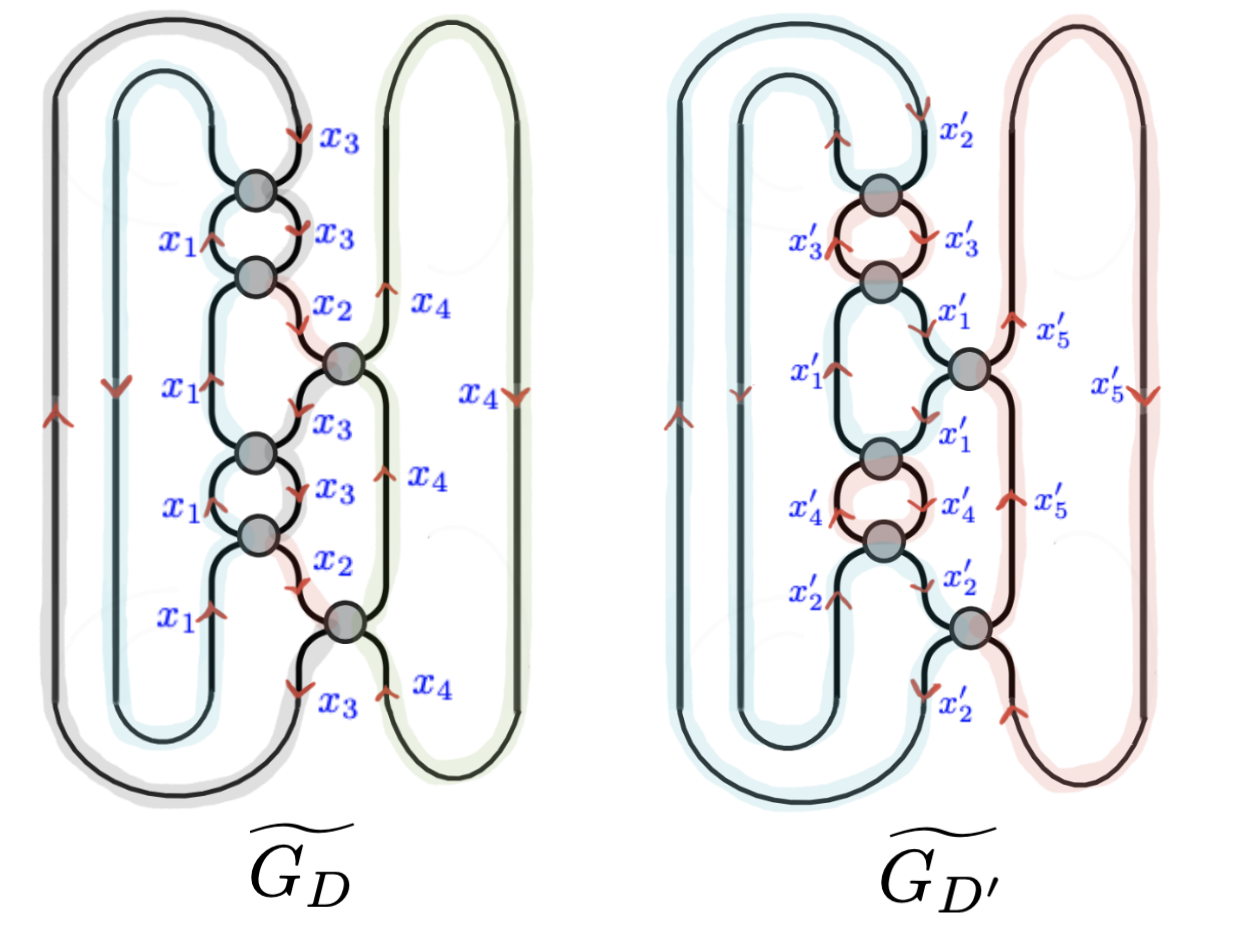}
\caption{The diagram $D$ is almost special alternating. The special alternating diagram $D'$ is obtained by changing the orientation of one of the components. The two figures on the right compare their Crowell graphs.}\label{almostspecial}
\end{figure}

Looking at the example of Figure~\ref{almostspecial}, one notices that, by identifying some of the variables, the refined Crowell polynomials $P_{\widetilde{G}_D}$ and $P_{\widetilde{G}_{D'}}$ become equal. More precisely, the set of edges with weight $x_1$, $x_2$, $x_3$ in $\widetilde{G}_D$ coincide with edges with weight $x'_1,\dots,x'_4$ in $\widetilde{G}_{D'}$. Hence, we have 
\[
P_{\widetilde{G}_D}(X,X,X,Y) = P_{\widetilde{G}_{D'}}(X,X,X,X,Y).
\]
Based on the work of Hafner, Mészáros, and Vidinas~\cite{KarolaLogconcavityOT}, the polynomial $P_{\widetilde{G}_{D'}}$ is denormalized Lorentzian. Identifying variables preserves the Lorentzian property by \cite[Lemma 2.5]{KarolaLogconcavityOT}, so $P_{\widetilde{G}_D}(X,X,X,Y)$ is also Lorentzian. Based on what we know about the general form of $P_{\widetilde{G}_D}$ (equation~\eqref{formof4-varmatrix}), we can deduce that the sequence of non-zero coefficients of $P_{\widetilde{G}_D}(X,X,X,Y)$ is $\sum\limits_{j}  A_{ij}$ for $j \in \{0,\dots,P-1\}$; i.e., the sum of the rows of the matrix $A$. We can conclude that this sequence is log-concave: 

\begin{theo}\label{evenalternating}
Let $D$ be the diagram of the closure of an alternating 3-braid word 
\[
B=\sigma_1^{p_1}\sigma_2^{-q_1}\cdots\sigma_1^{p_n}\sigma_2^{-q_n}. 
\]
Let $A=(A_{i,j})$ be the $P \times Q$ matrix of coefficients of $P_{\widetilde{G}_D,c_1}$. If all $p_i$ are even, then the sum of the rows of $A$ form a log-concave sequence. If all $q_i$ are even, then the sum of the columns of $A$ form a log-concave sequence. 
\end{theo}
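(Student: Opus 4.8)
The plan is to reduce the statement to the log-concavity of the coefficients of a genuinely \emph{special} alternating link's refined Crowell polynomial, exactly along the lines sketched in the paragraph preceding the theorem, and then to read off a marginal of $A$ as the coefficient sequence of a two-variable specialization. So first I would assume all $p_i$ are even and record that the closure $L$ is then almost special alternating, i.e. one of its Tait graphs is bipartite; equivalently there is a special alternating diagram $D'$, obtained by reversing the orientation of a suitable component of $D$ (see Figure~\ref{almostspecial}), whose Tait graph agrees with that of $D$. The underlying oriented rooted graphs of $\widetilde{G}_D$ and $\widetilde{G}_{D'}$ coincide, and the two refined Crowell graphs differ only in the weights assigned to the edges: reorienting a component changes which arcs receive weight $t$ versus $1$ in Crowell's rule, but leaves the $4$-valent diagram, and hence the spanning-tree combinatorics, untouched.

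The key step is the weight identification. I would check that, under the bipartite structure, three of the four edge classes of $\widetilde{G}_D$ become identified with the distinct variables of $\widetilde{G}_{D'}$ (one per vertex of the bipartite Tait graph), while the fourth class corresponds to the single remaining variable. Concretely, this is the statement $P_{\widetilde{G}_D}(X,X,X,Y)=P_{\widetilde{G}_{D'}}(X,\dots,X,Y)$ verified for the example in Figure~\ref{almostspecial}. By \cite{KarolaLogconcavityOT}, $P_{\widetilde{G}_{D'},c_1}$ is denormalized Lorentzian, and identifying variables preserves this property by \cite[Lemma~2.5]{KarolaLogconcavityOT}; hence the resulting two-variable specialization of $P_{\widetilde{G}_D,c_1}$ is denormalized Lorentzian.

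Finally I would extract the conclusion. A homogeneous two-variable denormalized Lorentzian polynomial $\sum_k c_k X^{d-k}Y^k$ has a log-concave coefficient sequence $(c_k)$ with no internal zeros: applying the normalization operator $N$ turns the requirement $N(f)\in L_2^d$ directly into log-concavity of $(c_k)$, and the positivity of all the entries of $A$ (Remark~\ref{positivecoeff} and Proposition~\ref{extermalA}) rules out internal zeros. By equation~\eqref{formof4-varmatrix}, the coefficients of the relevant specialization are exactly a marginal of the matrix $A$, namely the row sums $\sum_j A_{i,j}$ when all $p_i$ are even; this gives their log-concavity. The statement for even $q_i$ then follows by the symmetry that takes the mirror of the braid diagram and turns it upside down, interchanging $\sigma_1$ and $\sigma_2$ (and hence $P$ with $Q$, and the rows of $A$ with its columns) while leaving the Alexander polynomial unchanged, as noted just before Corollary~\ref{strongbadinequality}.

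The step I expect to be the main obstacle is pinning down the weight correspondence precisely: verifying that all $p_i$ even really forces the relevant Tait graph to be bipartite, and identifying exactly which of the four edge classes carrying $x_1,x_2,x_3,x_4$ must be collapsed together — equivalently, whether the surviving variable records the rows or the columns of $A$. Once that identification is fixed, checking homogeneity of the specialization and translating the definition of denormalized Lorentzian into log-concavity in two variables is routine.
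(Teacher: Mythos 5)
Your proposal follows the paper's own argument essentially verbatim: the paper derives Theorem~\ref{evenalternating} precisely from the almost-special-alternating structure, the variable identification $P_{\widetilde{G}_D}(X,X,X,Y)=P_{\widetilde{G}_{D'}}(X,\dots,X,Y)$, the Hafner--M\'esz\'aros--Vidinas denormalized Lorentzian result combined with their Lemma~2.5 on identifying variables, and reading off the resulting marginal of $A$ from equation~\eqref{formof4-varmatrix}, with the other case handled by the $\sigma_1\leftrightarrow\sigma_2$ mirror symmetry. The bookkeeping point you flag as the main obstacle (which three edge classes get merged, hence whether the surviving variable records rows or columns of $A$) is exactly the detail the paper itself settles only by inspecting the example of Figure~\ref{almostspecial}, so your argument is no less complete than the paper's.
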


\section{A conjecture of Hirasawa and Murasugi}\label{Section2}
\subsection{Introduction}\label{HMintro}

In 2013, Hirasawa and Murasugi proposed an extension of the trapezoidal conjecture. They conjectured that the signature gives an upper bound on the length of the stable part of the sequence of absolute values of coefficients of the Alexander polynomial of an alternating knot. 
In 2017, Chen~\cite{Chen2017OnTA} generalized their conjecture to alternating links. 

\HMrestatable*

We call this the \textit{H-M inequality}, and the alternating links for which equality holds \textit{H-M sharp}. We define the \textit{stable length of $L$} to be 
\[
\sl(L) := l-2(i_0-1).
\]

This conjecture originates from the study of the roots of the Alexander polynomial of alternating links, which has been the subject of numerous papers, most notably regarding questions related to Hoste's conjecture~\cite{Lyubich-Murasugi}. Hirasawa and Murasugi~\cite{hirasawa2013various} introduced the class of \textit{stable alternating links}, which consists of all alternating links $L$ for which $\Delta_L$ has only real roots. Lemma~\ref{HMstable} implies that Conjecture~\ref{HMconjecture} holds for this class of links.

\begin{lemm}[\cite{hirasawa2013various}]\label{HMstable}
    Let $L$ be a stable alternating link and 
    \[
    \Delta_L(t) = \sum\limits_{i=0}^{l-1}(-1)^i a_i t^{i}. 
    \]
    Then $(a_0, a_1, \cdots, a_{l-1})$ is a strictly log-concave sequence, and hence it is unimodal; i.e., it is trapezoidal with stable length one. Furthermore, the signature of $L$ is zero.
\end{lemm}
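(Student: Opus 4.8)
The plan is to treat the two assertions—strict log-concavity of $(a_i)$ and the vanishing of $\sigma(L)$—separately, since they rest on different inputs. For the log-concavity I would first pass from $\Delta_L$ to $\Delta_L(-t)=\sum_{i=0}^{l-1}a_i t^i$, which by Murasugi's sign-alternation result has \emph{nonnegative} coefficients, and in fact strictly positive ones: an alternating link has no internal zeros among the coefficients of its Alexander polynomial (cf.\ Remark~\ref{positivecoeff}), and $a_0,a_{l-1}\neq 0$ by the normalization. Stability of $L$ means $\Delta_L$ has only real roots, so $\Delta_L(-t)$ does as well; together with the positivity of its coefficients this forces all of its roots to be strictly negative reals.

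Next I would invoke Newton's inequalities for the real-rooted polynomial $\Delta_L(-t)$. Writing $p_k=a_k/\binom{l-1}{k}$ for the normalized coefficients, real-rootedness gives $p_k^2\geq p_{k-1}p_{k+1}$, which rearranges to
\[
a_k^2 \geq a_{k-1}a_{k+1}\cdot \frac{(k+1)(l-k)}{k(l-1-k)}.
\]
Since the binomial factor exceeds $1$ (its numerator minus $k(l-1-k)$ equals $l>0$) and all $a_i$ are positive, this yields the \emph{strict} inequality $a_k^2>a_{k-1}a_{k+1}$; i.e.\ $(a_i)$ is strictly log-concave. Strict log-concavity makes the ratios $a_{k+1}/a_k$ strictly decreasing, so the sequence strictly increases and then strictly decreases, and no three consecutive terms can be equal; the symmetry $a_i=a_{l-1-i}$ then forces the maximum to the center, giving the claimed unimodality (stable length one). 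This part is routine.

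The main obstacle is the signature. Here I would use the Tristram--Levine signature function $\sigma_\omega(L)=\operatorname{sign}\bigl((1-\omega)V+(1-\bar\omega)V^{T}\bigr)$ for a Seifert matrix $V$, recalling that $\sigma(L)=\sigma_{-1}(L)$ and that $\sigma_\omega$ is locally constant on the complement in $S^1$ of the zeros of $\Delta_L$. Real-rootedness is exactly what clears those zeros off the circle: a real root can lie on $S^1$ only at $\pm 1$, and $-1$ is excluded because $|\Delta_L(-1)|=\det(L)\neq 0$ for a nonsplit alternating link. Hence $\sigma_\omega$ is constant on the open upper arc from $1$ to $-1$, so $\sigma(L)$ equals the limit of $\sigma_\omega$ as $\omega\to 1^{+}$.

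Finally I would compute that limiting value. For $\omega=e^{i\theta}$ with $\theta\to 0^{+}$ one has $(1-\omega)V+(1-\bar\omega)V^{T}=i\theta\,(V^{T}-V)+O(\theta^2)$, and $i(V^{T}-V)$ is Hermitian with eigenvalues occurring in $\pm$ pairs (together with harmless zeros from the radical of the intersection form), because $V^{T}-V$ is real and skew-symmetric. Its signature is therefore $0$, so $\sigma_\omega\to 0$ and $\sigma(L)=0$. The delicate points are the exclusion of unit-circle roots at the endpoint $\omega=1$ in the link case and the balancing of eigenvalues for a possibly degenerate intersection form; both are handled by the single observation that multiplying a real skew-symmetric matrix by $i$ produces a Hermitian form of signature zero regardless of its rank.
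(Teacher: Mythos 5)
First, a point of comparison: the paper does not prove Lemma~\ref{HMstable} at all --- it is quoted from Hirasawa--Murasugi \cite{hirasawa2013various} --- so your proposal has to stand on its own. Its first half does: sign alternation plus the non-vanishing of the coefficients of an alternating Alexander polynomial (cf.\ Remark~\ref{positivecoeff}) make $\Delta_L(-t)$ a polynomial with strictly positive coefficients, real-rootedness lets you invoke Newton's inequalities, and your binomial bookkeeping (the excess of the numerator over the denominator being exactly $l$) is correct, so strict log-concavity and the resulting unimodality follow.

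The signature half, however, has a genuine gap, located exactly at the step you describe as ``handled by the single observation.'' The expansion $(1-\omega)V+(1-\bar\omega)V^{T}=i\theta\,(V^{T}-V)+\tfrac{\theta^{2}}{2}(V+V^{T})+O(\theta^{3})$ determines $\lim_{\omega\to 1^{+}}\sigma_\omega$ by the signature of $i(V^{T}-V)$ only when $V^{T}-V$ is nondegenerate, i.e.\ only when $L$ is a knot. For a $\mu$-component link the intersection form $V^{T}-V$ has nullity $\mu-1$, and the eigenvalues emerging from its radical are governed by the second-order term: their signs are those of $V+V^{T}$ restricted to that radical, and nothing forces them to cancel --- the signature of a degenerate leading term says nothing about what a perturbation does on its kernel. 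Concretely, take the positive Hopf link: $V=(-1)$, so $V^{T}-V=0$ and $(1-\omega)V+(1-\bar\omega)V^{T}=-(2-2\cos\theta)<0$, whence $\sigma_\omega\equiv -1$ and $\sigma(L)=-1$; yet $\Delta_L(t)\doteq t-1$ has only real roots, so under the paper's definition the Hopf link is stable alternating. The boundary $T'_{2,2n}$ of a twisted annulus behaves the same way. Note also that for any link with at least two components, $t=1$ is automatically a root of $\Delta_L$, so the endpoint $\omega=1$ is always a singular point and this degenerate-perturbation issue cannot be sidestepped. In short, your argument correctly proves $\sigma=0$ for stable alternating \emph{knots} (where $\det(V^{T}-V)=\pm 1$ and the limit really is $0$), but it proves something false if applied verbatim to links; the link case requires the precise hypotheses and conventions of \cite{hirasawa2013various} rather than the observation you rely on.
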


Chen~\cite{Chen2017OnTA} showed that Conjecture~\ref{HMconjecture} holds for 2-bridge links, and Alrefai and Chbili~\cite{small3braids} proved it for closures of alternating 3-braids of length less than four. There also has been some effort to construct families of stable alternating links by plumbing Hopf bands to a disk; see Hirasawa and Murasugi~\cite{hirasawa2013various} and Stoimenow~\cite{StoimenowHopfbands}.

Note that the H-M inequality is trivial for special alternating links. This comes from the combinatorial formula for the signature of alternating links stated in Lemma~\ref{signatureformula}, below. Using Lemma~\ref{signatureformula}, we will establish Lemma~\ref{specialaltsignature} about the signature of special alternating links. Recall that we only consider positive special diagrams, unless otherwise stated.

\begin{lemm}[\cite{Traczyk2004ACF},\cite{OSineq}]\label{signatureformula}
    Let $D$ be a reduced alternating diagram, $c$ a crossing in $D$, and $D_0$ the diagram obtained by taking the oriented smoothing of $c$. Then 
    \[
    \sigma(D_0) = \sigma(D) + \text{sign}(c).
    \]
    Hence 
    \[
    \sigma(D) = \#(\text{black regions}) - \#(\text{positive crossings}) - 1,
    \]
    where the black-white coloring convention for alternating links is as in Figure~\ref{OSmu-1}. 
\end{lemm}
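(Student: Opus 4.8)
The word ``Hence'' signals the intended route: first establish the local smoothing identity $\sigma(D_0)=\sigma(D)+\operatorname{sign}(c)$, and then deduce the closed formula by downward induction on the number of crossings. For the local identity I would compute the signature from a checkerboard spanning surface $F$ of $D$ via the Gordon--Litherland formula $\sigma(L)=\operatorname{sig}(\mathcal{G}_F)-\mu(F)$, where $\mathcal{G}_F$ is the Gordon--Litherland (equivalently Goeritz) form and $\mu(F)$ is the correction term. The key classical input is that, for a reduced alternating diagram, the form $\mathcal{G}_F$ of the appropriate checkerboard surface is definite. The oriented smoothing at $c$ replaces $F$ by the surface obtained by deleting the band at $c$, which lowers $\operatorname{rank}H_1(F)$ by one; combining definiteness with the standard one-sided skein bound on the signature then forces the change $\sigma(D_0)-\sigma(D)$ to equal $\operatorname{sign}(c)$ rather than $0$. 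Equivalently, one can run this argument on the canonical Seifert surface produced by Seifert's algorithm, which is minimal genus for alternating $D$: the smoothing restricts the symmetrized Seifert form to a codimension-one subspace, and Cauchy interlacing together with definiteness pins the jump.

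Granting the local identity, I would read off from Figure~\ref{OSmu-1} the effect of the oriented smoothing on the right-hand side. Smoothing a positive crossing merges two white regions, so $\#(\text{black regions})$ is unchanged while $\#(\text{positive crossings})$ drops by one; smoothing a negative crossing merges two black regions, so $\#(\text{black regions})$ drops by one while $\#(\text{positive crossings})$ is unchanged. In both cases a one-line substitution shows that the identity $\sigma(D)=\#(\text{black regions})-\#(\text{positive crossings})-1$ is equivalent, through the local formula, to the same identity for $D_0$. The induction therefore runs downward on the crossing number, with base case a crossingless connected diagram, i.e.\ a single circle, where $\sigma=0$, there is one black region and no positive crossing, and the formula reads $0=1-0-1$.

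The main obstacle is keeping this induction honest, since the local formula needs $D$ to be reduced whereas the smoothed diagram $D_0$ need not be. I would deal with this by strengthening the inductive statement to all connected alternating diagrams and handling two auxiliary situations. When $D_0$ has a nugatory crossing I remove it by a Reidemeister~I move, checking from the coloring convention of Figure~\ref{OSmu-1} that the vanishing monogon has the color that keeps $\#(\text{black regions})-\#(\text{positive crossings})$ constant: a positive nugatory crossing bounds a black monogon and a negative one a white monogon, so each removal preserves the formula. When the smoothing splits $D_0$ into a disconnected (hence split) diagram I use additivity of the signature under split union, together with the fact that the two pieces share exactly one black region, which supplies the single $-1$. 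Verifying these color assignments against Figure~\ref{OSmu-1}, and---more essentially---checking that definiteness really forces the local jump to be $\operatorname{sign}(c)$ and not $0$, are the two points demanding genuine care; the remaining steps are bookkeeping.
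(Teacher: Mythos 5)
The first thing to say is that the paper does not prove Lemma~\ref{signatureformula} at all: it is quoted as a known result from \cite{Traczyk2004ACF} and \cite{OSineq}, so your proposal has to be measured against the literature rather than against an argument in the text. Measured that way, your main route is the standard one: the closed formula is obtained in \cite{OSineq} from the Gordon--Litherland formula $\sigma(L)=\operatorname{sig}(\mathcal{G}_F)-\mu(F)$ together with the classical fact that the Goeritz form of a connected reduced alternating diagram, colored as in Figure~\ref{OSmu-1}, is definite (it is, up to sign, the reduced Laplacian of a Tait graph). But note that once you have invoked these two facts you are essentially done: $\operatorname{sig}(\mathcal{G}_F)=\pm\bigl(\#(\text{black regions})-1\bigr)$, and for an alternating diagram with this coloring the correction term $\mu(F)$ is exactly the (signed) count of the crossings of one sign, so the closed formula falls out with no induction, and the skein identity $\sigma(D_0)=\sigma(D)+\operatorname{sign}(c)$ then follows by comparing the formulas for $D$ and $D_0$. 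Your plan runs this backwards --- local identity first, then a downward induction --- which is legitimate but redundant; the induction, with its bookkeeping about merged regions and nugatory crossings, is only needed if one wants to avoid Gordon--Litherland entirely (as Traczyk's combinatorial proof of the first identity does). Incidentally, your split-union case is vacuous: a smoothing disconnects the diagram if and only if the corresponding Tait-graph edge is a bridge (equivalently, a loop in the dual Tait graph), i.e.\ if and only if the crossing is nugatory, which reducedness excludes.

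Two points are genuinely problematic. First, in your derivation of the local identity you never track the correction term $\mu(F)$. Deleting the band at $c$ does change $\operatorname{sig}(\mathcal{G}_F)$ by exactly one (definiteness, plus the fact that a non-nugatory crossing gives a non-bridge edge, so the rank drops), but what you need is
\[
\sigma(D_0)-\sigma(D)=\pm 1+\bigl(\mu(F)-\mu(F_0)\bigr),
\]
and the one-sided skein bound only confines the left side to $\{0,\operatorname{sign}(c)\}$; it does not rule out the scenario in which $c$ contributes to $\mu$ and the jump is $0$. You must verify from the conventions of Figure~\ref{OSmu-1} that crossings of the sign being smoothed contribute trivially to $\mu$ of the chosen checkerboard surface --- the same kind of local check you defer elsewhere, except that here it is the crux of the argument, not bookkeeping. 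Second, the sentence claiming one can ``equivalently'' run the argument on the canonical Seifert surface via Cauchy interlacing is false: the symmetrized Seifert form of an alternating knot is not definite in general (the figure-eight knot has genus one and signature zero, so its $2\times 2$ symmetrized Seifert matrix is indefinite), and interlacing alone only reproves the bound $|\sigma(D)-\sigma(D_0)|\le 1$; it cannot pin the jump. Deleting that aside and carrying out the $\mu$ computation honestly leaves a correct proof, essentially the one in the cited reference \cite{OSineq}.
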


\begin{figure}[h]
    \centering
    \includegraphics[scale=0.3]{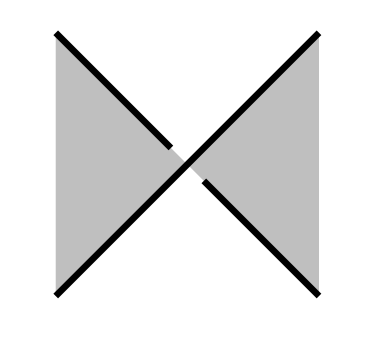}
    \caption{Black-white coloring convention for regions of an alternating diagram, following Ozsv\'ath and Szab\'o~\cite{OSineq}.}
    \label{OSmu-1}
\end{figure}

\begin{lemm}\label{specialaltsignature}
For a special alternating link $L$, we have 
\[
-\sigma(L) = 2g(L) = \deg(\Delta_L).
\]
Furthermore, if an alternating link $L$ satisfies this condition, then it is special alternating (up to taking its mirror). 
\end{lemm}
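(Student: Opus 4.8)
The plan is to derive the two equalities in the forward direction from the signature formula of Lemma~\ref{signatureformula} together with an Euler-characteristic computation on the canonical Seifert surface, and to obtain the converse from the Murasugi decomposition and additivity of the signature.

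First I would treat the forward direction. Since $L$ is special alternating, fix a special diagram $D$ of $L$; as explained around Figure~\ref{PosNeg} all crossings have the same sign, so after possibly replacing $L$ by its mirror I may assume $D$ is positive. Write $c$ for the number of crossings and $b$ for the number of black regions in the convention of Figure~\ref{OSmu-1}, which for a positive special diagram are exactly the regions bounded by the Seifert circles. Because every crossing is positive, Lemma~\ref{signatureformula} gives
\[
-\sigma(L) = c - b + 1.
\]
On the other hand, the canonical (checkerboard) Seifert surface $S$ is built from $b$ disks with $c$ bands attached and deformation retracts onto the black Tait graph $B$, so $\rank H_1(S) = \rank H_1(B) = c - b + 1$. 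As $D$ is reduced alternating, $S$ realizes the genus, hence $\rank H_1(S) = 2g(L) + |L| - 1$, which also equals $\deg(\Delta_L)$ since the breadth of the Alexander polynomial of an alternating link is $2g(L) + |L| - 1$. Combining these yields $-\sigma(L) = \deg(\Delta_L) = 2g(L) + |L| - 1$, which is $2g(L)$ in the knot case.

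For the converse, suppose $L$ is alternating with $-\sigma(L) = \deg(\Delta_L)$, oriented so that $\sigma(L)\le 0$. I would apply Proposition~\ref{Murasugidecomp} to decompose $L = L_1 * \cdots * L_{m+1}$ as a diagrammatic Murasugi sum of special alternating links, along which both signature and genus are additive. Gluing minimal-genus Seifert surfaces along a disk adds Euler characteristics up to a correction of $1$ (Theorem~\ref{Gabai}), so $\rank H_1$, equivalently $\deg(\Delta)$, is also additive: $\deg(\Delta_L) = \sum_i \deg(\Delta_{L_i})$. Applying the forward direction to each special summand gives $\deg(\Delta_{L_i}) = |\sigma(L_i)|$, whence
\[
\deg(\Delta_L) = \sum_i |\sigma(L_i)| \ge \Bigl| \sum_i \sigma(L_i) \Bigr| = |\sigma(L)|,
\]
with equality exactly when all the $\sigma(L_i)$ share a common sign. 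The hypothesis forces this equality, so all summands, and hence all crossings of $L$, have the same sign; that is, $L$ is positive or negative. It then remains to conclude that a positive (or negative) alternating link is special alternating, which finishes the characterization up to mirror.

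The main obstacle is precisely this last step of the converse: passing from ``all crossings of $L$ have the same sign'' to ``$L$ is special alternating.'' A positive alternating \emph{diagram} need not itself be special, since a nontrivial diagrammatic plumbing of positive special links is positive yet contains a type~2 Seifert circle; thus one cannot argue diagram-locally and must instead produce a genuinely special diagram of the same link. I expect to settle this by invoking the known equivalence between special alternating links and positive-or-negative alternating links. The remaining points, namely additivity of the breadth under Murasugi sums and the orientation and sign bookkeeping in the triangle inequality, are routine.
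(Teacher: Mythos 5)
Your proof is correct and follows essentially the same route as the paper: the forward direction via Lemma~\ref{signatureformula} together with the homology of the black Tait graph and the minimal-genus Tait surface, and the converse via the Murasugi decomposition of Proposition~\ref{Murasugidecomp}, additivity of genus and signature, and the equality case of the triangle inequality forcing all summands to have signatures of one sign. The final step you flag as the main obstacle (a positive alternating link is special alternating, up to mirror) is asserted in the paper's proof with no further justification either, so your plan to invoke the known equivalence is exactly what the published argument does implicitly.
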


\begin{proof}
    Suppose that $L$ is a positive special alternating link. Using the second formula of Lemma~\ref{signatureformula}, we can compute the signature of $L$ based on its black Tait graph $B$. The black regions correspond to the vertices of $B$, and the crossings correspond to the edges. Hence, we have 
    \[
    \sigma(L) = |V(B)| - |E(B)| - 1 = - \text{rank}(H_1(B))= -2g(L).
    \]
    The last equality holds because the black Tait surface, constructed by median construction on $B$, is a minimal genus Seifert surface.
    
    To prove the converse, we use Proposition~\ref{Murasugidecomp} to decompose $L$ into the diagrammatic Murasugi sum of special alternating components such that genus and signature are additive under this decomposition. The absolute value of the signature is sub-additive under Murasugi decomposition, so the equality $2g(\Delta_{L}) = |\sigma(L)|$ holds only when the signatures of all the special alternating components of $L$ have the same sign. Hence, all of these components must be positive (resp.\ negative) alternating links, which means that $L$ is a positive (resp.\ negative) alternating link. Consequently, $L$ or its mirror is a special alternating link.
\end{proof}

In the rest of this subsection, we prove some new results about the H-M inequality based on our work in Section~\ref{Section1}. In subsection~\ref{HMsharp}, we focus on the study of H-M sharp knots. In Subsection~\ref{dual}, we connect the H-M inequality with concordance.
We are going to use the results of Subsection~\ref{stablisesection}, so we now summarize these.

\begin{prop}\label{summaryofsection1.2}
Let $n$ be the number of crossings in the largest coherent twist region of $L$, denoted by $R$. Let $L_i$ be the alternating link obtained by changing $R$ to a twist region containing $i$ crossings. The polynomials $P_1$ and $P'_2$ are Crowell polynomials of the graphs depicted in Figure~\ref{Iteratedformulapics}.

\begin{figure}[h]
    \centering
    \includegraphics[scale=0.3]{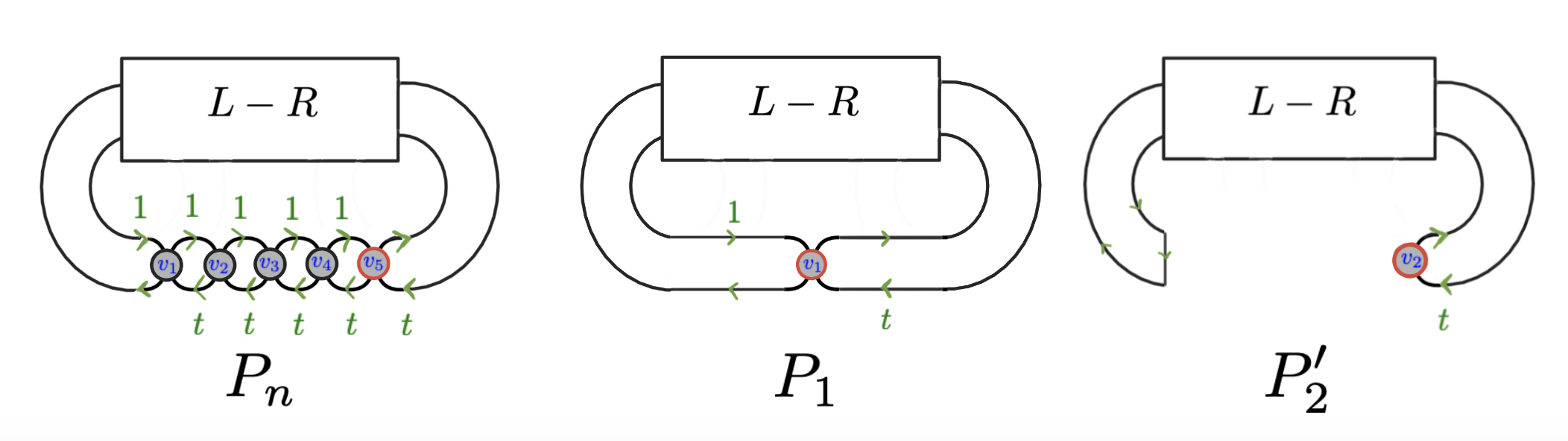}
    \caption{The Crowell graphs of equations~\eqref{summaryof1.2.eq1} and \eqref{summaryof1.2.eq2}.}
    \label{Iteratedformulapics}
\end{figure}

If $n$ is odd, then 
\begin{equation}\label{summaryof1.2.eq1}
\Delta_{L}(-t) = P_n = t^{n-1}P_1+(1+\cdots+t^{n-2})P'_2.    
\end{equation}
Furthermore, the first $n-2$ terms of $P_n$ come from only $(1+\cdots+t^{n-2})P'_2$.

Similarly, if $n$ is even, then
\begin{equation}\label{summaryof1.2.eq2}
\Delta_{L}(-t)=P_n=t^{n-2}P_2+(1+\cdots+t^{n-3})P'_2,   
\end{equation}
and the first $n-3$ terms of $P_n$ come from only $(1+\cdots+t^{n-3})P'_2$. 
\end{prop}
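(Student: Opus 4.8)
The plan is to recognize that this proposition merely repackages, symmetrically in the parity of $n$, the computations already carried out in the proof of Theorem~\ref{twistconcentrated}. The identification of $P_1$ and $P'_2$ with the Crowell polynomials of the graphs in Figure~\ref{Iteratedformulapics} is immediate from their definitions in Subsection~\ref{stablisesection}: $P_1=P_{G_1,v_1}$ is the Crowell polynomial of the diagram whose twist region carries a single crossing, and $P'_2=P_{G'_2,v_2}$ is the Crowell polynomial of its unoriented smoothing. For odd $n$, equation~\eqref{iterated2} is exactly formula~\eqref{summaryof1.2.eq1}, and the assertion that the first $n-2$ coefficients of $P_n$ are unaffected by $t^{n-1}P_1$ follows from equation~\eqref{seperatedterms}, which was derived there from $\maxdeg(P_n)=\maxdeg(t^{n-1}P_1)$ together with $\Span(P_n)-\Span(t^{n-1}P_1)=n-1$; so for the odd statement I would simply invoke that argument.

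First, for the even case, I would rerun the iterated skein relation of Lemma~\ref{CrowellSkein}, applying $P_k=tP_{k-1}+P'_k$ for $k=n,n-1,\dots,3$ and terminating at $P_2$ rather than $P_1$, which gives
\[
P_n = t^{n-2}P_2 + \bigl(P'_n + tP'_{n-1} + \cdots + t^{n-3}P'_3\bigr).
\]
The weight-preserving collapse bijection between $\mathcal{T}(G'_k,v_k)$ and $\mathcal{T}(G'_2,v_2)$ of Figure~\ref{smoothingcollapse} then yields $P'_k=P'_2$ for every $k$, so the bracket collapses to $(1+\cdots+t^{n-3})P'_2$, which establishes equation~\eqref{summaryof1.2.eq2}.

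Next, to separate the low-degree terms in the even case, I would transcribe the degree analysis of Theorem~\ref{twistconcentrated} with $P_2$ in place of $P_1$. Since every spanning tree counted by $P'_2$ is also counted by $P_2$, we get $\maxdeg(P_2)\ge\maxdeg(P'_2)$, hence $\maxdeg(t^{n-2}P_2)>\maxdeg\bigl((1+\cdots+t^{n-3})P'_2\bigr)$ and so $\maxdeg(P_n)=\maxdeg(t^{n-2}P_2)$. For the span comparison I would use that, for even $n$, the strands of the twist region close up so that $|L_n|=|L_2|$, and that the Seifert-graph comparison of Figure~\ref{seifertcycs} replaces the $2$ parallel bands of $L_2$ by $n$ bands, raising the first Betti number of the Seifert graph by $n-2$; thus $\Span(P_n)-\Span(P_2)=n-2$ and $g(L_n)-g(L_2)=(n-2)/2$. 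Combined with $\Span(P)=\maxdeg(P)-\mindeg(P)$, this forces $\mindeg(t^{n-2}P_2)=\mindeg(P_n)+(n-2)$, so $t^{n-2}P_2$ contributes nothing to the first $n-3$ coefficients of $P_n$, which therefore coincide with those of $(1+\cdots+t^{n-3})P'_2$.

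The only genuine obstacle is the parity bookkeeping. I must confirm that $|L_n|=|L_2|$ holds precisely for even $n$, whereas the odd argument needs $|L_n|=|L_1|$: adding a full twist (that is, $+2$ crossings) preserves the permutation of strands and hence the component count, so $|L_k|$ depends only on the parity of $k$. I must likewise check that passing from $L_2$ to $L_n$ adds $n-2$ parallel bands rather than $n-1$, so that the genus increment is $(n-2)/2$. Once these two parity-sensitive facts are pinned down, the even case is a line-for-line transcription of the odd one, and nothing new needs to be proved.
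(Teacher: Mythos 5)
Your proposal is correct and follows exactly the paper's route: the paper offers no separate proof of Proposition~\ref{summaryofsection1.2}, treating it as a summary of the proof of Theorem~\ref{twistconcentrated}, which handles odd $n$ in full and explicitly delegates even $n$ to ``the same argument \ldots\ but one needs to use $L_2$ instead of $L_1$'' --- precisely the substitution you carry out. Your even-case details (terminating the iteration of Lemma~\ref{CrowellSkein} at $P_2$, using $P'_k = P'_2$, the bound $\maxdeg(P_2) \geq \maxdeg(P'_2)$ via the weight-preserving inclusion of $\mathcal{T}(G'_2,v_2)$ into $\mathcal{T}(G_2,v_2)$, and the parity bookkeeping $|L_n|=|L_2|$, $g(L_n)-g(L_2)=(n-2)/2$) are all sound and match what the paper intends.
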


Using Proposition~\ref{summaryofsection1.2}, we now establish the first result of this subsection.

\begin{theo}\label{stablelengthofhightwist}
Let $L$ be an alternating link and 
\[
\Delta_L(T) \dot{=} \sum\limits_{i=0}^{2g(L)+|L|-1} (-1)^i a_i T^i. 
\]
If $\text{MT}(L)-3 \geq g(L) + |L|/2$, then $(a_i)$ is trapezoidal and 
\[
\text{sl}(L) = 2g(L) + |L| - 2(\deg(P'_2)-1),
\]
where $P'_2$ is the Crowell polynomial of the graph depicted in Figure~\ref{Iteratedformulapics}.
\end{theo}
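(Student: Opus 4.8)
The plan is to read the stable length straight off the structural formula of Proposition~\ref{summaryofsection1.2}, rather than re-deriving trapezoidality, which Theorem~\ref{twistconcentrated} already supplies under the hypothesis $\text{MT}(L)-3\ge g(L)+|L|/2$. Write $n=\text{MT}(L)$ and $l=2g(L)+|L|$, so that $(a_i)$ has $l$ terms and $\Span(P_n)=l-1$. When $n$ is odd, Proposition~\ref{summaryofsection1.2} gives $\Delta_L(-t)=P_n=t^{n-1}P_1+(1+\cdots+t^{n-2})P'_2$, and the first $n-2$ coefficients of $P_n$ are supplied entirely by $(1+\cdots+t^{n-2})P'_2$; the even case is identical with $P_2$ and $(1+\cdots+t^{n-3})$ in place of $P_1$ and $(1+\cdots+t^{n-2})$, so I would treat the two uniformly.

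First I would record that, in its lowest degrees, the coefficient sequence of $(1+\cdots+t^{n-2})P'_2$ is the sequence of partial sums of the coefficients of $P'_2$: writing $d=\mindeg(P'_2)$, the coefficient of $t^{d+j}$ equals $\sum_{r=0}^{j}(\text{coeff.\ of }t^{d+r}\text{ in }P'_2)$ for $0\le j\le n-2$. By Remark~\ref{positivecoeff}, every coefficient of the Crowell polynomial $P'_2$ is strictly positive throughout $[\mindeg(P'_2),\maxdeg(P'_2)]$, so these partial sums strictly increase as long as terms of $P'_2$ are still being added, and become constant (equal to $P'_2(1)$) exactly once $j$ reaches $\Span(P'_2)$. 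Hence the partial-sum sequence strictly increases through its first $\Span(P'_2)$ steps and then stabilizes.

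Next I would transfer this to $(a_i)$. Because the first $n-2$ coefficients of $P_n$ coincide with these partial sums — the separation estimates on $\mindeg$, $\maxdeg$, and $\Span$ in the proof of Theorem~\ref{twistconcentrated} guarantee that $t^{n-1}P_1$ contributes nothing in this range — and because the hypothesis $n-3\ge g(L)+|L|/2$ forces this window to reach past the midpoint $\lfloor l/2\rfloor$, the strictly increasing part of $(a_i)$ is exactly $a_0<a_1<\cdots<a_{\Span(P'_2)}$, after which the sequence is constant up to the middle; the symmetry $a_i=a_{l-1-i}$ then recovers the full trapezoidal profile. Comparing with the normal form $a_0<\cdots<a_{i_0-1}=\cdots=a_{l-i_0}>\cdots>a_{l-1}$ of Conjecture~\ref{HMconjecture} identifies the first index of the plateau as $i_0-1=\Span(P'_2)$. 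Therefore $\text{sl}(L)=l-2(i_0-1)=2g(L)+|L|-2\,\Span(P'_2)$, which is the stated formula once one notes that, by positivity of its coefficients, $P'_2$ has no internal zeros, so its number of nonzero terms — the quantity written $\deg(P'_2)$ — equals $\Span(P'_2)+1$.

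The main obstacle I anticipate is pinning down the plateau's starting index \emph{exactly}, i.e.\ showing that $(a_i)$ stabilizes precisely at index $\Span(P'_2)$, neither one step earlier nor later. The ``not earlier'' direction is the strict positivity of the coefficients of $P'_2$, which forces genuine strict increase of the partial sums. The ``not later'' direction is more delicate and is where I would lean on the $\mindeg$/$\maxdeg$/$\Span$ bookkeeping from Theorem~\ref{twistconcentrated}: I must verify both that $t^{n-1}P_1$ really does not perturb any of the first $n-2$ coefficients, and that the twist-concentrated inequality is strong enough for the stabilized partial sums to fill the \emph{entire} central plateau, including the peak at $a_{\lfloor l/2\rfloor}$. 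Once these two endpoint checks are in place, the evaluation of $\text{sl}(L)$ is immediate.
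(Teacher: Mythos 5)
Your proof follows essentially the same route as the paper's: trapezoidality is quoted from Theorem~\ref{twistconcentrated}, the low-order coefficients of $P_n$ are identified via Proposition~\ref{summaryofsection1.2} as cumulative sums of the (strictly positive, by Remark~\ref{positivecoeff}) coefficients of $P'_2$, and symmetry of $(a_i)$ then locates the plateau and yields the stable length. The only divergence is bookkeeping: you express the answer as $2g(L)+|L|-2\,\Span(P'_2)$ and reconcile it with the stated formula by reading $\deg(P'_2)$ as the number of nonzero terms of $P'_2$, whereas the paper works directly with $\deg(P'_2)$ under an implicit normalization of $\mindeg(P'_2)$ that it never makes explicit; your version of this final step is, if anything, the more careful of the two.
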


\begin{proof}
Let $n = \text{MT}(L)$, and assume that $n$ is odd. The trapezoidality of the sequence $(a_i)$ follows from Theorem~\ref{twistconcentrated}.
Let $c_0, \dots, c_{n-2 + \deg(P'_2)}$ denote the coefficients of $(1+\cdots+t^{n-2})P'_2$. Then $(c_i)_{1 \leq i \leq n-2}$ are cumulative sums of coefficients of $P'_2$.
Hence, if $n-2 > \deg(P'_2)$, then
\[
c_{\deg(P'_2)} = \cdots = c_{n-2},
\]
and they are all equal to the sum of the coefficients of $P'_2$.
Using Proposition~\ref{summaryofsection1.2}, we know that, for $i \in \{0,\dots,n-2\}$, we have $a_i = c_i$. The symmetry of the sequence $(a_i)$ implies the result. 
The argument is analogous for $n$ even.
\end{proof}

In Subsection~\ref{stablisesection}, we used the shape of twist regions in the Seifert graph to prove that $g(L_n) = g(L_1) + \frac{n-1}{2}$; see Figure~\ref{seifertcycs}. Using this, one can rewrite the conclusion of Theorem~\ref{stablelengthofhightwist} as
\begin{equation}\label{rephrasestablelenofhightwist}
\text{sl}(L_n) = 2(g(L_1)) + n - 1 + |L_n| - 2(\deg(P'_2)-1).
\end{equation}
We can use this reformulation and prove Corollary~\ref{ReductionoftwistsinHM}. Recall that we call an alternating link satisfying $\text{MT}(L)-3 > g(L) + |L|/2$ a \textit{twist-concentrated alternating link}. 

\begin{coro}\label{ReductionoftwistsinHM}
Using the notation of Proposition~\ref{summaryofsection1.2}, if $L_n$ is twist-con\-cen\-trat\-ed, then the H-M inequality for $L_{n}$ follows from the H-M inequality for $L_{n-2}$. 
\end{coro}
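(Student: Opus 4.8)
The plan is to show that replacing the twist region of $L_{n-2}$ by one with two more crossings raises both the stable length $\sl$ and the absolute signature $|\sigma|$ by exactly $2$; since both $x\mapsto\lfloor (x+1)/2\rfloor$ and $x\mapsto\lfloor x/2\rfloor$ increase by $1$ when $x$ increases by $2$, the two sides of the H-M inequality then advance in step and the inequality for $L_{n-2}$ transfers verbatim to $L_n$. The trapezoidal half of the H-M statement for $L_n$ needs no work: $L_n$ is twist-concentrated, so Theorem~\ref{twistconcentrated} already gives that its coefficient sequence is trapezoidal.

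Step one is the stable length. Since $L_n$ is twist-concentrated, its maximal twist region still dominates $g+|L|/2$ after removing one full twist, so $L_{n-2}$ satisfies the hypothesis of Theorem~\ref{stablelengthofhightwist} as well; hence $\sl(L_k)=2g(L_k)+|L_k|-2(\deg(P'_2)-1)$ for $k\in\{n-2,n\}$. The polynomial $P'_2$ depends only on the complement of the twist region, so it is the same for both links, and $|L_n|=|L_{n-2}|$. Comparing Seifert graphs exactly as in Subsection~\ref{stablisesection} gives $g(L_n)=g(L_{n-2})+1$. Subtracting the two instances of the formula yields $\sl(L_n)=\sl(L_{n-2})+2$.

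Step two is the signature, and this is where the real work lies. By Lemma~\ref{signatureformula}, the oriented smoothing of a crossing turns a reduced alternating diagram of $L_k$ into one of $L_{k-1}$ and changes the signature by the sign of that crossing; as the crossings of a coherent twist region all share one sign $\epsilon\in\{\pm1\}$, applying this twice gives $\sigma(L_{n-2})=\sigma(L_n)+2\epsilon$, so $|\sigma(L_n)-\sigma(L_{n-2})|=2$. The delicate point is to promote this to $|\sigma(L_n)|=|\sigma(L_{n-2})|+2$, which requires pinning down the sign of $\sigma(L_{n-2})$. For this I would iterate the smoothing relation down to the base link ($L_1$ for odd $n$, $L_2$ for even $n$), obtaining $\sigma(L_{n-2})=\sigma(L_1)-(n-3)\epsilon$, and combine the classical bound $|\sigma(L_1)|\le 2g(L_1)+|L_1|-1$ with $g(L_1)=g(L_n)-(n-1)/2$ and $|L_1|=|L_n|$ to get $|\sigma(L_1)|\le 2g(L_n)+|L_n|-n$. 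The twist-concentrated inequality $n-3\ge g(L_n)+|L_n|/2$ then forces $n-3>|\sigma(L_1)|$, so the term $-(n-3)\epsilon$ dominates, $\text{sign}(\sigma(L_{n-2}))=-\epsilon$, and therefore $|\sigma(L_n)|=|\sigma(L_{n-2})|+2$.

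Finally I would assemble everything. Using the two identities above and the assumed H-M inequality for $L_{n-2}$,
\[
\left\lfloor\frac{|\sigma(L_n)|+1}{2}\right\rfloor=\left\lfloor\frac{|\sigma(L_{n-2})|+1}{2}\right\rfloor+1\ge\left\lfloor\frac{\sl(L_{n-2})}{2}\right\rfloor+1=\left\lfloor\frac{\sl(L_n)}{2}\right\rfloor,
\]
which is precisely the H-M inequality for $L_n$; together with the trapezoidal conclusion from Theorem~\ref{twistconcentrated} this establishes the full H-M statement for $L_n$. The main obstacle is the sign determination in step two; the remaining bookkeeping — distinguishing odd and even $n$ (with $L_2$ in place of $L_1$) — is routine.
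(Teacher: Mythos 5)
Your proposal follows the same route as the paper's proof: show that the stable length and the absolute signature each grow by exactly $2$ when the full twist is restored, so that both sides of the H-M inequality advance in step. In fact, your step two is \emph{more} complete than the paper's own argument: the paper only records the signed identity $\sigma(L_n)=\sigma(L_{n-2})+2$ (after normalizing the twist region to be negative) and immediately concludes, even though the conjecture involves $|\sigma|$; your determination of the sign of $\sigma(L_{n-2})$ --- iterating Lemma~\ref{signatureformula} down to the base link, invoking $|\sigma(L_1)|\le 2g(L_1)+|L_1|-1$, and using twist-concentration to make the twisting term dominate --- is precisely the justification the paper leaves implicit, and it is correct.

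The genuine flaw is in your step one, in the sentence ``its maximal twist region still dominates $g+|L|/2$ after removing one full twist.'' This is off by one: deleting a full twist lowers $\MT$ by $2$ but lowers $g(L)+|L|/2$ by only $1$, since $g(L_{n-2})=g(L_n)-1$ and $|L_{n-2}|=|L_n|$. Hence, in the borderline case $\MT(L_n)-3=g(L_n)+|L_n|/2$, the link $L_{n-2}$ need \emph{not} satisfy the hypothesis of Theorem~\ref{stablelengthofhightwist}, and you may not quote that theorem to evaluate $\text{sl}(L_{n-2})$. (The paper's proof shares this gap, since it applies equation~\eqref{rephrasestablelenofhightwist} to $L_{n-2}$ without checking the hypothesis.) The repair needs no new input. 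You are assuming the full H-M statement for $L_{n-2}$, which includes trapezoidality of its coefficients, and Proposition~\ref{summaryofsection1.2} applied to $L_{n-2}$ (for $n$ odd; use $L_2$ and $P_2$ for $n$ even) exhibits its first $n-4$ coefficients as cumulative sums of the coefficients of the \emph{same} polynomial $P'_2$; these strictly increase for $\Span(P'_2)$ steps and are constant thereafter. Since every monomial of $P'_2$ occurs in $P_1$, one has $\Span(P'_2)\le\Span(P_1)=2g(L_n)+|L_n|-n\le n-6$ by twist-concentration, so at least two equal consecutive coefficients are already visible among these $n-4$ terms. A symmetric trapezoidal sequence that strictly increases and then stabilizes at a known index must have its stable part begin exactly there, so the trapezoids of $L_n$ and $L_{n-2}$ start their stable parts at the same index, whence $\text{sl}(L_n)-\text{sl}(L_{n-2})=\bigl(2g(L_n)+|L_n|\bigr)-\bigl(2g(L_{n-2})+|L_{n-2}|\bigr)=2$ after all; alternatively, the weaker inequality $\text{sl}(L_n)\le\text{sl}(L_{n-2})+2$, which is all your final display needs, follows even more directly from this stabilization picture.
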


\begin{proof}
Based on equation~\eqref{rephrasestablelenofhightwist}, we have 
\[
\text{sl}(L_n) = \text{sl}(L_{n-2})+2.
\]
Without loss of generality, assume that the twisting region consists of negative crossings (as in Figure~\ref{twistregion}). Using Lemma~\ref{signatureformula}, we have 
\[
\sigma(L_n) = \sigma(L_{n-2})+2.
\]
The combination of these two equalities gives us the conclusion.
\end{proof}

Note that, using equation~\eqref{rephrasestablelenofhightwist}, one only needs to compute $\deg(P'_2)$ to obtain an explicit formula for the stable length of alternating twist-concentrated links. One example is twist-concentrated alternating 3-braids, for which the computation is done in Proposition~\ref{degP'2_3braids}. 

\begin{prop}\label{degP'2_3braids}
 Let $B = \sigma_1^{p_1}\sigma_2^{-q_1}\cdots\sigma_1^{p_n}\sigma_2^{-q_n}$, and let $L$ be the corresponding alternating 3-braid. Assume that $P'_2$ is defined as in Proposition~\ref{summaryofsection1.2}. Then we have
\[
\deg(P'_2) = \deg(P_1)-1 = 2g(L_1) + |L_1| - 2.
\]   
\end{prop}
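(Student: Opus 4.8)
The plan is to prove the easy right-hand equality first and then reduce the left-hand equality to the vanishing of a single coefficient via the skein relation of Lemma~\ref{CrowellSkein}. Using the mirror/upside-down symmetry noted after Theorem~\ref{badinequality}, I may assume the largest twist region is a $\sigma_1$-block, and after a cyclic permutation that it is $\sigma_1^{p_1}$ with $p_1=\MT(L)$; then $L_1$ and $L_2$ are the closures of the same braid with $p_1$ replaced by $1$ and by $2$. Throughout I assume $n\ge 2$, equivalently $P>p_1$ (the remaining case $B=\sigma_1^{p_1}\sigma_2^{-q_1}$ is a connected sum of two torus links and must be handled separately; see the last paragraph). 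For the right-hand equality, since $L_1$ is alternating and $P_1\doteq\Delta_{L_1}(-t)$ (as recorded in the proof of Theorem~\ref{twistconcentrated}), after normalizing so that $\mindeg(P_1)=0$ we get $\deg(P_1)=2g(L_1)+|L_1|-1$ directly from the classical span formula for alternating links.

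For the left-hand equality I would show that $\mindeg(P'_2)=\mindeg(P_1)$ and $\maxdeg(P'_2)=\maxdeg(P_1)-1$; as the minimal degrees agree, this yields $\deg(P'_2)=\deg(P_1)-1$. Both the equality $\mindeg(P'_2)=\mindeg(P_1)$ and the inequality $\maxdeg(P'_2)\le\maxdeg(P_1)$ are already contained in the proof of Theorem~\ref{twistconcentrated}: the lowest coefficients of $P_n$ come from $(1+\cdots+t^{n-2})P'_2$, while every oriented spanning tree of $G'_2$ is a spanning tree of $G_1$ of the same weight. It therefore remains to pin down $\maxdeg(P'_2)$ exactly.

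Here I use the exact identity $P'_2=P_2-tP_1$ (Lemma~\ref{CrowellSkein} with $k=2$), together with $P_2\doteq\Delta_{L_2}(-t)$ and the description of coefficients through the matrix $A$ in equation~\eqref{Alexanderfrom4var}. The leading coefficients of $P_2$ and of $tP_1$ are both the top Alexander coefficient $A_{0,0}=1$, so they cancel and $\maxdeg(P'_2)\le\maxdeg(P_1)$; the decisive point is that the coefficient of $P'_2$ in degree $\maxdeg(P_1)$ also vanishes. Writing $A^{(k)}$ for the coefficient matrix of $L_k$, that coefficient equals the second coefficient of $P_2$ minus the second coefficient of $P_1$, i.e. $(A^{(2)}_{1,0}+A^{(2)}_{0,1})-(A^{(1)}_{1,0}+A^{(1)}_{0,1})$. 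By Proposition~\ref{extermalA} the first column of each $A^{(k)}$ is identically $1$, so $A^{(1)}_{1,0}=A^{(2)}_{1,0}=1$ (this uses $n\ge2$, which guarantees that the cycle $L$ of $L_1$ still has at least two vertices, so the entry $A^{(1)}_{1,0}$ is present). Moreover, the proof of Proposition~\ref{extermalA} exhibits the trees counted by the top row $A^{(k)}_{0,\bullet}$ as those traversing the whole cycle $L$ as a single path, whose $R$-side configurations do not depend on the length of $L$; hence $A^{(1)}_{0,1}=A^{(2)}_{0,1}$. The two second coefficients thus coincide and the degree-$\maxdeg(P_1)$ term of $P'_2$ is zero. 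Finally, to see that the drop is by exactly one, I would exhibit a single spanning tree of $G'_2$ running along $L$ through the weight-$t$ edge $e_{io}$ and reaching every vertex of $R$ through the weight-$t$ red edges; since the smoothing shortens the cycle $L$ by one vertex, this tree has degree $\maxdeg(P_1)-1$, giving $\maxdeg(P'_2)=\maxdeg(P_1)-1$.

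The main obstacle is the stabilization of the second coefficient, namely the equality $A^{(1)}_{0,1}=A^{(2)}_{0,1}$ together with the presence of the matching term $A^{(1)}_{1,0}=1$; making the tree-counting behind Proposition~\ref{extermalA} uniform in the length of $L$ is the crux of the argument. This is also precisely where the hypothesis $n\ge2$ enters: for $n=1$ the cycle $L$ of $L_1$ collapses to a single vertex, the term $A^{(1)}_{1,0}$ disappears, the cancellation fails, and in fact one computes $P'_2=P_1$, so the statement must exclude this degenerate connected-sum case.
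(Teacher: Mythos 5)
Your route to the upper bound is genuinely different from the paper's and is essentially sound. The paper proves the proposition by exhibiting extremal spanning trees directly: the all-weight-$t$ tree in $G_1$, and a tree in $G'_2$ with exactly one weight-$1$ edge. You instead get $\maxdeg(P'_2)\le\maxdeg(P_1)-1$ algebraically from $P'_2=P_2-tP_1$ (Lemma~\ref{CrowellSkein}), cancelling the two top coefficients via $A^{(k)}_{0,0}=A^{(k)}_{1,0}=1$ (Proposition~\ref{extermalA}) and the stabilization $A^{(1)}_{0,1}=A^{(2)}_{0,1}$; the latter is correct, since for trees counted by the top row the $L$-part is forced and the admissible $R$-side configurations are literally the same for $L_1$ and $L_2$. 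Your observation that the statement fails for $n=1$ (where indeed $P'_2=P_1$, e.g.\ $P_1=t+\dots+t^q$, $P_2=(1+t)P_1$) is a genuine catch: the paper states and proves the proposition without excluding this case, and its tree construction silently degenerates there.

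The gap is in your final step, the lower bound $\maxdeg(P'_2)\ge\maxdeg(P_1)-1$. The edge set you propose (the path along $L$, the edge $e_{io}$, and the incoming red edge of every vertex of $R$) is not a spanning tree: it contains a directed cycle. In the notation of the paper's proof, $e_{io}=(v_{iu},v_{ou})$ with $v_{iu}\in R$ and $v_{ou}\in L$, and for $n\ge 2$ the chain of red edges feeding $v_{iu}$ originates exactly at $v_{ou}$ (the red transition edge leaving the $\sigma_1$-block adjacent to the twist region), so your set contains the cycle $v_{ou}\to\cdots\to v_{iu}\to v_{ou}$. The weight count also contradicts itself: $G'_2$ and $G_1$ have the same number $N$ of vertices, so a spanning tree of $G'_2$ has $N-1$ edges, and if all of them had weight $t$ its degree would be $\maxdeg(P_1)$, not $\maxdeg(P_1)-1$ --- which is impossible by the upper bound you just established. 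Both problems resolve the same way: every spanning tree of $G'_2$ must contain at least one weight-$1$ edge, and the extremal one contains exactly one. That is precisely the paper's construction: it first reroutes $v_{iu}$ through the weight-$1$ edge $(v_{od},v_{iu})$ of the cycle $R$ bypassing the twist region, and only then replaces $e_{ou}$ by $e_{io}$ (Figure~\ref{highdegreetrees}). To complete your proof, either substitute this tree, or stay in your algebraic framework and show the third coefficients do not cancel, i.e.\ $A^{(2)}_{2,0}+A^{(2)}_{1,1}+A^{(2)}_{0,2}>A^{(1)}_{2,0}+A^{(1)}_{1,1}+A^{(1)}_{0,2}$, which again requires producing a tree.
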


\begin{proof}
Twist regions of $L$ are in one-to-one correspondence with powers of $\sigma_1$ and $\sigma_2$ in $B$. Without loss of generality, assume that 
\[
p_n=\max \{p_1,\cdots,p_n,q_1,\cdots,q_n\}. 
\]
We use the same notations as in Subsection~\ref{stablisesection}. Let $N = |V(G_1)| = |V(G'_2)|$.

We identify rooted spanning trees with the highest weight in $G_1$ and $G'_2$. First, note that edges with weight $t$, excluding the one that leads to the root $v$, form a spanning tree $T \in \mathcal{T}(G_1,v)$. As a result, $\deg(P_1) = N-1$. A similar argument gives that $\mindeg(P_1)=1$, hence 
\[
\deg(P_1) = \deg(\Delta_{L_1})=2g(L_1)+|L_1|-2. 
\]
Note  that we showed the existence of these two trees in Lemma~\ref{Supportof3braid} as well. 

Now we compute $\deg(P'_2)$. We are going to show that there is a spanning tree $T' \in \mathcal{T}(G'_2,v)$ with only one edge with weight $1$. Let $e_{ou}=(v, v_{ou})$ (resp.~$e_{od}=(v,v_{od})$) be the upper (resp.~lower) outgoing edge of the root $v$ in $G_1$. Also let $e_{iu}=(v_{iu},v)$ be the upper incoming edge of the root $v$ in $G_1$. To construct $G'_2$, we deltet the edges $e_{ou}$ and $e_{iu}$ and replace them with an edge $e_{io}=(v_{iu},v_{ou})$. We are going to construct $T'$ from $T$ using the following modifications. We delete the edge leading to $v_{iu}$ in $T$ and replace it with $(v_{od},v_{iu})$, and then replace $e_{ou}$ with $e_{io}$. Note that, following this construction, $(v_{od},v_{iu})$ will be the only edge with weight $1$ in $T'$. See Figure~\ref{highdegreetrees} for an illustration of the proof. 
\end{proof}

Using Proposition~\ref{degP'2_3braids} and Lemma~\ref{sign_3braids}, we can deduce the H-M inequality as stated in Theorem~\ref{HM3-braids}, below.

\begin{lemm}(\cite{small3braids},\cite{Erle})\label{sign_3braids}
    Let $B = \sigma_1^{p_1}\sigma_2^{-q_1}\cdots\sigma_1^{p_n}\sigma_2^{-q_n}$ be an alternating 3-braid, and let $L_B$ be its closure.
    Then 
    \[
    \sigma(L_B) = \sum p_i - \sum q_j.
    \]
\end{lemm}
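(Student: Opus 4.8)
The plan is to compute $\sigma(L_B)$ by exploiting the additivity of the signature under Murasugi decomposition (Proposition~\ref{Murasugidecomp}), thereby reducing everything to torus links, whose signatures are pinned down by Lemma~\ref{specialaltsignature}. As recalled in Subsection~\ref{3braids}, the standard diagram of the closure of an alternating $3$-braid has a \emph{single} type~$2$ Seifert cycle, the one following the middle strand; hence its Murasugi decomposition has exactly two special alternating summands, one on each side of this cycle. First I would identify these two summands explicitly. Writing $P=\sum_i p_i$ and $Q=\sum_j q_j$, smoothing every crossing of the middle strand that does not belong to the left Seifert domain leaves the two left strands carrying all $P$ of the $\sigma_1$-crossings, i.e.\ the closure of $\sigma_1^{P}$, which is the torus link $T_{2,P}$; symmetrically the right domain yields $T_{2,-Q}$, the closure of $\sigma_2^{-Q}$. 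Thus $L_B = T_{2,P} * T_{2,-Q}$, a single $*$-product (of length $n$ in the sense of Remark~\ref{diagramgonality}).

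Since signature is additive under this decomposition (Proposition~\ref{Murasugidecomp}), it then remains to evaluate $\sigma(T_{2,P})$ and $\sigma(T_{2,-Q})$. Both are special alternating, being closures of powers of a single generator, so all their crossings have the same sign and Lemma~\ref{specialaltsignature} applies: the black Tait graph of $T_{2,k}$ has first Betti number $k-1$, whence $|\sigma(T_{2,k})| = k-1$. The two blocks carry crossings of opposite sign, so one signature is $+$ and the other $-$; a single local inspection of the braid orientation (or the reference computation for the figure-eight knot $(\sigma_1\sigma_2^{-1})^2$, the plumbing of a positive and a negative Hopf band) fixes $\sigma(T_{2,P}) = P-1$ and $\sigma(T_{2,-Q}) = -(Q-1)$. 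Adding these gives $\sigma(L_B) = (P-1)-(Q-1) = P - Q = \sum_i p_i - \sum_j q_j$, as claimed.

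The main obstacle is the first step, the precise identification of the two Murasugi summands as $T_{2,P}$ and $T_{2,-Q}$: one must verify that smoothing the middle-strand crossings genuinely merges the separate blocks $\sigma_1^{p_1},\dots,\sigma_1^{p_n}$ into a single $(2,P)$-torus twist region (and likewise for the $\sigma_2^{-1}$-blocks), and that the resulting gluing is a bona fide $*$-product to which Proposition~\ref{Murasugidecomp} applies irrespective of its length. The only other delicate point is the sign bookkeeping, which is purely a matter of matching the orientation convention of the braid closure with the black--white convention of Figure~\ref{OSmu-1}.

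Alternatively, one can avoid the decomposition and argue directly from the recursion $\sigma(D_0) = \sigma(D) + \text{sign}(c)$ of Lemma~\ref{signatureformula}: smoothing one crossing at a time lowers either $P$ or $Q$ by one while changing $\sigma$ by the corresponding crossing sign, so an induction on $P+Q$ with base case the connected sum $\sigma_1^{p_1}\sigma_2^{-q_1}$ of two torus links yields the same formula. The care needed there is to keep every intermediate diagram reduced and alternating so that Lemma~\ref{signatureformula} can be reapplied; the degenerate endpoints (pure powers $\sigma_1^{a}$, the unknot $\sigma_1\sigma_2^{-1}$, the Hopf link $\sigma_1^2\sigma_2^{-1}$, etc.) are \emph{not} reduced and have to be treated by hand, which is precisely why I would present the Murasugi-sum route as the primary argument.
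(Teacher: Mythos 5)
The paper itself gives no proof of Lemma~\ref{sign_3braids}; it is quoted from Alrefai--Chbili and Erle. Your Murasugi-sum route is the natural way to prove it within this paper, and its skeleton is sound: the standard diagram of $L_B$ has a single type~2 Seifert cycle, so the Murasugi decomposition has exactly two summands, namely $L_B = T_{2,P} * T_{2,-Q}$ with $P = \sum p_i$ and $Q = \sum q_j$ (smoothing the $\sigma_2$-crossings does concatenate the blocks $\sigma_1^{p_1},\dots,\sigma_1^{p_n}$ into $\sigma_1^{P}$); moreover Proposition~\ref{Murasugidecomp} asserts additivity of the signature for \emph{any} diagrammatic Murasugi decomposition, with no restriction on the length, so the worry you raise about that point can be dismissed. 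The Betti-number computation $|\sigma(T_{2,k})| = k-1$ via Lemma~\ref{specialaltsignature} is also correct.

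The genuine gap is the sign step, which you declare to be ``a single local inspection'' but never carry out --- and it comes out opposite to what you assert. Lemma~\ref{specialaltsignature}, the very result you invoke, states $-\sigma(L) = 2g(L)$ for \emph{positive} special alternating links: under the paper's conventions (Lemma~\ref{signatureformula}, following Ozsv\'ath--Szab\'o), positive links have \emph{negative} signature. Since the $\sigma_1$-crossings of the braid closure are the positive ones, this forces $\sigma(T_{2,P}) = -(P-1)$ and $\sigma(T_{2,-Q}) = +(Q-1)$, so your argument, executed consistently, proves $\sigma(L_B) = \sum q_j - \sum p_i$, the negative of the stated formula; as written, the two halves of your proof contradict each other. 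To be fair, this exposes an inconsistency in the paper rather than only in your proposal: the lemma as quoted carries the signature convention of the cited references, which is opposite to the convention used elsewhere in the paper. For instance, Example~\ref{weakdualexample} records $\sigma(6_2) = -2$, while $6_2$ is the closure of $\sigma_1^{3}\sigma_2^{-1}\sigma_1\sigma_2^{-1}$, for which the lemma's formula gives $+2$; nothing downstream breaks because only $|\sigma|$ enters the H-M inequality. A correct write-up must therefore either prove the statement up to sign (which is all the paper ever uses), or explicitly pass to the mirror / flag the convention mismatch, rather than assert that the signs ``work out'' to match the displayed formula.
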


\begin{theo}\label{HM3-braids}
     Let $B=\sigma_1^{p_1}\sigma_2^{-q_1}\cdots\sigma_1^{p_n}\sigma_2^{-q_n}$ be an alternating 3-braid, and let $L_B$ by its closure. Assume that 
     \[
     p_1 \geq \sum\limits_{i\geq 2} p_i + \sum q_j + 6.
     \]
     Then the Hirawasa--Murasugi conjecture holds for $L_B$. 
\end{theo}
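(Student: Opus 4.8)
The plan is to reduce the statement to an explicit comparison between the stable length and the signature, both of which are computed by the machinery of Subsection~\ref{stablisesection}. First I would check that the hypothesis forces $L_B$ to be twist-concentrated. Since $p_1\geq\sum_{i\geq 2}p_i+\sum_j q_j+6$, the exponent $p_1$ strictly dominates every other $p_i$ and every $q_j$, so the $\sigma_1^{p_1}$-block is the unique largest coherent twist region and $\MT(L_B)=p_1$. For the closure of an alternating $3$-braid the span of the Alexander polynomial is $P+Q-2$, where $P=\sum p_i$ and $Q=\sum q_j$, so $2g(L_B)+|L_B|=P+Q-1$ and hence $g(L_B)+|L_B|/2=(P+Q-1)/2$. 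The hypothesis gives $2p_1\geq P+Q+6$, i.e.\ $p_1-3\geq(P+Q-1)/2=g(L_B)+|L_B|/2$, so $L_B$ is twist-concentrated in the sense of Definition~\ref{twistconcentrateddefi}.

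With twist-concentration in hand, the first displayed condition of Conjecture~\ref{HMconjecture} comes for free: Theorem~\ref{twistconcentrated} (equivalently Corollary~\ref{twisconc3braids}) gives the trapezoidal shape, and the proof of Theorem~\ref{stablelengthofhightwist} identifies the increasing part of the coefficient sequence with the cumulative sums of the coefficients of $P'_2$. Since those coefficients are strictly positive by Remark~\ref{positivecoeff}, the cumulative sums are \emph{strictly} increasing until they plateau, which is exactly the strict increase required by the H-M conjecture.

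Next I would compute the two quantities entering the H-M inequality. Taking $p_1$ odd (the even case is identical with $L_2$ in place of $L_1$), equation~\eqref{rephrasestablelenofhightwist} together with Proposition~\ref{degP'2_3braids} expresses $\sl(L_B)$ in closed form. Writing $L_1$ for the link obtained by replacing $\sigma_1^{p_1}$ with $\sigma_1$, one has $2g(L_1)+|L_1|=P+Q-p_1$, and substituting this into the stable-length formula yields $\sl(L_B)=2p_1-P-Q+1$. On the other hand Lemma~\ref{sign_3braids} gives $\sigma(L_B)=P-Q>0$, so $|\sigma(L_B)|+1=P-Q+1$. The crux is then the identity
\[
(|\sigma(L_B)|+1)-\sl(L_B)=2(P-p_1)=2\sum_{i\geq 2}p_i\geq 0.
\]
Because this difference is a nonnegative even integer, $\lfloor(|\sigma(L_B)|+1)/2\rfloor\geq\lfloor\sl(L_B)/2\rfloor$, which is precisely the H-M inequality; equality holds exactly when $P=p_1$, i.e.\ when $L_B=T_{2,p_1}\#T_{2,-q_1}$.

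The span/genus bookkeeping and the signature substitution are routine; the step needing the most care is matching the precise stable-length formula of Theorem~\ref{stablelengthofhightwist} with the value of $\deg(P'_2)$ from Proposition~\ref{degP'2_3braids} under a consistent normalisation, and tracking the parity of $p_1$ (which controls $|L_B|$ and whether $L_1$ or $L_2$ is the relevant reduced link). I would pin down the additive constant by testing the connected-sum case $B=\sigma_1^{p_1}\sigma_2^{-q_1}$, where $\Delta_{L_B}$ is the convolution of two all-ones sequences, so the stable length is $p_1-q_1+1$ and indeed equals $|\sigma(L_B)|+1$. As a sanity check on the whole argument, one can instead phrase it inductively via Corollary~\ref{ReductionoftwistsinHM}: passing from $L_k$ to $L_{k-2}$ decreases both $\sl$ and $|\sigma|$ by $2$ while the link remains twist-concentrated, so the inequality propagates down the chain of reductions and only the additive constant, fixed by the base case above, needs to be verified.
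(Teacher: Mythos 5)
Your proposal is correct and follows essentially the same route as the paper, whose own proof of Theorem~\ref{HM3-braids} is exactly this chain: twist-concentration from the hypothesis, the stable-length formula of Theorem~\ref{stablelengthofhightwist}/equation~\eqref{rephrasestablelenofhightwist} combined with Proposition~\ref{degP'2_3braids}, the signature formula of Lemma~\ref{sign_3braids}, and the comparison $(|\sigma(L_B)|+1)-\sl(L_B)=2\sum_{i\geq 2}p_i\geq 0$ (with the inductive alternative via Corollary~\ref{ReductionoftwistsinHM} that you mention). One remark in your favor: a \emph{literal} substitution of the paper's stated value $\deg(P'_2)=2g(L_1)+|L_1|-2$ into equation~\eqref{rephrasestablelenofhightwist} yields $\sl(L_B)=2p_1-P-Q+5$ rather than your $2p_1-P-Q+1$; the paper's normalization of $\deg(P'_2)$ versus $\Span(P'_2)$ contains an off-by-one slip, and your value is the correct one (it is forced by the connected-sum case $B=\sigma_1^{p_1}\sigma_2^{-q_1}$, where the stable length is $p_1-q_1+1=|\sigma|+1$ and the H-M inequality holds with equality), so your instinct to calibrate the additive constant against that base case is precisely what is needed to make the argument airtight.
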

 
\begin{figure}[h]
    \centering
    \includegraphics[scale=0.25]{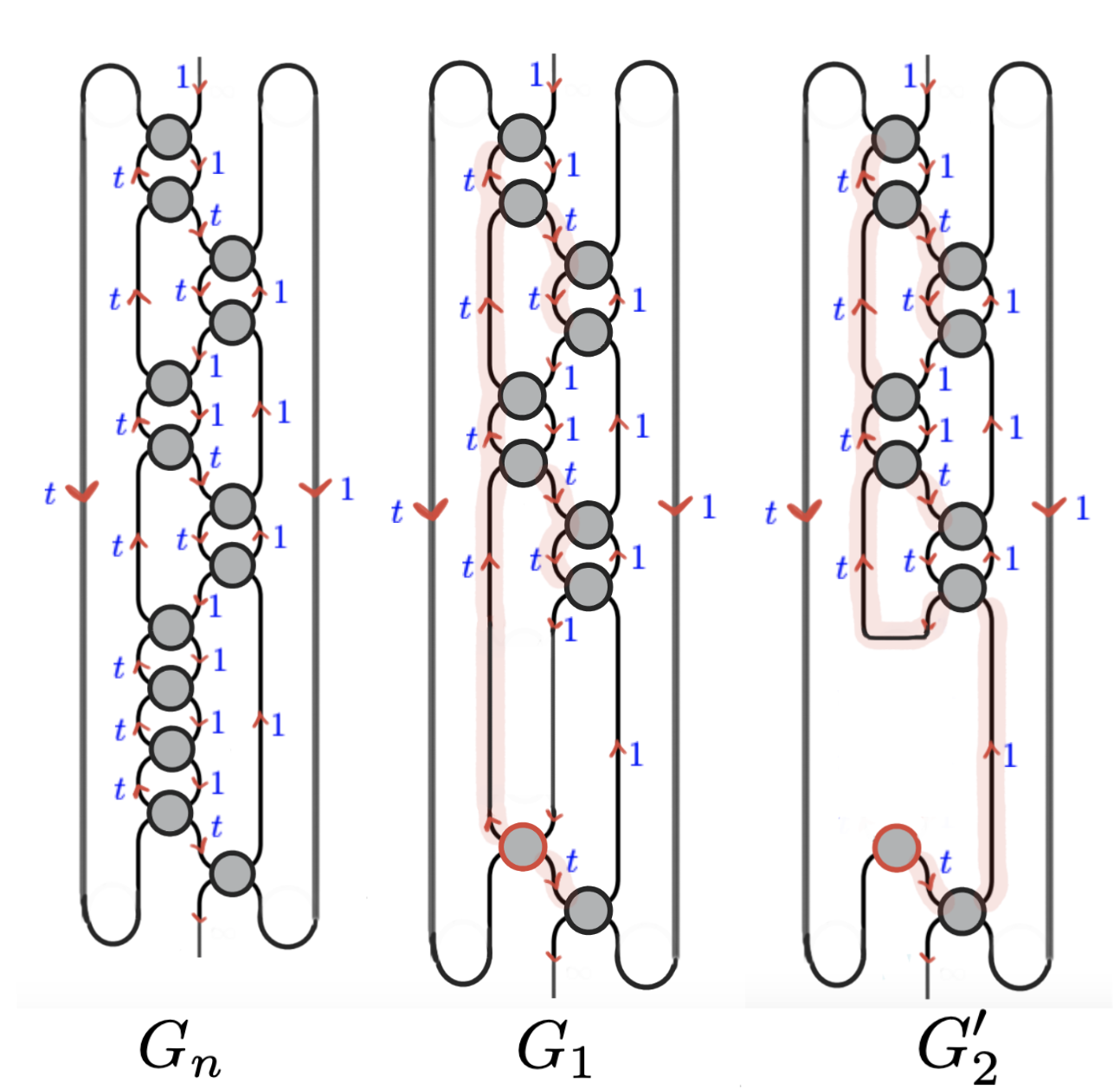}
    \caption{The rooted spanning trees with the highest degree weights in $P_1$ and $P'_2$ for an alternating 3-braid.}
    \label{highdegreetrees}
\end{figure}

If Fox's trapezoidal conjecture is true, one can replace the assumption that $\text{MT}(L) - 3 \geq g(L) + |L|/2$ in Theorem~\ref{stablelengthofhightwist} with 
\[
\text{MT}(L) - 2 > \deg(P'_2).
\]

We now use the results of Subsection~\ref{plumbingofspecial} to study the stable length and the Hirasawa--Murasugi inequality. 

\begin{lemm}\label{stablelenproductoftrapezoidals}
    Let $K_1$ and $K_2$ be alternating links such that the trapezoidal conjecture holds for both. Assume that $\deg(\Delta_{K_1}) \geq \deg(\Delta_{K_2})$. Then
    \[
    \text{sl}(K_1 \# K_2) = \min \{\text{sl}(K_1) - \deg(\Delta_{K_2}), 0\}.
    \]
    In particular, if $\text{sl}(K_1 \# K_2) > 0$, then $\text{sl}(K_1) = \text{sl}(K_1 \# K_2) + \deg(\Delta_{K_2})$. 
\end{lemm}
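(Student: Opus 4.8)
The plan is to reduce the whole statement to a computation about the convolution of two symmetric trapezoidal sequences. Under connected sum the Alexander polynomial is multiplicative, $\Delta_{K_1 \# K_2} \doteq \Delta_{K_1} \cdot \Delta_{K_2}$, so writing $\Delta_{K_i}(t) = \sum_{p} (-1)^p a^{(i)}_p t^p$, the coefficient sequence of $K_1 \# K_2$ is, up to the alternating sign, the convolution $c_k = \sum_{p+q=k} a^{(1)}_p a^{(2)}_q$. Since the trapezoidal conjecture is assumed for $K_1$ and $K_2$, both $(a^{(1)}_p)$ and $(a^{(2)}_q)$ are symmetric and trapezoidal, and by \cite[Proposition~2.1]{murasugi_1985} the convolution $(c_k)$ is again symmetric and trapezoidal. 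Hence the only quantity left to pin down is the length of its central plateau, namely $\text{sl}(K_1 \# K_2)$.

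First I would set up a normal form for trapezoidal sequences. Let $l_i$ be the number of coefficients of $\Delta_{K_i}$, so $\deg(\Delta_{K_i}) = l_i - 1$. A symmetric trapezoidal sequence of length $l_i$ decomposes uniquely as a positive combination of indicators of nested centered intervals $I^{(i)}_j = \{j, j+1, \dots, l_i - 1 - j\}$, say $a^{(i)} = \sum_{j} m^{(i)}_j \,\mathbf{1}_{I^{(i)}_j}$. Trapezoidality (strict increase up to the plateau) forces $m^{(i)}_j > 0$ for exactly $0 \le j \le i_0^{(i)} - 1$, where $i_0^{(i)}-1$ marks the start of the plateau. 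Consequently the interval lengths that actually occur are exactly the values in $\{\text{sl}(K_i), \text{sl}(K_i)+2, \dots, l_i\}$ (all congruent to $l_i \bmod 2$): the shortest is the plateau, of length $\text{sl}(K_i)$, and the longest is the full interval of length $l_i$.

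The gain from this normal form is that convolution is bilinear, so
\[
(c_k) = \sum_{j,r} m^{(1)}_j m^{(2)}_r \,\bigl(\mathbf{1}_{I^{(1)}_j} * \mathbf{1}_{I^{(2)}_r}\bigr).
\]
Each convolution of interval indicators of lengths $L$ and $M$ is a discrete trapezoid, and all of these summands are centered at the common midpoint of the length-$(l_1 + l_2 - 1)$ product, with plateau length exactly $|L - M| + 1$. A positive sum of symmetric unimodal sequences sharing one center is again symmetric and unimodal, and—because on the left half every summand is non-decreasing so increments cannot cancel—a position lies in the plateau of the sum precisely when it lies in the plateau of every summand. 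As these plateaus are nested centered intervals, the plateau of $(c_k)$ is the shortest of them, giving
\[
\text{sl}(K_1 \# K_2) = \min_{L, M} \bigl(|L - M| + 1\bigr),
\]
the minimum over $L \in \{\text{sl}(K_1), \dots, l_1\}$ and $M \in \{\text{sl}(K_2), \dots, l_2\}$. Now I would invoke the hypothesis $\deg(\Delta_{K_1}) \ge \deg(\Delta_{K_2})$, i.e. $l_1 \ge l_2$: in the regime $\text{sl}(K_1) \ge l_2$ every admissible $L$ dominates every admissible $M$, so the minimum is attained at $L = \text{sl}(K_1)$, $M = l_2$, and equals $\text{sl}(K_1) - l_2 + 1 = \text{sl}(K_1) - \deg(\Delta_{K_2})$, which is positive and is the asserted value (yielding the ``in particular'' clause).

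I expect the main obstacle to be the two technical points behind the plateau computation. First, one must prove rigorously that the plateau of a positive co-centered sum of discrete trapezoids is exactly the intersection of the individual plateaus; this is where non-negativity of the left-half increments is essential, ruling out accidental cancellations that could lengthen or shorten the flat region. Second, one must treat the complementary regime $\text{sl}(K_1) < l_2$, where the two interval-length ranges overlap: there $\min(|L-M|+1)$ collapses to the parity-forced floor ($1$ when $l_1 + l_2 - 1$ is odd, $2$ when even), which is what the truncation in the stated formula records, and verifying that this floor is genuinely achieved amounts to checking that boxes of nearly equal length occur with positive multiplicity, which is immediate from the explicit range of interval lengths found above.
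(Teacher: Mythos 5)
Your proposal is correct, and it takes a genuinely different route from the paper. The paper's own proof is a sketch: it forms the same convolution and then argues directly, by sliding the (shorter) coefficient sequence of $\Delta_{K_2}$ along that of $\Delta_{K_1}$, that consecutive coefficients of the product agree exactly while the sliding window sits inside the stable part of $\Delta_{K_1}$ and drop as soon as it leaves; the actual computation is explicitly omitted (``a generalization of the rearrangement inequality'') and delegated to a figure. You instead linearize the problem: each trapezoidal sequence is a positive combination of indicators of nested centered intervals whose lengths are exactly $\text{sl}(K_i), \text{sl}(K_i)+2,\dots,l_i$ (this is where strictness of the increase before the plateau is used), convolution is bilinear, a length-$L$ box convolved with a length-$M$ box is a co-centered trapezoid with plateau length $|L-M|+1$, and the plateau of a co-centered positive sum of such trapezoids is the shortest summand plateau. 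Both technical points you flag do hold: the summand plateaus are nested centered intervals and each summand is non-decreasing on the left half, so the sum is flat at a step if and only if every summand is, and the first failure moving outward from the center occurs precisely at the largest left plateau edge; the minimization over $L$ and $M$ is then routine, using that the admissible lengths form contiguous step-two ranges. What your approach buys is a complete proof where the paper gives a picture, and a sharper statement: in the regime $\text{sl}(K_1) \le \deg(\Delta_{K_2})$ your computation shows the stable length of the connected sum is the parity-forced value $1$ or $2$, not $0$ --- which exposes that the lemma's displayed formula is imprecise as written (the $\min$ must be a $\max$, and a stable length is never $\le 0$), while the ``in particular'' clause, the only part of the lemma the paper actually uses later, comes out identically from both arguments.
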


\begin{proof}
The sequence of coefficients of $\Delta_{K_1 \# K_2} = \Delta(K_1) \Delta(K_2)$ is the convolution of the coefficient sequences of $\Delta(K_1)$ and $\Delta(K_2)$. This convolution alongside a sketch of the proof appears in Figure~\ref{convolutionoftrapezoidal}.
 
The trapezoidal inequalities between coefficients become equality only when the shorter sequence falls in the stable part of the longer sequence (blue in Figure~\ref{convolutionoftrapezoidal}). As soon as the shorter sequence moves out of this stable part, the associated coefficient drops (red in Figure~\ref{convolutionoftrapezoidal}).
 
One can derive this explicitly by expanding the terms and using the trapezoidal condition, but we do not include the computation here, which is a generalization of the rearrangement inequality.
\end{proof}

\begin{figure}[h]
    \centering
    \includegraphics[scale=0.3]{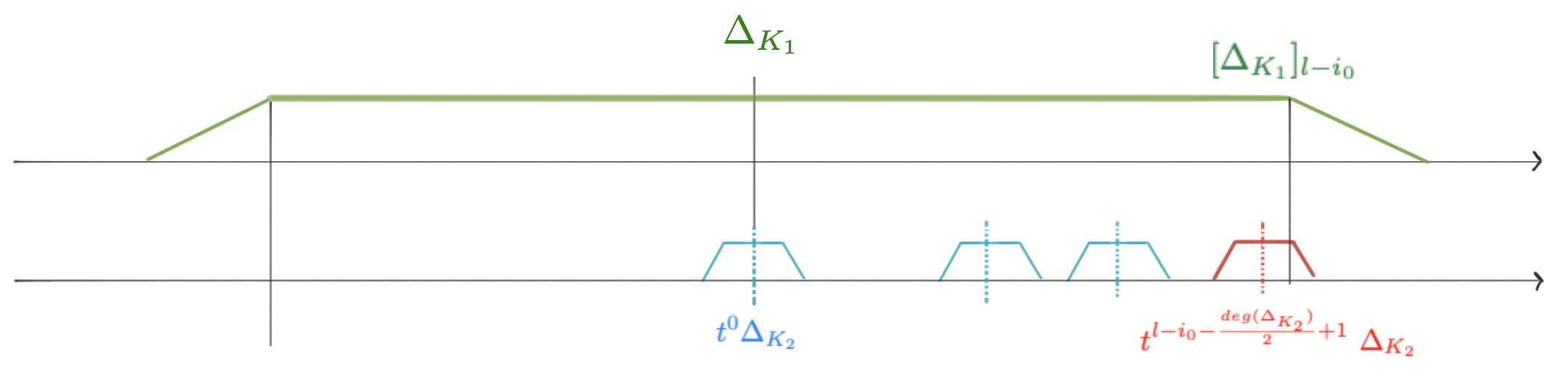}
    \caption{Convolution of two trapezoidal sequences. The top figure is a graph of the sequence of coefficients of $\Delta_{K_1}$. The bottom figure shows different copies of the sequence of coefficients of $\Delta_{K_2}$ used for computing the convolution at different points.}
    \label{convolutionoftrapezoidal}
\end{figure}

A simple corollary of Lemma~\ref{stablelenproductoftrapezoidals} is that one only needs to prove the H-M inequality for prime alternating links.

\begin{coro}\label{HMnonprime}
If the H-M inequality holds for prime alternating links, then it also holds for arbitrary alternating links.  
\end{coro}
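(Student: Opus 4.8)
The plan is to induct on the number $m$ of prime factors in a connected-sum decomposition $L = K_1 \mathbin{\#} \cdots \mathbin{\#} K_m$ of $L$ into prime alternating links, using that the prime summands of an alternating link are themselves alternating. The base case $m=1$ is the hypothesis. For the inductive step I would set $K = K_1 \mathbin{\#} \cdots \mathbin{\#} K_{m-1}$ and $K' = K_m$, so that $L = K \mathbin{\#} K'$, with the H-M inequality available for $K$ by induction and for $K'$ by hypothesis. The first, trapezoidal, part of the H-M conjecture for $L$ is then immediate: $\Delta_L \doteq \Delta_K \cdot \Delta_{K'}$ is a product of trapezoidal polynomials with alternating-sign coefficients, hence trapezoidal by \cite[Proposition~2.1]{murasugi_1985}.

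For the inequality I would first reformulate it in a parity-free way. For an alternating link $J$ the symmetrized Seifert form $V+V^{\top}$ is nondegenerate of rank $2g(J)+|J|-1 = \deg(\Delta_J)$, since $\det(V+V^{\top}) = \Delta_J(-1)$ and $\det(J) = |\Delta_J(-1)| \neq 0$ for alternating $J$. Consequently $|\sigma(J)| \leq \deg(\Delta_J)$ and $|\sigma(J)| \equiv \deg(\Delta_J) \pmod 2$. As $\sl(J) = (\deg(\Delta_J)+1) - 2(i_0-1) \equiv \deg(\Delta_J)+1 \pmod 2$, the integers $\sl(J)$ and $|\sigma(J)|+1$ have the same parity, and therefore the H-M inequality $\lfloor (|\sigma(J)|+1)/2 \rfloor \geq \lfloor \sl(J)/2 \rfloor$ is equivalent to the cleaner statement
\[
\sl(J) \leq |\sigma(J)| + 1.
\]

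With this reformulation the inductive step becomes transparent in the main regime. Signature is additive, $\sigma(L) = \sigma(K) + \sigma(K')$, and the stable length of the connected sum is governed by Lemma~\ref{stablelenproductoftrapezoidals}. Ordering the factors so that $\deg(\Delta_K) \geq \deg(\Delta_{K'})$, in the generic regime $\sl(K) > \deg(\Delta_{K'})$ that lemma yields $\sl(L) = \sl(K) - \deg(\Delta_{K'})$, and then
\[
\sl(L) = \sl(K) - \deg(\Delta_{K'}) \leq (|\sigma(K)|+1) - |\sigma(K')| \leq |\sigma(K)+\sigma(K')| + 1 = |\sigma(L)| + 1,
\]
where the first step uses the reformulated H-M inequality for $K$ together with $|\sigma(K')| \leq \deg(\Delta_{K'})$, and the second uses the reverse triangle inequality $|\sigma(K)| - |\sigma(K')| \leq |\sigma(K)+\sigma(K')|$. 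In the complementary regime $\sl(K) \leq \deg(\Delta_{K'})$ the convolution $\Delta_L$ is much closer to unimodal, and I would check $\sl(L) \leq |\sigma(L)|+1$ directly from the corresponding branch of Lemma~\ref{stablelenproductoftrapezoidals}, where the stable length is controlled by the smaller degree.

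I expect the main obstacle to lie precisely in coordinating two opposing effects in the inductive step. When the summands have signatures of opposite sign the quantity $|\sigma(K)+\sigma(K')|$ can be far smaller than $|\sigma(K)|+|\sigma(K')|$, so a crude subadditive bound on $\sl(L)$ is insufficient; the argument closes only because the stable length of the product collapses by exactly the degree of the smaller factor, which is the content of Lemma~\ref{stablelenproductoftrapezoidals}. The delicate point in writing this out is to verify the inequality uniformly across the boundary between the two regimes of that lemma, where the reverse triangle inequality is tight and the parity reformulation above is what keeps the floor comparison exact.
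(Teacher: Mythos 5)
Your proposal is correct and follows essentially the same route as the paper: both arguments combine Lemma~\ref{stablelenproductoftrapezoidals} with additivity of the signature under connected sum, the bound $|\sigma(K')| \leq \deg(\Delta_{K'})$ for alternating links, and the reverse triangle inequality, and your direct induction on the number of prime factors is just the contrapositive of the paper's passage from a composite counterexample to a counterexample with fewer factors. The only place you go beyond the paper is the explicit parity argument showing $\sl(J)$ and $|\sigma(J)|+1$ have the same parity, so that the floor form of the H-M inequality is equivalent to $\sl(J) \leq |\sigma(J)|+1$ --- a point the paper's proof uses implicitly.
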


\begin{proof}
    Assume that the H-M inequality fails for $K_1 \# K_2$. This means that
    \[
    \text{sl}(K_1 \# K_2) > |\sigma(K_1 \# K_2)| + 1 = |\sigma(K_1) + \sigma(K_2)| + 1.
    \]
    Using Lemma~\ref{stablelenproductoftrapezoidals}, we have
    \[
    \text{sl}(K_1) > |\sigma(K_1) + \sigma(K_2)| + \deg(\Delta_{K_2})+1 \geq |\sigma(K_1)| - |\sigma(K_2)| + \deg(\Delta_{K_2})+1,
    \]
    so $\text{sl}(K_1) > |\sigma(K_1)| + 1$. Hence, if there exists a non-prime counterexample to the H-M inequality, then there exists a prime counterexample. 
\end{proof}

One can extend this result to diagrammatic plumbings, and, even more generally, to Murasugi sums of length less than $3$:

\begin{coro}\label{H-Mplummbing}
    The H-M inequality holds for any link $L = L_1 * \cdots * L_n$ which is a diagrammatic Murasugi sum of special alternating links $L_1, \dots, L_n$, such that the length of all Murasugi sums are less than $3$. 
\end{coro}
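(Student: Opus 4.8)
The plan is to prove the stronger statement that $\sl(L) \le |\sigma(L)| + 1$, which immediately yields the H-M inequality since the floor function is monotone. I would argue by induction on the number $n$ of summands, following the Murasugi sum tree exactly as in the proof of Theorem~\ref{Trapezoidalsumovertree}. For the base case $n = 1$, the link $L = L_1$ is special alternating; by Lemma~\ref{specialaltsignature} its Alexander polynomial has span equal to $|\sigma(L_1)|$, so the number of coefficients is $|\sigma(L_1)| + 1$, and since the stable length never exceeds the number of coefficients we get $\sl(L_1) \le |\sigma(L_1)| + 1$. This is precisely why the H-M inequality is trivial for special alternating links.

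For the inductive step I would peel off a leaf $L_n$ of the Murasugi sum tree and write $L = L' * L_n$, where $L' = L_1 * \cdots * L_{n-1}$ is again a diagrammatic Murasugi sum of special alternating links with all sums of length less than three, so the induction hypothesis applies to $L'$. By Proposition~\ref{Alexanderformulaplumbing} and its length-two generalization, $\tilde{\Delta}_L = \tilde{\Delta}_{L'}\cdot\tilde{\Delta}_{L_n} + \tilde{\Delta}_{\widetilde{L'}}\cdot\tilde{\Delta}_{\widetilde{L_n}}$; write $F = \tilde{\Delta}_{L'}\tilde{\Delta}_{L_n}$ and $G = \tilde{\Delta}_{\widetilde{L'}}\tilde{\Delta}_{\widetilde{L_n}}$. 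Both $F$ and $G$ are symmetric and trapezoidal (each is a product of trapezoidal polynomials with alternating signs), and by equation~\eqref{degreedif} the span of $G$ is two less than that of $F$; moreover, these are exactly the two terms whose absolute values reinforce in the proof of Theorem~\ref{plumbingofspecialthm}. Since $G$ is a symmetric sequence of the same center-parity sitting strictly inside $F$, the plateau of $F + G$ is the intersection of the plateaus of $F$ and $G$, so $\sl(L) = \min(\sl(F), \sl(G)) \le \sl(F)$. Finally $F = \tilde{\Delta}_{L' \# L_n}$, so $\sl(F) = \sl(L' \# L_n)$; when the join has length one it is a connected sum, $G = 0$, and $\sl(L) = \sl(F)$ directly (this case could alternatively be dispatched by Corollary~\ref{HMnonprime}).

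It then remains to estimate $\sl(L' \# L_n)$. I would apply Lemma~\ref{stablelenproductoftrapezoidals} with the factor of larger Alexander degree playing the role of $K_1$, giving $\sl(L' \# L_n) \le \sl(K_1) - \deg(\Delta_{K_2})$ whenever the right-hand side is positive, and $\sl(L' \# L_n) = 1$ otherwise, in which case the desired bound is immediate. Two inputs convert this into the claim. First, $\sl(K_1) \le |\sigma(K_1)| + 1$, by the base case if $K_1 = L_n$ and by the induction hypothesis if $K_1 = L'$. Second, because each special alternating summand satisfies $|\sigma(L_i)| = \Span(\tilde{\Delta}_{L_i})$ and both signature and span are additive under the Murasugi decomposition (Proposition~\ref{Murasugidecomp}), we have $\deg(\Delta_{K_2}) = \Span(\tilde{\Delta}_{K_2}) = \sum_i |\sigma(L_i)| \ge |\sigma(K_2)|$ by the triangle inequality, the sum ranging over the special alternating summands composing $K_2$. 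Combining these with the reverse triangle inequality $|\sigma(L)| = |\sigma(K_1) + \sigma(K_2)| \ge |\sigma(K_1)| - |\sigma(K_2)|$ gives $\sl(L) \le \sl(F) \le (|\sigma(K_1)| + 1) - |\sigma(K_2)| \le |\sigma(L)| + 1$, completing the induction.

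The main obstacle is the inequality $\sl(L) \le \sl(F)$: one must verify that adding the lower-degree correction term $G$ cannot lengthen the stable plateau of $F$. This is where it is essential that $F$ and $G$ enter with reinforcing rather than competing absolute values, and that $G$ is narrower than $F$ by exactly one coefficient on each side — both facts are already embedded in the degree comparison used in Theorem~\ref{plumbingofspecialthm}. The secondary, bookkeeping obstacle is the case analysis in the product lemma over which of $L'$, $L_n$ has the larger Alexander degree; the feature that makes it go through uniformly is the identity $|\sigma| = \Span$ for special alternating summands, which converts additivity of the signature into precisely the degree bound required.
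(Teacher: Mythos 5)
Your proposal is correct and takes essentially the same route as the paper's own proof: peel one summand off the Murasugi sum tree, apply Proposition~\ref{Alexanderformulaplumbing} (and its length-two generalization) to write $\tilde{\Delta}_L$ as a reinforcing sum of two trapezoidal products, deduce $\sl(L)=\min\bigl(\sl(F),\sl(G)\bigr)\le \sl(F)$, and conclude via Lemma~\ref{stablelenproductoftrapezoidals} together with additivity of the signature under the Murasugi decomposition, exactly as in Corollary~\ref{HMnonprime}. The only cosmetic differences are that you run the induction directly rather than in the paper's contrapositive (smaller-counterexample) phrasing, and that you make explicit the bound $\deg(\Delta_{K_2})\ge|\sigma(K_2)|$ that the paper leaves implicit.
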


\begin{proof}
Let $L'_1 = L_1$ and $L'_2 = L_2 * \cdots * L_n$. Then $L = L'_1 * L'_2$, and the length of this Murasugi sum is less than 3. By Theorem~\ref{Trapezoidalsumovertree}, the trapezoidal conjecture holds for $L$, $L'_1$, and $L'_2$.

By Proposition~\ref{Alexanderformulaplumbing}, for a plumbing, we can write
\begin{equation}\label{repeatAlexanderplumbformula}
\tilde{\Delta}_{L'_1*L'_2} = \tilde{\Delta}_{L'_1} \tilde{\Delta}_{L'_2} + \tilde{\Delta}_{\widetilde{L}'_1}  \tilde{\Delta}_{\widetilde{L}'_2}.
\end{equation}
Since the Alexander polynomials are trapezoidal, one can deduce that 
\[
\text{sl}(L'_1*L'_2) = \min \left\{\text{sl}(\Delta_{L'_1} \times \Delta_{L'_2}), \text{sl}\left(\Delta_{\widetilde{L}_1'} \Delta_{\widetilde{L}_2'} \right) \right\}.
\]
Recall that signature is additive under diagrammatic Murasugi sum (Proposition~\ref{Murasugidecomp}). The result now follows analogously to the proof of Corollary~\ref{HMnonprime}.
The argument for other Murasugi sums of length less than 3 is similar.
\end{proof}

\subsection{H-M sharp links}\label{HMsharp}

In this subsection, we study the family of H-M sharp alternating links with non-zero stable lengths. Recall that we call an alternating link $L$ H-M sharp when
\[
\left\lfloor \frac{|\sigma(L)|+1}{2} \right\rfloor = \left\lfloor \frac{\text{sl}(L)}{2} \right\rfloor.
\]
We start by looking at the results of Subsection~\ref{HMintro} and analyze when equality holds. 

\begin{lemm}\label{HMsharpcomposites}
Let $K_1$ and $K_2$ be alternating knots satisfying the Hirawasa--Murasugi conjecture, and assume $\deg(\Delta_{K_1}) \geq \deg(\Delta_{K_2})$. The connected sum $K_1 \# K_2$ is H-M sharp if and only if, up to mirroring,
\begin{enumerate}
    \item $\sl(K_1) = \sigma(K_1) + 1$,
    \item $\sigma(K_2) + \deg(\Delta_{K_2}) = 0$, and
    \item $\sl(K_1) \geq \deg(\Delta_{K_2})$.
\end{enumerate}
Furthermore, $K_1$ is H-M sharp with positive signature and $K_2$ is special alternating. 
\end{lemm}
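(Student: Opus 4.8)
The plan is to reduce the statement to two numerical identities and then read off both directions. I would first record the standing simplifications available for knots: $\Delta_{K_1}$, $\Delta_{K_2}$ and $\Delta_{K_1\#K_2}$ have even degree and symmetric trapezoidal absolute‑value sequences of odd length, and all three signatures are even. Consequently, for an alternating knot $K$ the H--M inequality reads $\sl(K)\le|\sigma(K)|+1$ and H--M sharpness is the equality $\sl(K)=|\sigma(K)|+1$. Writing $\sigma_i=\sigma(K_i)$ and $d_i=\deg(\Delta_{K_i})$, I would use that $\sigma(K_1\#K_2)=\sigma_1+\sigma_2$ and that $d_2=2g(K_2)\ge|\sigma_2|$ (for an alternating knot $\deg\Delta=2g$, while $|\sigma|\le 2g_4\le 2g$). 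Since the trapezoidal conjecture holds for $K_1$ and $K_2$ by hypothesis, Lemma~\ref{stablelenproductoftrapezoidals} and the characterisation in Lemma~\ref{specialaltsignature} are available throughout.

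For the forward direction I would assume $K_1\#K_2$ is H--M sharp, so $\sl(K_1\#K_2)=|\sigma_1+\sigma_2|+1>0$, and apply Lemma~\ref{stablelenproductoftrapezoidals} to get $\sl(K_1)=|\sigma_1+\sigma_2|+1+d_2$. Plugging this into the H--M inequality $\sl(K_1)\le|\sigma_1|+1$ for $K_1$ yields $|\sigma_1+\sigma_2|+d_2\le|\sigma_1|$. The key step is then the pinch
\[
|\sigma_1|\ \le\ |\sigma_1+\sigma_2|+|\sigma_2|\ \le\ |\sigma_1+\sigma_2|+d_2\ \le\ |\sigma_1|,
\]
where the first inequality is the triangle inequality applied to $\sigma_1=(\sigma_1+\sigma_2)-\sigma_2$, the second is $|\sigma_2|\le d_2$, and the third is what we just derived. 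Equality everywhere forces $d_2=|\sigma_2|$ and $|\sigma_1|=|\sigma_1+\sigma_2|+|\sigma_2|$. The first gives, via Lemma~\ref{specialaltsignature}, that $K_2$ is special alternating; after replacing $K_1\#K_2$ by its mirror if necessary we may take $\sigma_2\le0$, hence $\sigma_2=-d_2$, which is condition (2). The triangle‑equality $|\sigma_1|=|\sigma_1+\sigma_2|+|{-\sigma_2}|$ forces $\sigma_1+\sigma_2$ and $-\sigma_2$ to share a sign, so $\sigma_1\ge0$. Substituting $d_2=|\sigma_2|$ and $|\sigma_1+\sigma_2|+|\sigma_2|=|\sigma_1|$ into the displayed value of $\sl(K_1)$ gives $\sl(K_1)=|\sigma_1|+1=\sigma_1+1$, which is condition (1) and exhibits $K_1$ as H--M sharp with nonnegative signature; condition (3) is then automatic from $\sl(K_1)=\sigma_1+1\ge d_2$.

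For the converse I would assume (1)--(3), normalised (up to the global mirror of the composite) so that $\sigma_1\ge0$, $\sigma_2=-d_2\le0$, and $K_2$ is special alternating. A parity observation does the work: $\sl(K_1)=\sigma_1+1$ is odd while $d_2=|\sigma_2|$ is even, so $\sl(K_1)-d_2$ is odd, and condition (3) makes it $\ge0$, hence $\ge1$. Lemma~\ref{stablelenproductoftrapezoidals} then gives $\sl(K_1\#K_2)=\sl(K_1)-d_2=\sigma_1+1-d_2$. Since $\sigma_1=\sl(K_1)-1\ge d_2-1$ with $\sigma_1,d_2$ both even we get $\sigma_1\ge d_2$, whence $|\sigma(K_1\#K_2)|+1=|\sigma_1-d_2|+1=\sigma_1-d_2+1=\sl(K_1\#K_2)$, i.e.\ $K_1\#K_2$ is H--M sharp.

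The main obstacle is the careful extraction of the two equality cases in the forward direction and the associated sign bookkeeping: the pinch above must be driven by the a priori bound $d_2\ge|\sigma_2|$, and the resulting triangle‑equality has to be converted into the precise sign data $\sigma_1\ge0$, $\sigma_2\le0$. One must also track parities (signatures even, stable lengths odd) both to promote the weak inequality in condition (3) to the strict $\sl(K_1)-d_2\ge1$ needed to invoke Lemma~\ref{stablelenproductoftrapezoidals}, and to confirm that the phrase ``up to mirroring'' is realised by a single mirror of the composite $K_1\#K_2$ rather than independent mirrors of the summands.
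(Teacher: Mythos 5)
Your proof is correct and follows essentially the same route as the paper's: express $\sl(K_1)$ via Lemma~\ref{stablelenproductoftrapezoidals}, pinch the resulting inequality chain (triangle inequality plus $\deg(\Delta_{K_2})\ge|\sigma(K_2)|$) against the H--M inequality for $K_1$, and identify $K_2$ as special alternating via Lemma~\ref{specialaltsignature}. The paper records only this pinch for the ``only if'' direction and leaves the converse and the sign/parity bookkeeping implicit, whereas you carry out both directions explicitly; this is a careful expansion of the same argument rather than a different method.
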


\begin{proof}
Using Lemma~\ref{stablelenproductoftrapezoidals}, we have
\[
\text{sl}(K_1 \# K_2) = |\sigma(K_1) + \sigma(K_2)| + 1.
\]
Hence
\[
\text{sl}(K_1) \geq |\sigma(K_1) + \sigma(K_2)| + \deg(\Delta_{K_2}) + 1 \geq |\sigma(K_1)| - |\sigma(K_2)| + \deg(\Delta_{K_2})+1,
\]
which implies that
\[
\text{sl}(K_1) \geq (|\sigma(K_1)|+1) + (\deg(\Delta_{K_2}) - |\sigma(K_2)|).
\]
The assumption that the H-M inequality holds for $K_1$ gives us the three conditions. The second part of the statement follows from Lemma~\ref{specialaltsignature}.
\end{proof}

Repeated application of Lemma~\ref{HMsharpcomposites} implies the following:

\begin{coro}\label{nonprimeHMsharp}
Assume that the Hirawasa--Murasugi conjecture is true for $K_1, \dots, K_n$. Let $K$ be an H-M sharp knot with prime decomposition $K = K_1\#\cdots\#K_n$, such that $\deg(\Delta_{K_1}) \geq \deg(\Delta_{K_i})$ for $i \geq 2$.
Then the following hold up to taking the mirror of $K$:
\begin{enumerate}
    \item $K_1$ is H-M sharp with $\sigma(K_1) > 0$,
    \item $K_2,\dots,K_n$ are all special alternating knots,
    \item $\text{sl}(K_1) \geq \sum\limits_{i=2}^{n} deg(\Delta_{K_i})$.
\end{enumerate}
This characterization is necessary and sufficient. 
\end{coro}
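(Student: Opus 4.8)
The plan is to prove the characterization by induction on the number $n$ of prime summands, using Lemma~\ref{HMsharpcomposites} as both the base case ($n=2$) and the engine of the inductive step. Throughout, I would fix a single global mirror of $K$ so that the distinguished largest-degree summand has non-negative signature, and I stay in the regime of positive stable length that is the subject of this subsection. The base case $n=2$ is precisely Lemma~\ref{HMsharpcomposites}, whose three ``if and only if'' conditions and ``furthermore'' clause match conclusions (1)--(3) verbatim.

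For the inductive step, I would write $K = K' \# K_n$, where $K' = K_1 * \cdots$ is the connected sum $K_1 \# \cdots \# K_{n-1}$ and $K_n$ is chosen to have smallest Alexander degree. Since $\deg\Delta$ is additive under connected sum and $\deg\Delta_{K_1} \geq \deg\Delta_{K_i}$ for all $i$, we get $\deg\Delta_{K'} = \sum_{i=1}^{n-1}\deg\Delta_{K_i} \geq \deg\Delta_{K_1} \geq \deg\Delta_{K_n}$, so the degree-dominance hypothesis of Lemma~\ref{HMsharpcomposites} holds. One must also check that $K'$ itself satisfies the Hirasawa--Murasugi conjecture; this follows from the argument of Corollary~\ref{HMnonprime}, namely that H-M for the summands propagates to their connected sum via Lemma~\ref{stablelenproductoftrapezoidals} together with the trapezoidality of products of trapezoidal polynomials. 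Applying Lemma~\ref{HMsharpcomposites} to $K = K' \# K_n$ then yields, up to the chosen mirror, that $K'$ is H-M sharp with $\sigma(K') > 0$, that $K_n$ is special alternating (equivalently $\sigma(K_n) = -\deg\Delta_{K_n}$ by Lemma~\ref{specialaltsignature}), and that $\text{sl}(K') \geq \deg\Delta_{K_n}$. Because the lemma's ``furthermore'' guarantees $K'$ is H-M sharp, the induction hypothesis applies to $K'$, giving that $K_1$ is H-M sharp with $\sigma(K_1) > 0$ and that $K_2,\dots,K_{n-1}$ are special alternating.

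It remains to reassemble the conclusions. Conclusions (1) and (2) are then immediate: $K_1$ is H-M sharp with positive signature, and $K_2,\dots,K_n$ are all special alternating. For conclusion (3), rather than threading the stable-length identity through the induction, I would simply peel $K_2,\dots,K_n$ off $K_1$ one at a time and apply the positive-stable-length case of Lemma~\ref{stablelenproductoftrapezoidals} at each step; since each peeled summand has Alexander degree at most that of the running connected sum (which always contains $K_1$), this yields $\text{sl}(K) = \text{sl}(K_1) - \sum_{i=2}^{n}\deg\Delta_{K_i}$, and positivity of $\text{sl}(K)$ gives $\text{sl}(K_1) \geq \sum_{i=2}^{n}\deg\Delta_{K_i}$. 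The converse direction---that the three conditions force $K$ to be H-M sharp---runs the same construction backwards, using the ``if'' direction of the equivalence in Lemma~\ref{HMsharpcomposites} to build up H-M sharpness one summand at a time.

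The main obstacle I anticipate is not any single hard estimate but the consistent bookkeeping across the peeling steps. One must verify at every stage that the factor being split off has the smaller Alexander degree, so that Lemma~\ref{HMsharpcomposites} applies in the correct orientation; that the remaining connected sum still satisfies the Hirasawa--Murasugi conjecture and remains H-M sharp; and---most delicately---that the single global mirror of $K$ can be chosen once and for all so that the H-M sharp factor $K_1$ has positive signature while every special alternating factor is simultaneously positive. Here the point is that the global mirror is pinned down by the requirement $\sigma(K_1) \geq 0$, while each special alternating summand has its orientation forced by the relation $\sigma(K_i) = -\deg\Delta_{K_i}$, so no conflicting mirror choices arise. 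Making the signature and stable-length identities line up with the correct parities---recalling that $\sigma$ is even for a knot and that $\text{sl} \geq 1$---is where the care is needed.
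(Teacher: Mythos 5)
Your proposal is correct and follows essentially the same route as the paper: the paper's entire proof is the single sentence that the corollary follows by repeated application of Lemma~\ref{HMsharpcomposites}, which is exactly the induction you carry out. Your additional bookkeeping (degree dominance at each peeling step, propagation of the Hirasawa--Murasugi conjecture to partial connected sums via the argument of Corollary~\ref{HMnonprime}, and the parity argument pinning down a single consistent mirror) correctly fills in the details the paper leaves implicit.
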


Similarly to Corollary~\ref{H-Mplummbing}, we can generalize the argument of Corollary~\ref{nonprimeHMsharp} to diagrammatic plumbings $K = K_1 * K_2$, and, even more generally, to diagrammatic Murasugi sums of length less than~3. For this, we need to assume that the trapezoidal conjecture holds for the links $\widetilde{K}_1$ and $\widetilde{K}_2$, as well as for $K_1$ and $K_2$. Deriving necessary and sufficient criteria in this case is a bit more complicated as we need additional conditions on $\widetilde{K}_1$ and $\widetilde{K}_2$. Due to Theorem~\ref{Trapezoidalsumovertree}, the main cases of interest are diagrammatic Murasugi sums of special alternating knots (in particular, Murasugi sums of a pair), when the lengths of all sums are less than $3$. 

\begin{lemm}\label{HMsharplumbing}
Let $K_1$ and $K_2$ be alternating knots satisfying the Hirawasa--Murasugi conjecture, and assume $\deg(\Delta_{K_1}) \geq \deg(\Delta_{K_2})$. Let $K_1 * K_2$ be a diagrammatic Murasugi sum of length less than $3$. If the link $K_1 * K_2$ is H-M sharp, then $K_1$ is H-M sharp with positive signature and $K_2$ is special alternating. 
\end{lemm}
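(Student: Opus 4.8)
The plan is to run the argument of Lemma~\ref{HMsharpcomposites} almost verbatim, replacing the connected-sum formula $\Delta_{K_1 \# K_2} = \Delta_{K_1}\Delta_{K_2}$ by the Murasugi-sum formula and using that signature is additive under diagrammatic Murasugi sums. Write $K = K_1 * K_2$. By the generalization of Proposition~\ref{Alexanderformulaplumbing} to sums of length less than three (see the remark following Theorem~\ref{Trapezoidalsumovertree}), we have $\tilde{\Delta}_K = \tilde{\Delta}_{K_1}\tilde{\Delta}_{K_2} + \tilde{\Delta}_{\widetilde{K}_1}\tilde{\Delta}_{\widetilde{K}_2}$, and, exactly as in the proof of Corollary~\ref{H-Mplummbing}, the two summands are trapezoidal with alternating signs and the degree of the first exceeds that of the second, so
\[
\text{sl}(K) = \min\bigl\{\,\text{sl}(\Delta_{K_1}\Delta_{K_2}),\ \text{sl}(\Delta_{\widetilde{K}_1}\Delta_{\widetilde{K}_2})\,\bigr\}.
\]
In particular $\text{sl}(K) \le \text{sl}(\Delta_{K_1}\Delta_{K_2})$. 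Since signature is additive (Proposition~\ref{Murasugidecomp}), we also have $\sigma(K) = \sigma(K_1) + \sigma(K_2)$.

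Next I would feed this into the same chain of inequalities as in Lemma~\ref{HMsharpcomposites}. As $K$ is H-M sharp with non-zero stable length, the matching floors and parities give $\text{sl}(K) = |\sigma(K)| + 1 > 0$, whence $\text{sl}(\Delta_{K_1}\Delta_{K_2}) > 0$ as well. Since $\deg(\Delta_{K_1}) \ge \deg(\Delta_{K_2})$, the ``in particular'' clause of Lemma~\ref{stablelenproductoftrapezoidals} applies and yields $\text{sl}(K_1) = \text{sl}(\Delta_{K_1}\Delta_{K_2}) + \deg(\Delta_{K_2})$. Combining this with $\text{sl}(\Delta_{K_1}\Delta_{K_2}) \ge \text{sl}(K) = |\sigma(K)|+1$, the triangle inequality $|\sigma(K)| = |\sigma(K_1)+\sigma(K_2)| \ge |\sigma(K_1)| - |\sigma(K_2)|$, and the bound $\deg(\Delta_{K_2}) = 2g(K_2) \ge |\sigma(K_2)|$ valid for any alternating knot, I obtain
\[
\text{sl}(K_1) \ge |\sigma(K_1)| - |\sigma(K_2)| + \deg(\Delta_{K_2}) + 1 \ge |\sigma(K_1)| + 1.
\]

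Finally I would extract the conclusions from the forced equalities. The H-M inequality for $K_1$ gives $\text{sl}(K_1) \le |\sigma(K_1)| + 1$, so every inequality above is an equality. Equality in $\text{sl}(K_1) \le |\sigma(K_1)|+1$ says $K_1$ is H-M sharp; the equality $\deg(\Delta_{K_2}) = |\sigma(K_2)|$, together with Lemma~\ref{specialaltsignature}, says $K_2$ is special alternating up to mirroring; and equality in the triangle inequality forces $\sigma(K_1)$ and $\sigma(K_2)$ to have opposite signs with $|\sigma(K_1)| \ge |\sigma(K_2)|$. A positive special alternating knot satisfies $-\sigma = 2g > 0$, so after mirroring $K$ (hence both summands) we may assume $\sigma(K_2) \le 0$; since $\deg(\Delta_{K_2}) = |\sigma(K_2)| > 0$ (the case $\deg \Delta_{K_2}=0$ being degenerate, as then $K_2$ is trivial and $K = K_1$), we get $\sigma(K_2) < 0$ and therefore $\sigma(K_1) > 0$, as required.

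The only non-formal input, and hence the main obstacle, is the identity $\text{sl}(K) = \min\{\text{sl}(\Delta_{K_1}\Delta_{K_2}), \text{sl}(\Delta_{\widetilde{K}_1}\Delta_{\widetilde{K}_2})\}$ for sums of length less than three. This requires three ingredients: the generalized Alexander formula of the remark after Theorem~\ref{Trapezoidalsumovertree}, in which $\widetilde{K}_i$ is obtained by gluing an unknotted band rather than by smoothing; the trapezoidality of $\Delta_{\widetilde{K}_1}\Delta_{\widetilde{K}_2}$, which is exactly why one must assume the trapezoidal conjecture for $\widetilde{K}_1$ and $\widetilde{K}_2$ in addition to $K_1$ and $K_2$; and the degree comparison $\deg(\tilde{\Delta}_{K_1}\tilde{\Delta}_{K_2}) = \deg(\tilde{\Delta}_{\widetilde{K}_1}\tilde{\Delta}_{\widetilde{K}_2}) + 1$ established in the proof of Theorem~\ref{plumbingofspecialthm}. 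This last point guarantees that the outermost coefficients of $\tilde{\Delta}_K$ come only from the first summand, so that the two concentric plateaus of the symmetric trapezoidal summands intersect in the shorter one and the sum is strictly decreasing outside it. Once these are in place, the inequality $\text{sl}(K) \le \text{sl}(\Delta_{K_1}\Delta_{K_2})$ is immediate and the remainder is the elementary bookkeeping above.
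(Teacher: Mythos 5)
Your proof follows exactly the route the paper intends: Lemma~\ref{HMsharplumbing} is stated there without a separate proof, as the combination of the stable-length/signature bookkeeping of Lemma~\ref{HMsharpcomposites} with the Murasugi-sum ingredients of Corollary~\ref{H-Mplummbing} (the generalized formula of Proposition~\ref{Alexanderformulaplumbing}, the resulting $\min$ formula for stable lengths, and signature additivity from Proposition~\ref{Murasugidecomp}), which is precisely what you carry out, down to extracting the conclusions from the forced equalities via Lemma~\ref{specialaltsignature}. Your explicit flagging of the required trapezoidality of $\widetilde{K}_1$ and $\widetilde{K}_2$ matches the paper's own caveat in the paragraph preceding the lemma, so the proposal is correct and essentially identical in approach.
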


Repeated application of Lemma~\ref{HMsharplumbing} implies Corollary~\ref{H-Msharpplum}: 

\begin{coro}\label{H-Msharpplum}
Let $K$ be an H-M sharp alternating knot with Murasugi decomposition $K = K_1 * \cdots * K_n$, such that the lengths of all Murasugi sums are less than~$3$. Assume  $\deg(\Delta_{K_1}) \geq \deg(\Delta_{K_i})$ for $i\geq 2$.
Then the following hold, up to taking the mirror of $K$:
\begin{enumerate}
    \item $K_1$ is H-M sharp with $\sigma(K_1) > 0$ (a negative special alternating link),
    \item $K_2, \dots, K_n$ are positive special alternating links, and
    \item $\text{sl}(K_1) \geq \sum\limits_{i=2}^{n} \deg(\Delta_{K_i})$.
\end{enumerate}  
\end{coro}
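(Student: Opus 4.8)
The plan is to prove the statement by induction on the number $n$ of summands, peeling one special alternating summand off the Murasugi sum tree at each step and invoking Lemma~\ref{HMsharplumbing}. Note first that, since $K = K_1 * \cdots * K_n$ is the Murasugi decomposition, \emph{every} $K_i$ is already special alternating by Proposition~\ref{Murasugidecomp}; thus the only content of the conclusion is the \emph{signs} of the summands (that $K_1$ is negative and $K_2,\dots,K_n$ positive, up to a global mirror), the H-M sharpness of $K_1$, and the degree inequality~(3). To make the degree inequality propagate cleanly through the induction, I would strengthen the inductive hypothesis to the telescoping relation
\[
\text{sl}(K_1) \ \geq\ \text{sl}(K) + \sum_{i=2}^{n}\deg(\Delta_{K_i}),
\]
which implies~(3) since $\text{sl}(K)\ge 1>0$. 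The base case $n=1$ is immediate: $K=K_1$ is special alternating and H-M sharp, and a global mirror arranges $\sigma(K_1)>0$.

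For the inductive step ($n\ge 2$), recall from the proof of the plumbing generalization of Theorem~\ref{plumbingofspecialthm} that a diagrammatic Murasugi decomposition is organized along a tree $T$ whose vertices are the summands. I would root $T$ at the maximal-degree summand $K_1$ and pick a leaf different from $K_1$; relabelling the indices $2,\dots,n$ (which preserves $\deg\Delta_{K_1}\ge\deg\Delta_{K_i}$) I may assume this leaf is $K_n$. Writing $K' = K_1 * \cdots * K_{n-1}$, we have $K = K' * K_n$, a Murasugi sum of length less than $3$, and additivity of genus under Murasugi sum (Theorem~\ref{Gabai}, Proposition~\ref{Murasugidecomp}) gives $\deg\Delta_{K'} = \sum_{i=1}^{n-1}\deg\Delta_{K_i}\ge \deg\Delta_{K_1}\ge\deg\Delta_{K_n}$. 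Since $K_n$ is special alternating and $K'$ satisfies the H-M inequality by Corollary~\ref{H-Mplummbing}, Lemma~\ref{HMsharplumbing} applies and shows that $K'$ is H-M sharp with $\sigma(K')>0$. Applying the strengthened inductive hypothesis to $K'$ (which has $n-1$ summands and still has $K_1$ as its maximal-degree summand) then delivers conclusion~(1), conclusion~(2) for $K_2,\dots,K_{n-1}$, and the telescoping bound for $K'$.

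Two points remain, and these are where the real work lies. First I must upgrade ``$K_n$ special alternating'' to ``$K_n$ \emph{positive} special alternating''. For this I would use the plumbing stable-length formula from the proof of Corollary~\ref{H-Mplummbing}, namely $\text{sl}(K)=\min\{\text{sl}(\tilde{\Delta}_{K'}\tilde{\Delta}_{K_n}),\ \text{sl}(\tilde{\Delta}_{\widetilde{K}'}\tilde{\Delta}_{\widetilde{K}_n})\}$, together with Lemma~\ref{stablelenproductoftrapezoidals} applied to the genuine product $\tilde{\Delta}_{K'}\tilde{\Delta}_{K_n}=\tilde{\Delta}_{K'\#K_n}$, which gives
\[
\text{sl}(K)\ \le\ \text{sl}(K')-\deg\Delta_{K_n}.
\]
If $K_n$ were negative special alternating, then $\sigma(K_n)=\deg\Delta_{K_n}$ and $\sigma(K)=\sigma(K')+\deg\Delta_{K_n}$; combining the H-M sharp estimates $\text{sl}(K)\ge|\sigma(K)|=\sigma(K')+\deg\Delta_{K_n}$ and $\text{sl}(K')\le|\sigma(K')|+2=\sigma(K')+2$ with the displayed bound forces $\deg\Delta_{K_n}\le 1$, which is absurd since $\deg\Delta_{K_n}=2g(K_n)$ is even and positive. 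Hence $K_n$ is positive. The same displayed bound, read as $\text{sl}(K')\ge \text{sl}(K)+\deg\Delta_{K_n}$, feeds into the inductive telescoping bound for $K'$ to produce the strengthened bound for $K$, closing the induction and yielding~(3).

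The main obstacle is controlling the stable length under a Murasugi sum, where by Proposition~\ref{Alexanderformulaplumbing} the Alexander polynomial is a \emph{sum} of two products rather than the single product that governs a connected sum. The length-less-than-three hypothesis and the trapezoidality supplied by Theorem~\ref{Trapezoidalsumovertree} are exactly what let me reduce $\text{sl}(K)$ to the dominant product term $\tilde{\Delta}_{K'}\tilde{\Delta}_{K_n}$ and thereby reuse Lemma~\ref{stablelenproductoftrapezoidals}. A secondary nuisance is the floor functions in the definition of H-M sharpness, but they are harmless here: the slack they introduce is at most $1$, whereas $\deg\Delta_{K_n}=2g(K_n)\ge 2$ provides a gap of at least $2$ in the key inequality.
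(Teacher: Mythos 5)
Your overall strategy---root the Murasugi sum tree at $K_1$, peel off a leaf, and apply Lemma~\ref{HMsharplumbing} together with Corollary~\ref{H-Mplummbing} at each step---is exactly the paper's proof, which consists of the single sentence ``repeated application of Lemma~\ref{HMsharplumbing}''; and your telescoping derivation of conclusion~(3) from $\text{sl}(K) \le \text{sl}(\tilde{\Delta}_{K'}\tilde{\Delta}_{K_n}) = \text{sl}(K') - \deg\Delta_{K_n}$ is a correct filling-in of a detail the paper leaves implicit. However, there is a genuine gap in the step where you upgrade ``$K_n$ special alternating'' to ``$K_n$ \emph{positive} special alternating.'' Your contradiction bottoms out at $2\deg\Delta_{K_n}\le 2$, which you dismiss on the grounds that $\deg\Delta_{K_n}=2g(K_n)$ is ``even and positive.'' That identity holds only when $K_n$ is a \emph{knot}. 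The summands of a Murasugi decomposition of a knot are in general links---indeed this is the typical case: for an alternating link $J$ one has $\deg\Delta_J = 2g(J)+|J|-1$, and the most common summands, Hopf bands (boundary $T_{2,\pm 2}$), have $\deg\Delta = 1$. A negative Hopf-link summand therefore passes through your inequality chain with no contradiction, and your induction cannot decide its sign. This is not a fringe case: the figure-eight knot is precisely the plumbing of one positive and one negative Hopf band, and it is H-M sharp, so your argument must be able to distinguish the two summands and as written it cannot.

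Two remarks on repairing this. At the top level, where $K$ is a knot, parity saves you: $\sigma(K)$ is even while $\text{sl}(K)$ is odd (it has the parity of $\deg\Delta_K+1$), so H-M sharpness forces the exact equality $\text{sl}(K)=|\sigma(K)|+1$ rather than the bound $\text{sl}(K)\ge|\sigma(K)|$ you use; re-running your chain with this extra $+1$ gives $2\deg\Delta_{K_n}\le 1$, a contradiction for \emph{every} summand of positive degree, Hopf links included. But your induction applies the (strengthened) hypothesis to $K'=K_1*\cdots*K_{n-1}$, which is in general a link, not a knot, so at the deeper levels neither the parity argument nor your evenness claim is available, and the induction does not close as written. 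Alternatively, observe that the paper intends the conclusion ``$K_2$ is special alternating'' of Lemma~\ref{HMsharplumbing} to carry the sign information, exactly as in its connected-sum prototype Lemma~\ref{HMsharpcomposites}, whose item~(2), $\sigma(K_2)+\deg(\Delta_{K_2})=0$, pins $K_2$ as positive special alternating by Lemma~\ref{specialaltsignature}; if you invoke the lemma in that form, your degree-gap argument for the sign becomes unnecessary and the rest of your induction goes through.
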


Note that a special alternating link is H-M sharp if and only if the absolute values of all of the coefficients of the Alexander polynomial are equal. This follows from Lemma~\ref{specialaltsignature}. Alternating torus links $T_{2,q}$ are examples of H-M sharp special alternating links. Another example of such a family comes from taking the boundary of an $n$-twisted, unknotted annulus in $S^3$, denoted by $T'_{2,2n}$, which can also be obtained by changing the orientation of one of the components of $T_{2,2n}$. For special alternating knots, H-M sharpness only holds when the knot is $T_{2,2g+1}$; see Ni~\cite{Ni}.

Many of the small H-M sharp links come from diagrammatic plumbings of links of the form $T_{2,q}$ and $T'_{2,2n}$. Figure~\ref{HMsharpupto10} is the list of all H-M sharp prime knots with less than $11$ crossings.

 \begin{figure}[h]
    \centering
    \includegraphics[scale=0.5]{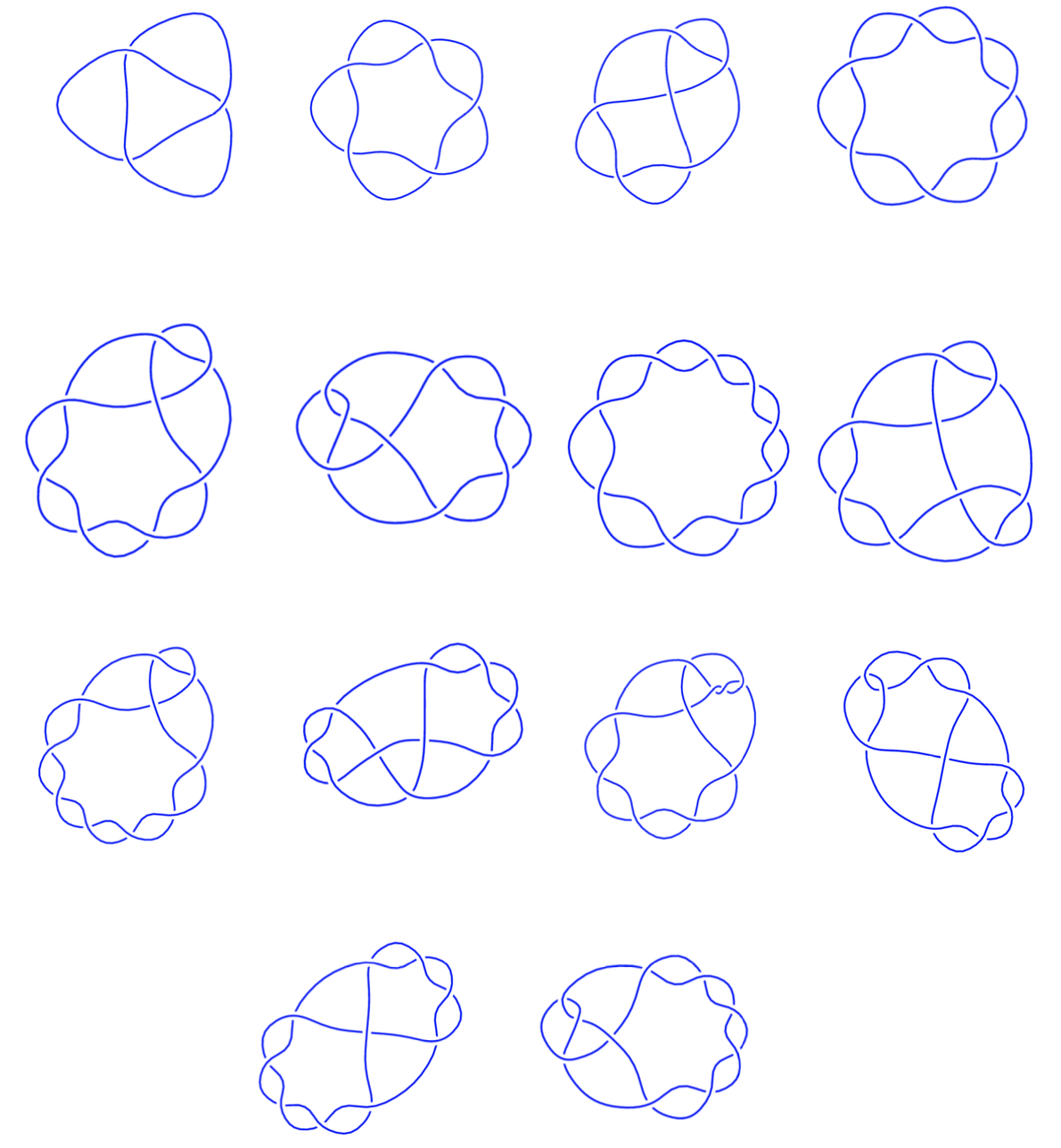}
    \caption{All H-M sharp prime knots up to 10 crossings.}
    \label{HMsharpupto10}
\end{figure}

\subsection{Dual inequality and concordance}\label{dual}

In this subsection, we investigate a connection between the H-M inequality and concordance based on the Fox--Milnor condition, stated in Theorem~\ref{FoxMilnor}.

\begin{theo}[\cite{FoxMilnor}]\label{FoxMilnor}
For concordant knots $K_1$ and $K_2$, there exists a polynomial $f(t)\in \mathbb{Z}[t]$ and $a \in \mathbb{Z}$ such that 
\[
\Delta_{K_1}(t) \Delta_{K_2}(t)  = t^{a} f(t)f(t^{-1}).
\]
\end{theo}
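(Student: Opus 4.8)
The plan is to reduce the statement to the classical fact that the Alexander polynomial of a \emph{slice} knot factors as $f(t)f(t^{-1})$ up to units. Recall that $K_1$ and $K_2$ are concordant if and only if the knot $K := K_1 \# (-K_2)$ is slice, where $-K_2 = \overline{K_2}^{\,r}$ denotes the reverse of the mirror image, i.e.\ the inverse of $K_2$ in the concordance group. Since the Alexander polynomial is multiplicative under connected sum and is unchanged up to a unit $\pm t^k$ under both mirroring and reversing orientation, we have
\[
\Delta_K(t) = \Delta_{K_1}(t)\,\Delta_{-K_2}(t) \doteq \Delta_{K_1}(t)\,\Delta_{K_2}(t).
\]
Thus it suffices to prove that $\Delta_K(t) \doteq f(t)f(t^{-1})$ for some $f \in \mathbb{Z}[t]$ whenever $K$ is slice, and then to absorb the remaining unit $\pm t^k$ into the factor $t^a$ of the statement. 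The sign causes no trouble: evaluating at $t=1$, the left-hand side equals $\Delta_{K_1}(1)\Delta_{K_2}(1) = 1$ for normalized Alexander polynomials, while $f(1)f(1) = f(1)^2 \geq 0$, which forces the overall sign to be $+$.

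For the slice case I would argue with a Seifert matrix. Let $F$ be a genus-$h$ Seifert surface for $K$, and let $V$ be the associated $2h \times 2h$ Seifert matrix, so that $\Delta_K(t) \doteq \det(V - tV^{\mathsf T})$. The essential geometric input is that a slice knot has a \emph{metabolic} Seifert form: there is a rank-$h$ subgroup $\Lambda \subset H_1(F)$, a \emph{metabolizer}, on which the Seifert form vanishes identically. Choosing a basis of $H_1(F)$ adapted to $\Lambda$, the matrix $V$ becomes congruent to a block matrix
\[
V = \begin{pmatrix} 0 & B \\ C & D \end{pmatrix},
\]
in which each block is $h \times h$ and the top-left block is zero.

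With $V$ in this form the determinant factors by a direct computation:
\[
\det(V - tV^{\mathsf T}) = \det\!\begin{pmatrix} 0 & B - tC^{\mathsf T} \\ C - tB^{\mathsf T} & D - tD^{\mathsf T}\end{pmatrix} = (-1)^h \det(B - tC^{\mathsf T})\,\det(C - tB^{\mathsf T}).
\]
Setting $f(t) := \det(C^{\mathsf T} - tB)$, one checks that $\det(C - tB^{\mathsf T}) = f(t)$ and $\det(B - tC^{\mathsf T}) = (-t)^h f(t^{-1})$, whence $\det(V - tV^{\mathsf T}) = t^h f(t)\,f(t^{-1}) \doteq f(t)\,f(t^{-1})$, exactly as required. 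This part is formal linear algebra.

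The main obstacle is establishing the existence of the metabolizer $\Lambda$, which is the only place where the slice disk $D \subset B^4$ is genuinely used. Here I would invoke the \emph{half-lives-half-dies} principle applied to $F$ pushed into $B^4$ and capped by $D$: the kernel of the map $H_1(F;\mathbb{Q}) \to H_1(B^4 \setminus \nu D;\mathbb{Q})$ has dimension exactly half that of $H_1(F;\mathbb{Q})$, and a linking-number argument (the slice disk bounds, so the relevant linking numbers vanish in $B^4$) shows that the Seifert form vanishes on this kernel. Passing from the $\mathbb{Q}$-subspace to a primitive integral sublattice of rank $h$ then produces $\Lambda$ and the block decomposition above. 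This homological step is the crux of the argument; once it is in place, the factorization and the reduction to the concordance statement follow formally.
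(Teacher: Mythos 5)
The paper does not prove this statement at all --- it is quoted directly from Fox--Milnor --- so your attempt can only be measured against the classical argument, and your overall route (reduce to sliceness of $K_1 \# (-K_2)$, produce a metabolizer for its Seifert form, factor the determinant of the block Seifert matrix) is exactly the standard one. The reduction step and the linear algebra are correct, including the signs: $\det(V - tV^{\mathsf T}) = (-1)^h\det(B - tC^{\mathsf T})\det(C - tB^{\mathsf T}) = t^h f(t)f(t^{-1})$.

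However, the step you yourself call the crux is wrong as stated. You claim that $\ker\bigl(H_1(F;\mathbb{Q}) \to H_1(B^4\setminus\nu D;\mathbb{Q})\bigr)$ is half-dimensional. But $H_1(B^4\setminus\nu D;\mathbb{Q}) \cong \mathbb{Q}$, generated by the meridian (Mayer--Vietoris or Alexander duality), and the map in question is identically zero: any curve $\alpha$ in the interior of a Seifert surface is isotopic in $S^3 \setminus K$ to its positive pushoff $\alpha^+$, which is disjoint from $F$, so $\lk(\alpha,K) = \alpha^+ \cdot F = 0$. Hence the kernel is \emph{all} of $H_1(F;\mathbb{Q})$, of dimension $2h$ rather than $h$, and the Seifert form certainly does not vanish on it (otherwise $V=0$, contradicting $\det(V - V^{\mathsf T}) = 1$). ``Half lives, half dies'' cannot be invoked for this map, because $F$ is not the boundary of the $4$-manifold $B^4\setminus\nu D$ (whose boundary is the zero-surgery $S^3_0(K)$); the half-dimensionality lemma you need is the one for a compact oriented \emph{3-manifold} and its boundary surface. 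The correct argument runs: cap $F$ off with the slice disk to get a closed genus-$h$ surface $\hat F \subset \partial(B^4\setminus\nu D)$, noting $H_1(\hat F)\cong H_1(F)$; since $H_2(B^4\setminus\nu D;\mathbb{Z})=0$, the surface $\hat F$ bounds a compact oriented $3$-manifold $R$ embedded in $B^4\setminus\nu D$ (for instance the preimage of a regular value of a map to $S^1$ extending the canonical one on the boundary); then $\ker\bigl(H_1(\hat F;\mathbb{Q})\to H_1(R;\mathbb{Q})\bigr)$ has dimension exactly $h$, and for $x,y$ in this kernel one chooses $2$-chains $A,B\subset R$ with $\partial A = x$, $\partial B = y$, pushes $B$ off $R$ in the direction extending the positive normal of $F$, and computes $\lk(x,y^+) = A\cdot B^+ = 0$. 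With this replacement (and passing to a primitive integral sublattice of the kernel) your block decomposition and the rest of your proof go through verbatim.
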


Note that, if one normalizes the Alexander polynomials so that they are both symmetric, then $a = 0$.
All results in this subsection rely on the following:

\begin{lemm}\label{Fox-Milnorstable}
For a non-zero polynomial $f(t) = \sum\limits_{i=0}^{n} a_it^i \in \mathbb{Z}[t]$, consider 
\[
f(t)f(t^{-1}) = \sum\limits_{i=0}^{n}c_i(t^{i}+t^{-i}).
\]
Then $|c_0| > |c_k|$ for all $k>0$. In particular, the stable length of $f(t)f(t^{-1})$ is zero. 
\end{lemm}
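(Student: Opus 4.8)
The plan is to identify $f(t)f(t^{-1})$ with the autocorrelation of the coefficient sequence of $f$, and then to dominate every off-center coefficient by the central one via an elementary AM--GM estimate, tracking the equality case to obtain strictness. Concretely, writing $f(t)=\sum_{i=0}^n a_it^i$ and expanding,
\[
f(t)f(t^{-1})=\sum_{i,j}a_ia_j\,t^{i-j},
\]
the coefficient of $t^k$ for $k\ge 0$ is $c_k=\sum_j a_{j+k}a_j$, with the convention $a_m=0$ for $m\notin\{0,\dots,n\}$; in particular $c_0=\sum_{i=0}^n a_i^2$, which is strictly positive since $f\neq0$.

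The next step is the bound. For each $k\ge 1$, the triangle inequality together with $2|xy|\le x^2+y^2$ gives
\[
2|c_k|\le 2\sum_j|a_{j+k}a_j|\le \sum_j\bigl(a_{j+k}^2+a_j^2\bigr)=\sum_i a_i^2+\sum_i a_i^2=2c_0,
\]
where the middle equality uses that $j\mapsto j+k$ merely reindexes the same squares, so the shifted sequence has the same sum of squares as $(a_i)$. This already yields $|c_k|\le c_0$.

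Finally, I would upgrade this to a strict inequality by examining when equality can occur. Equality throughout the displayed chain forces $|a_{j+k}|=|a_j|$ for every $j$. Letting $d$ be the largest index with $a_d\neq0$ and taking $j=d$, this would demand $|a_{d+k}|=|a_d|>0$; but $d+k>d$ forces $a_{d+k}=0$, a contradiction. Hence $|c_k|<c_0$ for all $k>0$, which is exactly the claim, and it immediately gives that the central coefficient of the symmetric Laurent polynomial $f(t)f(t^{-1})$ strictly dominates, so its stable length is zero. There is no serious obstacle here; the only points requiring care are the correct identification of the coefficients $c_k$ as autocorrelations (with the attendant indexing of the support) and the equality analysis, for which isolating the extremal nonzero coefficient $a_d$ is the cleanest device. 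One could instead bound $|c_k|$ by Cauchy--Schwarz, but the AM--GM form keeps the equality case transparent.
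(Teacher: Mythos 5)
Your proof is correct and rests on essentially the same idea as the paper's: both control the lag-$k$ autocorrelation $c_k$ by the central value $c_0=\sum_i a_i^2$ using $2xy\le x^2+y^2$, the paper doing so via the exact sum-of-squares identity $c_0-c_k=\sum_i\tfrac12(a_i-a_{i+k})^2+\tfrac12\sum_{i=0}^{k-1}a_i^2+\tfrac12\sum_{i=n-k+1}^{n}a_i^2$. If anything, your triangle-inequality formulation is slightly more complete, since it bounds $|c_k|$ directly (covering the case $c_k<0$, which the paper's displayed identity for $c_0-c_k$ handles only implicitly via the companion identity with $(a_i+a_{i+k})^2$), and your equality analysis via the top nonzero coefficient $a_d$ makes the strictness explicit where the paper simply asserts it.
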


\begin{proof}
We just need to compute $c_0$, $c_k$, and their difference:
\[
c_0 = \sum\limits_{i=0}^{n} a_i^{2} , \ c_k = \sum\limits_{i=0}^{n-k} a_i a_{i+k},
\]
so
\[
c_0 -c_k = \sum\limits_{i=0}^{n-k} \frac{1}{2}(a_i-a_{i+k})^2 + \frac{1}{2} \sum\limits_{i=0}^{k-1} a_i^2 + \frac{1}{2} \sum\limits_{i=n-k+1}^{n} a_i^2 > 0. \qedhere
\]
\end{proof}

We analyzed the stable length of a convolution of two sequences in Figure~\ref{convolutionoftrapezoidal}. Together with Lemma~\ref{Fox-Milnorstable}, we obtain Proposition~\ref{stablelendegree}.
 
\begin{prop}\label{stablelendegree}
    Assume $K_1$ is an alternating link which is algebraically concordant to $K_2$. Then 
    \[
    \text{sl}(K_1) \leq 2\deg(\tilde{\Delta}_{K_2})+1.
    \]
\end{prop}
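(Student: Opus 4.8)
The plan is to feed the Fox--Milnor factorization coming from algebraic concordance into the convolution analysis already used for Lemma~\ref{stablelenproductoftrapezoidals}, and then invoke the rigidity of $f(t)f(t^{-1})$ recorded in Lemma~\ref{Fox-Milnorstable}. First I would record the algebraic input: since $K_1$ is algebraically concordant to $K_2$, their Seifert forms are algebraically concordant, so the Alexander polynomials satisfy the Fox--Milnor condition of Theorem~\ref{FoxMilnor}; after symmetrizing both factors we may write
\[
\tilde{\Delta}_{K_1}(t)\,\tilde{\Delta}_{K_2}(t) \doteq f(t)\,f(t^{-1})
\]
for some $f \in \mathbb{Z}[t]$. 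By Lemma~\ref{Fox-Milnorstable}, the coefficient sequence $(c_k)$ of $f(t)f(t^{-1})$ satisfies $|c_0| > |c_k|$ for every $k > 0$; in particular its maximum is attained at the single central coefficient, so $(c_k)$ admits no nontrivial central plateau.

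Next I would read the left-hand side as a convolution. Write $(b_j)$ for the symmetric coefficient sequence of $\tilde{\Delta}_{K_1}$ and $(a_j)$ for that of $\tilde{\Delta}_{K_2}$, the latter supported on the $2\deg(\tilde{\Delta}_{K_2})+1$ consecutive degrees from $-\deg(\tilde{\Delta}_{K_2})$ to $\deg(\tilde{\Delta}_{K_2})$, so that $c_k = \sum_j a_j\,b_{k-j}$. Here is where the alternating (trapezoidal, Hirasawa--Murasugi) form of $\tilde{\Delta}_{K_1}$ enters: under it the maximal value $M$ of $(b_j)$ is attained exactly on the central block of $\text{sl}(K_1)$ consecutive indices. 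For every $k$ whose shifted window $k-\mathrm{supp}(a)$ lies inside this block one has $b_{k-j}=M$ for all $j$ in the support of $(a_j)$, whence
\[
c_k = M\sum_j a_j,
\]
a constant independent of $k$. The set of such $k$ is a central range of length $\text{sl}(K_1) - 2\deg(\tilde{\Delta}_{K_2})$, so $(c_k)$ has a central plateau of exactly that many equal coefficients. This is precisely the convolution picture of Figure~\ref{convolutionoftrapezoidal}: the short factor sweeping across the stable part of the long one produces constant values.

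Finally I would combine the two observations. The central plateau length $\text{sl}(K_1) - 2\deg(\tilde{\Delta}_{K_2})$ of $(c_k)$ cannot exceed $1$, since otherwise some $c_k$ with $k \neq 0$ would equal $c_0$, contradicting $|c_0|>|c_k|$ from Lemma~\ref{Fox-Milnorstable}. Hence $\text{sl}(K_1) - 2\deg(\tilde{\Delta}_{K_2}) \leq 1$, which is the claimed bound
\[
\text{sl}(K_1) \leq 2\deg(\tilde{\Delta}_{K_2}) + 1.
\]
The step needing the most care is the bookkeeping that turns the stable part of $\tilde{\Delta}_{K_1}$ into a genuine convolution plateau of the predicted length $\text{sl}(K_1)-2\deg(\tilde{\Delta}_{K_2})$: one must state the window condition so that $c_k = M\sum_j a_j$ holds on a block of exactly that size, and handle the parity of $\text{sl}(K_1)$ (and of the support width of $\tilde{\Delta}_{K_2}$) uniformly. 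Once that is pinned down, the conclusion is immediate from the strict unimodality supplied by Lemma~\ref{Fox-Milnorstable}.
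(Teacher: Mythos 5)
Your proof is correct and takes essentially the same route as the paper's: both feed the Fox--Milnor factorization into the convolution picture of Figure~\ref{convolutionoftrapezoidal} and derive a contradiction with Lemma~\ref{Fox-Milnorstable}, you by explicitly counting the $\text{sl}(K_1)-2\deg(\tilde{\Delta}_{K_2})$ window positions that produce a constant central block, the paper by observing that two copies of the short sequence would fit inside the stable part. The one caveat---since the coefficients of $\tilde{\Delta}_{K_1}$ alternate in sign, the plateau computation $c_k = M\sum_j a_j$ should be run on absolute values via the substitution $t \mapsto -t$, under which $f(t)f(t^{-1})$ retains the form needed for Lemma~\ref{Fox-Milnorstable}---is an informality your write-up shares with the paper's own proof.
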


\begin{proof}
 If $\deg(\tilde{\Delta}_{K_1}) < \deg(\tilde{\Delta}_{K_2})$, then the inequality is obvious. Assume that $\deg(\tilde{\Delta}_{K_1}) \geq \deg(\tilde{\Delta}_{K_2})$. If $\text{sl}(K_1) > 2\deg(\tilde{\Delta}_{K_2}) + 1$, then at least two copies of the sequence of coefficients of $\tilde{\Delta}_{K_2}$ falls in the stable part of $\tilde{\Delta}_{K_1}$; see Figure~\ref{convolutionoftrapezoidal}. Hence, the stable length of $\tilde{\Delta}_{K_1} \tilde{\Delta}_{K_2}$ is non-zero, contradicting Lemma~\ref{Fox-Milnorstable}. Note that the proof does not rely on trapezoidal inequalities, and hence $K_2$ can be non-alternating.
\end{proof}

In the light of Corollary~\ref{stablelendegree}, we define the following invariant, which we use in Corollary~\ref{HMdual}.

\begin{defi}
    Define the \textit{concordance degree} of a knot $K$ as
    \[
    d_c(K) = \min \{\deg(\Delta_{K'}) \, : \, K' \ \text{algebraically concordant to} \ K \}.
    \]
\end{defi}

\begin{coro}\label{HMdual}
The H-M inequality holds for any alternating link $K$ satisfying 
\[
d_c(K) = |\sigma(K)|.
\]
This includes all alternating links which are algebraically concordant to special alternating links. 
\end{coro}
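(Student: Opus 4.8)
The plan is to deduce the statement from the single relation $\sl(K) \le d_c(K)+1$, which realises the ``duality'' between maximising the stable length in an algebraic concordance class and minimising the Alexander degree. To establish it, fix any knot $K'$ that is algebraically concordant to the alternating knot $K$. Proposition~\ref{stablelendegree} applies (and, as noted there, $K'$ need not be alternating), giving $\sl(K) \le 2\deg(\tilde{\Delta}_{K'})+1$. Since $\Delta_{K'}$ and $\tilde{\Delta}_{K'}$ differ by a unit $\pm t^{k}$ and $\tilde{\Delta}_{K'}$ is symmetric, we have $\deg(\Delta_{K'}) = 2\deg(\tilde{\Delta}_{K'})$, so $\sl(K) \le \deg(\Delta_{K'})+1$. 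Taking the minimum over all $K'$ in the algebraic concordance class yields $\sl(K) \le d_c(K)+1$. Under the hypothesis $d_c(K) = |\sigma(K)|$ this becomes $\sl(K) \le |\sigma(K)|+1$, and applying the monotone floor function gives $\lfloor \sl(K)/2 \rfloor \le \lfloor (|\sigma(K)|+1)/2 \rfloor$, which is precisely the H-M inequality of Conjecture~\ref{HMconjecture}.

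It then remains to check that the hypothesis $d_c(K) = |\sigma(K)|$ holds automatically when $K$ is algebraically concordant to a special alternating link $L_0$. For the upper bound, $L_0$ lies in the algebraic concordance class of $K$, so $d_c(K) \le \deg(\Delta_{L_0})$; by Lemma~\ref{specialaltsignature} one has $\deg(\Delta_{L_0}) = 2g(L_0) = |\sigma(L_0)|$, and since the signature is an algebraic concordance invariant, $|\sigma(L_0)| = |\sigma(K)|$, whence $d_c(K) \le |\sigma(K)|$. For the reverse inequality I would invoke the classical bound $|\sigma(J)| \le \deg(\Delta_J)$ valid for every knot $J$ (the ordinary signature is the Tristram--Levine signature at $-1$, whose jumps occur at the unit-circle roots of $\Delta_J$, of which there are at most $\deg(\Delta_J)$ counted with multiplicity). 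Applying this to each $K'$ algebraically concordant to $K$ and using $\sigma(K') = \sigma(K)$ gives $\deg(\Delta_{K'}) \ge |\sigma(K)|$, hence $d_c(K) \ge |\sigma(K)|$. Combining the two bounds yields $d_c(K) = |\sigma(K)|$, so the first part applies.

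The corollary is a short deduction, so the difficulty is bookkeeping rather than a genuine obstacle: all of the analytic content sits in Proposition~\ref{stablelendegree}, which itself rests on the Fox--Milnor factorisation (Theorem~\ref{FoxMilnor}) and Lemma~\ref{Fox-Milnorstable}. The step that most requires care is the reverse bound $d_c(K) \ge |\sigma(K)|$, since it uses the classical signature--Alexander-degree inequality and the algebraic-concordance invariance of the signature rather than anything from Section~\ref{Section1}; one must also keep the normalisation of $\Delta$ versus $\tilde{\Delta}$ consistent when passing between $d_c$ (phrased with $\Delta$) and Proposition~\ref{stablelendegree} (phrased with $\tilde{\Delta}$). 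I would emphasise that the alternating hypothesis is used only for $K$, while the comparison knot $K'$ is unconstrained; this is exactly what makes the dual optimisation over the concordance class meaningful and lets a (special alternating, hence possibly lower-degree) representative drive $d_c(K)$ down to $|\sigma(K)|$.
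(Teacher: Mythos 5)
Your proof is correct and follows essentially the same route as the paper: Proposition~\ref{stablelendegree} (after the $\Delta$ versus $\tilde{\Delta}$ normalisation you spell out) gives $\text{sl}(K) \leq d_c(K)+1$, the hypothesis $d_c(K)=|\sigma(K)|$ and the floor function then yield the H-M inequality, and Lemma~\ref{specialaltsignature} together with concordance invariance of the signature handles the special alternating case. The one place you go beyond the paper is the reverse bound $d_c(K) \geq |\sigma(K)|$: the paper's one-line justification of the second part only directly gives $d_c(K) \leq |\sigma(K)|$, which is in fact all the H-M conclusion needs, whereas you verify the literal equality claimed in the statement by invoking the classical inequality $|\sigma(J)| \leq \deg(\Delta_J)$; that fact is standard and true for knots, though your parenthetical sketch of it is slightly compressed --- one must use that the unit-circle roots of $\Delta_J$ come in conjugate pairs (and that $\Delta_J(\pm 1) \neq 0$ for a knot) to bound the total jump by $\deg(\Delta_J)$ rather than $2\deg(\Delta_J)$.
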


\begin{proof}
Using Proposition~\ref{stablelendegree}, we have that
\begin{equation*}\label{stablelendegreeineq}
    \text{sl}(K) \leq d_c(K) + 1 \Rightarrow \text{sl}(K) \leq  |\sigma(K)|+ 1 \Rightarrow 
\left\lfloor \frac{\text{sl}(K)}{2} \right\rfloor \leq \left\lfloor \frac{|\sigma(K)|+1}{2} \right\rfloor,
\end{equation*}    
where the last inequality is the H-M inequality for $K$. The second part of Corollary~\ref{HMdual} follows from Lemma~\ref{specialaltsignature}. 
\end{proof}

The H-M inequality can be seen as an optimization problem, where we maximize the stable length over all alternating links in an algebraic concordance class. Using inequality~\eqref{stablelendegreeineq}, one can construct a dual of this optimization problem in the form of minimizing the degree of the Alexander polynomial in any algebraic concordance class. In particular, we can search for knots satisfying 
\[
d_c(K) = \deg(\Delta_K) = |\sigma(K)|.
\]
    
Note that this duality is weak, and the dual problem cannot always prove the H-M inequality: 
    
\begin{exem}\label{weakdualexample}
Let $K$ be the knot $6_2$. We know that $\sigma(K )= -2$, $\deg(\Delta_K) = 4$, and $\sl(K) = 3$. Note that $\Delta_K$ is an irreducible element of $\mathbb{Z}[t]$, which is a UFD. Using the Fox--Milnor condition, one can deduce that, for all links $K'$ algebraically concordant to $K$, we have $\tilde{\Delta}_{K} | \tilde{\Delta}_{K'}$. Consequently, $d_c(K) = deg(\Delta_K) = 4$.
\end{exem}

We can use the same argument as in Example~\ref{weakdualexample} to compute the concordance degree of the figure eight knot.

We now show how to use the knot's symmetry to prove the H-M inequality in its algebraic concordance class. The figure eight knot is negative amphicheiral, and it satisfies the following: 

\begin{theo}[\cite{livingston2004survey}]
If a knot $K$ is concordant to a negative amphicheiral knot, then 
\[
\tilde{\Delta}_{K}(t^2) = F(t)F(t^{-1}).
\]
\end{theo}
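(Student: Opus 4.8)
The plan is to reduce the statement to two inputs: the Fox--Milnor relation of Theorem~\ref{FoxMilnor}, which is already available, and the Hartley--Kawauchi factorization of the Alexander polynomial of a negative amphicheiral knot, which supplies the genuinely new ``$t^2$'' phenomenon. I would first treat the case in which $K$ itself is negative amphicheiral. Passing to a strongly negative amphicheiral representative, there is an involution $\tau$ of $(S^3,K)$ reversing the orientations of both $S^3$ and $K$, and one can arrange a $\tau$-equivariant Seifert surface $\Sigma$. The induced involution on $H_1(\Sigma)$ anticommutes, up to transpose and sign, with the Seifert form $V$; decomposing $H_1(\Sigma)$ into the $\pm 1$-eigenspaces puts $V$ in a block anti-diagonal shape. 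Starting from $\tilde{\Delta}_K(t) \doteq \det(V - tV^{T})$ and substituting $t \mapsto t^2$, this block structure factors the determinant and exhibits $\tilde{\Delta}_K(t^2)$ as a norm $H(t)H(t^{-1})$. (The passage to $t^2$ is essential: for the figure eight, $\tilde{\Delta}_{4_1}(t)$ is not of the form $F(t)F(t^{-1})$ over $\mathbb{Z}$, whereas $\tilde{\Delta}_{4_1}(t^2) \doteq (t+1-t^{-1})(t^{-1}+1-t)$.)

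With this base case in hand, I would combine it with Fox--Milnor. Let $J$ be negative amphicheiral with $K$ concordant to $J$, and write $\tilde{\Delta}_J(t^2) \doteq H(t)H(t^{-1})$. Theorem~\ref{FoxMilnor} gives $g \in \mathbb{Z}[t,t^{-1}]$ with $\tilde{\Delta}_K(t)\,\tilde{\Delta}_J(t) \doteq g(t)\,g(t^{-1})$. Applying the unit-preserving substitution $t \mapsto t^2$ and dividing by the base-case identity yields
\[
\tilde{\Delta}_K(t^2) \doteq \frac{g(t^2)\,g(t^{-2})}{H(t)\,H(t^{-1})},
\]
where the left-hand side is a genuine symmetric integral Laurent polynomial, so the division is exact.

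It then remains to show that such a quotient is again of the form $F(t)F(t^{-1})$. I would argue in the UFD $\mathbb{Q}[t,t^{-1}]$, on which the involution $\iota\colon t \mapsto t^{-1}$ permutes the irreducibles, fixing a set of self-dual primes and pairing the remaining ones. In any norm $X\cdot\iota(X)$ each self-dual prime occurs with even multiplicity and each conjugate pair $\{p,\iota(p)\}$ with equal multiplicities. Writing $P = \tilde{\Delta}_K(t^2)$, self-duality $\iota(P)=P$ already forces equal multiplicities on conjugate pairs, and comparing multiplicities of each self-dual prime in the identity $P\cdot\bigl(H\,\iota(H)\bigr) \doteq g(t^2)\,\iota\bigl(g(t^2)\bigr)$ shows its exponent in $P$ is even (as a difference of two even numbers). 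Hence $P \doteq F\,\iota(F)$ over $\mathbb{Q}$, and Gauss's lemma promotes $F$ to $\mathbb{Z}[t,t^{-1}]$. The main obstacle is the base case: constructing the equivariant Seifert surface and extracting the block structure is the substantive (Hartley--Kawauchi) step, whereas the concordance extension above is then formal bookkeeping with norms.
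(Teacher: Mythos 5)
You are not competing against a proof in the paper here: the paper imports this statement wholesale from Livingston's survey \cite{livingston2004survey} and never proves it. Behind that citation, the standard argument is exactly your outer skeleton: the theorem of Hartley (extending Hartley--Kawauchi from the strongly negative amphicheiral case to all negative amphicheiral knots) that $\tilde{\Delta}_{J}(t^2)\doteq H(t)H(t^{-1})$, combined with Theorem~\ref{FoxMilnor}, the substitution $t\mapsto t^2$, and the multiplicity bookkeeping for self-dual versus paired primes in $\mathbb{Q}[t,t^{-1}]$ followed by Gauss's lemma. Your steps 2 and 3 carry out that formal layer correctly. So everything rests on your base case --- which you rightly call the substantive step --- and that is where the proposal fails.

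The equivariant Seifert surface mechanism cannot work, because it proves too much. Suppose $\Sigma$ were a Seifert surface for $K$ with $\tau(\Sigma)=\Sigma$. Since $\tau|_K$ reverses the orientation of $K$, the map $\tau|_\Sigma$ must reverse the orientation of $\Sigma$; since $\tau$ also reverses the ambient orientation, it preserves the positive normal direction, so $(\tau x)^{+}=\tau(x^{+})$, while $\mathrm{lk}(\tau a,\tau b)=-\mathrm{lk}(a,b)$. Hence the induced involution $T$ on $H_1(\Sigma;\mathbb{Q})$ satisfies $T^{T}VT=-V$ (not merely ``up to transpose and sign''), and on each $(\pm1)$-eigenspace of $T$ the form $V$ vanishes identically. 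Thus $V$, $V^{T}$, and $V-tV^{T}$ are all block anti-diagonal, $V=\left(\begin{smallmatrix}0 & B\\ C & 0\end{smallmatrix}\right)$, and unimodularity of the intersection form $V-V^{T}$ forces the two eigenspaces to have equal dimension $g$. Then
\[
\tilde{\Delta}_{K}(t)\doteq\det\bigl(V-tV^{T}\bigr)\doteq\det\bigl(B-tC^{T}\bigr)\cdot\det\bigl(C-tB^{T}\bigr)\doteq f(t)\,f(t^{-1}),\qquad f(t):=\det\bigl(B-tC^{T}\bigr),
\]
i.e.\ $\tilde{\Delta}_{K}(t)$ itself would be a norm over $\mathbb{Q}[t,t^{-1}]$, with no $t\mapsto t^2$ substitution needed or relevant. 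This contradicts your own (correct) parenthetical: $\tilde{\Delta}_{4_1}$ is irreducible and self-reciprocal over $\mathbb{Q}$, hence not a norm, even though the figure eight is strongly negative amphicheiral. The unavoidable conclusion is that the figure eight admits no Seifert surface invariant under its amphicheiral involution: the surface you propose to ``arrange'' is obstructed precisely whenever the conclusion you want is nontrivial. The genuine $t^2$ phenomenon in Hartley--Kawauchi does not come from an invariant surface at all, but from the semilinear action on the Alexander module (the lift of $\tau$ to the infinite cyclic cover satisfies $\tau_* t=t^{-1}\tau_*$, an infinite dihedral symmetry) and the Blanchfield pairing. A secondary gap: ``passing to a strongly negative amphicheiral representative'' is unjustified --- the theorem concerns arbitrary negative amphicheiral knots, and it is not known that such a knot is isotopic, or even concordant, to a strongly negative amphicheiral one; the general case is precisely what Hartley's module-theoretic argument was needed for. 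As written, then, the proposal is not a proof: its formal concordance layer is fine, but its core mechanism is one that provably cannot exist in the relevant examples.
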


Combining this with Lemma~\ref{Fox-Milnorstable} implies the following:
    
\begin{coro}\label{amphicheiral}
An alternating knot $K$ which is concordant to a negative amphicheiral knot has stable length zero, and hence satisfies the H-M inequality. 
\end{coro}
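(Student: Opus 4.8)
The plan is to combine the quoted theorem on negative amphicheiral knots with Lemma~\ref{Fox-Milnorstable}, just as the surrounding text signals. The statement to prove is Corollary~\ref{amphicheiral}: an alternating knot $K$ concordant to a negative amphicheiral knot has stable length zero and hence satisfies the H-M inequality.

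First I would invoke the quoted theorem from \cite{livingston2004survey}: since $K$ is concordant to a negative amphicheiral knot, we have the factorization
\[
\tilde{\Delta}_{K}(t^2) = F(t)F(t^{-1})
\]
for some $F(t) \in \mathbb{Z}[t]$. The key observation is that this is exactly the shape of polynomial treated in Lemma~\ref{Fox-Milnorstable}. After symmetrizing $F$ (i.e.\ clearing a power of $t$ so that $F(t)F(t^{-1}) = \sum_i c_i(t^i + t^{-i})$), Lemma~\ref{Fox-Milnorstable} gives $|c_0| > |c_k|$ for all $k > 0$. Thus the sequence of absolute values of the coefficients of $\tilde{\Delta}_{K}(t^2)$ has a strict maximum at the center and strictly smaller entries elsewhere; in particular its stable length is zero.

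Next I would transfer this conclusion from $\tilde{\Delta}_{K}(t^2)$ back to $\tilde{\Delta}_{K}(t)$. The substitution $t \mapsto t^2$ simply spreads out the coefficients of $\tilde{\Delta}_{K}$ by inserting zeros between them and doubling the degree; it does not merge or alter any nonzero coefficient. Hence the multiset of nonzero coefficients of $\tilde{\Delta}_{K}(t^2)$ coincides with that of $\tilde{\Delta}_{K}(t)$, and the central (constant-term) coefficient corresponds to the central coefficient of $\tilde{\Delta}_{K}$. Therefore the strict inequality $|c_0| > |c_k|$ translates into the statement that the central coefficient of $\tilde{\Delta}_{K}$ strictly dominates all others, so $\text{sl}(K) = 0$ by the definition of stable length. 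Since $\text{sl}(K) = 0$ forces $\lfloor \text{sl}(K)/2 \rfloor = 0 \leq \lfloor (|\sigma(K)|+1)/2 \rfloor$, the H-M inequality holds trivially.

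The only genuine subtlety, which I would address carefully, is the bookkeeping in the substitution $t \mapsto t^2$: I must confirm that the constant term of $\tilde{\Delta}_{K}(t^2)$ really is the constant term of $\tilde{\Delta}_{K}(t)$ (both are the center of a symmetric Laurent polynomial), and that no cancellation or index shift occurs that could move the maximum off-center. Because $K$ is an alternating knot, the coefficients of $\tilde{\Delta}_{K}$ alternate in sign and the absolute-value sequence is what enters the definition of stable length, so the strict domination of the central absolute value is precisely the assertion $\text{sl}(K)=0$. This is a short argument and I do not anticipate a hard obstacle beyond this indexing check; the substance is entirely supplied by the two cited results.
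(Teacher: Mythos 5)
Your proof is correct and follows exactly the route the paper intends: the paper offers no separate argument beyond the sentence ``Combining this with Lemma~\ref{Fox-Milnorstable} implies the following,'' and your write-up simply makes that combination explicit. The $t \mapsto t^2$ bookkeeping you spell out (zeros interleaved, central coefficient preserved, so $|c_0|>|c_k|$ for $F(t)F(t^{-1})$ transfers to strict dominance of the central coefficient of $\tilde{\Delta}_K$) is precisely the detail the paper leaves implicit, and your use of ``stable length zero'' for a strictly dominant central coefficient matches the paper's own convention in Lemma~\ref{Fox-Milnorstable}.
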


\bibliographystyle{alpha} 
\bibliography{bibtemplate}

\end{document}